\newcommand*\graychooser{}
\renewcommand*\graychooser{_gray}
\newcommand*\tocaddline{\addtocontents{toc}{\protect\vspace{1.5ex}}}
\newcommand*\dS{\mathcal{M}_{\mathrm{dS}}}
\newcommand*\rotvf[1]{\Omega_{(#1)}}
\newcommand*\ooo[1]{\mathring{#1}}
\newcommand*\cpmc{\texorpdfstring{\ensuremath{\mathrm{C}_+\mathrm{MC}}}{C+MC}\xspace}
\newcommand*\vmc{\texorpdfstring{\ensuremath{\mathrm{C}_0\mathrm{MC}}}{C0MC}\xspace}
\newcommand*\sff{\mathit{II}}
\newcommand*\mcv{\vec{H}}
\newcommand*\mcs{H}
\newcommand*\shapeop{S}
\newcommand*\lorvf[1]{\Lambda_{(#1)}}
\newcommand*\strtensor{\mathcal{Q}}
\newcommand*\massterm{\mathfrak{M}}
\newcommand*\spaceslice[1]{\Sigma_{#1}} 
\newcommand*\bulkregion[2]{\mathcal{D}_{#1}^{#2}}
\newcommand*\tweight{\mathfrak{T}}
\newcommand*\areasph{\D{\text{Area}}_{\Sphere^d}}
\newcommand*\energy{\mathcal{E}^2}
\newcommand*\allrot{\mathfrak{R}}
\theoremstyle{plain}
\newtheorem{thm}{Theorem}[section]
\newtheorem{prop}[thm]{Proposition}
\newtheorem{lem}[thm]{Lemma}
\newtheorem{cor}[thm]{Corollary}
\theoremstyle{definition}
\newtheorem{quest}{Question}
\theoremstyle{remark}
\newtheorem{rmk}[thm]{Remark}
\newtheorem{exa}[thm]{Example}
\numberwithin{equation}{section}
\begin{document}
\title[Stability and instability for the Lorentzian \cpmc]{Stability
and instability of expanding solutions to the Lorentzian
constant-positive-mean-curvature flow}
\author[W. W.-Y. Wong]{Willie Wai-Yeung Wong}
\address{\'{E}cole Polytechnique F\'{e}d\'{e}rale de Lausanne,
Switzerland}
\thanks{Version \texttt{rev\SVNRev} of \texttt{\SVNRawDate}}
\email{willie.wong@epfl.ch}
\subjclass[2010]{Primary: 53C44; Secondary: 35A02, 35B35, 35B40, 35L45, 35L72, 53A10, 53C50, 58J45}

\begin{abstract}
We study constant mean curvature \emph{Lorentzian} hypersurfaces of
$\Real^{1,d+1}$ from the point of view of its Cauchy problem. We
completely classify the spherically symmetric solutions, which include
among them a manifold isometric to the de Sitter space of general
relativity. We show that the spherically symmetric solutions exhibit
one of three (future) asymptotic behaviours: (i) finite time collapse (ii)
convergence to a time-like cylinder isometric to some
$\Real\times\Sphere^d$ and (iii) infinite expansion to the future
converging asymptotically to a time translation of the de Sitter solution. 
For class (iii) we examine the future stability properties of the
solutions under \emph{arbitrary} (not necessarily spherically
symmetric) perturbations. We show that the usual notions of asymptotic
stability and modulational stability cannot apply, and connect this to
the presence of \emph{cosmological horizons} in these class (iii)
solutions. We can nevertheless show the global existence and future 
stability for small perturbations of class
(iii) solutions under a notion of stability that naturally takes into
account the presence of cosmological horizons. The proof is based on
the vector field method, but requires additional geometric insight. In
particular we introduce two new tools: an \emph{inverse-Gauss-map}
gauge to deal with the problem of cosmological horizon and a quasilinear 
generalisation of Brendle's Bel-Robinson tensor to obtain natural
energy quantities. 
\end{abstract}

\maketitle
{\small
\tableofcontents
}

\section{Introduction}
In this paper we study Lorentzian (i.e.\ time-like) hypersurfaces 
$M\subset \Real^{1,d+1}$ of $d+2$ dimensional Minkowski spaces 
with constant, \emph{positive} mean curvature (``$M$ is \cpmc''). The limiting
case where $M$ has everywhere \emph{vanishing} mean curvature (``$M$
is \vmc'') is
actively studied under names such as \emph{relativistic membranes} and
\emph{extremal} or \emph{time-like minimal/maximal} hypersurfaces.
Mathematically they give rise to natural classes of \emph{quasilinear
wave equations} with clear geometric interpretation, and serve as a
testing ground for development of techniques in geometric analysis and
in the study of nonlinear waves on curved backgrounds; some recent
successes can be found in
\cite{DoKrSW2013,NguTia2013,Lindbl2004,Brendl2002}. On the other hand,
manifolds which are \vmc give one plausible description of a
classical (as opposed to quantum), relativistic, \emph{extended} test
object moving freely in space \cite{AurChr1979}. Understanding such 
objects seems to be
a first step toward the quantization of extended relativistic objects
(see \cite{Hoppe2013} for a recent topical review of the physical 
perspective). 

If \vmc manifolds are ``freely evolving'', then \cpmc manifolds are
those subject to a ``constant normal force''. The analogy is clearest
when we start with dimension $d = 0$. The ambient space-time is then
a $2$-dimensional Lorentzian manifold, and our manifold $M$ is simply a
curve. By assumption $M$ is taken to be time-like, and so we
interpret it as the world-line of a particle. 
Taking an arc-length (i.e.\ proper time) parametrisation, the
mean curvature of $M$ is nothing more than the acceleration of this
particle! Hence in the $d= 0$ case, the \vmc manifolds are geodesics,
and the \cpmc manifolds are those subject to a constant force, once we
appeal to Newton's second law. (See also the discussion in
\cite{AurChr1979}.)

(It is interesting to note that one can alternatively characterise
geodesics in a pseudo-Riemannian manifold as the image of a harmonic
map from $\Real$. Swapping the source space to a higher-dimensional
manifold gives another possible interpretation of what it means to
describe a freely evolving, classical, relativistic, extended test object.)

Just as the equations describing a \emph{Riemannian} hypersurface of
prescribed mean curvature have an \emph{elliptic} nature, the
equations describing our \emph{Lorentzian} hypersurfaces are
hyperbolic partial differential equations, with a locally well-posed 
initial value problem. The easiest way to see this is to fix a point 
$x\in M$ and consider $M$, locally in a neighbourhood of $x$, as a
graph over the tangent plane $\Pi_x$ to $M$. Letting $\phi$ be the
height of the graph (in the direction of the Minkowski normal
direction to $\Pi_x$), the mean curvature (see Appendix
\ref{app:defmc} for a
quick review) of $M$ is given by
\begin{equation}\label{eq:graphmc}
\text{mean curvature} = \frac{\partial}{\partial y_i} \left(
\frac{m^{ij} \frac{\partial}{\partial y_j} \phi}{\sqrt{ 1 +
m^{ij}\partial_i\phi \partial_j\phi}} \right)
\end{equation}
where $\{y_0, \dots, y_d\}$ is a flat (Minkowski) coordinate system
for the hyperplane $\Pi_x$ and $m^{ij}$ is the induced Minkowski
metric with signature $(- + \cdots +)$. That $M$ remains time-like is
captured in the condition $1 + m^{ij}\partial_i\phi \partial_j \phi >
0$. Cast in this form it is evident that \cpmc and \vmc manifolds can
be locally described by quasilinear wave equations, which classically
admit well-posed initial value problems
\cite{CouHil1962,HuKaMa1976,Klaine1980}.
Taking advantage of the finite speed of propagation for such equations, 
these local descriptions can be glued together (a technique common in
geometric wave equations and mathematical relativity, see e.g.\ 
\cite{Four1952,Ringst2009,KlaMac1995}) to get the desired local
existence of evolution. 

\begin{rmk}\label{rmk:defcauchyproblem}
More precisely, the Cauchy problem of the constant mean curvature flow
can be phrased as following.  Let $\Sigma$ be a $d$--dimensional smooth 
manifold, and $H$ the value of the prescribed mean curvature. Our 
initial data is $\Upsilon_0:\Sigma \to \Real^{1,d+1}$ a (sufficiently
regular) embedding such that $\Upsilon_0(\Sigma)$ is a space-like 
submanifold, together with $\Upsilon_1:\Sigma \to \Real^{1,d+1}$ a
family of future-directed time-like vectors. A solution to the Cauchy
problem is an embedding $\Upsilon:\Real\times\Sigma \to \Real^{1,d+1}$
satisfying $\Upsilon(\Real\times\Sigma)$ has the constant mean
curvature $H$, such that $\Upsilon(0,\pholder) = \Upsilon_0(\pholder)$
and that the image of $\D*{\Upsilon}(0,\pholder)$ is spanned by the
image of $\D*{\Upsilon_0}(\pholder)$ and $\Upsilon_1(\pholder)$. Note
that phrased in this way there is considerable gauge freedom in the
diffeomorphism $\Upsilon$ due to diffeomorphism invariance. To get a 
well-posed problem one would need to fix a gauge or coordinate system.
When $\Upsilon_0$ takes value in $\{0\}\times\Real^{d+1}\subset
\Real^{1,d+1}$ a convenient gauge is to require that
$\Upsilon(t,\pholder) \in \{t\}\times\Real^{d+1}$ and that
$\partial_t\Upsilon(t,\pholder)$ be orthogonal to
$\Upsilon(t,\Sigma)$. It is relatively simple to convert between a
solution described in this gauge with the local solution defined by
solving \eqref{eq:graphmc}. 

For obtaining global estimates in the case where $\Sigma$ is a
$d$--dimensional (topological) sphere, and the initial data $\Upsilon_{0,1}$ 
are ``sufficiently small'', it turns out a more convenient gauge
choice is what we will call the \emph{inverse-Gauss-map} gauge, and
which we will discuss in Section \ref{sec:IGMgauge}.
\end{rmk}

When facing an evolution equation with well-posed local dynamics, it
is natural to ask ``for which classes of initial data do we have
\emph{global} existence of solutions?'' When furthermore certain
\emph{explicit} solutions are known, it is also natural to ask ``are
the behaviours exhibited by those explicit solutions \emph{stable}?''
These two questions drive the analysis of the current paper. 

\subsection{Some known results in the \vmc case}
To give examples of the type of answers that one looks for in regards
to the two questions above, let us briefly review the recent progress
concerning the case of \vmc manifolds. 

The first results concerning global stability are that for the
``trivial solution'' of the \vmc equations. One easily sees that the
Minkowski space $\Real^{1,d}$ embeds in $\Real^{1,d+1}$ as a
hyperplane, and this embedding is totally geodesic, and hence has
vanishing mean curvature. Brendle (\cite{Brendl2002} for $d \geq 3$)
and Lindblad (\cite{Lindbl2004} for $d = 2$) were able to show that
starting with initial data ``sufficiently close'' (in a Sobolev sense)
to one of these time-like hyperplanes, the solution to the \vmc
equations exist for all time and converges asymptotically in time back
to said hyperplane. 

As the solution is a perturbation of a hyperplane, the manifold $M$ in
this case can be \emph{globally} represented as a graph. The results
and Brendle and Lindblad can thus be understood, via
\eqref{eq:graphmc}, as a statement about global well-posedness and
scattering for a quasilinear wave equation on $\Real^{1,d}$, and in
fact can be deduced from earlier works of Christodoulou
\cite{Christ1986} and Klainerman \cite{Klaine1986}. The decay
that drives the asymptotic convergence then takes its origins in the
\emph{linear} decay of waves on Minkowski space with $d \geq 2$, and
the crucial observation that allows the nonlinearity to be controlled
by the linear decay is that \eqref{eq:graphmc} obeys both the
quadratic \cite{Christ1986, Klaine1986} and, in Lindblad's case, the cubic
\cite{Alinha2001, Alinha2001a} \emph{null conditions}. 

There are, of course, other known explicit global solutions to the
\vmc equations. In fact, if one starts with any complete \emph{minimal
hypersurface} in $\Real^{d+1}$, extending it trivially in the time
direction leads to a geodesically complete \vmc manifold $M$. One can then ask whether the
same stability property enjoyed by the hyperplane shown by Brendle and
Lindblad (global existence for perturbed initial data, asymptotic
decay of the perturbation) is also shared by such $M$. Exactly this
question was studied recently by the author, together with R.~Donninger, 
J.~Krieger, and J.~Szeftel, for $M$ being the stationary solution 
generated by the \emph{catenoid}, with $d = 2$ \cite{DoKrSW2013}. The
catenoid is \emph{variationally} unstable as a minimal surface
\cite{FisSch1980}, a fact leading directly to linear instability of
the stationary catenoid solution under the \vmc flow. Nevertheless, in
\cite{DoKrSW2013} the authors were able to construct a centre manifold
for the evolution: under some symmetry assumptions (which in
particular allows the authors to avoid some difficulty having to do
with the trapping of null geodesics) they were able to show the
existence of a co-dimension 1 set of small perturbations which evolve
into solutions that converge asymptotically back to the catenoid. The
main decay mechanism here is, again, the dispersive decay of solutions
to the linear wave equation (on a now curved background, and with a 
short-range potential); here they crucially exploited the catenoid's
nature as an asymptotically flat manifold.

On the other hand certain blow-up results are available. It is
expected that for $M$ arising from initial data that is a
\emph{compact} manifold, one should have finite time singularity
formation under the \vmc equations. This is motivated in part by the
non-existence of compact minimal hypersurfaces in $\Real^{d+1}$, which 
implies there are no stationary solutions to the \vmc equations with
compact spatial cross-section. The singularity formation can also be 
easily verified in the
spherically symmetric case\footnote{Even in the case of higher
co-dimensions, and with external forces; see \cite{AurChr1979a}.}. 
Here the manifold can be described as the
set $\{r = f(t)\}$ where $f$ solves the nonlinear ordinary differential 
equation (see also Section \ref{sec:rotsym} below)
\begin{equation}\label{eq:vmcrotsym} 
0 = f f'' + d[ 1 - (f')^2]. 
\end{equation}
That the manifold is time-like requires $|f'| < 1$, and by assumption
$f > 0$ (it is the value of the radial coordinate). From convexity one 
can easily see the finite time collapse of any
initial data. (For $d = 1, 2$ the equation can be explicitly solved in
terms of trigonometric and Jacobi elliptic functions respectively.)
Outside of spherical symmetry, the recent work of Nguyen and Tian
\cite{NguTia2013} verified singularity formation in dimension $d = 1$
for initial data being a closed curve, and provided detailed
information about the behaviour of the solution at the singular point. 

\subsection{Positive mean curvature}
An immediate difference one notices when studying the \cpmc case is
that there exist global-in-time solutions with compact spatial cross
sections. In fact, as the sphere $\Sphere^d\subset \Real^{d+1}$ is a
constant positive mean curvature hypersurface, its trivial extension
in time gives a stationary \cpmc manifold; physically one may think of
this as a soap bubble supported by a pressure differential. As we will
discuss in Section \ref{sec:rotsym} below in the context of spherical
symmetry and time-symmetric initial data, for a \emph{fixed} value of 
the mean curvature, this static
solution forms a barrier between solutions which collapses in finite
time (both in the future and in the past) and solutions which expand
indefinitely. This immediately implies the instability of this
stationary solution (which is isometric to the Einstein cylinder)
under small perturbations, which then leads to an interesting
open question in the direction of \cite{DoKrSW2013}:
\begin{quest}\label{q:hard}
Does there exist some non-trivial set of initial perturbations of the
data generating $\Real\times \Sphere^d$ on which the \cpmc flow (with
mean curvature $d$) is \emph{orbitally} stable?
\end{quest}
A few remarks are in order. Firstly, the question is stated in terms
of orbital stability instead of asymptotic stability as the latter
would essentially require proving certain small data solutions to a
quasilinear wave equation on the Einstein cylinder decay in time. This
seems highly unlikely to the author as even for the \emph{linear} wave
equation on the Einstein cylinder one has no dispersive decay (there
are finite energy mode solutions whose amplitudes are constant in
time). Secondly, once we allow ourselves to consider solutions which
remain bounded asymptotically, there are obvious initial
perturbations, which correspond to the translation symmetries of
$\Real^{d+1}$, leading to orbital stability; hence the requirement
that the initial perturbation is non-trivial. 

We will not address Question \ref{q:hard} in this paper beyond the
spherically symmetric case; see Theorem \ref{thm:unstablecylinder}. 
Instead, the
main focus is the following, slightly easier problem.
\begin{quest}\label{q:easy}
Are the spherically symmetric expanding solutions ``outside'' the
``Einstein cylinder'' \emph{stable} under the \cpmc flow in any sense? 
\end{quest}
That this question may be more tractable comes from the expansion of
the background solution. That the expansion of space-time can drive
the decay of solutions to wave equations, even when the spatial
topology is compact, is a well-studied phenomenon
from the study of space-times with \emph{positive cosmological
constant} in general relativity. In some cases the decay given by this
expansion can be seen as stronger and giving rise to better 
estimates, compared to the dispersion on a flat space-time. For the
linear wave equation, for example, the accelerated expansion of the
space-time leads to exponential (in proper time) decay of solutions to
a constant (see, e.g.\ \cite{MeSaVa2014} and references therein);
dispersion on a flat space-time only gives polynomial decay. For a
nonlinear example one may consider Friedrich's proof of the stability 
of \emph{de Sitter space}\footnote{The original formulation of
Question \ref{q:easy}, as posed to the author by Lars Andersson, is
precisely whether de Sitter space is stable under \cpmc flow. As we
will discuss in Appendix \ref{app:gaussmap}, de Sitter space has a canonical
representation as a \cpmc manifold.} \cite{Friedr1986} compared to the
Christodoulou-Klainerman theorem on stability of \emph{Minkowski
space} \cite{ChrKla1993}. 

It is however easy to see that the answer to Question \ref{q:easy}
must be in the \emph{negative} if one studies the perturbed solution
$M$ as a graph in the normal bundle over the spherically symmetric expanding background. 
A first class of unstable perturbations are easily
understood: again we make use of the symmetries of the ambient
space-time. Isometries of $\Real^{1,d+1}$ send \cpmc manifolds to
other \cpmc manifolds; the spatial and temporal translations in
particular preserves none of the spherically symmetric expanding
solutions. As we shall see in Section \ref{sec:linearinstability}, the
corresponding perturbations grow \emph{exponentially} in proper time.
A second class of perturbations correspond to the purely radial
perturbations. From the analysis of the corresponding ODE system in
Section \ref{sec:rotsym}, we will also see that these give rise to
also exponentially growing perturbations. 

\begin{rmk}\label{rmk:notionsofstability}
It turns out that the rate of growth depends on how one measures time
and how one measures the deviation of a solution from the
spherically symmetric expanding background. For the former it is
convenient to measure with respect to a time function that is
``proper'' (by which we mean has unit length) relative to the induced
Lorentzian geometry of the spherically symmetric expanding background.
The latter is more complicated. If one treats the solution
as a graph in the normal bundle of the background, then we indeed have
linear instability leading to exponential growth (and in particular
neither asymptotic nor orbital stability holds). In terms of the
formulation given in Section \ref{sec:rotsym} where the solutions are
described as graphs $r = f(t)$, the \emph{orbital} stability of
solutions is an easy corollary of the analyses leading up to Theorem
\ref{thm:expansionstability}. In Section
\ref{sec:geometricimplications} we will also see how one can interpret
the spherically symmetric expanding solutions as orbitally stable
under general perturbations; this, however, will be a direct
consequence of the our more detailed control on the asymptotic
behaviour of solutions.  
\end{rmk}

In order to deal with these unstable perturbations, a commonly used
technique is that of \emph{modulation theory}, originally introduced
for proving \emph{orbital} (instead of asymptotic) stability of
certain stationary solutions of semilinear equations
\cite{Weinst1985,Weinst1986}. A key feature to this theory is to
identify a finite dimensional subspace (the \emph{modulation space})
of the solution space which captures the instability (in the
asymptotic sense) of the (linearised) evolution. The partial
differential equation then is decomposed as a coupled system of
\emph{ordinary} differential equations (the \emph{modulation
equations}) describing the trajectory (of the projection) on the
modulation space along with a partial differential equation describing
the dynamics transverse to the modulation space. The choice of the
modulation space and modulation equations are so that the remaining
PDE enjoys better stability or compactness properties, rendering the
problem more tractable. In many (semilinear) cases the modulation 
equations can be tracked ``in the large'', leading to results on orbital 
stability or stable blow-up dynamics (e.g.\ \cite{MeRaSz2010,
RapRod2012}). For quasilinear equations, the dependence of the
linearised operator on the background solution makes the procedure
more delicate; but if one restricts attention to showing the existence
of a centre manifold for the evolution, the basic method of Lyapunov and 
Perron can be viewed as a ``baby'' version of modulation theory, from
which some success can be obtained (for example \cite{DoKrSW2013}).

If one were to try to adapt the idea of modulation theory (or at the
very least, the Lyapunov-Perron method) na\"\i{}vely
to the \cpmc setting to study the stability of spherically symmetric
expanding solutions, one runs into an obstacle tied to the background
geometry. As is well-known in the literature in mathematical
relativity, a feature of expanding solutions such as the de Sitter
geometry or the Friedmann-Lema\^\i{}tre-Robertson-Walker (FLRW)
geometry is the presence of \emph{cosmological horizons}. Roughly
speaking, from the \emph{intrinsic} point of view the space-time may
be expanding faster than the speed of light, leading to regions which
asymptotically \emph{cannot} communicate with each other. (This will
be explained in more detail in Section \ref{sec:cosmohorizon}.) The net effect
of these cosmological horizons is that, asymptotically, one needs to
keep track of an \emph{infinite} dimensional modulation space, which
effectively obviates the advantages usually proffered by the use of
modulation theory. 

Our resolution of this conundrum is through a \emph{geometrically
motivated replacement} of modulation theory, which in practice is
implemented through a good \emph{gauge choice} (see Section
\ref{sec:IGMgauge}). The rough idea is the following: an analysis of
the spherically symmetric expanding solutions shows that they all
share the same asymptotic profile. This suggests that at the
derivative level the perturbations should ``converge to
zero''.\footnote{One notes here that this is commensurate with the
analysis of linear waves on such expanding backgrounds; the improved
decay estimates are only expected to hold for derivatives. The
function itself can converge asymptotically to a constant: unlike the
case with non-compact spatial slices, there are no obvious ways to
rule out the constant solution.} One
should then try to formulate the equation ``at the level of the first
derivative''. (Note that the perturbation equations for the solution
described as a graph over the perturbed background have a scalar 
dependence on the
solution itself, so formulating the equation for the derivatives is
not as simple as just commuting the equation with a differential
operator.) 

An imperfect analogy can be drawn with the various proofs
of the stability of Minkowski space. In harmonic coordinates,
the vacuum Einstein equations can be written as a quasilinear wave
equation for the components of the metric itself. This equation
however does not satisfy the classical null condition and it is not
until the recent work of Lindblad and Rodnianski \cite{LinRod2010,
LinRod2005} that the global behaviour of small-data solutions is
understood in terms of the so-called \emph{weak null condition}. 
Furthermore, asymptotically there is a certain loss of control for 
solutions to equations satisfying the weak null condition compared to
those to equations satisfying the classical null condition
\cite{Alinha2003, Lindbl2008}, a phenomenon having to do with the fact
that the background Minkowskian geometry is a poor approximation of
the true dynamical null geometry of the solution. Morally speaking this corresponds to
the approach studying the \cpmc problem as a quasilinear equation for
the height function of a graph over a background \cpmc manifold. Our
approach, then, is more similar to the proof of Christodoulou and
Klainerman \cite{ChrKla1993}. There the authors studied an system of 
associated equations at the level of the second derivatives of the 
unknown metric (the Weyl curvature), with one family of equations
(the Bianchi identities) arising from an integrability constraint
(morally that the curvature is the ``derivative of something else''),
and another (the dual Bianchi identities) a consequence of the 
original vacuum Einstein equations (note that the vacuum Einstein
equations is ``lower order'' than the dual Bianchi identities). One
exploits the dispersive nature of this system of equations (obtained
by considering the true dynamical null geometry of the solution) to gain
decay estimates, which one can then integrate (null structure
equations) to obtain control on the first derivatives of the metric
(Ricci rotation coefficients). As will be discussed in Section
\ref{sec:IGMgauge}, we will study for the \cpmc system also an
associated system of equations at ``higher order'' than the statement
of constant mean curvature, consisting of an integrability constraint
and an equation derived as a consequence of constant mean curvature.
This will allow us to directly prove the decay on the level of
derivatives without worrying about the possible exponential growth of
the height function itself. 

At this point we should mention that similar results (exponential
growth of the unknown together with decay of derivatives) have also
been obtained recently in the context of nonlinear stability of spatially
homogeneous solutions to coupled systems of Einstein's equation with
positive cosmological constant with various matter fields
\cite{Ringst2008,RodSpe2013,Speck2012,Speck2013,HadSpe2013}. The
positive cosmological constant drives an accelerated expansion and
leads naturally to discussions similar to Question \ref{q:easy}. A typical
feature of the results mentioned here is that the control obtained for
the fundamental unknown, which let us call $u$, takes the form 
\[ \norm[\infty]{e^{-t} u(t,\cdot) - u_0(\cdot)} \lesssim e^{-t} \]
while
\[ \norm[\infty]{u_0} \approx \epsilon \]
where $\epsilon$ is the size of the initial perturbation, while for
higher derivatives of $u$ one gets improved decay. In particular, for
the unknown $u$ itself, one cannot prove that it decays to zero, even
after renormalisation; one can only expect (renormalisable) exponential 
growth. This \emph{freezing-in} of the initial perturbation seems to
be a stable feature of stability problems for background with
accelerated expansion. Compare to this our geometrical approach
provides a small gain: we are in fact able to extract quite precisely 
the asymptotic behaviour of our perturbed solutions (see next
section). 

We remark here also that the methods employed
in \cite{Ringst2008,RodSpe2013,Speck2012,Speck2013,HadSpe2013} study
directly the equations at the level of the metric (similar to
\cite{LinRod2005} and comparable to the case of studying the height
function of the graphical description in our problem), and requires
carefully keeping track the structure of the equation to verify that
the exponential growth of the unknown itself will not cause problems.
In comparison our geometric approach allows us to be much more
schematic when considering the structure of the equations --- this is
attested in the relative simplicity of the proof of Theorem
\ref{thm:mainSmallData} below. Unfortunately it is not clear to the
author whether a similar approach is available to treat the problems
in general relativity.

\subsection{Main results and outline of paper}
We start with some remarks. First, we will use throughout the
\emph{Japanese bracket} notation  $\jb{x} \eqdef \sqrt{1 + x^2}$ for
$x\in \Real$. Secondly, we give a quick review of pseudo-Riemannian
geometry in the Appendix, which includes setting of the convention for
the definition of the mean curvature (in our convention the unit
sphere $\Sphere^d\subset\Real^{d+1}$ has positive mean curvature $d$).
Thirdly, examining the behaviour of the mean curvature
under scaling transformations (see Appendix \ref{app:defmc} and
\eqref{eq:sffscaling}), we see that when studying the \cpmc problem,
we can assume without loss of generality that the mean curvature
scalar is fixed to be $(d+1)$. 

In Section \ref{sec:rotsym}, we study the \cpmc problem in spherical
symmetry. The equations of motion reduce to a single second order
ordinary differential equation, and we completely classify its
asymptotic behaviour (including the blow-up cases), first qualitatively 
in Section \ref{sec:classification} and then quantitatively in Section
\ref{sec:rotsymasymp}. As we have already seen above, \emph{symmetries
of the ambient space}\footnote{In the context of spherical symmetry,
the only nontrivial compatible symmetry in the Poincar\'e group is
time translation.} can generate instabilities for the associated
equations of motion; a fact we will recover from our analysis.
However, our asymptotic profile also implies that this is the
\emph{only} instability in the spherical symmetric case, and we have
indeed a \emph{modulational} stability result. To illustrate, 
we give here a rough version of Theorem
\ref{thm:expansionstability}.
\begin{thm}
Let $M = \set{(t,x)\in\Real_+\times \Real^{d+1}}{\abs{x} = f(t)}$
denote a spherically
symmetric \cpmc manifold, such that the defining function
$f(t):\Real_+\to\Real_+$ satisfies $\lim_{t\to\infty} f(t) = \infty$.
Then $M$ as an individual solution is unstable under small
perturbations. However, the family of all time translations of $M$ is
future asymptotically stable.
\end{thm}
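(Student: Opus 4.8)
The plan is to build everything on the complete qualitative classification of Section~\ref{sec:classification} and the sharp asymptotics of Section~\ref{sec:rotsymasymp}. Writing the \cpmc flow in spherical symmetry (normalised to mean curvature $d+1$) as the first-order autonomous system for $(f,p)=(f,f')$ on the strip $\{f>0,\ |p|<1\}$, the defining relation becomes $f'=p$, $p'=(d+1)(1-p^2)^{3/2}-d(1-p^2)/f$; its only fixed point is $(d/(d+1),0)$, the Einstein cylinder. The classification exhibits an \emph{open} set $\mathcal{O}$ of initial data whose forward orbits escape to $\{f=\infty\}$ — these are exactly the class~(iii) solutions — and shows these solutions are future-global, the boundary of $\mathcal{O}$ consisting of separatrices asymptotic to the cylinder (class~(ii)). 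Since the data of $M$ lies strictly inside $\mathcal{O}$ (it is genuinely expanding, hence off the separatrix) and $\mathcal{O}$ is open, every sufficiently small perturbation of $M$ again has data in $\mathcal{O}$: it exists for all future time and is itself of class~(iii). This disposes of global existence for the perturbed solutions.

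Next I would import the quantitative asymptotics. In the escape regime $p\to 1$, and the good variable is $v=f^2(1-p^2)$, which equals $1$ on the de Sitter solution $f=\jb{t}$; passing to the proper-time parameter $\tau$ (so $d\tau=\sqrt{1-p^2}\,dt$) and $u=1/f$ turns the system into $du/d\tau=-pu/\sqrt{v}$, $dv/d\tau=2(d+1)p\sqrt{v}(1-\sqrt{v})$ with $p=\sqrt{1-u^2v}$, whose relevant fixed point $(u,v)=(0,1)$ is an attracting node with exponential rates $-1$ and $-(d+1)$. Unpacking this gives, for every class~(iii) solution $g$, a convergence $g(t)=\jb{t-\mathfrak{a}(g)}+o(1)$ with a definite rate — $O(t^{-1})$, say, together with the corresponding statement for $g'$ — where the modulus $\mathfrak{a}(g)\in\Real$ depends continuously on the initial data and is equivariant under time translation, $\mathfrak{a}(g(\cdot-s))=\mathfrak{a}(g)+s$. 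This is the modulational structure: all class~(iii) solutions collapse, asymptotically, onto the single one-parameter family of de Sitter time translates.

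Instability of $M$ as an individual solution is then the linear-instability observation previewed in the introduction (see Section~\ref{sec:linearinstability} and Remark~\ref{rmk:notionsofstability}). Take $\tilde f=f(\cdot-s)$ for small $s\neq 0$: again a class~(iii) solution with data $O(s)$-close to that of $M$. Since $f'(t)\to 1$ (from the de Sitter asymptotics), the coordinate deviation $\tilde f(t)-f(t)=f(t-s)-f(t)\to -s$ does not decay; worse, when $\tilde M$ is written as a graph in the normal bundle of $M$ the height function is $\phi=(\tilde f-f)/\sqrt{1-(f')^2}\sim -s/\sqrt{1-(f')^2}\sim -s\,f(t)$, which grows linearly in $t$, hence exponentially in the proper time $\tau$ of $M$ (since $f\sim e^{\tau}$). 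So $M$ is not Lyapunov, let alone asymptotically, stable in the geometrically natural topology — it cannot be, being a non-stationary solution on whose orbit the translation symmetry acts nontrivially.

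Finally, for asymptotic stability of the family $\mathfrak{R}:=\{f(\cdot-s)\}_{s\in\Real}$: given a small perturbation $\tilde f$, the first step puts it in class~(iii), so $\mathfrak{a}(\tilde f)$ is defined and, by continuity, close to $\mathfrak{a}(f)$. Set $s_*:=\mathfrak{a}(\tilde f)-\mathfrak{a}(f)$, small. By equivariance $f(\cdot-s_*)\in\mathfrak{R}$ has $\mathfrak{a}(f(\cdot-s_*))=\mathfrak{a}(\tilde f)$, so $\tilde f$ and $f(\cdot-s_*)$ share the same asymptotic de Sitter profile; subtracting the expansions of the second step, $\tilde f(t)-f(t-s_*)$ and its first derivative tend to zero with a rate. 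Combining this with local well-posedness and continuous dependence on data over any fixed interval shows $\tilde f$ stays uniformly close to $\mathfrak{R}$ (it is close to $f(\cdot-s_*)$, which is within $|s_*|$ of $M$ uniformly since $|f'|<1$) and converges to the member $f(\cdot-s_*)$ of $\mathfrak{R}$ as $t\to\infty$ — future asymptotic stability. I expect the real obstacle to be the sharp asymptotics of the second step: compactifying the phase portrait at $f=\infty$, identifying the node structure of the limiting fixed point, and — the delicate point — extracting $\mathfrak{a}$ together with its continuity and equivariance, because $\mathfrak{a}$ is invisible \emph{at} the fixed point and must be read off from the rate of approach along the weak-stable (de Sitter) direction; this is exactly the work carried out in Section~\ref{sec:rotsymasymp}.
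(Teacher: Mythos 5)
Your proposal is correct and follows the same architecture as the paper's proof of Theorem \ref{thm:expansionstability}: openness of the expanding class (Cauchy stability plus Corollary \ref{cor:odecomparisonprinciple}) gives global existence for perturbations, the time-translation symmetry with $f'\to 1$ gives instability of the individual solution, and convergence of every expanding solution to a translated light cone (Corollary \ref{cor:expansiontolightcone} and Remark \ref{rmk:expansionstability}) gives stability of the family. The one genuine difference is internal to the asymptotics: you compactify the phase portrait in the variables $(u,v)=(1/f,\,f^2(1-(f')^2))$ with proper time and read the rates off the hyperbolic node at $(0,1)$ (eigenvalues $-1$, $-(d+1)$, which I have checked), whereas the paper argues directly from monotonicity of $\eta=f\sqrt{1-(f')^2}$ and integrability of $1-f'$ via \eqref{eq:gammaprime} and \eqref{eq:etaprime2}; the two routes yield the same conclusion, and your node analysis in fact slightly streamlines the paper's case split on $\lim\eta$. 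Your additional claims (continuity and equivariance of the modulus $\mathfrak{a}$, uniform closeness to the family) go beyond what the paper's statement requires and are correctly flagged as the part needing the quantitative work of Section \ref{sec:rotsymasymp}.
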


The natural question to ask after the previous theorem is whether it
extends to the case without spherical symmetry. In Section
\ref{sec:cosmohorizon} we show that the answer is \emph{no}, by
exploiting the finite speed of propagation properties of hyperbolic
partial differential equations, and the presence of so-called
``cosmological horizons'' on expanding space-times such as de Sitter
space $\dS$ (see Appendix \ref{app:gaussmap} for the definition). The
main results of this section are Theorems \ref{thm:allmodeunstable}
and \ref{thm:badasymp}. The first theorem applies to the linearised
equation around $\dS$, where the solution is treated as a graph over
the normal bundle of $\dS$: it indicates that the linearised equation
has an infinite dimensional set of unstable directions, making naive
applications of modulation theory unsuitable. The second theorem shows
that the (finite dimensional) family generated by the application of
the Poincar\'e group to $\dS$ cannot exhaust all possible asymptotic
structures, in stark contrast to the spherically symmetric case. 

The remaining sections are devoted to proving that, in spite of the
results obtained in Section \ref{sec:cosmohorizon}, one can still have
a positive answer to Question \ref{q:easy} if one refines the notion
of ``stability''. (We remark here that while the Sections
\ref{sec:rotsym} and \ref{sec:cosmohorizon} have some independent
interest and lays the motivation and intuition for the rest of the
paper, the material presented in the
remaining sections are essentially logically independent.)
Returning to the issue of cosmological horizons, we
see that it forces an asymptotic decoupling of disjoint spatial
regions of the solution. Thus one should expect that, in order to
apply some sort of modulation theory, the modulation parameter should
no longer be just a running function of time. Instead, it should be
given by a function defined over the entire space-time: this nicely 
dovetails with the intuition
that the modulation space is infinite dimensional. The actual
implementation of this idea, however, is geometrical: we find a
mapping from our perturbed manifold to the standard $\dS$ such that
certain geometric quantities (including the difference of the induced
metrics and the difference of induced second fundamental forms and
their derivatives) decay asymptotically. We may interpret our final
result (Theorem \ref{thm:bigmaintheorem}) as
\begin{thm}\label{thm:maintheoremgeometric}
Let $M$ be the (future) \cpmc manifold generated by a small
perturbation of the initial data for a spherically symmetric, future
expanding solution described in Section \ref{sec:rotsym} (which
includes, in particular, the $\dS$ solution). Then as long
as the initial perturbation is sufficiently small, we have that
\begin{itemize}
\item $M$ is future global;
\item $M$ converges in time, \emph{spatially locally}, to a (space-time)
translation of the original expanding solution. By spatially locally
one should think a notion such as ``along tubular neighbourhoods `of
fixed spatial size $\epsilon\ll 1$' of time-like curves''. 
\end{itemize}
\end{thm}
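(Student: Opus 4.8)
The plan is to prove Theorem~\ref{thm:bigmaintheorem}, of which the statement above is the geometric paraphrase, by a vector-field bootstrap argument carried out entirely in the \emph{inverse-Gauss-map} gauge of Section~\ref{sec:IGMgauge}. First I would fix once and for all the standard $\dS$ hyperboloid as a reference \cpmc manifold; since its Gauss map is (essentially) the identity on the hyperboloid, any \cpmc manifold $M$ that is $C^1$-close to $\dS$ can be parametrised by pulling back the reference coordinates through $M$'s own Gauss map. In this gauge the unknowns are naturally \emph{geometric deviations} --- the difference $g - \ooo{g}$ of the induced metric from the de Sitter metric, the difference $\sff - \ooo{\sff}$ of the second fundamental forms, and their derivatives --- rather than a scalar height function, which is exactly what is needed to circumvent the (unavoidable, by Remark~\ref{rmk:notionsofstability}) exponential growth of the height function over the expanding background. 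Expressed in these variables the constant-mean-curvature condition $\mcs\equiv d+1$ becomes a quasilinear hyperbolic system; following the Christodoulou--Klainerman philosophy alluded to in the introduction, I would work ``one derivative up'', pairing an \emph{integrability constraint} of Gauss--Codazzi type with an evolution equation obtained as a differential consequence of the mean curvature being constant.

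The heart of the argument is the energy estimate, for which I would use the quasilinear generalisation of Brendle's Bel--Robinson tensor announced in the abstract. The essential structural input is that, when this tensor is contracted against multiplier vector fields adapted to the de Sitter geometry --- the time-like field normal to the reference constant-``time'' slices, together with the conformal dilation field generating the accelerated expansion --- its divergence splits into a term of favourable sign coming from the deformation tensor of the multiplier, which is where the background expansion is converted into decay, plus quasilinear error terms that are at least cubic in the deviations and hence absorbable once the deviations are small. Commuting with the rotation and boost generators of $\dS$ and with the appropriate derivatives then produces a hierarchy of energies $E_k(t)$, each controlling $k$ derivatives of the geometric deviations, and each decaying exponentially in the proper time of the background solution.

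With these estimates available I would run the standard continuity argument: on the maximal interval of existence, assume $E_k(t)\le C\epsilon^2$ for all admissible $k$; feed this into the energy inequality to improve the bound to $\le \frac{1}{2}C\epsilon^2$, with an exponential-in-time weight; and use Sobolev embedding on the compact time-slices to deduce in particular that $g-\ooo{g}$ and $\sff-\ooo{\sff}$ remain $C^1$-small, so that the inverse-Gauss-map parametrisation stays a genuine diffeomorphism and the reduced system stays strictly hyperbolic. Hence the bootstrap closes, $M$ is future global, and the gauge never degenerates. Finally, because it is the \emph{derivative-level} quantities that decay exponentially, integrating along any time-like curve shows that the local geometry of $M$ converges; finite speed of propagation confines the data relevant to a fixed-$\epsilon$ tubular neighbourhood of such a curve to a bounded region of the initial slice, and matching against the classification of Section~\ref{sec:rotsym} identifies the limit as a space-time translate of the spherically symmetric expanding background --- with a translation parameter that depends on the curve, which is precisely the ``spatially local'' convergence asserted, and the direct manifestation of the cosmological horizons exhibited in Section~\ref{sec:cosmohorizon}.

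I expect the main obstacle to lie in the second step: pinning down the correct quasilinear Bel--Robinson current, i.e.\ supplying the ``additional geometric insight'' that makes the error terms genuinely perturbative while keeping the principal part of the divergence sign-definite. This is delicate precisely because the relevant causal geometry controlling the propagation is that of the true dynamical solution and only approximately that of $\dS$; getting the multipliers and the commutation fields to respect this without losing the monotonicity is, I anticipate, where most of the work goes. A secondary difficulty is verifying that the inverse-Gauss-map gauge is globally well-defined and compatible with the domain-of-dependence structure, so that the ``spatially local'' part of the conclusion can be stated cleanly.
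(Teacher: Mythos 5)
Your architecture coincides with the paper's: the inverse-Gauss-map gauge with the Codazzi/divergence system \eqref{eq:mainsystem} as the ``one derivative up'' formulation, a quasilinear generalisation of Brendle's Bel--Robinson tensor, a bootstrap closed by Sobolev embedding on the expanding slices, and finite speed of propagation plus an asymptotic shift function on the sphere at time-like infinity to extract the spatially local convergence. Your honest flag that the crux is the divergence identity for the stress tensor is well placed; but your anticipated mechanism for the energy estimate differs from what actually works in three ways worth recording. First, the divergence of $\strtensor^{ab}_{cd}\tau_b\tau^c\tau^d$ is \emph{not} sign-favourable: the deformation of $\tau$ produces a term $(d-2)\strtensor^{ab}_{cd}\tau_a\tau_b\tau^c\tau^d$ of the wrong sign for $d>2$ (see \eqref{eq:strwdeform}), together with a term proportional to $(\trace_{\ooo{g}}\phi)\,\phi_{np}\tau^n\tau^p$ of indeterminate sign. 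These are handled, respectively, by renormalising the energy with the weight $\tweight=\jb{t}^{2-d}$ --- so the energies are merely \emph{almost conserved}, not exponentially decaying; the unweighted $L^2$ norms are allowed to grow, and the pointwise exponential-in-proper-time decay is recovered only from the volume factor $\jb{t}^{-d/2}$ in the Sobolev inequality (Lemma \ref{lem:sobolev}) --- and by the trace constraint \eqref{eq:traceid}, which forces $\trace_{\ooo{g}}\phi$ to be quadratically small. Second, the multiplier/commutator set is leaner than you propose: only $\tau$ and the rotations $\allrot$ are used; no conformal dilation multiplier is needed, and commuting with the boosts $\lorvf{i}$ would be ill-advised since they change causal type across the cosmological horizon. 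Third, for backgrounds other than $\dS$ itself the perturbed system \eqref{eq:largedatasystem} acquires an inhomogeneity built from $\massterm$, which the paper closes either by a Cauchy-stability reduction to the small-data theorem after the background has decayed, or by exploiting the $\jb{t}^{-(d+1)+\epsilon}$ decay of $\massterm$ directly; this ``large data'' step is absent from your outline.
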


In order to obtain the above results, we introduce two
new\footnote{Both tools have appeared before in the mathematical
literature in the large. But
their use in this context is new.} tools, which, to the specialists,
would be the main contribution of this paper. The first, as already
mentioned, is the \emph{inverse-Gauss-map gauge}, our geometric
replacement for modulation theory. This is developed in Section
\ref{sec:IGMgauge}. Under the inverse-Gauss-map gauge, the equations
of motions reduce to a relatively simple form \eqref{eq:mainsystem}
which is a quasilinear divergence-curl system. To establish the
suitable \emph{a priori} energy estimates for demonstrating
\emph{decay}, we first refine Brendle's Bel-Robinson tensor
\cite{Brendl2002} in Section \ref{sec:strtensor} to a very general
setting in order to apply to our
quasilinear situation. This allows us to prove $L^2$-based energy
estimates in Section \ref{sec:energyest}; these estimates are somewhat
unintuitively \emph{weighted in time} (the unweighted $L^2$ norms are allowed to
grow exponentially in proper time).  The favourable geometry 
of $\dS$ allows us to dwarf this growth by the exponential growth of the
spatial volume, which, via a Sobolev embedding, gives that the
$L^\infty$ norm will in fact decay exponentially (in proper time), assuming 
boundedness
of the weighted $L^2$ energy. Small data global existence and
asymptotic stability then follows by a standard bootstrap argument. 

In writing up this paper, concision is sacrificed for motivation and
for a desire for the manuscript to be reasonably self-contained. The
author wishes the readers grant him this indulgence. 

\subsection{Acknowledgements}
This paper has its genesis in a question posed to the author by Lars
Andersson at the 2014 OXPDE workshop in \emph{Nonlinear Wave Equations
and General Relativity}; as such the author must thank Lars for the
interesting question, and also OXPDE, especially Gui-Qiang Chen and
Qian Wang, for their hospitality. The research for this paper can thus
be said to have its seeds sown at Oxford; the author is otherwise
supported by the Swiss National Science Foundation through a grant to
Joachim Krieger, who has on many occasions also lent the author his
sympathetic ear. The author would also like to thank Joules Nahas for
several profitable discussions; Jared Speck for clarifying
some details of his work on the stability of expanding FLRW solutions;
and Demetrios Christodoulou for some insightful comments on the
manuscript.

\tocaddline
\section{Rotationally symmetric solutions}\label{sec:rotsym}
Under rotational symmetry\footnote{In the slightly different setting
of axially symmetric co-dimensional 2 surfaces in $\Real^{1,3}$, 
which correspond
physically to the case of circular \emph{strings}, evolving under a 
constant electromagnetic field, similar results have been obtained by
Aurilia and Christodoulou \cite{AurChr1979a}. The presence of the
external field generates some different dynamics, depending on whether
the field is electric or magnetic.}, the equation for constant mean 
curvature reduces to an ordinary
differential equation in the extrinsic time variable $t$: let $r$ be the
radial coordinate, the inward unit normal to the rotationally symmetric
surface given by the graph of $r = f(t)$ is 
\[ \vec{n} = -\frac{1}{\sqrt{1 - (f')^2}}(\partial_r + (f') \partial_t).\]
A direct computation yields that the nonlinear ODE for constant mean
curvature $c$ is 
\begin{equation}\label{eq:cmcsphsym}
[1-(f')^2] d + f f'' = c [1-(f')^2]^{\frac32} f,
\end{equation}
as indicated before, by rescaling we can fix $c = d+ 1$ for
convenience. We can equivalently write \eqref{eq:cmcsphsym}, with the
choice of $c$ fixed, as
\begin{equation}\label{eq:cmcsphsym2}
\left( \frac{f'}{\sqrt{1 - (f')^2}}\right)' = (d+1) -
\frac{d}{f\sqrt{1-(f')^2}}.
\end{equation}

The equation \eqref{eq:cmcsphsym} admits two explicit solutions.
The pseudo-sphere $\dS = \Sphere^{1,d+1,1}$ as described in Appendix
\ref{app:gaussmap} corresponds to the solution $f(t) = \jb{t}$. 
Another explicit solution is given by the cylinder 
$f(t) \equiv \frac{d}{d+1}$. Note that as \eqref{eq:cmcsphsym} is
autonomous, time translations of solutions are also solutions. 

In this
section we will analyse the ODE \eqref{eq:cmcsphsym} and describe the
asymptotic behaviours of the solutions. Observe that from the
fundamental theorem of existence and uniqueness of ordinary
differential equations, if $f(t_0) \neq 0$ and $\abs{f'(t_0)} < 1$,
the equation \eqref{eq:cmcsphsym} has an unique local solution also
satisfying $f \neq 0$ and $\abs{f'} < 1$. These two conditions are
\emph{geometric} in nature: when $f = 0$ the solution manifold $\{r
= f(t)\}$
collapses to a point and fails to be regular, while when $\abs{f'} =
1$ the induced pseudo-Riemannian structure on the 
solution manifold $\{r = f(t)\}$ becomes degenerate. 
We first prove a blow-up criterion.

\begin{prop}\label{prop:odeblowupcrit}
Let $\abs{t_1},\abs{t_2} < \infty$, and let $f:(t_1,t_2)\to\Real_+$ be
a $C^2$ solution of \eqref{eq:cmcsphsym}. If $\sup_{(t_1,t_2)}\abs{\log f} 
< \infty$, and $\abs{f'(t_0)} < 1$ for some $t_0\in (t_1,t_2)$, 
then $\sup_{(t_1,t_2)} \abs{f'} < 1$. 
\end{prop}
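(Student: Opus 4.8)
The plan is to recast \eqref{eq:cmcsphsym} as a scalar first-order ODE for the rapidity-type variable
\[ v \eqdef \frac{f'}{\sqrt{1-(f')^2}}, \]
and then run a soft continuation argument. Since $u\mapsto u/\sqrt{1-u^2}$ is a diffeomorphism of $(-1,1)$ onto $\Real$, the condition $\abs{f'}<1$ holds precisely when $v$ is finite, and one has the identity $\sqrt{1-(f')^2} = \jb{v}^{-1}$. With $c = d+1$, equation \eqref{eq:cmcsphsym2} then reads
\[ v' = (d+1) - \frac{d\,\jb{v}}{f}, \]
so the proposition reduces to showing that $v$ remains bounded on $(t_1,t_2)$.

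I would first pass to the connected component $(a,b)\subseteq(t_1,t_2)$ of the open set $\set{t}{\abs{f'(t)}<1}$ containing $t_0$ (nonempty, since $t_0$ belongs to it by hypothesis); on $(a,b)$ the function $v$ is $C^1$ and solves the displayed ODE. The heart of the matter is an estimate on $(a,b)$ that is \emph{independent of the endpoints $a,b$}. Here the hypothesis $\sup_{(t_1,t_2)}\abs{\log f}<\infty$ enters only through the lower bound $f \geq f_{\min}\eqdef \exp\bigl(-\sup_{(t_1,t_2)}\abs{\log f}\bigr) > 0$. Since both terms on the right-hand side of the ODE are nonnegative and $\jb{v}\geq 1$, one has $\abs{v'}\leq (d+1) + d\jb{v}/f_{\min}\leq C\jb{v}$ with $C\eqdef d+1+d/f_{\min}$, whence
\[ \Bigl|\frac{d}{dt}\log\jb{v}\Bigr| = \Bigl|\frac{v\,v'}{\jb{v}^2}\Bigr| \leq \frac{\abs{v'}}{\jb{v}} \leq C. \]
Integrating from $t_0$ over the \emph{bounded} interval $(t_1,t_2)$ gives $\log\jb{v(t)}\leq \log\jb{v(t_0)} + C\abs{t_2-t_1}$ for every $t\in(a,b)$, and $\jb{v(t_0)}<\infty$ because $\abs{f'(t_0)}<1$. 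Hence $\sup_{(a,b)}\jb{v}<\infty$, i.e.\ $\sup_{(a,b)}\abs{f'}<1$.

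Finally I would close the argument by maximality: if $b<t_2$, then $b\in(t_1,t_2)$ and $f\in C^2$ there, so $f'$ is continuous at $b$ and $\abs{f'(b)} = \lim_{t\to b^-}\abs{f'(t)}<1$ by the uniform bound just obtained, contradicting that $b$ is the right endpoint of the component. The left endpoint is handled identically, so $(a,b)=(t_1,t_2)$ and $\sup_{(t_1,t_2)}\abs{f'}<1$, as claimed.

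I do not anticipate a genuine obstacle: the argument is essentially a Gr\"onwall-type continuation estimate. The only subtlety worth flagging is that one should resist looking for a sign-definite monotonicity of $v$ --- the bound $v'\leq (d+1) - d/f$ only rules out $v\to+\infty$ towards the future (and $v\to-\infty$ towards the past), while the a priori dangerous behaviour $v\to-\infty$ towards the future is tamed purely by the finiteness of $(t_1,t_2)$ together with the fact that the nonlinearity $\jb{v}/f$ grows only linearly in $v$. It is precisely this linear (rather than superlinear) growth, combined with $f$ being bounded away from $0$, that the argument above exploits.
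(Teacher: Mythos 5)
Your proof is correct and is essentially the paper's argument in a change of variables: since $\jb{v}^2 = 1/(1-(f')^2)$, your quantity $\log\jb{v}$ is exactly $-\tfrac12\log u$ for the paper's $u = 1-(f')^2$, and both proofs bound its derivative using the lower bound on $f$ and integrate over the bounded interval before closing via the open-and-closed connected-component argument. No substantive difference.
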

\begin{proof}
Consider the quantity $u = 1 - (f')^2$. A direct computation from
\eqref{eq:cmcsphsym} gives
\begin{equation}\label{eq:interimuprimebound0}
(\log u)' = \frac{u'}{u} = 2\frac{f'}{f} \left[ d - (d+1) f u^\frac12\right].
\end{equation}
Observe that $u > 0 \implies \abs{f'} < 1$, and that by construction
$u \leq 1$. Thus the right-hand side of \eqref{eq:interimuprimebound0} 
is bounded whenever $u> 0$ and $\log f$ is bounded. Let 
$U$ be the connected component containing $t_0$ of the 
open subset $\set{t \in (t_1,t_2)}{u >0}$.
Integrating \eqref{eq:interimuprimebound0} from $t_0$, using the
boundedness of $t_1,t_2$, gives that $\sup_U \abs{\log u} < \infty$,
and hence $U$ is closed. Therefore $U = (t_1,t_2)$ and 
$\sup_{(t_1,t_2)} \abs{f'} < 1$.
\end{proof}

The implied bound on $f'$ in Proposition \ref{prop:odeblowupcrit} also
shows that starting from initial data $f(t_0) > 0$ and $\abs{f'(t_0)}
< 1$, the solution $f$ cannot blow-up to $\infty$ in finite time.
Hence we have the continuation criterion
\begin{cor}\label{cor:odecontcrit}
With initial data $f(t_0) \in \Real_+$ and $f'(t_0)\in (-1,1)$, the
solution can be extended as long as $f$ is bounded away from $0$. 
\end{cor}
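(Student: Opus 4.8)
The plan is to read \eqref{eq:cmcsphsym} (with $c=d+1$) as the first-order system $f'=g$, $g'=(d+1)(1-g^2)^{3/2}-d(1-g^2)/f$, whose right-hand side is smooth on the open region $\mathcal{U}=\set{(f,g)\in\Real^2}{f>0,\ \abs{g}<1}$. The local existence and uniqueness quoted just above \eqref{eq:interimuprimebound0} then furnishes a unique maximal solution through the prescribed data on an open interval $(t_-,t_+)\ni t_0$, with $(f(t),f'(t))\in\mathcal{U}$ throughout. It suffices to establish the forward statement: if $t_+<\infty$ and $\inf_{[t_0,t_+)}f>0$, then the solution extends beyond $t_+$ (so in fact $t_+<\infty$ forces $f\to 0$); the backward case is identical.

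So I would assume $t_+<\infty$ and $f\ge\delta>0$ on $[t_0,t_+)$ and first upgrade this to two-sided control on the trajectory. The upper bound on $f$ is free: since $\abs{f'}<1$ along the solution, $f$ is $1$-Lipschitz, hence $f(t)\le f(t_0)+(t_+-t_0)=:B<\infty$ on $[t_0,t_+)$ — this is exactly the ``no blow-up to $\infty$ in finite time'' observation following Proposition \ref{prop:odeblowupcrit}. Together with $f\ge\delta$ this gives $\sup_{[t_0,t_+)}\abs{\log f}<\infty$. Choosing any $t_0'\in(t_-,t_0)$, the function $f$ is a genuine $C^2$ solution of \eqref{eq:cmcsphsym} on the open interval $(t_0',t_+)$, on which $\abs{\log f}$ is still bounded (it is bounded on $[t_0,t_+)$ and $f$ is continuous and positive on the compact set $[t_0',t_0]$) and $\abs{f'(t_0)}<1$. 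Proposition \ref{prop:odeblowupcrit} then applies and yields $K\eqdef\sup_{(t_0',t_+)}\abs{f'}<1$; equivalently $1-(f')^2\ge 1-K^2>0$ uniformly, i.e.\ $f'$ stays away from the degenerate value $\pm1$.

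With $\delta\le f\le B$ and $\abs{f'}\le K<1$ on $[t_0,t_+)$, the trajectory $t\mapsto(f(t),f'(t))$ is confined, as $t\to t_+^-$, to the compact set $[\delta,B]\times[-K,K]\subset\mathcal{U}$. The standard continuation criterion for ODEs — a maximal solution of a locally Lipschitz system whose graph remains in a fixed compact subset of the domain on a finite time interval must extend past the endpoint — then contradicts the maximality of $t_+$. Hence $t_+<\infty$ is impossible under $\inf f>0$, and symmetrically at $t_-$, which is precisely the asserted continuation criterion.

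The argument is otherwise textbook; the one point that genuinely needs the input of Proposition \ref{prop:odeblowupcrit} (rather than bare ODE theory) — and thus the ``hard part'', such as it is — is the degeneration of the system at $\abs{f'}=1$, where the right-hand side is no longer smooth and the induced pseudo-Riemannian structure breaks down. Ruling out $\sup\abs{f'}\to 1$ is exactly what Proposition \ref{prop:odeblowupcrit} supplies, and the small preliminary manoeuvre needed to invoke it is the observation that finiteness of $t_+$ combined with the built-in Lipschitz bound already controls $f$ from above, so that $\abs{\log f}$ is bounded.
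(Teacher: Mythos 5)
Your argument is correct and follows the paper's intended route: the corollary is deduced from Proposition \ref{prop:odeblowupcrit} together with the observation that $\abs{f'}<1$ makes $f$ 1-Lipschitz, hence bounded above on a finite interval, so that $\abs{\log f}$ is controlled and $f'$ stays uniformly away from $\pm 1$. You have simply written out in full the compactness/continuation details that the paper leaves implicit in the one-sentence remark preceding the corollary.
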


\subsection{Classification}\label{sec:classification}

Next we make precise the notion of the cylindrical solution $f \equiv
\frac{d}{d+1}$ being a barrier between global existence and finite
time extinction. 
\begin{prop}\label{prop:globalodeex}
If $f(t_0) > \frac{d}{d+1}$, and $f'(t_0) \geq 0$ then $f$ can be
extended to a solution on the whole ray $[t_0,\infty)$ with $0 \leq f'
< 1$, and such that $f$ grows unboundedly as $t\to\infty$. Similarly, 
if $f(t_0) > \frac{d}{d+1}$ and $f'(t_0) \leq 0$ then
$f$ can be extended to a solution on the whole ray $(-\infty,t_0]$
with $-1 < f' \leq 0$, and such that $f$ grows unboundedly as $t \to
-\infty$. 
\end{prop}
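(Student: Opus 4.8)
By time-reversal symmetry $t\mapsto -t$ of the autonomous equation \eqref{eq:cmcsphsym}, the second assertion follows from the first, so I will concentrate on the case $f(t_0) > \frac{d}{d+1}$, $f'(t_0)\ge 0$, aiming for a forward solution on $[t_0,\infty)$ with $0\le f' < 1$ and $f\to\infty$. The plan is to show that the region $\mathcal{R} = \{\,f > \frac{d}{d+1},\ 0\le f' < 1\,\}$ is forward-invariant for the flow, and that inside it $f$ is forced to grow without bound; global existence is then handed to us by the continuation criterion, Corollary \ref{cor:odecontcrit}, since $f$ stays bounded away from $0$.

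The first step is to establish forward invariance of $\mathcal{R}$. I would rewrite the equation in the form \eqref{eq:cmcsphsym2}, i.e.\ $(g)' = (d+1) - \frac{d}{f\sqrt{1-(f')^2}}$ with $g \eqdef \frac{f'}{\sqrt{1-(f')^2}}$; note $g$ is a strictly increasing function of $f'$, $g$ and $f'$ have the same sign, and $f'\to 1^-$ iff $g\to+\infty$. The key observations are the two "boundary" checks. On the face $\{f' = 0\}$ (with $f > \frac{d}{d+1}$) one computes $g' = (d+1) - \frac{d}{f} > (d+1) - (d+1) = 0$, so $f'$ is strictly increasing there and cannot decrease through $0$: this keeps $f' \ge 0$, hence $f$ nondecreasing. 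On the face $\{f = \frac{d}{d+1}\}$: since $f' \ge 0$ keeps $f$ nondecreasing and $f(t_0) > \frac{d}{d+1}$ strictly, $f$ can never return to $\frac{d}{d+1}$; so that face is never reached. Finally $f' < 1$ is maintained: as long as $f > \frac{d}{d+1}$ and $f' \ge 0$ we have $\frac{d}{f\sqrt{1-(f')^2}} \ge \frac{d}{f} \cdot 1$, but more usefully $g' = (d+1) - \frac{d}{f\sqrt{1-(f')^2}} \le (d+1)$, so $g$ grows at most linearly and cannot blow up in finite time; equivalently $f'$ stays strictly below $1$ on any finite interval. Combined with Corollary \ref{cor:odecontcrit} (and Proposition \ref{prop:odeblowupcrit}, which rules out $|f'|\to 1$ as long as $\log f$ stays bounded), the solution extends to all of $[t_0,\infty)$ and remains in $\mathcal{R}$.

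The second step is unbounded growth. Suppose for contradiction $f$ stays bounded, say $f \le L < \infty$ on $[t_0,\infty)$ (it is nondecreasing, so it would converge to some limit $L \ge f(t_0) > \frac{d}{d+1}$). Then on $[t_0,\infty)$ we have, using $f' < 1$ so $\sqrt{1-(f')^2} \le 1$,
\[
g' = (d+1) - \frac{d}{f\sqrt{1-(f')^2}} \ge (d+1) - \frac{d}{f\sqrt{1-(f')^2}};
\]
I would instead argue more carefully: since $f \le L$, either $\sqrt{1-(f')^2}$ stays bounded below by some $\delta > 0$ — in which case $\frac{d}{f\sqrt{1-(f')^2}} \le \frac{d}{(d/(d+1))\delta}$ is bounded, and I must show $g'$ is eventually positive and bounded away from $0$, giving $g\to\infty$, i.e.\ $f'\to 1$, contradicting $\sqrt{1-(f')^2}\ge\delta$ — or else $f'\to 1$ along a sequence. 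The cleaner route: integrate $g' = (d+1) - \frac{d}{f\sqrt{1-(f')^2}}$. If $f$ is bounded and $f'\not\to 1$, then $\frac{f'}{\sqrt{1-(f')^2}} = g$ is bounded, so $\int_{t_0}^\infty g'\,dt$ converges, forcing $\frac{d}{f\sqrt{1-(f')^2}} \to d+1$, hence $f\sqrt{1-(f')^2}\to \frac{d}{d+1}$; but $f \ge f(t_0) > \frac{d}{d+1}$ and $\sqrt{1-(f')^2} \le 1$, and if this product tends to $\frac{d}{d+1}$ while $f\to L > \frac{d}{d+1}$ then $\sqrt{1-(f')^2}\to \frac{d}{(d+1)L} < 1$, so $f'$ converges to a positive limit $a > 0$ — but then $f' \ge a/2$ eventually forces $f\to\infty$, a contradiction. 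Tracking the monotonicity of $g'$ (it is increasing in $f$ and, for fixed $f$, increasing in $|f'|$, hence eventually of one sign) makes this rigorous.

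The main obstacle I anticipate is the growth argument rather than the invariance: forward invariance is a soft barrier argument at the two faces of $\mathcal{R}$, but ruling out a bounded-$f$ "steady-ish" trajectory requires excluding the scenario in which $f$ asymptotes to a constant with $f'\to 0$ — one must use the sign $g' = (d+1) - \frac{d}{f\sqrt{1-(f')^2}} > (d+1) - \frac{d}{f} > 0$ (valid precisely because $f > \frac{d}{d+1}$) to see that $f'$ cannot decay to $0$, so in fact $f'$ is bounded below by a positive constant after a finite time, and linear-in-$t$ growth of $f$ follows immediately. This last estimate — $f'$ eventually bounded below — is the crux, and it is exactly where the strict inequality $f(t_0) > \frac{d}{d+1}$ (as opposed to equality) is used.
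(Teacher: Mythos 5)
The first half of your argument --- forward invariance of the region $\{f > \tfrac{d}{d+1},\ 0\le f'<1\}$ and global forward existence via Corollary \ref{cor:odecontcrit} --- is correct and is essentially the paper's proof: your barrier computation at the face $\{f'=0\}$, namely that $\gamma'=(d+1)-d/f>0$ there (writing $\gamma=f'/\sqrt{1-(f')^2}$ for what you call $g$), is the same as the paper's observation from \eqref{eq:rearrangedsphsym} that $f''>0$ whenever $f\sqrt{1-(f')^2}>\tfrac{d}{d+1}$; the two quantities $\gamma'$ and $f''$ differ by the positive factor $(1-(f')^2)^{3/2}$.

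The unbounded-growth half, however, fails precisely at what you identify as the crux. You claim
\[
\gamma' = (d+1) - \frac{d}{f\sqrt{1-(f')^2}} \;>\; (d+1) - \frac{d}{f},
\]
but since $\sqrt{1-(f')^2}\le 1$ we have $\frac{d}{f\sqrt{1-(f')^2}}\ge\frac{d}{f}$, so the inequality goes the other way. Thus $f>\tfrac{d}{d+1}$ alone does \emph{not} make $\gamma'>0$: for $f'$ close to $1$ the product $f\sqrt{1-(f')^2}$ is close to $0$ and $\gamma'$ is large and negative, so $f'$ is \emph{not} bounded below by a positive constant merely because the trajectory stays in the invariant region. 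The relevant condition is $f\sqrt{1-(f')^2}>\tfrac{d}{d+1}$, which is not preserved throughout that region. Your fallback ``cleaner route'' also has gaps: boundedness of $f$ does not immediately bound $\gamma$ ($f'$ could spike toward $1$ on short intervals), convergence of $\int\gamma'$ (i.e.\ of $\gamma$) does not force $\gamma'\to0$, and $\gamma'$ is \emph{decreasing}, not increasing, in $\abs{f'}$ at fixed $f$. The repair is the paper's argument run in the right order: if $f$ stayed bounded, then $f$ is monotone with $f''$ bounded, so $f'\to0$; consequently $f\sqrt{1-(f')^2}\to\lim f>\tfrac{d}{d+1}$, so $f''>0$ for all large times by \eqref{eq:rearrangedsphsym}; but then $f'$ is eventually increasing and positive, contradicting $f'\to0$.
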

\begin{proof}
By time reversal it suffices to consider the case $f'(t_0) \geq 0$. 
For the existence proof we need to show that $f$ remains bounded
below. Rearranging \eqref{eq:cmcsphsym} we get
\begin{equation}\label{eq:rearrangedsphsym} f'' = \frac{1 - (f')^2}{f} \left[ (d+1) f \sqrt{1 - (f')^2} - d
\right]\end{equation}
which implies that whenever $f(t)\sqrt{1 - (f'(t))^2} > \frac{d}{d+1}$, we
must have $f''(t) > 0$. In view of the initial conditions this implies
$f'(t) > 0$ for all $t > t_0$ when the solution exists. This further
implies that $f(t) \geq f(t_0) > \frac{d}{d+1}$ and by Corollary
\ref{cor:odecontcrit} the solution can be extended for all future
time. 

To show that the solution cannot remain bounded, we argue by
contradiction. We have shown that $f'(t) > 0$ for
all $t > t_0$. Were $f$ to remain bounded, necessarily $\lim_{t \to
\infty} f'(t) = 0$. But since we know that $f(t) \geq f(t_0) >
\frac{d}{d+1}$, for all sufficiently large $s$ this gives $f(s) \sqrt{1
- (f'(s))^2} > \frac{d}{d+1}$, and hence by
  \eqref{eq:rearrangedsphsym} again $f''(s) > 0$, which then gives a
contradiction with the assumed decay of $f'$.
\end{proof}

\begin{prop}\label{prop:odeblowupcollapse}
If $f(t_0)\sqrt{1 - (f'(t_0))^2} < \frac{d}{d+1}$, then the solution
extinguishes in finite time. More precisely, under the above
assumption
\begin{itemize}
\item if $f'(t_0) \leq 0$ then there exists $t_1 > t_0$ such that the
solution exists on $[t_0,t_1)$, and $\lim_{t\nearrow t_1} f(t) = 0$. 
\item if $f'(t_0) \geq 0$ then there exists $t_1 < t_0$ such that the
solution exists on $(t_1,t_0]$ and $\lim_{t \searrow t_1} f(t) = 0$. 
\end{itemize}
\end{prop}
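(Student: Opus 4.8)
The plan is to run the argument of Proposition \ref{prop:globalodeex} with all the sign conditions reversed. By the time-reversal symmetry $f(t)\mapsto f(-t)$ of \eqref{eq:cmcsphsym} (which flips the sign of $f'$ while leaving $f\sqrt{1-(f')^2}$ unchanged), the case $f'(t_0)\ge 0$ reduces to the case $f'(t_0)\le 0$ applied to the backward evolution, so it suffices to treat the latter. Fix then initial data $f(t_0)>0$, $f'(t_0)\le 0$ with $g(t_0)<\tfrac{d}{d+1}$, where I abbreviate $g\eqdef f\sqrt{1-(f')^2}$, and let $[t_0,T)$ be the maximal interval of existence to the future of $t_0$ (on which $f>0$ and $\abs{f'}<1$ throughout, by Proposition \ref{prop:odeblowupcrit} and Corollary \ref{cor:odecontcrit}).

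The key step is a continuity argument showing $g<\tfrac{d}{d+1}$ is propagated over all of $[t_0,T)$. From \eqref{eq:rearrangedsphsym} one reads off that $g(t)<\tfrac{d}{d+1}$ forces $f''(t)<0$. Let $\tau\eqdef\sup\set{t\in[t_0,T)}{g<\tfrac{d}{d+1}\text{ throughout }[t_0,t]}$. On $(t_0,\tau)$ we then have $f''<0$, hence $f'$ is strictly decreasing, hence $f'<f'(t_0)\le 0$, hence $f$ is strictly decreasing while $(f')^2$ is non-decreasing; thus $\sqrt{1-(f')^2}$ is non-increasing, and since $f(t)<f(t_0)$ and $(f'(t))^2\ge (f'(t_0))^2$ we get $g(t)<g(t_0)<\tfrac{d}{d+1}$ for all $t\in(t_0,\tau)$. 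In fact $g$ is strictly decreasing on $(t_0,\tau)$, so if $\tau<T$ continuity of $g$ gives $g(\tau)<g(t_0)<\tfrac{d}{d+1}$, and then $g<\tfrac{d}{d+1}$ on a neighbourhood of $\tau$, contradicting the definition of $\tau$. Hence $\tau=T$, and in particular $f''<0$, $f'<0$ and $f$ is strictly decreasing on $(t_0,T)$.

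It remains to extract the finite-time extinction. Choosing any $t_0'\in(t_0,T)$ and setting $\delta\eqdef -f'(t_0')>0$, monotonicity of $f'$ gives $f(t)\le f(t_0')-\delta(t-t_0')$ on $[t_0',T)$; positivity of $f$ then forces $T\le t_0'+f(t_0')/\delta<\infty$. Since $T<\infty$ and $[t_0,T)$ is maximal, Corollary \ref{cor:odecontcrit} shows $f$ cannot be bounded away from $0$ near $T$, and combined with the monotonicity of $f$ this yields $\lim_{t\nearrow T}f(t)=0$; we take $t_1=T$.

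The only point demanding any care is the continuity/bootstrap step: one must check that all the monotonicity conclusions genuinely reinforce one another — here they do, since a decreasing $f$ and a more negative $f'$ both drive $g$ downward — and note that the hypothesis only supplies $g(t_0)<\tfrac{d}{d+1}$ (not $f'(t_0)<0$), so the strict decrease of $f$, and hence of $g$, is only guaranteed immediately after $t_0$. Everything else is a direct transcription of the mechanism behind Proposition \ref{prop:globalodeex}.
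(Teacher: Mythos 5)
Your proof is correct and follows essentially the same route as the paper's: both arguments propagate the condition $f\sqrt{1-(f')^2}<\frac{d}{d+1}$ forward in time via a monotonicity/bootstrap argument and then use the resulting concavity $f''<0$ to force a linear decrease of $f$ to zero in finite time. The only cosmetic difference is that the paper obtains the monotonicity of $\eta=f\sqrt{1-(f')^2}$ from the identity $\eta'=f'\sqrt{1-(f')^2}\,(1-\eta\gamma')$, whereas you deduce it from the separate monotonicity of the two factors $f$ and $\sqrt{1-(f')^2}$; your write-up is, if anything, more explicit about the final extinction step and about the borderline case $f'(t_0)=0$.
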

\begin{proof}
For convenience write $\gamma = \frac{f'}{\sqrt{1 - (f')^2}}$ and
$\eta = f \sqrt{1 - (f')^2}$. From \eqref{eq:cmcsphsym2} we see 
\begin{equation}\label{eq:gammaprime} \eta \gamma' = (d+1) \eta - d.\end{equation}
A direct computation shows
\begin{equation}\label{eq:etaprime1} \eta' = f' \sqrt{1- (f')^2} \left( 1- \eta
\gamma'\right).\end{equation}
Thus whenever $\eta < \frac{d}{d+1}$ we have $\eta\gamma' < 0$ and
$\eta' f' \geq 0$. Hence if $f'(t_0) \leq 0$ (or $\geq 0$) we must 
have that for all $t > t_0$ (or $< t_0$) where the solution exists,
$\eta(t) \leq \eta(t_0) < \frac{d}{d+1}$. Now, we have that
\[ \gamma' = \frac{f''}{[ 1 - (f')^2]^{\frac32}}\]
and hence our control on $\eta(t)$ implies that $f''(t) < 0$ in the
relevant intervals. Hence in finite time $f$ must become zero. 
\end{proof}
\begin{rmk}
Suppose that $f(t_0') \sqrt{1 - (f'(t_0'))} = \frac{d}{d+1}$. Then in
the proof above we see that $\gamma'(t_0') = 0$, which implies that
$\eta'(t_0') = f'(t_0') \sqrt{1 - (f'(t_0'))^2}$. Hence if $f'(t_0') < 0$ (or
$< 0$), at time $t_0 = t_0' + \epsilon$ (or $- \epsilon$) for some 
$\epsilon > 0$ sufficiently small, the hypotheses of Proposition
\ref{prop:odeblowupcollapse} are satisfied, and we also have finite
time collapse. The remaining case is when
$\eta(t_0') = \frac{d}{d+1}$ and $\gamma(t_0') = 0$: this corresponds to
the static cylindrical solution $f \equiv \frac{d}{d+1}$. 
\end{rmk}
\begin{rmk}
Combining \eqref{eq:gammaprime} and \eqref{eq:etaprime1} we get
\begin{equation}\label{eq:etaprime2}
\eta' = f' \sqrt{1 - (f')^2} (d+1) (1 - \eta).
\end{equation}
The corresponding stationary solution $\eta \equiv 1$ is given by
precisely the pseudo-sphere $f(t) = \jb{t}$. 
\end{rmk}

Propositions \ref{prop:odeblowupcollapse} and 
\ref{prop:globalodeex} completely characterises solutions of
\eqref{eq:cmcsphsym} when $f' = 0$ somewhere. They fall into three
classes:
\begin{description}
\item[Expanding solutions] The derivative $f'$ vanishes at exactly one
point $t_0$, the solution exists globally, with $f(t) > \frac{d}{d+1}$
always. Furthermore $\lim_{t \to \pm\infty} f(t) = \infty$. 
\item[Static cylinder] $f \equiv \frac{d}{d+1}$, $f' \equiv 0$. 
\item[Big bang and big crunch] The solution exists on a bounded
interval $(t_1,t_2)$ with $\abs{t_1} + \abs{t_2} < \infty$. The
derivative $f'$ vanishes at exactly one point $t_0 \in (t_1,t_2)$.
$f(t) < \frac{d}{d+1}$ always, and $\lim_{t\to t_1,t_2} f(t) = 0$. 
\end{description}
From Cauchy stability the class of expanding solutions and the class
of ``big bang and big crunch'' solutions are stable under small
perturbations, in the sense that sufficiently small perturbations of a
solution in one of the two above classes will be another solution in
the same class. 

To categorise the remaining solutions for which $f'$ never vanishes,
we need the following lemma.
\begin{lem}\label{lem:asymptotics}
Let $f$ be a positive $C^2$ solution of \eqref{eq:cmcsphsym} on 
$(t_0,\infty)$ and $f'\neq 0$. Then if  $\lim_{t \to\infty}f(t) < 
\infty$ we must have $\lim_{t\to\infty} f(t) = \frac{d}{d+1}$. 
\end{lem}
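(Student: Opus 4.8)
The plan is to exploit the strict monotonicity forced by $f'\neq0$ together with the two propositions just proved, while tracking the auxiliary quantities $\gamma=f'/\sqrt{1-(f')^2}$ and $\eta=f\sqrt{1-(f')^2}$ already used in the classification. Since $f'$ is continuous and nowhere zero it has a constant sign, so $f$ is strictly monotone; as the solution is time-like we have $\abs{f'}<1$ throughout, so $\gamma$ and $\eta$ are well-defined $C^1$ functions, and \eqref{eq:cmcsphsym2} and \eqref{eq:rearrangedsphsym} give the two identities I will use: $\gamma'=(d+1)-d/\eta$, and the sign of $f''$ equals that of $(d+1)\eta-d$. I would then split into the cases $f$ increasing and $f$ decreasing. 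In both, once one knows $f'\to0$, one has $\eta=f\sqrt{1-(f')^2}\to L$ and $\gamma=f'/\sqrt{1-(f')^2}\to0$; feeding this into $\gamma'=(d+1)-d/\eta$ gives $\gamma'(t)\to(d+1)-d/L$, and since a $C^1$ function tending to a finite limit cannot have derivative converging to a nonzero constant, we are forced to have $(d+1)-d/L=0$, i.e.\ $L=\frac{d}{d+1}$. (We will see $L>0$ in each case, so this is meaningful.)

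In the decreasing case ($f'<0$) I would first show $\eta(t)\geq\frac{d}{d+1}$ for every $t$: if $\eta(t_*)<\frac{d}{d+1}$ at some $t_*$, then since $f'(t_*)\leq0$, Proposition~\ref{prop:odeblowupcollapse} forces the solution to collapse ($f\to0$) at a finite time larger than $t_*$, contradicting that $f$ is a positive solution on all of $(t_0,\infty)$. Given $\eta\geq\frac{d}{d+1}$, the sign identity yields $f''\geq0$ everywhere, so $f'$ is nondecreasing; being negative while $f$ stays positive, $f'$ cannot tend to a negative limit, hence $f'\to0$. Then $\eta\to L$, and since $\eta\leq f$ we also get $L\geq\frac{d}{d+1}>0$; the endgame above now gives $L=\frac{d}{d+1}$.

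In the increasing case ($f'>0$) I would instead first show $f(t)<\frac{d}{d+1}$ for every $t$: if $f$ reached or exceeded $\frac{d}{d+1}$, then at a point where $f>\frac{d}{d+1}$ (using monotonicity) with $f'\geq0$ there, Proposition~\ref{prop:globalodeex} would force $f\to\infty$, contradicting $\lim f=L<\infty$. Then $\eta\leq f<\frac{d}{d+1}$, so $f''<0$ everywhere; $f'$ is positive and strictly decreasing, and since $\int_{t_0}^\infty f'=L-f(t_0)<\infty$ it must tend to $0$. As $f$ is increasing, $L\geq f(t_0)>0$, and $\eta\to L$, $\gamma\to0$; the endgame again gives $L=\frac{d}{d+1}$.

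The one genuine subtlety is that monotone boundedness of $f$ only gives $\liminf\abs{f'}=0$, not $f'\to0$. The remedy is precisely the first step of each case: the two propositions, together with the standing assumption that the solution is global in the future (and, in the increasing case, bounded), confine $f$ to one side of $\frac{d}{d+1}$, which fixes the sign of $f''$, makes $f'$ monotone, and so delivers $f'\to0$. After that everything is a short computation with $\gamma$ and $\eta$; the only point to watch is that the borderline value $L=\frac{d}{d+1}$ is consistently accounted for — it is, being exactly the case $\gamma'\to0$.
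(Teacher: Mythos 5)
Your proof is correct and follows essentially the same route as the paper's: establish $f'\to 0$ and then observe that the equation forces $\gamma'$ (equivalently, by \eqref{eq:rearrangedsphsym}, $f''$) to tend to a nonzero constant unless $\lim_{t\to\infty} f(t) = \frac{d}{d+1}$, contradicting the convergence of $\gamma$ (resp.\ $f'$). The one place you go beyond the paper is in justifying $f'\to 0$: the paper simply asserts that a bounded monotone function has derivative tending to zero, whereas you correctly flag that this needs an argument and supply one via Propositions \ref{prop:globalodeex} and \ref{prop:odeblowupcollapse}, which confine $f$ to one side of $\frac{d}{d+1}$, fix the sign of $f''$, and make $f'$ monotone.
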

\begin{proof}
If $f$ is monotonic and bounded, then $\lim_{t\to\infty} f'(t) = 0$.
Then by \eqref{eq:rearrangedsphsym} we have that for all sufficiently
large $s$, $f''(s)$ is signed and bounded away from zero if
$\lim_{t\to\infty}f(t) \neq \frac{d}{d+1}$. This gives a
contradiction with the decay of $f'$. 
\end{proof}
\begin{rmk}\label{rmk:otherclasses}
Lemma \ref{lem:asymptotics} implies that when $f'$ never vanishes, the
solution belongs to one of the six classes given by
\begin{enumerate}
\item $f' > 0$: $f$ collapses to $0$ in finite time in the past, and grows
unboundedly in the future.
\item $f' > 0$: $f$ collapses to $0$ in finite time in the past, and
asymptotically approaches $\frac{d}{d+1}$ from below. 
\item $f' > 0$: $f$ exists globally; it approaches to $\frac{d}{d+1}$
from above in the past, and it grows unboundedly in the future. 
\end{enumerate}
and their time reversals.
\end{rmk}

\begin{lem}\label{lem:otherclassesnonempty}
All six classes in Remark \ref{rmk:otherclasses} are non-empty. 
\end{lem}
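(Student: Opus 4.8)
The plan is to exhibit, for each of the three listed classes (the time-reversed versions following by the symmetry $t\mapsto -t$ already used throughout Section~\ref{sec:rotsym}), an explicit open set of initial data $(f(t_0),f'(t_0))$ whose maximal solution has the prescribed behaviour, using the ODE analysis already in hand. The key dividing quantity is $\eta = f\sqrt{1-(f')^2}$, whose evolution is governed by \eqref{eq:etaprime2}, together with the sign of $f'$, which (by \eqref{eq:rearrangedsphsym}) is controlled by whether $\eta$ sits above or below $\frac{d}{d+1}$.

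For class (1) ($f'>0$, collapses in the past, grows unboundedly in the future): take data with $f'(t_0)>0$ and $f(t_0)>\frac{d}{d+1}$ but with $f'(t_0)$ close enough to $1$ that $\eta(t_0)=f(t_0)\sqrt{1-(f'(t_0))^2}<\frac{d}{d+1}$. Forward in time, Proposition~\ref{prop:globalodeex} does not apply directly, but the hypothesis of Proposition~\ref{prop:odeblowupcollapse} is $\eta(t_0)<\frac{d}{d+1}$ with $f'(t_0)\geq 0$, which gives past collapse; to get the future behaviour I argue that $\eta$ is driven upward: by \eqref{eq:etaprime2}, while $f'>0$ and $\eta<1$ we have $\eta'>0$, so $\eta$ increases, and once $\eta$ crosses $\frac{d}{d+1}$ the solution enters the regime of Proposition~\ref{prop:globalodeex} (indeed $f(t)>\eta(t)>\frac{d}{d+1}$ and $f'>0$ there), hence is future-global with $f\to\infty$. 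One must check $f'$ stays positive and bounded away from $1$ on the relevant interval; positivity follows since $f''$ changes sign only when $\eta$ crosses $\frac{d}{d+1}$ and the crossing is in the favourable direction, and the bound $|f'|<1$ follows from Proposition~\ref{prop:odeblowupcrit} once $\log f$ is bounded on compact time intervals. Thus class~(1) is non-empty, and the construction is open in the data.

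For class (3) ($f'>0$, global, $\to\frac{d}{d+1}$ from above in the past, $\to\infty$ in the future): this is the stable manifold of the cylinder in the expanding direction. I would take data exactly on the cylinder value in a limiting sense — more concretely, run the (backward) flow: pick $f'(t_0)>0$ small with $f(t_0)$ slightly above $\frac{d}{d+1}$ so that $\eta(t_0)$ is slightly below $\frac{d}{d+1}$ but $f(t_0)>\frac{d}{d+1}$. Forward in time we are in the situation of Proposition~\ref{prop:globalodeex} after an instant (or directly, if $f(t_0)\sqrt{1-(f'(t_0))^2}$ is arranged above $\tfrac{d}{d+1}$), giving future-global unbounded growth with $f'>0$. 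Backward in time, as long as $f'>0$ stays small and $\eta<\frac{d}{d+1}$, \eqref{eq:etaprime2} forces $\eta'>0$ going forward, i.e.\ $\eta$ \emph{decreasing} going backward, so $\eta\searrow$; meanwhile \eqref{eq:rearrangedsphsym} with $\eta<\frac{d}{d+1}$ gives $f''<0$, so going backward $f'$ \emph{increases}. The delicate point — and I expect this to be the main obstacle — is to show the backward solution does \emph{not} collapse but instead limits to $\frac{d}{d+1}$ with $f'\to 0$: one needs a barrier/monotonicity argument showing $(f,f')$ is trapped, as $t\to-\infty$, in a neighbourhood of the fixed point $(\frac{d}{d+1},0)$ and converges to it. The cleanest route is to linearise \eqref{eq:cmcsphsym} about the static cylinder, check that the fixed point is a saddle (one stable, one unstable direction), and invoke the stable-manifold theorem to produce a one-parameter family of solutions converging to the cylinder; the expanding branch of this stable manifold lies in class~(3), and the contracting branch (where $f\searrow$ to $0$) furnishes a solution in class~(2) after choosing the sign appropriately. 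Class~(2) ($f'>0$, past collapse, $\to\frac{d}{d+1}$ from below in the future) is then exactly the future-asymptotic-to-cylinder branch with $f<\frac{d}{d+1}$: start near the cylinder on that branch and run backward; since $f<\frac{d}{d+1}$ keeps $\eta<\frac{d}{d+1}$, Proposition~\ref{prop:odeblowupcollapse} gives the past collapse.

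In summary: classes (1)--(3) are populated by (i) large-$|f'|$ data above the cylinder, (ii) the contracting branch of the stable manifold of the cylinder, and (iii) the expanding branch of that same stable manifold, respectively; the time reversals populate the remaining three. The only genuinely non-routine input is the saddle/stable-manifold structure at $f\equiv\frac{d}{d+1}$ (equivalently at $(\eta,\gamma)=(\frac{d}{d+1},0)$), for which one computes the linearisation of the first-order system \eqref{eq:gammaprime}--\eqref{eq:etaprime2} and checks the eigenvalues have opposite signs; everything else is bookkeeping with the sign conditions already extracted in Propositions~\ref{prop:odeblowupcrit}, \ref{prop:globalodeex}, and \ref{prop:odeblowupcollapse}.
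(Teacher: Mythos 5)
Your proposal is correct, but for the substantive part --- producing the solutions asymptotic to the cylinder --- it takes a genuinely different route from the paper. The paper's proof is a topological shooting argument: fixing $f(t_0)=f_0(t_0)>\frac{d}{d+1}$ and varying $\lambda=f'(t_0)$ over $(-1,1)$, the set $C$ of data collapsing in the past and the set $E$ of data expanding in the past are both non-empty and, by Cauchy stability together with Propositions \ref{prop:globalodeex} and \ref{prop:odeblowupcollapse}, both open; connectedness of $(-1,1)$ then yields a $\lambda'$ doing neither, which must converge to $\frac{d}{d+1}$. Your route instead linearises about the static cylinder (one checks $g''=\frac{(d+1)^2}{d}g$, so the fixed point is indeed a saddle) and invokes the stable-manifold theorem; the four branches of the stable and unstable manifolds, combined with Propositions \ref{prop:globalodeex} and \ref{prop:odeblowupcollapse} away from the fixed point, populate classes (2), (3) and their reversals. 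Both arguments are valid; the paper's is more elementary (no linearisation, no invariant-manifold machinery), while yours gives sharper local information (exponential convergence rates and uniqueness of the asymptotically-cylindrical trajectory through each nearby $f$-value, which the paper only recovers later via Lemma \ref{lem:maximumprinciplegamma}). Two small inaccuracies, neither fatal: for class (1) forward in time, Proposition \ref{prop:globalodeex} \emph{does} apply directly once $f(t_0)>\frac{d}{d+1}$ and $f'(t_0)\geq 0$ --- its hypothesis involves only $(f,f')$, not $\eta$ --- so the discussion of $\eta$ crossing $\frac{d}{d+1}$ is unnecessary; and the ``expanding branch of the stable manifold'' (with $f>\frac{d}{d+1}$, $f'<0$) is the time-reversal of class (3), the branch in class (3) proper being the expanding branch of the \emph{unstable} manifold, though this is immaterial given the time-reversal symmetry you invoke at the outset.
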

\begin{proof}
That the first class in Remark \ref{rmk:otherclasses} and its time-reversal
are non-empty follows by applying Propositions \ref{prop:globalodeex}
and \ref{prop:odeblowupcollapse} to initial data with $f(t_0) =
\frac{d}{d+1}$ and $f'(t_0) \neq 0$. This further implies that the
other classes are also non-empty: we give the proof for the third
class; the proof for the remaining classes are similar and omitted. 

 Let $f_0$ be a solution that
collapses in finite time in the past, and expands indefinitely in the
future; then at some value $t_0$ we can satisfy $f_0(t_0) >
\frac{d}{d+1}$, and $f_0'(t_0) > 0$. 

Now let $f_{(\lambda)}$ be the solution given by $f_{(\lambda)}(t_0) =
f_0(t_0)$ and $f'_{(\lambda)}(t_0) = \lambda$, where $\lambda \in
(-1,1)$. Define the sets
\[ C = \set{ \lambda \in (-1,1)}{f_{(\lambda)}\text{ collapses in finite time in the
past}} \]
and
\[ E = \set{ \lambda \in (-1,1)}{f_{(\lambda)}\text{ expands indefinitely in the 
past}}, \]
neither is empty since $f_0\in C$ and $f_{(\lambda)}\in E$ for every
$\lambda \leq 0$ by Proposition \ref{prop:globalodeex}. From
Propositions \ref{prop:globalodeex} and \ref{prop:odeblowupcollapse},
together with Cauchy stability for the initial value problem, we have
that both $C$ and $E$ are open sets. As $(-1,1)$ is connected, there
must then exist a $\lambda'  > 0$ such that
$f_{(\lambda')}$ neither expands indefinitely in the past nor
collapses in finite time. Hence it must be in the third class of Remark
\ref{rmk:otherclasses}.
\end{proof}

The construction given in the proof above in fact shows that for each
$r_0$, there exists some $\lambda_0$ such that the solution corresponds
to $f(t_0) = r_0$ and $f'(t_0) = \lambda_0$ converges to
$\frac{d}{d+1}$ in the future (past). To understand better the
dependence of $\lambda_0$ on $r_0$, we observe the following maximum
principle. 

\begin{lem}\label{lem:maximumprinciplegamma}
Let $f_1$ and $f_2$ be two distinct solutions to \eqref{eq:cmcsphsym}, then
$(f_2 - f_1)^2$ has at most one critical point, and it must be a local
minimum. 
\end{lem}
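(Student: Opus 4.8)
The plan is to reduce the statement to a differential inequality for the function $g = (f_2 - f_1)^2$ at a critical point, exploiting the structure of the ODE \eqref{eq:cmcsphsym}. Write $w = f_2 - f_1$, so $g = w^2$, $g' = 2 w w'$, and $g'' = 2(w')^2 + 2 w w''$. A critical point of $g$ occurs either where $w = 0$ or where $w' = 0$. In the first case the two solutions agree at that point; if moreover $g' = 0$ there then $w w' = 0$, and since $|f_i'| < 1$ the local existence–uniqueness statement recalled before Proposition \ref{prop:odeblowupcrit} forces $f_1 \equiv f_2$ (because agreement of $f$ plus agreement of $f'$ at one point would make them identical), contradicting distinctness --- so actually $w = 0$ and $g' = 0$ cannot happen together. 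Hence at any critical point of $g$ we have $w \neq 0$ and $w' = 0$. There, $g'' = 2 w w''$, and the sign of $g''$ is the sign of $w w''$.

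The key step is therefore to show that at a point where $w' = 0$ but $w \neq 0$, one has $w w'' > 0$; this simultaneously gives that the critical point is a strict local minimum and, via a connectedness argument, that there can be at most one such point. To see the sign, use \eqref{eq:rearrangedsphsym}, i.e.
\[
f_i'' = \frac{1 - (f_i')^2}{f_i}\left[(d+1) f_i \sqrt{1 - (f_i')^2} - d\right].
\]
At the critical point $f_1' = f_2'$ (call the common value $p$, with $|p| < 1$), so
\[
w w'' = (f_2 - f_1)\left( \frac{1 - p^2}{f_2}\bigl[(d+1) f_2 \sqrt{1-p^2} - d\bigr] - \frac{1 - p^2}{f_1}\bigl[(d+1) f_1 \sqrt{1-p^2} - d\bigr]\right).
\]
The $(d+1)\sqrt{1-p^2}$ terms cancel in the bracket, leaving
\[
w w'' = (1 - p^2) \, d \, (f_2 - f_1)\left( \frac{1}{f_1} - \frac{1}{f_2}\right) = (1 - p^2)\, d \, \frac{(f_2 - f_1)^2}{f_1 f_2} > 0,
\]
using $f_1, f_2 > 0$ and $|p| < 1$. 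This is the heart of the argument; the rest is bookkeeping.

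Finally, to conclude "at most one critical point", suppose $g$ had two critical points $s_1 < s_2$. On $[s_1, s_2]$ the function $g$ is continuous and positive (it cannot vanish, since a zero of $g$ is a zero of $w$, and we showed a zero of $w$ cannot be a critical point of $g$ unless the solutions coincide --- and between two critical points there is necessarily an interior extremum of $g$; if $g$ attained $0$ it would be a minimum and hence a critical point, excluded). So $g > 0$ on $[s_1,s_2]$, and by the maximum/minimum theorem $g$ attains an interior maximum at some $s_3 \in (s_1, s_2)$, which is a critical point. But we just showed every critical point of $g$ is a strict local minimum --- contradiction. Hence at most one critical point, and it is a local minimum. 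The main obstacle I anticipate is the degenerate bookkeeping around $w = 0$: one must be careful to invoke uniqueness correctly (agreement of $f$ and $f'$, not just $f$) to rule out a critical point sitting at a coincidence point of the two solutions; everything else is the clean cancellation above.
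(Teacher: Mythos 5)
Your central computation is exactly the paper's: at a critical point where $f_1'=f_2'=p$ and $f_1\neq f_2$, the $(d+1)\sqrt{1-p^2}$ terms in \eqref{eq:rearrangedsphsym} cancel and one is left with $w\,w'' = d(1-p^2)(f_2-f_1)^2/(f_1f_2)>0$, which is precisely the identity displayed in the paper's proof. The concluding ``between two strict local minima there is an interior maximum'' step is also the same.

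However, your treatment of the coincidence case is wrong. You claim that ``$w=0$ and $g'=0$ cannot happen together,'' but $g'=2ww'$, so \emph{every} zero of $w$ is automatically a critical point of $g$; your deduction that $ww'=0$ together with $w=0$ yields $w'=0$ (and hence $f_1\equiv f_2$) is a non sequitur, since the factor $w$ already vanishes. Distinct solutions can certainly cross (transversally), and such a crossing is a genuine critical point of $(f_2-f_1)^2$ that your case analysis wrongly excludes. The repair is one line, and it is what the paper does: at a point where $f_2(t_0)=f_1(t_0)$, the non-negative function $(f_2-f_1)^2$ attains the value $0$, so $t_0$ is automatically a local minimum --- indeed a strict one, since uniqueness forces $w'(t_0)\neq 0$ and hence $g''(t_0)=2(w'(t_0))^2>0$. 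With that correction the auxiliary claim in your final paragraph that $g>0$ on $[s_1,s_2]$ becomes unnecessary (and its justification, which again invokes the false exclusion, should be dropped): once every critical point is a strict local minimum, the interior maximum of $g$ on $[s_1,s_2]$ exists because neither endpoint can be the maximizer, and it furnishes the contradiction directly.
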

\begin{proof}
Assume $t_0$ is a critical point of $(f_2 - f_1)^2$. By the
fundamental uniqueness theorem of ODEs, since $f_1$ and $f_2$ are
distinct solutions, either $f_2(t_0) = f_1(t_0)$ or $f_2'(t_0) =
f_1'(t_0)$. In the first case since $(f_2 - f_1)^2$ is non-negative,
the critical point must be a local minimum. In the second case
\eqref{eq:cmcsphsym} implies that at the point $t_0$ 
\[ (f_2 - f_1)'' = \frac{d [1 - (f_1')^2] (f_2 - f_1)}{f_1
f_2} \]
holds, which implies that
\[ [(f_2 -f_1)^2]'' = 2(f_2 - f_1)(f_2 - f_1)'' + 2 [(f_2 - f_1)']^2 >
0.\]
Thus any critical point of $(f_2 - f_1)^2$ must be a local minimum,
which rules out the possibility of more than one critical point, since
between any two local minimum there must be a local maximum. 
\end{proof}

\begin{cor}\label{cor:odeminimum}
Let $f_1$ and $f_2$ be two distinct solutions to \eqref{eq:cmcsphsym}.
\begin{enumerate}
\item $f_1$ and $f_2$ intersect at most once. If they do intersect,
then $f_2 - f_1$ is strictly monotonic. 
\item $f_1$ and $f_2$ are parallel at most once. When they are
parallel, it is when $f_2 - f_1$ is at a strict minimum. 
\end{enumerate}
\end{cor}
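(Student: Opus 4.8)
The plan is to work entirely with $g \eqdef f_2 - f_1$ and to extract both statements from Lemma \ref{lem:maximumprinciplegamma}, which asserts that $g^2$ has at most one critical point and that any such point is necessarily a local minimum. First I would record two facts used repeatedly. (a) By uniqueness for \eqref{eq:cmcsphsym} there is no point $s$ with $g(s) = 0$ \emph{and} $g'(s) = 0$ simultaneously — otherwise $f_1 \equiv f_2$. (b) If $s$ satisfies $g(s) = 0$ or $g'(s) = 0$, then $(g^2)'(s) = 2 g(s) g'(s) = 0$, so $s$ is a critical point of $g^2$; hence by the Lemma there is at most one point at which $g$ vanishes or $g'$ vanishes, and any such points all coincide.

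For item (1): two intersection points of $f_1$ and $f_2$ would be two distinct zeros of $g$, hence two critical points of $g^2$ by (b), contradicting the Lemma; so there is at most one intersection. If there is exactly one, at $t_0$ with $g(t_0) = 0$, then $g'(t_0) \neq 0$ by (a), and $t_0$ is a critical point of $g^2$. Any zero of $g'$ would also be a critical point of $g^2$, hence would equal $t_0$ — impossible since $g'(t_0) \neq 0$. So $g'$ is nowhere zero, and being continuous it has constant sign; thus $g = f_2 - f_1$ is strictly monotonic.

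For item (2): ``parallel at $s$'' means $g'(s) = 0$, which by (b) makes $s$ a critical point of $g^2$; by the Lemma there is at most one such $s$, which is the first assertion. At a parallel point $t_0$ one has $g(t_0) \neq 0$ by (a), and by the Lemma $t_0$ is the unique critical point of $g^2$ and a local minimum; since $(g^2)'$ is continuous and vanishes only at $t_0$, it is negative before $t_0$ and positive after, so $(f_2 - f_1)^2$ attains there its strict (global) minimum. For the stated form ``$f_2 - f_1$ at a strict minimum'', I would relabel so that $g(t_0) > 0$ and substitute $g'(t_0) = 0$ into the second-derivative identity from the proof of Lemma \ref{lem:maximumprinciplegamma}, getting $g''(t_0) = d[1 - (f_1'(t_0))^2] g(t_0)/(f_1(t_0) f_2(t_0)) > 0$; since $g'$ vanishes only at $t_0$, this upgrades to $g = f_2 - f_1$ having a strict minimum there.

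I do not expect a genuine obstacle: the corollary is pure bookkeeping on top of Lemma \ref{lem:maximumprinciplegamma} and ODE uniqueness. The only subtlety worth flagging is that ``$f_2 - f_1$ is at a strict minimum'' is literally correct only after fixing which of $f_1$, $f_2$ is larger at the parallel point (with the opposite labelling one gets a strict maximum of $f_2 - f_1$, whereas $(f_2 - f_1)^2$ is symmetric); so I would phrase the sharper conclusion either in terms of $(f_2 - f_1)^2$ or with an explicit ``up to relabelling''.
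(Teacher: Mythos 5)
Your argument is correct and is exactly the intended (and omitted) derivation: both items are routine consequences of Lemma \ref{lem:maximumprinciplegamma} together with ODE uniqueness, which is why the paper states the corollary without proof. Your caveat about the labelling of $f_1,f_2$ in item (2) is a fair reading of a slight looseness in the statement — the invariant content is that $(f_2-f_1)^2$ attains a strict global minimum at the unique parallel point — and your second-derivative computation correctly upgrades this to a strict minimum of $f_2-f_1$ once the larger solution is called $f_2$.
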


\begin{cor}\label{cor:odecomparisonprinciple}
\begin{enumerate}
\item For every $r_0\in\Real_+$, there exists exactly one $\lambda_0\in (-1,1)$ such that the
solution with data $f(t_0) = r_0$ and $f'(t_0) = \lambda_0$ satisfies
$\lim_{t\to \infty} f(t) = \frac{d}{d+1}$; solutions with $f(t_0) =
r_0$ and $f'(t_0) > \lambda_0$ (or $< \lambda_0$) will expand
indefinitely (or collapse in finite time) to the future. 
\item For every $\lambda_0\in (-1,1)$, there exists exactly one
$r_0\in\Real^+$
such that the solution with data $f(t_0) = r_0$ and $f'(t_0) =
\lambda_0$ satisfies $\lim_{t\to\infty} f(t) = \frac{d}{d+1}$.
Solutions with $f'(t_0) = \lambda_0$ and $f(t_0) > r_0$ (or $< r_0$)
will expand indefinitely (or collapse in finite time) to the future. 
\end{enumerate}
\end{cor}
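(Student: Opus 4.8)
The plan is to prove the two items in parallel, via the scheme \emph{existence} $\to$ \emph{uniqueness} $\to$ \emph{ordering}, with the soft input being a \textbf{trichotomy} for the future behaviour and the essential input being the rigidity in Lemma~\ref{lem:maximumprinciplegamma}/Corollary~\ref{cor:odeminimum}. The trichotomy reads: followed maximally to the future, a solution of \eqref{eq:cmcsphsym} either (a) collapses, $f\to 0$, in finite time; or (b) is future-global with $f(t)\to\infty$; or (c) is future-global with $f(t)\to\frac{d}{d+1}$ (the static case $f\equiv\frac{d}{d+1}$ included). I would derive this from Corollary~\ref{cor:odecontcrit} together with the non-blow-up bound of Proposition~\ref{prop:odeblowupcrit} (so that failure of future-completeness is exactly (a)); Lemma~\ref{lem:maximumprinciplegamma} applied to $f$ and the static solution, which makes $(f-\frac{d}{d+1})^2$ have at most one critical point, hence $f$ eventually monotone; Proposition~\ref{prop:odeblowupcollapse} to exclude $f\to 0$ only as $t\to\infty$; and Lemma~\ref{lem:asymptotics} (plus the trivial separate case $f'\equiv 0$ eventually) to pin a bounded future-global limit to $\frac{d}{d+1}$. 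I would also note, exactly as in the proof of Lemma~\ref{lem:otherclassesnonempty}, that the sets of data giving (a) and (b) are \emph{open}: evaluating a collapsing solution just before extinction, resp.\ an expanding one at large time, lands in the \emph{open} hypotheses of Proposition~\ref{prop:odeblowupcollapse}, resp.\ Proposition~\ref{prop:globalodeex} (with $f'>0$), and continuous dependence on the data then propagates the behaviour.

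\emph{Item (1).} Fix $r_0$ and vary $\lambda$: among the solutions $f_{(\lambda)}$ with $f_{(\lambda)}(t_0)=r_0$, $f'_{(\lambda)}(t_0)=\lambda$, the $\lambda$ giving (a) and those giving (b) form disjoint open subsets of $(-1,1)$, both nonempty --- $\lambda$ near $-1$ gives (a) by Proposition~\ref{prop:odeblowupcollapse}, and a time translate of a class-1 solution of Remark~\ref{rmk:otherclasses} (collapse in the past, expansion in the future, $f$ sweeping all of $\Real_+$) can be made to pass through $r_0$ at $t_0$, giving (b). By connectedness some $\lambda_0$ falls in class (c), and Lemma~\ref{lem:asymptotics} identifies its limit. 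For uniqueness, if $f_1,f_2$ are two class-(c) solutions with $f_1(t_0)=f_2(t_0)=r_0$ they intersect at $t_0$, so by Corollary~\ref{cor:odeminimum}(1) $f_2-f_1$ is strictly monotone --- impossible for a function vanishing at $t_0$ and tending to $0$ at $\infty$. For the ordering, when $\lambda>\lambda_0$ Corollary~\ref{cor:odeminimum}(1) applied to $f_{(\lambda)}$ and the class-(c) solution $f$ gives $f_{(\lambda)}-f$ strictly increasing (its derivative at $t_0$ is $\lambda-\lambda_0>0$), so $f_{(\lambda)}>f$ on $(t_0,\infty)$; hence $f_{(\lambda)}$ is bounded away from $0$ on compact intervals, so it is not of type (a), and it is not of type (c) either (else it would equal $f$), so it expands. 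The case $\lambda<\lambda_0$ is symmetric: $f_{(\lambda)}<f$ on $(t_0,\infty)$, so bounded, so not of type (b), hence it collapses.

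\emph{Item (2).} This is the identical scheme with $\lambda_0$ fixed and $r$ varying, using Corollary~\ref{cor:odeminimum}(2) and Lemma~\ref{lem:maximumprinciplegamma} in place of Corollary~\ref{cor:odeminimum}(1). Existence: among $g_{(r)}$ with $g_{(r)}(t_0)=r$, $g'_{(r)}(t_0)=\lambda_0$, the $r$ giving (a) and (b) are again disjoint, open and nonempty --- a time translate of a big-bang/big-crunch solution, whose derivative (by \eqref{eq:cmcsphsym2}) runs from near $+1$ through $0$ to near $-1$ between bang and crunch, can be arranged to have derivative $\lambda_0$ at $t_0$ and still collapses to the future, giving (a); similarly a time translate of a two-sided-expanding solution, e.g.\ the pseudo-sphere $f(t)=\jb{t}$ (whose derivative likewise sweeps all of $(-1,1)$), gives (b) --- so connectedness of $\Real_+$ produces a class-(c) value $r_0$. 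Uniqueness: two class-(c) solutions $g_1,g_2$ with $g_1'(t_0)=g_2'(t_0)=\lambda_0$ and $g_1(t_0)\neq g_2(t_0)$ are parallel at $t_0$, so by Lemma~\ref{lem:maximumprinciplegamma} $(g_2-g_1)^2$ has a strict minimum at $t_0$ and no other critical point, whence $(g_2-g_1)^2(t)>(g_2-g_1)^2(t_0)>0$ for $t>t_0$, contradicting $(g_2-g_1)^2\to 0$. Ordering: for $r>r_0$, $(g_{(r)}-f)^2$ attains its unique critical point, a strict minimum, at $t_0$ with value $(r-r_0)^2>0$, so $g_{(r)}-f$ never vanishes and, being positive at $t_0$, stays positive; thus $g_{(r)}>f$ throughout and $g_{(r)}$ expands as in Item~(1), while $r<r_0$ gives $g_{(r)}<f$ and hence collapse.

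The uniqueness and ordering steps I expect to be routine once Corollary~\ref{cor:odeminimum} and the trichotomy are in hand. The main obstacle is making the trichotomy airtight: extracting eventual monotonicity from the single-critical-point property of $(f-\frac{d}{d+1})^2$, ruling out $f\to 0$ only as $t\to\infty$ (re-apply Proposition~\ref{prop:odeblowupcollapse} once $f$ is small and decreasing), excluding convergence to a limit $L\neq\frac{d}{d+1}$ (then \eqref{eq:rearrangedsphsym} forces $f''$ bounded away from $0$, contradicting $f'\to 0$), and disposing of the degenerate static case. A secondary technical point is producing the data in the ``(a)'' and ``(b)'' sets for Item~(2): the time-translation trick above is clean, but it relies on first knowing --- from the classification of Remark~\ref{rmk:otherclasses} and Lemma~\ref{lem:otherclassesnonempty} --- that big-bang/big-crunch and two-sided-expanding solutions exist and that their derivatives exhaust the full interval $(-1,1)$.
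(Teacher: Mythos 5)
Your proposal is correct and follows the same route the paper intends (the paper states the corollary without a written proof, deriving it from the connectedness/openness construction in the proof of Lemma~\ref{lem:otherclassesnonempty} for existence and from the maximum principle of Lemma~\ref{lem:maximumprinciplegamma}/Corollary~\ref{cor:odeminimum} for uniqueness and the ordering). Your additional care with the future trichotomy and with producing collapsing/expanding data for item~(2) via time-translates of big-bang/big-crunch and pseudo-sphere solutions fills in exactly the details the paper leaves implicit.
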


\begin{prop}\label{prop:smoothdependencelambda}
Let $\lambda_+:\Real_+ \to (-1,1)$ be the assignment given by
Corollary \ref{cor:odecomparisonprinciple}, and $\lambda_-$ be the one
of the time-reversed version. Then $\lambda_\pm$ are smooth, strictly
monotonic functions
on $\Real_+\setminus \{d/(d+1)\}$, and continuous at $d/(d+1)$. 
\end{prop}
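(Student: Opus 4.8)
The plan is to combine the comparison structure already established (Corollary \ref{cor:odecomparisonprinciple} together with the uniqueness statement in Corollary \ref{cor:odeminimum}) with ODE continuous dependence on data to get continuity, then differentiate through the defining relation to get smoothness, and finally use the single-intersection property to extract strict monotonicity. First I would fix $t_0 = 0$ without loss of generality (the equation is autonomous) and consider the smooth flow map $\Phi: (r_0,\lambda) \mapsto f_{(r_0,\lambda)}$ sending data to the maximal solution; by the continuation criterion (Corollary \ref{cor:odecontcrit}) and Propositions \ref{prop:globalodeex} and \ref{prop:odeblowupcollapse}, the sets where the solution expands indefinitely to the future and where it collapses in finite time are each open in $(r_0,\lambda)$-space, and they are separated by the graph $\{\lambda = \lambda_+(r_0)\}$. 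Continuity of $\lambda_+$ then follows from a standard argument: if $\lambda_+(r_0^{(n)}) \to \ell \neq \lambda_+(r_0)$ along some $r_0^{(n)} \to r_0$, then $(r_0,\ell)$ lies in one of the two open sets, hence so do all $(r_0^{(n)}, \lambda_+(r_0^{(n)}))$ for large $n$, contradicting the definition of $\lambda_+$.

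For smoothness I would set up an implicit function. The natural "detector" of the asymptotic regime is the quantity $\eta = f\sqrt{1-(f')^2}$: by \eqref{eq:etaprime2} the pseudo-sphere $\eta \equiv 1$ is stationary, and by the proofs of Propositions \ref{prop:globalodeex} and \ref{prop:odeblowupcollapse}, along an expanding solution $\eta$ eventually increases through $\tfrac{d}{d+1}$ while along a collapsing one $\eta$ stays below and $\eta' f' \ge 0$ forces decrease. In fact, along the separatrix solution $f_{(r_0,\lambda_+(r_0))}$ one shows $\eta(t) \to \tfrac{d}{d+1}$ monotonically (this is precisely the content of the convergence $f \to \tfrac{d}{d+1}$ in Corollary \ref{cor:odecomparisonprinciple}, combined with $f' \to 0$ from \eqref{eq:rearrangedsphsym}). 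The key transversality observation is that $\partial_\lambda \eta(t, r_0, \lambda)\big|_{\lambda = \lambda_+(r_0)}$ is nonzero and of a definite sign for $t$ large: from Corollary \ref{cor:odeminimum}, two distinct solutions are parallel at most once, so the variation field $v = \partial_\lambda f$ along the separatrix vanishes at most once and is eventually sign-definite; combined with $\partial_\lambda f'$ being eventually sign-definite as well (again by the single-parallel-point property applied to the reference solution and its $\lambda$-perturbations), one gets $\partial_\lambda \eta \neq 0$ asymptotically. Choosing $T$ large and writing the condition "$\eta(T) = \tfrac{d}{d+1}$ and the solution has not yet collapsed" as a local characterisation of $\lambda_+(r_0)$, the implicit function theorem (applied to the smooth map $(r_0,\lambda) \mapsto \eta(T, r_0, \lambda)$, with nonvanishing $\lambda$-derivative) yields that $\lambda_+$ is smooth near $r_0$; since $r_0 \in \Real_+\setminus\{d/(d+1)\}$ was arbitrary this gives smoothness away from the cylinder value.

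For strict monotonicity on $\Real_+ \setminus \{d/(d+1)\}$ I would use Corollary \ref{cor:odeminimum}(1): two distinct solutions of \eqref{eq:cmcsphsym} intersect at most once, and if they intersect their difference is strictly monotonic. Suppose $r_0 < r_0'$ are both on the same side of $d/(d+1)$ and the corresponding separatrix solutions $f$, $f'$ (abusing notation, call them $g_0, g_1$) have $\lambda_+(r_0) = \lambda_+(r_0')$; then $g_0(0) \ne g_1(0)$ but $g_0'(0) = g_1'(0)$, so $0$ is a critical point of $(g_1 - g_0)^2$, hence by Lemma \ref{lem:maximumprinciplegamma} a strict local minimum, so $g_1 - g_0$ changes sign at $0$ — i.e.\ $g_0$ and $g_1$ intersect at $0$. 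But two solutions both converging to $\tfrac{d}{d+1}$ in the future, while crossing, would have $g_1 - g_0$ strictly monotonic on all of $(0,\infty)$ with a sign change at $0$, contradicting $g_1 - g_0 \to 0$. Hence $\lambda_+$ is injective; being continuous and injective on the interval $\Real_+ \setminus\{d/(d+1)\}$ (or rather on each of the two component intervals) it is strictly monotonic there. Continuity at $d/(d+1)$ follows from the continuity argument above, noting $\lambda_+(d/(d+1)) = 0$ since the static cylinder is itself the separatrix solution through that point. The same arguments, run with time reversed, handle $\lambda_-$.

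I expect the main obstacle to be the transversality claim $\partial_\lambda \eta \ne 0$ at the separatrix for large $t$: one must rule out the degenerate possibility that the $\lambda$-variation of the solution decays to zero as fast as, or faster than, the approach of $\eta$ to $d/(d+1)$, which is what would make the implicit function theorem fail. This is where I would lean hardest on Corollary \ref{cor:odeminimum} (the at-most-one-parallel-point and at-most-one-intersection statements), possibly supplemented by an explicit Gr\"onwall-type lower bound on $|v|$ obtained by linearising \eqref{eq:cmcsphsym2} about the separatrix and using the known asymptotics $f \to d/(d+1)$, $f' \to 0$; in that regime the linearised equation has approximately constant coefficients with a genuine unstable eigenvalue, which pins down the growth (or controlled decay) rate of $v$ from below and secures transversality.
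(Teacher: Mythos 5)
There is a genuine gap in your smoothness argument. You propose to characterise $\lambda_+(r_0)$ locally by the finite\nobreakdash-time condition ``$\eta(T,r_0,\lambda)=\tfrac{d}{d+1}$'' and apply the implicit function theorem, but the separatrix never lies on that level set: for the solution converging to $\tfrac{d}{d+1}$ from below one has $\eta=f\sqrt{1-(f')^2}<f<\tfrac{d}{d+1}$ for every finite $t$, and for convergence from above one has $\eta>\tfrac{d}{d+1}$ throughout (otherwise Proposition \ref{prop:odeblowupcollapse} forces collapse). The set $\{\eta(T)=\tfrac{d}{d+1}\}$ is therefore disjoint from the graph of $\lambda_+$, so the implicit function is set up on the wrong set, and the transversality $\partial_\lambda\eta\neq0$ --- which you yourself identify as the main obstacle and do not establish --- never gets a chance to matter; a repaired version of this strategy is essentially the stable manifold theorem for the fixed point $(\tfrac{d}{d+1},0)$ of the phase-plane flow, far heavier machinery than needed. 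Separately, in the monotonicity step the inference ``$0$ is a strict local minimum of $(g_1-g_0)^2$, so $g_1-g_0$ changes sign at $0$'' is a non sequitur when $g_1(0)\neq g_0(0)$; the correct conclusion from Lemma \ref{lem:maximumprinciplegamma} is that $0$ is the \emph{unique} critical point and a local minimum, so $(g_1-g_0)^2$ is increasing on $(0,\infty)$ and hence bounded below by $(r_0'-r_0)^2>0$, which contradicts the convergence of both solutions to $\tfrac{d}{d+1}$. That step is fixable; your continuity argument via openness of the collapsing and expanding sets is also fine apart from the unaddressed possibility of subsequential limits $\ell=\pm1$.

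The paper's proof sidesteps all of this with one observation you missed: since \eqref{eq:cmcsphsym} is autonomous and, by Corollary \ref{cor:odecomparisonprinciple}, the admissible $\lambda_0$ is unique for each $r_0$, the graph of $\lambda_+$ over $(0,\tfrac{d}{d+1})$ is exactly the phase-plane orbit $\{(f_{1,+}(t),f'_{1,+}(t))\}$ of the single solution $f_{1,+}$ that collapses in the past and converges to $\tfrac{d}{d+1}$ in the future --- every point of that orbit is initial data for a time-translate of $f_{1,+}$. Hence $\lambda_+=f'_{1,+}\circ f_{1,+}^{-1}$ on $(0,\tfrac{d}{d+1})$, which is smooth because $f'_{1,+}>0$, tends to $0$ as $r\nearrow\tfrac{d}{d+1}$, and is monotone there since $f''_{1,+}<0$ along this trajectory; the interval $(\tfrac{d}{d+1},\infty)$ is handled identically with $f_{2,+}$. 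No implicit function theorem and no transversality are required.
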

\begin{proof}
Let $f_{1,+}$ be a solution that collapses in finite-time in the past
and converges to $\frac{d}{d+1}$ in the future. Since $f'_{1,+} > 0$
always, we have that the function $f'_{1,+}\circ f^{-1}_{1,+}:
(0,d/(d+1)) \to (0,1)$ is a smooth function, and clearly it agrees
with $\lambda_+$. Similarly using $f_{2,+}$ the solution that expands
indefinitely in the past and converges to $\frac{d}{d+1}$ in the
future, we show that $\lambda_+$ is smooth on $(d/(d+1),\infty)$. By
their definitions it is also clear that 
\[ \lim_{r\nearrow\frac{d}{d+1}} f'_{1,+}\circ f^{-1}_{1,+}(r) = 0
= \lim_{r\searrow \frac{d}{d+1}} f'_{2,+}\circ f^{-1}_{2,+}(r) \]
establishing continuity. Monotonicity then follows from the continuity
and the fact that by Corollary \ref{cor:odecomparisonprinciple} that
$\lambda_\pm$ are invertible. 
\end{proof}

\subsection{Asymptotics}\label{sec:rotsymasymp}
For the non-static solutions, it is clear that due to the freedom of
time translation, the solutions cannot be asymptotically stable in the
direction where the solution expands or collapses. To understand their
behaviour, we examine in more detail the asymptotic behaviour of
solutions. 

\subsubsection{Convergence to $\frac{d}{d+1}$}
One can converge from above, or from below. From below, it is clear
that the quantity $f\sqrt{1-(f')^2} < \frac{d}{d+1}$ throughout, and
hence by \eqref{eq:rearrangedsphsym} we have $f'' < 0$ throughout. For
the decay of $f'$ to zero, we must have that $f''$ is integrable.
Using that $(1-(f')^2) / f > 1$ in the limit, this implies that 
\[ (d+1) f\sqrt{1 - (f')^2} - d \]
(which is strictly increasing since $\abs{f'}$ is decreasing and $f$
is increasing) must be integrable.

In the case of convergence from above, we note that if $f\sqrt{1 -
(f')^2}$ ever falls below $\frac{d}{d+1}$, then Proposition
\ref{prop:odeblowupcollapse} kicks in and we have finite time
collapse. This implies that necessarily we must have $f\sqrt{1 -
(f')^2} > \frac{d}{d+1}$ throughout. Thus $f''$ is positive throughout
and, as above, must remain integrable. Hence in this case we also have
that $\abs{f \sqrt{1 - (f')^2} - \frac{d}{d+1}}$ is integrable. 

On the other hand, since $f$ is monotonic and converges, we must also
have $\abs{f'}$ be integrable. Using that $1 - x^2 > (1-\abs{x})^2$ we have
that $1 - \sqrt{1 - (f')^2}$ is integrable, and hence
\begin{prop}
If $\lim_{t\to\infty} f(t) = \frac{d}{d+1}$ for a (semi-global)
solution, we must have that $\abs{f(t) - \frac{d}{d+_1}}$ is
integrable. Analogously for the case $t\to-\infty$. 
\end{prop}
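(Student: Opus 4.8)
The plan is to assemble the claim from two integrability statements already isolated in the preceding discussion --- that $\abs{f\sqrt{1-(f')^2} - \frac{d}{d+1}}$ and $\abs{f'}$ are integrable --- together with the elementary estimate relating $f$ to $f\sqrt{1-(f')^2}$.

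First I would dispose of the static solution $f\equiv\frac{d}{d+1}$, for which nothing needs proof, and assume henceforth that $f$ is non-static on its forward ray of existence $(t_0,\infty)$. Applying Propositions \ref{prop:globalodeex} and \ref{prop:odeblowupcollapse} at any hypothetical zero of $f'$, I would argue that $f'$ never vanishes: a zero at a point where $f>\frac{d}{d+1}$ forces unbounded expansion, a zero where $f<\frac{d}{d+1}$ forces finite-time collapse (both incompatible with $f\to\frac{d}{d+1}$ on a forward ray), and a zero where $f=\frac{d}{d+1}$ would make $f$ static by uniqueness. Hence $f$ is strictly monotonic, so it approaches $\frac{d}{d+1}$ either strictly from below (then $f'>0$ and $f\sqrt{1-(f')^2}\le f<\frac{d}{d+1}$) or strictly from above (then $f'<0$, and since a value $f\sqrt{1-(f')^2}\le\frac{d}{d+1}$ would trigger finite-time collapse by Proposition \ref{prop:odeblowupcollapse}, we have $f\sqrt{1-(f')^2}>\frac{d}{d+1}$ throughout). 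In both cases \eqref{eq:rearrangedsphsym} shows $f''$ has a fixed sign on $(t_0,\infty)$.

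Next I would extract the integrability facts. Since $f''$ is signed, $f'$ is monotone; being bounded it converges, and the limit must be $0$ (otherwise $f$ would leave every bounded interval). Integrating the signed functions $f'$ and $f''$ then gives $\int^\infty\abs{f'}\,dt=\abs{f(t_0)-\frac{d}{d+1}}<\infty$ and $\int^\infty\abs{f''}\,dt=\abs{f'(t_0)}<\infty$. Because $f'\to0$ and $f\to\frac{d}{d+1}>0$, the coefficient $\frac{1-(f')^2}{f}$ in \eqref{eq:rearrangedsphsym} is bounded below by a positive constant for $t$ large, so $\bigl|(d+1)f\sqrt{1-(f')^2}-d\bigr|\lesssim\abs{f''}$ near infinity, whence $\abs{f\sqrt{1-(f')^2}-\frac{d}{d+1}}$ is integrable. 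Finally, $1-\sqrt{1-(f')^2}\le(f')^2$ together with the boundedness of $f$ yields $\abs{f-f\sqrt{1-(f')^2}}\lesssim\abs{f'}$, and the triangle inequality
\[ \Bigl|f-\tfrac{d}{d+1}\Bigr|\le\Bigl|f-f\sqrt{1-(f')^2}\Bigr|+\Bigl|f\sqrt{1-(f')^2}-\tfrac{d}{d+1}\Bigr| \]
shows the left side is integrable. The case $t\to-\infty$ follows from the time-reversal symmetry of \eqref{eq:cmcsphsym}.

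The one step needing real attention is fixing the sign of $f''$ when $f$ converges from above: this is exactly where Proposition \ref{prop:odeblowupcollapse} must be invoked to prevent $f\sqrt{1-(f')^2}$ from sinking down to $\frac{d}{d+1}$. Everything else is bookkeeping with monotone, integrable functions and the elementary bound $1-\sqrt{1-x^2}\le x^2$.
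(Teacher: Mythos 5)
Your proof is correct and follows essentially the same route as the paper: fix the sign of $f''$ (invoking Proposition \ref{prop:odeblowupcollapse} to rule out $f\sqrt{1-(f')^2}$ dropping to $\frac{d}{d+1}$ in the convergence-from-above case), deduce integrability of $\abs{f''}$ and hence of $\abs{f\sqrt{1-(f')^2}-\frac{d}{d+1}}$ via \eqref{eq:rearrangedsphsym}, get integrability of $\abs{f'}$ from monotone convergence of $f$, and combine with the elementary comparison of $\sqrt{1-x^2}$ with $1$. The only addition is your explicit verification that $f'$ never vanishes, which the paper takes as given from its earlier classification.
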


\subsubsection{Expansion}
Assume now that $f(t)$ expands indefinitely as $t\to\infty$; the
$t\to-\infty$ case can be dealt with analogously. In the
following analysis, we assume that $t$ is sufficiently large so that
from our previous analysis $f'(t) > 0$. Recall the quantity $\eta =
f\sqrt{1-(f')^2}$. Going back to
\eqref{eq:etaprime2} we see that the stationary solution $\eta = 1$ is
attractive, in the sense that if $\eta < 1$ then $\eta' > 0$ and if $\eta
> 1$ then $\eta' < 0$. In particular, $\eta-1$ cannot change sign. 

\begin{lem}\label{lem:expansionderivativelimit}
Under our expansion assumption, $\lim_{t\to\infty} f'(t) = 1$. 
\end{lem}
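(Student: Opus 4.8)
The plan is to work with the auxiliary quantity $\eta = f\sqrt{1-(f')^2}$ appearing in \eqref{eq:etaprime2}, together with the elementary rearrangement
\[ \sqrt{1-(f')^2} = \frac{\eta}{f}. \]
Under the expansion hypothesis $f(t)\to\infty$, and since $f'(t)>0$ for all sufficiently large $t$ by the preceding analysis, the whole statement reduces to showing that $\eta$ stays bounded above as $t\to\infty$: if so, then $1-(f')^2 = (\eta/f)^2\to 0$, hence $(f')^2\to 1$, and positivity of $f'$ upgrades this to $f'\to 1$.

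The key step is therefore the bound on $\eta$, and here I would split into the two cases permitted by the observation — recorded immediately before the statement — that $\eta-1$ cannot change sign on a terminal ray. If $\eta\leq 1$ there, then (using also $\eta\geq 0$) $\eta$ is bounded with nothing to prove. If instead $\eta\geq 1$ on some $[t_0,\infty)$, then in \eqref{eq:etaprime2} the factors $f'$, $\sqrt{1-(f')^2}$ and $d+1$ are all positive while $1-\eta\leq 0$, so $\eta'\leq 0$ on that ray; hence $\eta$ is non-increasing and $\eta(t)\leq\eta(t_0)$ for $t\geq t_0$. In both cases $\eta$ is bounded above, which is all that the first paragraph needs.

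I do not expect a real obstacle. The only points to watch are that $\eta$ is well-defined throughout, i.e.\ $\abs{f'}<1$, which holds by the continuation criterion (Corollary \ref{cor:odecontcrit}, via Proposition \ref{prop:odeblowupcrit}), and that the sign dichotomy for $\eta-1$ invoked is precisely the one already deduced from \eqref{eq:etaprime2}. One could instead run the argument through $\gamma = f'/\sqrt{1-(f')^2}$ using \eqref{eq:gammaprime} in the form $\gamma' = (d+1) - d/\eta$ — once $\eta\geq 1$ this gives $\gamma'\geq 1$, hence $\gamma\to\infty$ and $f'\to 1$ — but the regime $\eta<1$ would then require first establishing $\eta\to 1$, so the $\eta/f$ route above is slightly cleaner.
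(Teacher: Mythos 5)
Your proof is correct, and it is a genuine streamlining of the paper's argument rather than a transcription of it. The paper also works with $\eta = f\sqrt{1-(f')^2}$ and \eqref{eq:etaprime2}, but its route is: $\eta$ is bounded and monotone, hence convergent, hence $\eta'\to 0$; then one inspects \eqref{eq:etaprime2} to see which factor must vanish, rules out $f'\to 0$ via \eqref{eq:rearrangedsphsym}, and in the residual case $\eta\to 1$ falls back on $f\to\infty$ to force $\sqrt{1-(f')^2}\to 0$. You bypass the ``$\eta'\to 0$'' step entirely: all you extract from the sign dichotomy is an upper bound on $\eta$, after which the algebraic identity $\sqrt{1-(f')^2}=\eta/f$ together with $f\to\infty$ finishes the argument in one line. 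This is slightly more robust, since convergence of a monotone bounded function does not by itself give pointwise decay of its derivative (the paper's step can be justified here via the ODE, but it requires a moment's thought), and your version needs no case analysis on which factor of $\eta'$ tends to zero. The hypotheses you invoke ($f'>0$ on a terminal ray, $\abs{f'}<1$ throughout, and the fact that $\eta-1$ cannot change sign) are all established in the paper immediately before the lemma, so there is no gap.
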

\begin{proof}
From the discussion above $\eta$ is bounded and monotonic, and hence
must converge as $t\to \infty$. This requires $\eta' \to 0$. From
\eqref{eq:etaprime2} we see that this requires either $\eta \to 1$,
$f'\to 0$, or $\sqrt{1 - (f')^2} \to 0$. The middle option is
impossible in the expansion case in view of
\eqref{eq:rearrangedsphsym}. As $f$ increases unboundedly by
assumption, if $\eta\to 1$ we must have $\sqrt{1 - (f')^2} \to 0$.
Since $f' > 0$ we have that the limit must be $f' \to 1$. 
\end{proof}

\begin{lem}
Under the above assumptions, $1 - f'(t)$ is integrable.
\end{lem}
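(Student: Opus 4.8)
The plan is to exploit the algebraic identity $1 - (f')^2 = \eta^2/f^2$, which is immediate from the definition $\eta = f\sqrt{1-(f')^2}$, together with two facts already in hand: that $\eta$ is bounded (this was noted in the discussion preceding Lemma~\ref{lem:expansionderivativelimit}, where $\eta$ is shown to be bounded, monotonic, and convergent to $1$), and that $f'(t)\to 1$ (Lemma~\ref{lem:expansionderivativelimit}), which forces $f$ to grow at least linearly. Since $f'>0$ for $t$ large we have $1+f'\ge 1$, hence
\[ 0 \le 1 - f'(t) \le 1 - (f'(t))^2 = \frac{\eta(t)^2}{f(t)^2}, \]
so it suffices to show that $f(t)^{-2}$ is integrable near $+\infty$, which reduces to a lower bound $f(t)\gtrsim t$.

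First I would fix $t_1\ge t_0$ so large that $\eta(t)\le 2$ and $f'(t)\ge\tfrac12$ for all $t\ge t_1$; both are possible since $\eta\to 1$ and $f'\to 1$. Integrating $f'\ge\tfrac12$ from $t_1$ gives $f(t)\ge f(t_1)+\tfrac12(t-t_1)$, so there is $t_2\ge t_1$ with $f(t)\ge \tfrac14 t$ for all $t\ge t_2$. Then for $t\ge t_2$,
\[ 1 - f'(t) \le \frac{4}{f(t)^2} \le \frac{64}{t^2}, \]
which is integrable on $[t_2,\infty)$. On the compact interval $[t_0,t_2]$ the continuous function $1-f'$ is bounded, contributing only a finite amount, so $1-f'$ is integrable on $(t_0,\infty)$ as claimed.

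There is no serious obstacle here; the only point worth flagging is that one should \emph{not} bound $1-f'$ by $\sqrt{1-(f')^2} = \eta/f$, which only decays like $1/t$ and is not integrable. The gain is in the square: $1-f' = \frac{1-(f')^2}{1+f'}\le 1-(f')^2 = \eta^2/f^2$, and the decay $\eta^2/f^2 \lesssim 1/t^2$ is what makes the integral converge, the key input being that $1+f'\to 2$ stays bounded away from $0$. Equivalently one may phrase the conclusion this way: $t-f(t)$ is eventually nondecreasing (since $1-f'\ge 0$) and, by the $1/t^2$ estimate, bounded above, hence converges, and
\[ \int_{t_0}^{\infty}\bigl(1-f'(t)\bigr)\,dt = \lim_{T\to\infty}\bigl(T-f(T)\bigr) - \bigl(t_0 - f(t_0)\bigr) < \infty; \]
this reformulation also foreshadows the sharper asymptotic $f(t) = t - c + o(1)$ that one presumably extracts next.
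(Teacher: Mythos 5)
Your proof is correct, and it takes a genuinely different and in fact more streamlined route than the paper. The paper splits into two cases according to $\lim\eta$: when $\lim\eta\neq 1$ it integrates \eqref{eq:etaprime2}, using the integrability of $\eta'$ (from boundedness and monotonicity of $\eta$) to conclude that $\sqrt{1-(f')^2}$, and hence $1-f'$, is integrable (with no rate); when $\lim\eta=1$ it instead uses \eqref{eq:gammaprime} to get $\gamma'\approx 1$, so $\gamma\gtrsim t$, which yields $\sqrt{1-(f')^2}\lesssim 1/t$ and then $1-f'\lesssim 1/t^2$. Your argument avoids the case split entirely: the identity $1-(f')^2=\eta^2/f^2$, the boundedness of $\eta$, and the linear lower bound $f(t)\gtrsim t$ (from $f'\to 1$, Lemma \ref{lem:expansionderivativelimit}) give the quantitative bound $1-f'\lesssim 1/t^2$ in all cases, which is slightly stronger than what the paper's first case delivers and is exactly what the subsequent Corollary \ref{cor:expansiontolightcone} needs. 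One small inaccuracy worth fixing: at this point in the development it is only established that $\eta$ is bounded, monotone, and convergent --- not that its limit is $1$ (the paper's own proof explicitly entertains $\lim\eta\neq 1$); but your argument never actually needs the value of the limit, only a uniform bound $\eta\le C$ on $[t_1,\infty)$, so replacing ``$\eta\le 2$ since $\eta\to 1$'' by ``$\eta\le C$ since $\eta$ is bounded'' repairs the justification without changing anything else.
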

\begin{proof}
In the case $\lim \eta \neq 1$, the fact that $\eta'$ is integrable
implies that $\sqrt{1 - (f')^2}$ is integrable by
\eqref{eq:etaprime2}. As pointwise for $x\in (0,1)$ we have
$\sqrt{1-x^2} \geq 1 - x$, we have that $1-f'$ is also integrable. 

In the case $\lim \eta = 1$ (in fact this argument works as long as
$\lim \eta > \frac{d}{d+1}$), we note that asymptotically, by
\eqref{eq:gammaprime} we have $\gamma' \approx 1$. Thus for some
sufficiently large $T$ we have that, for every $t > T$
\[ \gamma(t) - \gamma(T) \geq \frac12 (t-T).\]
This implies that, using the definition $\gamma =
\frac{f'}{\sqrt{1-(f')^2}} < \frac{1}{\sqrt{1 - (f')^2}}$, that
\[ \frac{1}{\frac12 (t-T) + \gamma(T)} \geq \sqrt{1-(f')^2}.\]
So asymptotically we have that
\[ 1 - f' = \frac{1 - (f')^2}{1 + f'} \lesssim \frac{1}{t^2}\]
giving also integrability. 
\end{proof}

\begin{cor}\label{cor:expansiontolightcone}
There exists a constant $\tau_0$ such that 
\[ \lim_{t \to\infty} \abs{f(t) - (t - \tau_0)} = 0.\]
\end{cor}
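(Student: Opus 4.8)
Corollary \ref{cor:expansiontolightcone} asks us to show that an indefinitely expanding solution $f$ is asymptotic to a null line $t \mapsto t - \tau_0$. The plan is to show $f'(t) \to 1$ (already done in Lemma \ref{lem:expansionderivativelimit}) and then upgrade this to the statement that $f(t) - t$ converges.

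The key observation is that the previous lemma established that $1 - f'(t)$ is integrable on $[T, \infty)$. I would set $g(t) = t - f(t)$, so that $g'(t) = 1 - f'(t) \geq 0$ (since $f' < 1$), which shows $g$ is monotonically increasing. Integrability of $g' = 1 - f'$ then immediately gives that $g$ is bounded, hence $g(t)$ converges to some finite limit $\tau_0 = \lim_{t\to\infty}(t - f(t))$ as $t \to \infty$. This yields $\lim_{t\to\infty}\abs{f(t) - (t - \tau_0)} = 0$, which is exactly the claim.

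There is essentially no obstacle here: the content of the corollary is entirely contained in the preceding two lemmas, and what remains is the elementary fact that a monotone function with integrable derivative converges. The only thing to be careful about is that the integrability of $1-f'$ was only established for $t$ large (past some threshold $T$ where $f' > 0$ and the asymptotic analysis applies), but since we are making a statement about the limit as $t \to \infty$, restricting to $[T,\infty)$ is harmless; on this interval $\int_T^\infty (1 - f'(s))\, ds < \infty$ gives $\lim_{t\to\infty} f(t) - f(T) = \lim_{t\to\infty}\bigl[(t - T) - \int_T^t (1-f'(s))\,ds\bigr]$, and subtracting $t$ from both sides shows $t - f(t) \to T - f(T) + \int_T^\infty(1-f'(s))\,ds =: \tau_0$.

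In short, the proof is: write $\tau_0 \eqdef T - f(T) + \int_T^\infty (1 - f'(s))\, ds$, which is finite by the previous lemma; then $\abs{f(t) - (t - \tau_0)} = \abs{\int_t^\infty (1 - f'(s))\, ds} \to 0$ as $t \to \infty$ by dominated convergence (the tail of a convergent integral). No new estimates, no case analysis beyond what has been done.
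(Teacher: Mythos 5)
Your proof is correct and is exactly the argument the paper intends: the corollary is stated as an immediate consequence of the integrability of $1-f'$, and your write-up (monotonicity of $t-f(t)$ plus convergence of $\int_T^\infty(1-f'(s))\,\D{s}$, with the limit defining $\tau_0$ and the tail of the integral giving the convergence) supplies precisely the elementary step the paper leaves implicit. No gaps.
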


In terms of the geometric picture, \emph{every expanding \cpmc
manifold is asymptotic to a light-cone}. 

\begin{rmk}\label{rmk:expansionstability}
As the pseudo-sphere $\dS$ is also an expanding solution, and
asymptotes to a light-cone, equivalently we can say that every
expanding \cpmc manifold is asymptotic to a time-translation of $\dS$.
This fact is what will drive our \emph{stability} analysis later:
one can hope that the $\dS$ gives a suitable asymptotic profile once
we factor in the Euclidean symmetries. Note also that in the case
where the solution expands both in the future and the past, the
parameter $\tau_0$ in the previously corollary can be different at the
two ends, and similarly the past and future expansions need not
be asymptotic to the same $\dS$ solution. 
\end{rmk}

\subsubsection{Collapse}
We complete the analysis by examining the asymptotic behaviour at the
collapse points $f \to 0$. This follows by examining the equation
\eqref{eq:gammaprime} for the quantity
\[ \gamma = \frac{f'}{\sqrt{1 - (f')^2}}\]
which we rewrite in integral form as
\begin{equation}\label{eq:gammaprime2}
\gamma(t_2) - \gamma(t_1) = (d+1) (t_2 - t_2) - \int_{t_1}^{t_2}
\frac{d}{\eta(s)} \D{s}.
\end{equation}
Now, let $f\to 0$ as $t\nearrow T$ (the collapse in the past can be
treated analogously). By Proposition \ref{prop:odeblowupcrit} we have
that $\abs{f'} \leq 1$ for the duration of the evolution, and hence
$\lim_{t\nearrow T} \eta(t) = 0$, where we recall that $\eta = f
\sqrt{1 - (f')^2}$. Revisiting \eqref{eq:etaprime2} tells us that
$\eta'$ remains bounded, hence we have the blow-up rate
\[ \frac{1}{\eta(t)} \gtrsim \frac{1}{T - t}.\]
This in particular implies that $\frac{1}{\eta}$ is not integrable. So
\eqref{eq:gammaprime2} implies that 
\[ \lim_{t\nearrow T} \gamma(t) = - \infty.\] 
Using again that $f'$ remains bounded on the interval of existence, we
see that this requires 
\[ \lim_{t\nearrow T} 1 - (f')^2 = 0.\]
Hence we have proven
\begin{lem}
The derivative $\abs{f'}$ converges to $1$ when $f$ collapses to $0$. 
\end{lem}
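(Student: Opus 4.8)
The plan is to make quantitative the argument already sketched in the paragraph preceding the statement. Throughout the (one-sided) interval of existence the solution manifold $\{r=f(t)\}$ is time-like, so $\abs{f'}<1$ there: indeed on any compact subinterval $\log f$ is bounded and $\abs{f'}<1$ at some point, so Proposition \ref{prop:odeblowupcrit} gives $\abs{f'}<1$ on that subinterval, and taking the union over all of them gives $\abs{f'}<1$ up to the collapse time. Since, by Lemma \ref{lem:asymptotics} together with the classification, a solution that reaches $f=0$ must do so at a \emph{finite} parameter value, we may write $\lim_{t\nearrow T}f(t)=0$ with $T<\infty$, whence also $\lim_{t\nearrow T}\eta(t)=0$ because $0<\eta=f\sqrt{1-(f')^2}\leq f$.

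The first genuine step is to show that $1/\eta$ is not integrable up to $T$. From \eqref{eq:etaprime2}, $\eta'=f'\sqrt{1-(f')^2}(d+1)(1-\eta)$; using $\abs{f'}<1$ and that $\eta\leq f$ is bounded (say $\eta<1$) on a neighbourhood $(t_1,T)$ of $T$, the right-hand side obeys $\abs{\eta'}\leq 2(d+1)=:C$ on $(t_1,T)$. Integrating, $\eta(t)=\eta(t)-\lim_{s\nearrow T}\eta(s)=-\int_t^T\eta'(s)\,\D{s}\leq C(T-t)$, so that $\tfrac{1}{\eta(t)}\geq\tfrac{1}{C(T-t)}$, which is indeed non-integrable on $(t_1,T)$.

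Next I would feed this into the integral form \eqref{eq:gammaprime2} of the equation for $\gamma=f'/\sqrt{1-(f')^2}$. Fixing $t_1$ and letting $t\nearrow T$, the term $(d+1)(t-t_1)$ stays bounded (as $T<\infty$) while $\int_{t_1}^{t}\tfrac{d}{\eta(s)}\,\D{s}\to+\infty$ by the previous step; hence $\gamma(t)\to-\infty$. Finally, since $x\mapsto x/\sqrt{1-x^2}$ is a strictly increasing homeomorphism of $(-1,1)$ onto $\Real$, the identity $\gamma=f'/\sqrt{1-(f')^2}$ forces $f'(t)\to-1$, and in particular $1-(f'(t))^2\to 0$, i.e.\ $\abs{f'}\to 1$. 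The case of collapse in the past is obtained by the time reversal $t\mapsto-t$, which leaves \eqref{eq:cmcsphsym} invariant.

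The only delicate point is the non-integrability of $1/\eta$ near $T$, which rests on the uniform bound for $\eta'$ there; that bound in turn uses both that $\abs{f'}<1$ all the way up to $T$ and that $\eta$ remains bounded (automatic here since $\eta\leq f\to 0$). Once these are in place, the conclusion is pure bookkeeping with the explicit relations among $f'$, $\gamma$, and $\eta$, and with the finiteness of the collapse time $T$.
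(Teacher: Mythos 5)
Your proof is correct and follows essentially the same route as the paper: bound $\eta'$ via \eqref{eq:etaprime2} to get $\eta(t)\lesssim T-t$, deduce non-integrability of $1/\eta$, conclude $\gamma\to-\infty$ from \eqref{eq:gammaprime2}, and hence $\abs{f'}\to 1$. The only difference is that you spell out the supporting details (finiteness of $T$, the uniform bound $\abs{f'}<1$ up to the collapse time) that the paper leaves implicit.
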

This can be strengthened a little to a rate of convergence. Revisiting
\eqref{eq:etaprime2} we see that this means $\eta(t) \lesssim
(T-t)^2$ in a small neighbourhood. This implies that $\abs{\gamma(t)}
\gtrsim \frac{1}{T-t}$, and hence 
\begin{prop}\label{prop:collapsetangentcone}
If $\lim_{t\nearrow T} f(t) = 0$, then in a small neighbourhood
$(T-\epsilon,T)$ the following estimate holds:
\[ 1 - \abs{f'(t)} \leq \sqrt{1 - (f'(t))^2} \lesssim T - t.\]
\end{prop}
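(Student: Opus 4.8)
The first inequality in the assertion is elementary ($1-\abs{f'}\le(1-\abs{f'})(1+\abs{f'})=1-(f')^2\le\sqrt{1-(f')^2}$ as $1-(f')^2\in[0,1]$), so the plan is to prove only the upper bound $\sqrt{1-(f'(t))^2}\lesssim T-t$. One may assume $d\ge1$, since for $d=0$ the quantity $f\sqrt{1-(f')^2}$ is never negative, so Proposition \ref{prop:odeblowupcollapse} forbids finite-time collapse and the statement is vacuous. By time reversal it suffices to handle a future collapse, $f(t)\to0$ as $t\nearrow T$. First I would record the local picture near $T$: $\abs{f'}<1$ throughout the interval of existence by construction, and since $f\to0$ the bracket in \eqref{eq:rearrangedsphsym} tends to $-d<0$ while the prefactor $(1-(f')^2)/f$ is positive, so $f''<0$ on a left neighbourhood of $T$; hence $f'$ is strictly decreasing there, and as $f>0$ with $f\to0$ it is impossible that $f'\ge0$ throughout such a neighbourhood (that would force $f<0$), so $f'<0$ and $f$ is strictly decreasing on some $(T_0,T)$. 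In particular $f(t)=\int_t^T(-f'(s))\,\D{s}\le T-t$ on $(T_0,T)$.

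The heart of the argument is to control the quantity $g\eqdef\sqrt{1-(f')^2}/f$ (equivalently $\eta/f^2$, with $\eta=f\sqrt{1-(f')^2}$ as before): the claim is that $g$ stays bounded on $(T_0,T)$. Granting this, $\sqrt{1-(f'(t))^2}=g(t)\,f(t)\lesssim T-t$ follows at once from $f(t)\le T-t$, and then the full chain of the Proposition follows from the elementary inequalities of the first paragraph. To bound $g$, the plan is to compute its logarithmic derivative, using \eqref{eq:rearrangedsphsym} to eliminate $f''$; this should produce the clean identity
\[
(\log g)' = (d-1)(\log f)' - (d+1)\,f'\sqrt{1-(f')^2}.
\]
Integrating from $T_0$ to $t\in(T_0,T)$ gives $\log\tfrac{g(t)}{g(T_0)}=(d-1)\log\tfrac{f(t)}{f(T_0)}+(d+1)\int_{T_0}^t(-f'(s))\sqrt{1-(f'(s))^2}\,\D{s}$; here the first term is $\le0$ because $f$ is decreasing and $d-1\ge0$, while the integrand of the second is $\le\tfrac12[(f')^2+(1-(f')^2)]=\tfrac12$ (since $2ab\le a^2+b^2$), so the second term is $\le\tfrac{d+1}{2}(T-T_0)$. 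This yields $g(t)\le g(T_0)e^{(d+1)(T-T_0)/2}=:C$ on $(T_0,T)$, which is exactly the desired bound (with $\epsilon=T-T_0$).

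The only computation of substance is the identity for $(\log g)'$; I expect the genuine difficulty to be the choice of auxiliary quantity rather than anything technical. The naive route --- a Gr\"onwall/Bernoulli estimate for $\eta$ read off directly from \eqref{eq:etaprime2} --- does reprove $\eta\to0$ but loses the constant in the exponent and, when $d=1$, gives only $\eta\lesssim(T-t)^{2-\delta}$, i.e.\ $\sqrt{1-(f')^2}\lesssim(T-t)^{1-\delta}$, falling short of the sharp power claimed. Working instead with $g=\sqrt{1-(f')^2}/f$ is precisely what makes the dangerous $(\log f)'$-contribution enter with the favourable sign for \emph{every} $d\ge1$ while leaving a manifestly bounded remainder, so that the estimate becomes short and uniform in the dimension; as a byproduct the argument also recovers the preceding Lemma ($\abs{f'}\to1$ at collapse) and the intermediate bound $\eta\lesssim(T-t)^2$.
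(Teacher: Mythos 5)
Your proof is correct for $d\ge1$ and takes a genuinely different route from the paper's. The paper works with $\gamma=f'/\sqrt{1-(f')^2}$ and the integral identity \eqref{eq:gammaprime2}: it first shows via \eqref{eq:etaprime2} that $\eta'$ is bounded, hence $1/\eta\gtrsim(T-t)^{-1}$ is non-integrable, hence $\gamma\to-\infty$ and $\abs{f'}\to1$; feeding this back into \eqref{eq:etaprime2} gives $\eta\lesssim(T-t)^2$, hence $\abs{\gamma}\gtrsim(T-t)^{-1}$, which unwinds to the stated bound. Your argument replaces this two-pass bootstrap by a single boundedness estimate for $\sqrt{1-(f')^2}/f$: the identity $(\log g)'=(d-1)(\log f)'-(d+1)f'\sqrt{1-(f')^2}$ does follow from \eqref{eq:rearrangedsphsym}, the first term is $\le0$ once $f$ is decreasing and $d\ge1$, the second is bounded pointwise by $(d+1)/2$, and then $\sqrt{1-(f'(t))^2}=g(t)f(t)\lesssim T-t$ follows from $f(t)\le T-t$ (which itself needs only $\abs{f'}\le1$ and $f(T)=0$). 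This is shorter, delivers the preceding Lemma and the intermediate bound $\eta\lesssim(T-t)^2$ as corollaries rather than requiring them as inputs, and makes transparent where $d\ge1$ enters.

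The one misstep is the dismissal of $d=0$. Proposition \ref{prop:odeblowupcollapse} gives a \emph{sufficient} condition for collapse; the failure of its hypothesis when $d=0$ does not forbid collapse. In fact for $d=0$ equation \eqref{eq:cmcsphsym} reduces to $f''=[1-(f')^2]^{3/2}$, which integrates explicitly ($\gamma(t)=\gamma_0+t$ and $f(t)=f_0+\sqrt{1+\gamma(t)^2}-\sqrt{1+\gamma_0^2}$), and whenever $f_0<\sqrt{1+\gamma_0^2}-1$ the solution does collapse in finite time --- with $\sqrt{1-(f')^2}=1/\sqrt{1+\gamma^2}$ tending to a \emph{positive} constant. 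So for $d=0$ the proposition is false rather than vacuous. The restriction to $d\ge1$ is the right one (and is implicit throughout the paper), but your justification for it is not; state it as a standing hypothesis instead.
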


\subsection{Stability and instability}
We now summarise the stability and instability properties of solutions
to \eqref{eq:cmcsphsym} in view of the analyses given above. This
answers exactly Questions \ref{q:hard} and \ref{q:easy} posed in
the introduction for the \emph{spherically symmetric} case. We will
phrase our statements in terms of \emph{future} stability, but the
time-reversed case is analogous.  

\begin{thm}\label{thm:unstablecylinder}
Let $f$ be a semi-global solution to \eqref{eq:cmcsphsym} such that
$\lim_{t\to\infty}f(t) = \frac{d}{d+1}$. Then $f$ is future unstable:
generic perturbations of $f$ will either collapse in finite time or
expand indefinitely in the future. There exists however a co-dimension
1 set of stable perturbations. 
\end{thm}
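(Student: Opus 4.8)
The plan is to read off the result from the comparison-principle machinery already built up in Section \ref{sec:classification}, especially Corollary \ref{cor:odecomparisonprinciple} and Proposition \ref{prop:smoothdependencelambda}. Fix a time $t_0$ at which $f$ is defined and set $r_0 = f(t_0)$, $\lambda_0 = f'(t_0)$; since $\lim_{t\to\infty} f(t) = \frac{d}{d+1}$, Corollary \ref{cor:odecomparisonprinciple} identifies $(r_0,\lambda_0)$ as lying on the graph $\lambda_0 = \lambda_+(r_0)$ of the distinguished assignment. The instability is then immediate: by the same corollary, any initial data $(r_0, \lambda)$ with $\lambda > \lambda_+(r_0)$ (respectively $\lambda < \lambda_+(r_0)$) expands indefinitely (respectively collapses in finite time) to the future, and similarly perturbing $r_0$ at fixed slope pushes one off the curve by Corollary \ref{cor:odecomparisonprinciple}(2). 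So arbitrarily small perturbations of the Cauchy data at $t_0$ leave the convergence class; this is the asserted generic instability, and by Cauchy stability the resulting solutions genuinely collapse or expand (the two excluded behaviours).

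For the co-dimension 1 statement, I would identify the stable set with the graph $\Gamma = \set{(r,\lambda_+(r))}{r\in\Real_+}$ inside the data space $\Real_+\times(-1,1)$. That every point of $\Gamma$ generates a solution converging to $\frac{d}{d+1}$ is exactly Corollary \ref{cor:odecomparisonprinciple}; that $\Gamma$ is a genuine co-dimension 1 submanifold (in particular, a smooth embedded curve, hence locally a graph over $\Real_+$) is Proposition \ref{prop:smoothdependencelambda}, which gives smoothness and strict monotonicity of $\lambda_+$ away from $d/(d+1)$ and continuity across it — together with the observation that the static cylinder data $(d/(d+1),0)$ sits on $\Gamma$ and joins the two smooth branches. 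One should also note, for completeness, that $\Gamma$ exhausts \emph{all} solutions converging to $d/(d+1)$: if $(r,\lambda)\notin\Gamma$ then either $\lambda > \lambda_+(r)$ or $\lambda<\lambda_+(r)$, and Corollary \ref{cor:odecomparisonprinciple} forces expansion or collapse, so no other convergence can occur.

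The only point needing a word of care — and the mild obstacle — is the behaviour of $\Gamma$ at the cylinder value $r = d/(d+1)$, where $\lambda_+$ is only asserted continuous, not a priori $C^1$; here the ``co-dimension 1'' claim should be read as: $\Gamma$ is the image of a continuous injective map, smooth off a single point, and in particular has empty interior and separates the data space locally into the collapse region and the expansion region. This separation property is what actually justifies the word ``generic'': the complement of $\Gamma$ is open and dense, and on it only the two unstable behaviours occur. I would phrase the conclusion accordingly and cite Corollaries \ref{cor:odeminimum}--\ref{cor:odecomparisonprinciple} and Proposition \ref{prop:smoothdependencelambda} for the structural facts, with Cauchy stability invoked (as in the proof of Lemma \ref{lem:otherclassesnonempty}) to pass from ``data off $\Gamma$'' to ``solution in one of the two excluded classes''.
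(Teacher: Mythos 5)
Your proposal is correct and follows exactly the paper's route: the paper's proof is the one-line ``Follows immediately from Corollary \ref{cor:odecomparisonprinciple} and Proposition \ref{prop:smoothdependencelambda},'' and you have simply unpacked those two citations (identifying the stable set with the graph of $\lambda_+$ and using the comparison principle off the graph). The extra care you take at $r=d/(d+1)$ is a reasonable elaboration but not a departure from the paper's argument.
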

\begin{proof}
Follows immediately from Corollary \ref{cor:odecomparisonprinciple}
and Proposition \ref{prop:smoothdependencelambda}.  
\end{proof}

\begin{thm}\label{thm:expansionstability}
Let $f$ be a semi-global solution to \eqref{eq:cmcsphsym} such that
$f$ expands indefinitely in the future. Then $f$ is future
asymptotically unstable. However, writing $f_\tau(t) = f(t+ \tau)$,
the family of time-translates $\{f_\tau\}_{\tau\in\Real}$ is future
asymptotically stable, in the sense that for every initial data
sufficiently close to that of $f$, one can find $\tau_0$ such that the
perturbed solution converges to $f_{\tau_0}$ as $t\nearrow \infty$. 
\end{thm}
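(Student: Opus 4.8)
The plan is to deduce both assertions from the asymptotic description of expanding solutions already established --- principally Corollary~\ref{cor:expansiontolightcone} --- together with the fact that ``expanding indefinitely to the future'' is an open condition on initial data. Throughout fix the reference time $t_0$ at which initial data is prescribed, and let $a$ be the constant furnished by Corollary~\ref{cor:expansiontolightcone} for $f$, so that $f(t)-(t-a)\to 0$ as $t\to\infty$.

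\medskip

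\noindent\emph{Instability of $f$ as an individual solution.} Since $f$ is $C^2$, the time-translate $f_\tau(t)=f(t+\tau)$ has data $(f(t_0+\tau),f'(t_0+\tau))$ at $t_0$, which lies within $O(\abs\tau)$ of $(f(t_0),f'(t_0))$ because $f'$ and $f''$ are bounded near $t_0$; hence $f_\tau$ is an arbitrarily small perturbation of $f$ as $\tau\to 0$. On the other hand, writing the difference in terms of the light-cone asymptotics,
\[
f_\tau(t)-f(t)=\bigl[f(t+\tau)-(t+\tau-a)\bigr]-\bigl[f(t)-(t-a)\bigr]+\tau\;\longrightarrow\;\tau .
\]
So for every $\tau\neq 0$ the perturbed solution $f_\tau$ fails to converge to $f$; thus $f$ is not future asymptotically stable.

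\medskip

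\noindent\emph{Asymptotic stability of the family $\{f_\tau\}$.} Let $\tilde f$ be the solution with data $(r,\lambda)$ at $t_0$. Because $f$ expands indefinitely to the future, Corollary~\ref{cor:odecomparisonprinciple} gives $f'(t_0)>\lambda_+(f(t_0))$; since $\lambda_+$ is continuous by Proposition~\ref{prop:smoothdependencelambda}, the inequality $\lambda>\lambda_+(r)$ persists for $(r,\lambda)$ in a neighbourhood of $(f(t_0),f'(t_0))$, and then Corollary~\ref{cor:odecomparisonprinciple} shows $\tilde f$ likewise expands indefinitely to the future (in particular it is future-global). Applying Corollary~\ref{cor:expansiontolightcone} to $\tilde f$ yields a constant $b$ with $\tilde f(t)-(t-b)\to 0$. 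The same corollary applied to $f_\tau$ gives that $f_\tau$ is asymptotic to the line $t-(a-\tau)$; choosing $\tau_0:=a-b$ makes $f_{\tau_0}$ asymptotic to the same line $t-b$ as $\tilde f$, whence $\abs{\tilde f(t)-f_{\tau_0}(t)}\to 0$, and the first derivatives also converge since both $\tilde f'$ and $f_{\tau_0}'$ tend to $1$ by Lemma~\ref{lem:expansionderivativelimit}. This is the claimed convergence.

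\medskip

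The only step beyond routine estimates (the $O(\abs\tau)$ bound on the translated data and the subtraction of asymptotics) is the openness of the expanding class, and that reduces entirely to the continuity of $\lambda_\pm$ from Proposition~\ref{prop:smoothdependencelambda}. If one wished in addition for $\tau_0$ to tend to $0$ as the perturbation shrinks --- a genuinely ``orbital'' refinement --- one would need continuity of the map (initial data) $\mapsto$ (asymptotic parameter $b$); this can be harvested from a locally uniform version of the argument behind Corollary~\ref{cor:expansiontolightcone}, but it is not required for the statement as phrased.
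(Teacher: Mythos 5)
Your proposal is correct and follows essentially the same route as the paper: instability of the individual solution comes from the time-translation symmetry, and stability of the family comes from Corollary~\ref{cor:expansiontolightcone} (matching the asymptotic light-cone parameter to choose $\tau_0$), which is exactly the argument the paper cites. The only cosmetic difference is that you justify openness of the expanding class via Corollary~\ref{cor:odecomparisonprinciple} and the continuity of $\lambda_+$, whereas the paper invokes Cauchy stability for this earlier in Section~\ref{sec:classification}; both are valid.
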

\begin{proof}
That for generic perturbations $f$ is future asymptotically unstable
follows from the time-translation symmetry of \eqref{eq:cmcsphsym}.
The stability of the family $\{f_\tau\}$ follows from 
Corollary \ref{cor:expansiontolightcone} and Remark
\ref{rmk:expansionstability}.
\end{proof}

\begin{thm}\label{thm:collapsestability}
Let $f$ be a solution to \eqref{eq:cmcsphsym} that collapses in finite
time in the future. Then $f$ is future unstable, in the sense that a
generic perturbation of $f$ collapses at a different finite time in
the future. However, the family of time translations
$\{f_\tau\}_{\tau\in\Real}$ as defined in the previous theorem is
stable, in the sense that for every initial data sufficiently close to
that of $f$, one can find $\tau_0$ such that the perturbed solution
collapses at the same time as $f_{\tau_0}$, and the first derivative
converges to that of $f_{\tau_0}$.  
\end{thm}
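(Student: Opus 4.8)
The plan is to mimic the structure of the proof of Theorem~\ref{thm:expansionstability}, replacing the role of Corollary~\ref{cor:expansiontolightcone} (asymptotics near expansion) with the collapse asymptotics established in Proposition~\ref{prop:collapsetangentcone}. The instability half is immediate from the autonomous nature of \eqref{eq:cmcsphsym}: if $f$ collapses at $T$ and $f_\tau(t) = f(t+\tau)$, then $f_\tau$ collapses at $T - \tau$, so even arbitrarily small time-translations of $f$ change the collapse time, which already shows that $f$ is future unstable in the stated sense. The substance of the theorem is the stability of the \emph{family} $\{f_\tau\}$, i.e.\ that a small perturbation of the initial data for $f$ still collapses in finite time, and that one can choose a single translation parameter $\tau_0$ matching both the collapse time and the limiting first derivative.

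First I would observe that the collapse hypothesis on $f$, by Proposition~\ref{prop:odeblowupcollapse} (more precisely by the slightly open condition $f(t_0)\sqrt{1-(f'(t_0))^2} < \tfrac{d}{d+1}$, which collapse forces at late times), is an \emph{open} condition on the data, so by Cauchy stability the perturbed solution $\tilde f$ still collapses in finite time, at some $\tilde T$ near $T$; this handles the first claim. For the matching claim, the key structural input is the comparison/maximum-principle package: by Corollary~\ref{cor:odeminimum}, two distinct solutions of \eqref{eq:cmcsphsym} intersect at most once and are parallel at most once. Given the perturbed solution $\tilde f$ (close to $f$ at $t_0$), I would consider the one-parameter family $\{f_\tau\}$ and track the function $\tau \mapsto \tilde T - (T-\tau) = \tilde T - T + \tau$, i.e.\ I want to solve for $\tau_0$ with $f_{\tau_0}$ collapsing exactly at $\tilde T$; taking $\tau_0 = \tilde T - T$ does this by construction. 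It then remains to show that with this choice the first derivatives also match in the limit $t\nearrow\tilde T$. Here Proposition~\ref{prop:collapsetangentcone} gives $1-|\tilde f'(t)| \lesssim \tilde T - t$ and likewise for $f_{\tau_0}$, so both derivatives tend to $\pm1$ (same sign, determined by the sign of $f'$ which is preserved under small perturbation since $f' \neq 0$ near collapse); the quantitative rate $\sqrt{1-(f')^2}\lesssim \tilde T - t$ from that proposition is what upgrades "both tend to $1$" to "their difference tends to $0$".

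The main obstacle I anticipate is the last point: showing that once the collapse times are aligned by the choice of $\tau_0$, the first derivatives genuinely converge to \emph{each other} (not merely both to the light-cone value $1$ with possibly different subleading behaviour). I would attack this by working with $\gamma = f'/\sqrt{1-(f')^2}$ and its integral form \eqref{eq:gammaprime2}: near collapse $\gamma(t) \sim (d+1)t - \int^t d/\eta$, and $\eta \lesssim (T-t)^2$ with a matching lower bound, so the difference $\gamma_{\tilde f} - \gamma_{f_{\tau_0}}$ is controlled by the difference of the $\int 1/\eta$ terms, which one estimates using that $\eta$ for the two solutions agree to leading order (same collapse time, same blow-up rate). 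Alternatively, and perhaps more cleanly, one can invoke Corollary~\ref{cor:odeminimum}(2): since $\tilde f$ and $f_{\tau_0}$ are distinct solutions (unless the perturbation was trivial), $\tilde f - f_{\tau_0}$ has at most one critical point; combined with both collapsing at $\tilde T$, a short argument on the sign of $(\tilde f - f_{\tau_0})'$ near $\tilde T$ forces $\tilde f' - f_{\tau_0}' \to 0$. I expect the second route to be the shorter one to write, with the first kept as backup if the maximum-principle argument leaves a gap at the endpoint.
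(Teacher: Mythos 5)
Your proposal is correct and follows essentially the same route as the paper, whose proof simply cites the time-translation symmetry for the instability and Proposition~\ref{prop:collapsetangentcone} for the stability; your extra worry about the derivatives is unnecessary, since once the collapse times are aligned both derivatives tend to the \emph{same} limit $\pm 1$ at the \emph{same} time $\tilde T$, so their difference automatically tends to zero and neither the $\gamma$-integral argument nor Corollary~\ref{cor:odeminimum} is needed. (Minor slip: with $f_\tau(t)=f(t+\tau)$ the translate collapses at $T-\tau$, so the matching parameter is $\tau_0 = T-\tilde T$, not $\tilde T - T$.)
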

\begin{proof}
The generic instability follows again from the time-translation
symmetry of the equation. 
The stability statement is an immediate consequence of the asymptotic
profile given by Proposition \ref{prop:collapsetangentcone}. 
\end{proof}

\tocaddline
\section{Cosmological horizon as stability
obstacle}\label{sec:cosmohorizon}
From here on we will focus on the future stability of a spherically
symmetric solution that expands indefinitely in the future. In general
the Minkowski space $\Real^{1,1+d}$ has the full Poincar\'e group of
symmetries, which consists of spatial and temporal translations,
spatial rotations, Lorentz boosts, and their compositions. Under the
assumption of spherical symmetry, the only relevant symmetry is that
of time translation. And we have see in Theorem
\ref{thm:expansionstability} that \emph{modulo} the symmetry of time
translations, the expanding solutions can be regarded as
asymptotically stable. 

One may then ask na\"\i{}vely whether a similar result holds outside
spherical symmetry: are spherically symmetric future-expanding
solutions asymptotically stable if we allow ourselves the full
Poincar\'e group of symmetries? The answer, as it turns out, is
\emph{no}\footnote{This is for $d\geq 1$, a condition which we will
implicitly assume for the rest of this section. The $d = 0$ case can be
fully recovered from the ODE result: the manifold $\Sphere^0$ consist
of two points which evolve independently following the appropriate
ODE. One can easily show that allowing the
full $(1+1)$-dimensional Poincar\'e group of symmetries (in
particular, space-like translations in addition to time-like
translations) we have modulational stability: both particles converge
to the reference background solution up to global space-time translations.}. 
We first discuss the difficulty by analysing the
\emph{linear stability} of the pseudo-sphere $\dS$, treating the
perturbed solution as a graph over the pseudo-sphere background. Next
we will describe the geometric origins of this difficulty (namely, the
presence of cosmological horizons in de Sitter space) and show that
the na\"\i{}ve statement above must be false. 

\subsection{Geometry of the pseudo-sphere}
By the pseudo-sphere we refer to the isometric image of de Sitter
space embedded in a higher dimensional Minkowski space. As described
in Appendix \ref{app:gaussmap}, the set 
\[ \dS = \Sphere^{1,d+1,1} = \set{(x^0, \dots,x^{d+1}) \in
\Real^{1,d+1}}{-(x^0)^2 + \sum_{i = 1}^{d+1} (x^i)^2 = 1} \]
is a \cpmc manifold with mean curvature $d+1$ and unit inward normal
vector $\vec{n} = - \sum_{i = 0}^{1+d} x^i \partial_{x^i}$. 

Since the indefinite orthogonal group $O(1,d+1)$ preserves the
Minkowski form on $\Real^{1,d+1}$, we see that $\dS$ is invariant
under its action. In particular, this induces a family of
$(d+1)(d+2)/2$ Killing
vector fields on $\dS$ exhibiting its maximally symmetric nature. More
precisely, the Lorentz boosts 
\begin{equation}\label{eq:deflorvf}
\lorvf{i} = x^0 \partial_{x^i} + x^i \partial_{x^0}, \quad i \in
\{1,\dots, d+1\}
\end{equation}
and the spatial rotations
\begin{equation}\label{eq:defrotvf}
\rotvf{ij} = x^i\partial_{x^j} - x^j\partial_{x^i}, \quad i,j\in
\{1,\dots,d+1\}
\end{equation}
generate the symmetries of $\dS$. 

From their definitions it is clear that $\rotvf{ij}$ are always
space-like vector fields. The Lorentz boosts, however, can change
type:
\[ g(\lorvf{i},\lorvf{i}) = (x^0)^2 - (x^i)^2 = -1 + \sum_{j\in
\{1,\dots,d+1\}, j\neq i} (x^j)^2.\]
The sets $\{x^0 = \pm x^i\}$ divide $\dS$ into regions where
$\lorvf{i}$ has fixed type; see Figure \ref{fig1} below. Each of the 
connected components where $\lorvf{i}$ is time-like is \emph{globally
hyperbolic}, and on each such region $\lorvf{i}$ is in fact a
\emph{static} Killing vector field, i.e.\ it is hypersurface
orthogonal.  

\begin{figure}[t]
\centering
\includegraphics[width=0.7\textwidth]{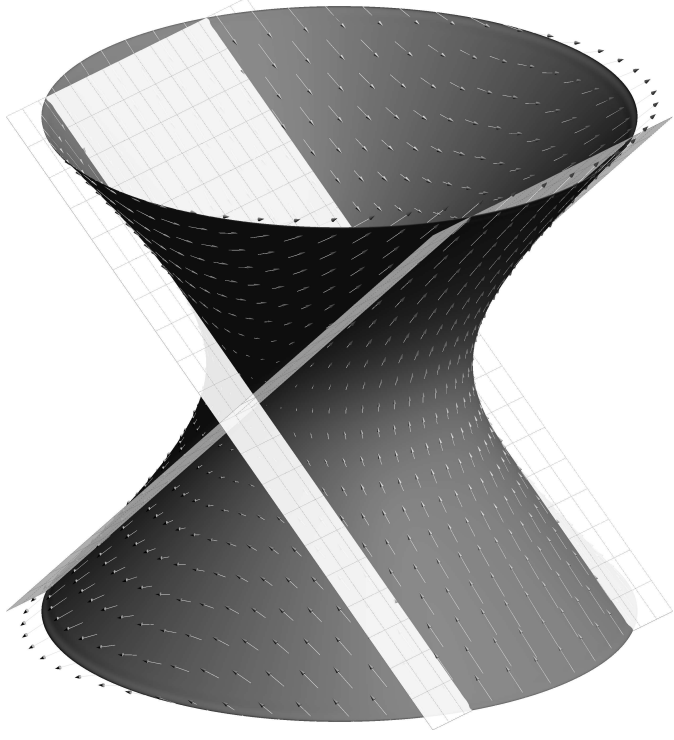}
\caption{The surface $\dS$ in the case $d = 1$. Also shown is the
Lorentz boost vector field $\lorvf{2}$ and the hyperplanes $x^0 = \pm
x^2$. As one sees that hyperplanes divide $\dS$ into six open regions,
inside four of which $\lorvf{2}$ is space-like, and in the other two
$\lorvf{2}$ is time-like. Along the hyperplanes $\lorvf{2}$ has
vanishing Minkowskian length, and the vector field vanishes exactly at
the intersection of the two hyperplanes. }
\label{fig1}
\end{figure}

This hypersurface orthogonality translates into a static decomposition
of the metric. Fix now our attention to the vector field $\lorvf{d+1}$
and a corresponding set on which it is time-like. On this set
($\{x^{d+1} > \abs{x^0}\}\cap \dS$) define the coordinates
$\zeta,\rho,z^i$ for $i\in \{1,\dots,d\}$ and $\sum_{i = 1}^d (z^i)^2 =
1$ (so $z^i$ describes the unit sphere in $\Real^{d}$) by
\begin{equation}
x^\mu = \begin{cases}
\sqrt{1 - \rho^2} \sinh(\zeta) & \mu = 0\\
\sqrt{1 - \rho^2} \cosh(\zeta) & \mu = d+1\\
\rho z^\mu & \mu\in \{1,\dots,d\}
\end{cases}.
\end{equation}
In this coordinate system the induced metric on $\dS$ takes the form
\begin{equation}\label{eq:staticslicing}
 -(1 - \rho^2) \D*{\zeta}^2 + \frac{1}{1 - \rho^2} \D*{\rho}^2 +
\rho^2 \D*{\omega}^2_{\Sphere^{d-1}}\end{equation}
which is spherically symmetric and explicitly independent of $\zeta$. 

The boundary of the region $\{x^{d+1} > \abs{x^0}\}$ corresponds to
$\rho \to \pm 1$, at which point our coordinate system becomes
degenerate. This is a manifestation of the \emph{cosmological horizon}
that is present in $\dS$, and is related to the fact that this region
is globally hyperbolic. This endows $\dS$ with a rather different
asymptotic causal structure when compared to Minkowski space
$\Real^{1,d}$. 

On Minkowski space, let $\gamma_{1,2}:\Real\to\Real^{1,d}$ represent
the worldline of two inertial observers; in other words $\gamma_{1,2}$
represent two time-like straight lines parametrised by arc-length. We
have the following nice property: for every $s_1\in \Real$, there
exists $s_2\in \Real$ such that $\gamma_1(s_1)$ is in the causal past
of $\gamma_2(s)$ for every $s > s_2$. This property is no longer true
on $\dS$. In particular, if we let $\omega$ be a unit vector in
$\Real^{d+1}$, we can consider the geodesic
$\gamma_\omega: t\mapsto(t,\sqrt{1+t^2}\omega)$ curve along $\dS$. For
every distinct pair $\omega_1,\omega_2$, there exists
$\tilde{t}_1,\tilde{t}_2$ such
that for all $t_1 > \tilde{t}_1$ and $t_2> \tilde{t}_2$,
$\gamma_{\omega_2}(t_2)$ is not in the causal past of
$\gamma_{\omega_1}(t_1)$ and vice versa.

The presence of this cosmological horizon has important consequences
for the solutions of wave equations on a $\dS$ background. Most
notably is the fact that ``structures'' on a fixed ``scale'' tend to
be frozen in place after finite time. One sees this already in Figure
\ref{fig1}. Fix $\tilde{x}^0 \geq 0$. Consider the wave equation on
$\dS$ to the future of $\tilde{x}^0$ with initial data prescribed on
the sphere $\{x^0 = \tilde{x}^0\}$. As already evident in Figure 
\ref{fig1}, by a domain of dependence argument the portion of the 
solution inside the set $\{x^{d+1} >
\abs{x^0}\}$ is entirely independent of the portion of the solution
inside the set $\{x^{d+1} < -\abs{x^0}\}$, as each of the two sets are
future globally hyperbolic.  Noting that $\tilde{x}^0$ gets larger and
larger,
the set $\{x^{d+1} > \abs{x^0}\}$ takes up smaller and smaller angular
size of the constant $\tilde{x}^0$ spheres, we see that we can divide
$\dS \cap \{x^0 \geq \tilde{x}^0\}$ into more and more of these
``mutually independent regions''. As we shall see in the remainder of
this section, this feature
of $\dS$ background introduces an obstacle to parametrizing the
asymptotic behaviour of \cpmc manifolds using a finite dimensional
modulation space. 

We conclude this subsection with a small computation that will be
useful later. Let $\tau$ be the unit future time-like vector field
along $\dS$ that is orthogonal to its constant $x^0$ slices; in terms
of the coordinate system of $\Real^{1,1+d}$ we easily verify that 
\begin{equation}\label{eq:taudef} \tau = \jb{x^0} \partial_{x^0} + \sum_{i = 1}^d \frac{x^0 x^i}{\jb{x^0}} \partial_{x^i}
 = \sum_{i = 1}^d \frac{x^i}{\jb{x^0}} \lorvf{i}.\end{equation}
Indeed $\tau$ is in the span of the radial vector $\sum_1^d
x^i\partial_{x^i}$ and the time-like vector $\partial_{x^0}$ so is
orthogonal to the constant $x^0$ slices, which are round spheres. It
is tangent to $\dS$ as it is a linear combination of $\lorvf{i}$ which
are tangent vector fields. And a direct computation shows its
Minkowskian length is $-1$. A further computation shows that $\tau$ is
a geodesic vector field along $\dS$. 

As $\tau$ is unit and orthogonal to the constant $x^0$ hypersurfaces,
its covariant derivative along said hypersurfaces is the shape tensor 
(see Appendix \ref{app:defmc}), and so is related to the second
fundamental form of the constant $x^0$ slices. 
Examining the definition of the second fundamental form, and
considering the nested embedding of the constant $x^0$ slices into
$\dS$ and $\dS$ into $\Real^{1,d+1}$,  we have that the second 
fundamental form of the constant $x^0$ slices inside $\dS$ is just the
\emph{projection of the second fundamental form of the corresponding
round sphere (which has radius $\jb{x^0}$) in $\Real^{1,d+1}$ onto
$\dS$}. Thus we have derived
\begin{prop}\label{prop:nablatau}
Let $\ooo\covD$ denote the induced Levi-Civita connection along $\dS$,
and let $\tau$ as above. Then 
\[ \ooo\covD_b \tau^c = \frac{x^0}{\jb{x^0}} ( \delta^c_b +
\tau_b\tau^c ) \]
where $\tau_b = \ooo{g}_{ba}\tau^a$ is the metric dual of $\tau$ via
the induced metric $\ooo{g}$ on $\dS$. 
\end{prop}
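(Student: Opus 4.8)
The plan is to prove the identity by a direct computation, treating $\dS$ as the level set $\{Q=1\}$ of the Lorentzian norm-square $Q(x)=-(x^0)^2+\sum_{i=1}^{d+1}(x^i)^2$ inside the flat ambient $\Real^{1,d+1}$ and invoking the Gauss formula. Write $\nabla$ for the flat ambient connection and let $P=\sum_{\mu=0}^{d+1}x^\mu\partial_{x^\mu}$ be the position vector field; on $\dS$ one has $g(P,P)=1$ and $P$ is Minkowski-orthogonal to $T\dS$ (indeed $P=-\vec n$), so $\dS$ is a non-degenerate (Lorentzian) hypersurface, the splitting $T\Real^{1,d+1}=T\dS\oplus\mathbb R P$ is valid, and the tangential projection is $V^{\top}=V-g(V,P)P$. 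The Gauss formula then gives $\ooo\covD_X Y=(\nabla_X Y)^{\top}$ for $X,Y$ tangent to $\dS$. I will use two elementary ambient facts repeatedly: $\nabla_X\partial_{x^0}=0$ (since $\partial_{x^0}$ is a parallel coordinate field) and $\nabla_X P=X$.

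The first real step is to re-express $\tau$ through these ambient objects. Expanding $\tau=\sum_{i=1}^{d+1}\tfrac{x^i}{\jb{x^0}}\lorvf{i}$ with $\lorvf{i}=x^0\partial_{x^i}+x^i\partial_{x^0}$ and using the defining relation $\sum_{i=1}^{d+1}(x^i)^2=1+(x^0)^2=\jb{x^0}^2$ on $\dS$, one gets the clean decomposition $\tau=\tfrac{1}{\jb{x^0}}\,\partial_{x^0}+\tfrac{x^0}{\jb{x^0}}\,P$ on $\dS$, equivalently $\partial_{x^0}=\jb{x^0}\,\tau-x^0\,P$. I would then differentiate: for $X$ tangent to $\dS$, $\nabla_X\tau=\bigl(X\tfrac{1}{\jb{x^0}}\bigr)\partial_{x^0}+\bigl(X\tfrac{x^0}{\jb{x^0}}\bigr)P+\tfrac{x^0}{\jb{x^0}}X$, where both scalar derivatives reduce to multiples of $X(x^0)$. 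The observation that makes everything collapse is that for $X$ tangent to $\dS$ one has $X(x^0)=-g(X,\partial_{x^0})=-\jb{x^0}\,g(X,\tau)$, the last equality because $g(X,P)=0$ and $\partial_{x^0}=\jb{x^0}\tau-x^0 P$. Substituting this, then substituting $\partial_{x^0}=\jb{x^0}\tau-x^0 P$ back in, and simplifying with $1+(x^0)^2=\jb{x^0}^2$ yields $\nabla_X\tau=\tfrac{x^0}{\jb{x^0}}\bigl(X+g(X,\tau)\tau\bigr)-g(X,\tau)\,P$.

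Finally, since $X$ and $\tau$ are tangent to $\dS$ while $P$ is normal, the tangential projection simply deletes the last term, giving $\ooo\covD_X\tau=\tfrac{x^0}{\jb{x^0}}\bigl(X+g(X,\tau)\tau\bigr)$, which written in components (with $\tau_b=\ooo g_{ba}\tau^a$, so $g(X,\tau)=X^b\tau_b$) is exactly $\ooo\covD_b\tau^c=\tfrac{x^0}{\jb{x^0}}(\delta^c_b+\tau_b\tau^c)$. I expect no serious obstacle here — the computation is entirely mechanical — so the only points requiring care are the Lorentzian sign conventions (the signature $(- + \cdots +)$ is what produces the factor $-g(X,\partial_{x^0})$ and must be tracked consistently) and keeping straight which terms are normal versus tangential. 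An alternative, more synthetic route — essentially the one indicated in the paragraph preceding the statement — would be to identify $\ooo\covD\tau$ restricted to the constant-$x^0$ slices with the second fundamental form of those slices inside $\dS$, compute the latter as the $\dS$-projection of the second fundamental form of the round sphere of radius $\jb{x^0}$ in $\Real^{1,d+1}$ (which is $\tfrac{1}{\jb{x^0}}$ times its induced metric), and separately note the $\ooo\covD_\tau\tau=0$ component from the fact that $\tau$ is geodesic; I would still prefer the direct computation, since it produces all components uniformly.
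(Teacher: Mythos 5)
Your computation is correct, and it takes a genuinely different (more computational) route than the paper. The paper's own proof is precisely the synthetic argument you relegate to your final sentence: it identifies the restriction of $\ooo\covD\tau$ to the constant-$x^0$ slices with the shape tensor of those slices inside $\dS$, observes via the nested embeddings $\Sphere^d_{\jb{x^0}}\subset\dS\subset\Real^{1,d+1}$ that this is the projection onto $\dS$ of the second fundamental form of the round sphere of radius $\jb{x^0}$, and supplies the remaining ($\tau$-direction) component from the separately asserted fact that $\tau$ is geodesic. Your direct argument instead runs everything through the Gauss formula and the decomposition $\tau=\tfrac{1}{\jb{x^0}}\partial_{x^0}+\tfrac{x^0}{\jb{x^0}}P$ with $P$ the position field; I have checked the algebra (in particular $X(x^0)=-\jb{x^0}\,g(X,\tau)$ for $X$ tangent, and the cancellation $\tfrac{(x^0)^2+1}{\jb{x^0}^2}=1$ that isolates the normal term $-g(X,\tau)P$) and it is sound. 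What your version buys is uniformity: all components of $\ooo\covD_b\tau^c$, including the geodesic equation $\ooo\covD_\tau\tau=0$ (which drops out since $g(\tau,\tau)=-1$), come from a single two-line calculation with no appeal to the hierarchy of second fundamental forms; what the paper's version buys is that it explains \emph{why} the answer has the form (projection of slice geometry) it does. One incidental point in your favour: the displayed formula \eqref{eq:taudef} in the paper sums the spatial indices only up to $d$, which is a typo (there are $d+1$ spatial coordinates on $\Real^{1,d+1}$); your sum to $d+1$ is the correct one and is needed for the identity $\sum_i(x^i)^2=\jb{x^0}^2$ on $\dS$ that your decomposition of $\tau$ relies on.
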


\subsection{Linear ``instability'' of \texorpdfstring{$\dS$}{de
Sitter} without modulation}\label{sec:linearinstability}
Going back to the \cpmc problem, let us consider a small perturbation
of $\dS$ as a graph over it. More precisely, we consider the manifold
$M_\phi = \set{\vec{x} + \phi(\vec{x})\vec{n}}{\vec{x}\in \dS}$ where
$\phi:\dS \to \Real$ is some smooth function. 
In Appendix \ref{app:linearisationmc} we compute the mean curvature of
$M_\phi$ and its formal linearisation. The linearised equation for
$\phi$ such that $M_\phi$ has constant mean curvature $d+1$ is given
by \eqref{eq:linearMCdS}, which we reproduce here
\[ \Box_{\dS} \phi + (d+1)\phi = 0. \tag{\ref*{eq:linearMCdS}}\]
In this subsection we study the asymptotic behaviour of solutions to
this linearised equation. 

Equation \eqref{eq:linearMCdS} takes the form of a Klein-Gordon
equation but with \emph{negative mass}. Experience with static
space-times (such as Minkowski space) tells us that wave equations
with sufficiently negative potentials will exhibit generically
exponential growth of the solution. This is clear when we write the
equations in the form 
\[ \partial_\zeta^2 \phi = - L \phi \]
where $L$ is a time-independent Schr\"odinger operator whose spectrum 
protrudes into the negative real axis. As we have seen in the previous
subsection, the induced metric on $\dS$ in 
fact admits such a static decomposition, if we restrict to a region
where a given Lorentz boost vector field is time-like. Furthermore,
the spherical symmetry of the static decomposition
\eqref{eq:staticslicing} implies that the exponential growth of the
solution to the equation also induces exponential growth of (some of)
the derivatives. 

One may however argue that the $(\zeta,\rho,z^i)$ coordinate system of
\eqref{eq:staticslicing}, in addition to not covering the entirety of
$\dS$ without degeneration, is also not representative of the true
asymptotic behaviour of a \cpmc manifold, due to the fact that every
constant $\zeta$ slice passes through the sphere $x^0 = x^{d+1} = 0$,
and so the behaviour of the solution as $\zeta\to\infty$ may not be
reflective of what we physically think of as asymptotic behaviour,
where $x^0\to \infty$. With regards to the ``true'' asymptotic
behaviour, one may expect something better. This is in view of known
results concerning the wave and (positive-mass) Klein-Gordon equations
on de Sitter backgrounds (see e.g.\ \cite{MeSaVa2014} and references
therein) that suggest one expects the solution itself to converge to a
(possibly non-zero) constant, with decaying derivatives. One may hope
that even in the case of the negative-mass Klein-Gordon term, the 
derivatives obey certain improved decay or boundedness properties
compared to the unboundedly growing solution. 

To understand the more physically relevant asymptotics, we first write
down explicitly the operator $\Box_{\dS}$ in coordinates. Let $t =
x^0$ and $\omega$ be some coordinate system for the sphere
$\Sphere^{d}$, the metric for $\dS$ in this cylindrical coordinate
system can be expressed as
\begin{equation}
- \frac{1}{\jb{t}^2} \D*{t}^2 + \jb{t}^2\D*{\omega}^2, 
\end{equation}
from which we can write down the wave operator as
\begin{equation}\label{eq:boxdscoord}
\Box_{\dS}\phi = - \frac{1}{\jb{t}^{d-1}} \partial_t \left(
\jb{t}^{d+1} \partial_t \phi\right) + \frac{1}{\jb{t}^2} \subDD \phi
\end{equation}
where $\subDD$ is the spherical Laplacian on $\Sphere^{d}$. The
spherical symmetry allows us to decompose a solution based on angular
momentum $\ell$, which is a non-negative integer. For a solution with
angular momentum $\ell$, which we denote by $\psi_\ell$, we have that
\[ \subDD\psi_\ell = - \ell(\ell + d + 1) \psi_\ell.\]

\subsubsection{Spherically symmetric case}
For $\ell = 0$, the linearised \cpmc equation \eqref{eq:linearMCdS} reduces via 
\eqref{eq:boxdscoord} to the ODE
\[ \jb{t}^2\psi_0'' + (d+1)t \psi_0' = (d+1)\psi_0.\]
Substituting $\psi_0 = t \hat{\psi}_0$ we get
\[
\jb{t}^2( t \hat{\psi}_0'' + 2 \hat{\psi}_0') + (d+1) t(t
\hat{\psi}_0' + \hat{\psi}_0) = (d+1) t \hat{\psi}_0
\]
which gives us the following equation for $f = \hat{\psi}_0'$:
\begin{equation}
\jb{t}^2 tf' + 2\jb{t}^2 f + (d+1) t^2 f = 0.
\end{equation}
This equation we can explicitly integrate
\[
\frac{f'}{f} = - \frac{2}{t} - \frac{(d+1)t}{\jb{t}^2} \implies \log f
+ C = -2 \log t - (d+1) \log \jb{t}
\]
or
\begin{equation}\label{eq:renormalderive0}
\hat{\psi}_0' = \frac{C}{t^2\jb{t}^{d+1}}.
\end{equation}
The case $C = 0$ corresponds to $\hat{\psi}_0 = C'$ and hence $\psi_0
= C' t$. This solution corresponds to the \emph{temporal translation
symmetry} of the background $\Real^{1,d+1}$. Note that in terms of the
``proper time'' for the constant $\omega$ observers in $\dS$, the
linear in $t$ growth translates to an exponential growth\footnote{This
follows by noting that with
$\tau$ being the unit future time-like vector orthogonal to constant
$x^0$ slices with coordinate expression \eqref{eq:taudef}, we have
$\tau(x^0) = \jb{x^0}$ which shows the $x^0$ coordinate is the
hyperbolic cosine of the elapsed proper time since $x^0 = 0$ for
observers described by $\tau$.}.

The equation \eqref{eq:renormalderive0} shows that $\hat{\psi}_0'$ is
integrable as $t\to\infty$ and so we have that $\hat{\psi}_0$ converges
to a finite constant generically, and signals the generic linear in
$t$ growth of a solution, agreeing\footnote{Some change of coordinates
is involved to see this; see Remark \ref{rmk:notionsofstability}.}
 with our analysis in Section
\ref{sec:rotsym}. We summarise the results as

\begin{prop}
For spherically symmetric perturbations, the solutions $\psi_0$ to the
linearised equation grows linearly in $t$ generically. The
renormalised quantity $\frac{1}{t}\psi_0$ is bounded, and its first
derivative decays; this is while the derivative $\psi'_0$ generically
remains bounded but does not decay. 
\end{prop}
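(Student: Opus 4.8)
The plan is to read off everything from the closed-form integration already carried out. Recall that the substitution $\psi_0 = t\hat\psi_0$ turned the linearised equation \eqref{eq:linearMCdS} (via \eqref{eq:boxdscoord}) into a first-order linear ODE for $\hat\psi_0'$ whose solutions are $\hat\psi_0'(t) = C\, t^{-2}\jb{t}^{-(d+1)}$, as displayed in \eqref{eq:renormalderive0}; one further integration recovers the full two-parameter solution space of the original second-order equation on a ray $[1,\infty)$, and I would parametrise it by the pair $(C,C')$ with $C' \eqdef \lim_{t\to\infty}\hat\psi_0(t)$. That this limit exists is immediate, since $t^{-2}\jb{t}^{-(d+1)}$ behaves like $t^{-(d+3)}$ and is therefore integrable near $+\infty$; quantitatively $\bigl|\hat\psi_0(t) - C'\bigr| = \bigl|\int_t^\infty \hat\psi_0'(s)\,\D{s}\bigr| \lesssim t^{-(d+2)}$. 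Since $\psi_0 = t\hat\psi_0$ with $t > 0$, the assignment $(C,C')\mapsto\psi_0$ is a linear map onto the solution space, and it is injective because $\psi_0\equiv 0$ forces first $C'=0$ (as $\hat\psi_0\equiv0$) and then $C=0$ (as $\hat\psi_0'\equiv0$); being a linear injection between two-dimensional spaces it is an isomorphism, so ``generic'' is to be read as ``$C'\neq 0$''.

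With this bookkeeping in place the three assertions are immediate. First, the renormalised quantity $t^{-1}\psi_0 = \hat\psi_0$ converges, hence is bounded, and its derivative $(t^{-1}\psi_0)' = \hat\psi_0' = C\,t^{-2}\jb{t}^{-(d+1)}$ decays (at the rate $t^{-(d+3)}$). Second, writing $\psi_0(t) = C' t + t\bigl(\hat\psi_0(t) - C'\bigr)$ and invoking the estimate above, the correction term is $O(t^{-(d+1)})$, hence bounded, so $\psi_0(t) = C' t + O(t^{-(d+1)})$; this grows exactly linearly precisely when $C'\neq 0$, that is, generically. The exceptional locus $\{C'=0\}$ is a codimension-one subspace of the (two-dimensional) solution space, on which in fact $\psi_0\to 0$; this is the linear reflection of the statement (cf.\ Section \ref{sec:rotsym} and Theorem \ref{thm:expansionstability}) that, modulo the time-translation mode $\psi_0 = C' t$, a spherically symmetric perturbation converges back to the background. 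Third, $\psi_0'(t) = \hat\psi_0(t) + t\hat\psi_0'(t) = \hat\psi_0(t) + C\,t^{-1}\jb{t}^{-(d+1)} \to C'$ as $t\to\infty$, so $\psi_0'$ is bounded and convergent, and it fails to decay to zero precisely when $C'\neq 0$.

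I do not anticipate a genuine obstacle: the proposition is essentially a summary of the explicit computation \eqref{eq:renormalderive0}. The only point requiring a moment's care is to fix the two-parameter family over which ``generic'' is quantified — pinning it to the transverse direction $C'\neq 0$, equivalently to the complement of the ``frozen'' codimension-one family — rather than to some other coordinatisation of the solution space, for which the growth/decay dichotomy would be phrased differently.
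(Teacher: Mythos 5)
Your proposal is correct and follows essentially the same route as the paper: the paper's justification of this proposition is precisely the computation culminating in \eqref{eq:renormalderive0}, from which it reads off the convergence of $\hat{\psi}_0$, the linear growth of $\psi_0$, and the non-decay of $\psi_0'$ exactly as you do. Your only addition is the explicit $(C,C')$ parametrisation of the two-dimensional solution space to pin down the meaning of ``generic'', which is a harmless (and correct) piece of bookkeeping left implicit in the paper.
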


\subsubsection{Higher angular momentum case}
For $\ell > 0$, instead of commuting with the $t$ weight, we commute
with a $\jb{t}$ weight. Writing $\varphi = \jb{t} \breve{\varphi}$ we have
\[ \Box_{\dS} \varphi + (d+1)\varphi = \jb{t} \left[ \Box_{\dS}
\breve{\varphi}
- 2t \partial_t \breve{\varphi} + \frac{d}{\jb{t}^2} \breve{\varphi}
  \right]. \]
Multiplying the equation with $\jb{t}^{2-d} \partial_t\breve{\varphi}$
and integrating against the space-time volume form $\jb{t}^{d-1} \D{t}
\D{\omega}$ we have
\begin{align*}
0 &= \iint \jb{t}^2 \left[ \Box_{\dS} \breve{\varphi} - 2 t
\breve{\varphi}_t  + \frac{d}{\jb{t}^2} \breve{\varphi} \right]
\breve{\varphi}_t \D{t} \D{\omega} \\
& = \iint - \frac{1}{2\jb{t}^{2d-2}} \partial_t \left( \jb{t}^{d+1}
\breve{\varphi}_t\right)^2 - \frac12 \partial_t \left(
\breve{\varphi}_\omega^2 - d \breve{\varphi}^2\right) - 2t \jb{t}^2
\breve{\varphi}_t^2 \D{t} \D{\omega} \\
& = \iint - \frac12 \partial_t \left( \jb{t}^4 \breve{\varphi}_t^2 +
\breve{\varphi}_\omega^2 - d \breve{\varphi}^2 \right) - (d+1) t
\jb{t}^2 \breve{\varphi}_t^2 \D{t} \D{\omega}
\end{align*}
which implies
\begin{equation}\label{eq:linearenergyestimate}
\int \jb{t}^4 \breve{\varphi}_t^2 +
\breve{\varphi}_\omega^2 - d \breve{\varphi}^2 \D{\omega} \Big]^{t =
T}_0  = - 2(d+1)
\iint t \jb{t}^2 \breve{\varphi}_t^2 \D{t}\D{\omega} < 0
\end{equation}
provided we restrict to the forward region $t > 0$. 
For the $\ell \geq 1$ spherical harmonics the energy quantity 
controlled is positive semi-definite; for $\ell > 1$ the energy
quantity is in fact coercive. The monotonicity of
\eqref{eq:linearenergyestimate} already implies that, as long as we project
out the spherically symmetric mode first, that $\int \jb{t}^4
\breve{\varphi}_t^2 \D{\omega}$ remains bounded. This can be strengthened
slightly: since $\int \jb{t}^4 \breve{\varphi}_t^2\D{\omega}$ is monotonically
decreasing in $t$, the limit as $t\to\infty$ must exist. Suppose
$\lim_{t\to\infty} \int \jb{t}^4 \breve{\varphi}_t^2\D{\omega} > 0$, 
this gives asymptotically a lower bound 
\[ \int \jb{t}^2 \breve{\varphi}_t^2 \D{\omega} \gtrsim \frac{1}{\jb{t}^2}\]
which would imply that the right hand side of
\eqref{eq:linearenergyestimate} is not integrable in time, giving a
contradiction. Therefore we have that

\begin{prop}\label{prop:lineargrowthhighermode}
When $\ell \geq 1$, we have that 
\[ \lim_{t\to\infty} \jb{t}^2 \partial_t[ \jb{t}^{-1} \psi_\ell(t)]  =
0,\]
which implies that the $\jb{t}^{-1}\psi_\ell(t)$ converges to a finite
value for each fixed $\omega\in \Sphere^d$, and that $\psi_\ell$ grows
at most linearly. 
\end{prop}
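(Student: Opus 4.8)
The plan is to read the statement off the energy identity \eqref{eq:linearenergyestimate}, its sign properties for $\ell\ge 1$, and a short monotonicity-plus-integrability argument for the remainder. Fix the angular momentum $\ell\ge 1$ and keep the substitution $\psi_\ell=\jb{t}\,\breve\varphi$ from above, so that the quantity to be controlled is $\jb{t}^2\partial_t[\jb{t}^{-1}\psi_\ell]=\jb{t}^2\breve\varphi_t$. Since $\breve\varphi(t,\cdot)$ lies in the fixed finite-dimensional $\subDD$-eigenspace of degree $\ell$, any bound or limit for the $L^2(\Sphere^d)$ norm of $\jb{t}^2\breve\varphi_t$ transfers to the claimed pointwise-in-$\omega$ statement up to $\ell$- and $\omega$-dependent constants. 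Write $F(t)\eqdef\int_{\Sphere^d}\jb{t}^4\breve\varphi_t^2\,\D{\omega}$ and $G(t)\eqdef\int_{\Sphere^d}\breve\varphi^2\,\D{\omega}$; integrating by parts on $\Sphere^d$ (so that $\int_{\Sphere^d}\breve\varphi_\omega^2\,\D{\omega}=\mu_\ell\,G(t)$, with $\mu_\ell$ the degree-$\ell$ eigenvalue) the boundary term in \eqref{eq:linearenergyestimate} becomes $\mathcal E(t)\eqdef F(t)+(\mu_\ell-d)\,G(t)$, and $\mu_\ell-d\ge 0$ by the hypothesis $\ell\ge 1$ (with strict inequality for $\ell>1$, as noted below \eqref{eq:linearenergyestimate}). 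It therefore suffices to prove $F(t)\to 0$: then $\|\jb{t}^2\breve\varphi_t\|_{L^2(\Sphere^d)}\to 0$ gives the stated limit, the bound $F\lesssim\mathcal E(0)$ gives $|\breve\varphi_t|\lesssim\jb{t}^{-2}$ hence $\jb{t}^{-1}\psi_\ell=\breve\varphi$ converges as $t\to\infty$, and $\psi_\ell=\jb{t}\,\breve\varphi=O(\jb{t})$.

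First I would extract monotonicity: differentiating \eqref{eq:linearenergyestimate} in its upper limit gives $\mathcal E'(t)=-2(d+1)\,t\int_{\Sphere^d}\jb{t}^2\breve\varphi_t^2\,\D{\omega}\le 0$ for $t\ge 0$, so $\mathcal E$ is nonincreasing and nonnegative and converges to some $\mathcal E_\infty\ge 0$. The identity then reads $2(d+1)\int_0^\infty t\int_{\Sphere^d}\jb{t}^2\breve\varphi_t^2\,\D{\omega}\,\D{t}=\mathcal E(0)-\mathcal E_\infty<\infty$. Since $t\jb{t}^2\breve\varphi_t^2=(t/\jb{t}^2)\,\jb{t}^4\breve\varphi_t^2$ and $t/\jb{t}^2\gtrsim t^{-1}$ for $t\ge 1$, this says exactly that $t^{-1}F(t)$ is integrable near $+\infty$, whence $\liminf_{t\to\infty}F(t)=0$ (if instead $F\ge c>0$ eventually, then $\int^\infty c\,t^{-1}\,\D{t}$ diverges).

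Next I would upgrade this $\liminf$ to a genuine limit. For $\ell=1$ (the semidefinite case, $\mu_1=d$) the term $(\mu_\ell-d)G$ drops out, so $F=\mathcal E$ is itself nonincreasing, and together with $\liminf F=0$ this forces $F(t)\to 0$. For $\ell>1$ the bare quantity $F$ need not be monotone, so I would instead show that $G$ converges: from $0\le F\le\mathcal E\le\mathcal E(0)$ and $0\le(\mu_\ell-d)G\le\mathcal E(0)$ one gets $\|\breve\varphi_t\|_{L^2(\Sphere^d)}=\jb{t}^{-2}F^{1/2}\lesssim\jb{t}^{-2}$ and $G\lesssim 1$, hence $|G'|\le 2\,G^{1/2}\|\breve\varphi_t\|_{L^2(\Sphere^d)}\lesssim\jb{t}^{-2}$, which is integrable, so $G(t)$ converges. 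Consequently $F(t)=\mathcal E(t)-(\mu_\ell-d)\,G(t)$ converges, and its limit can only be $\liminf F=0$. In either case $F(t)\to 0$, which by the first paragraph gives all three assertions.

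The step I expect to be the real obstacle is precisely this last upgrade in the range $\ell>1$: the identity \eqref{eq:linearenergyestimate} supplies only the monotonicity of the combined energy $\mathcal E$, and integrability of $t^{-1}F(t)$ by itself does not prevent $F$ from oscillating on a sparse set away from $0$. Splitting $F=\mathcal E-(\mu_\ell-d)G$ into two separately convergent pieces repairs this, and it is here that one genuinely uses that the perturbation lives in a single angular-momentum sector, so that $\int_{\Sphere^d}\breve\varphi_\omega^2$ is an exact multiple of $G$ and the ``zeroth-order'' energy $G$ has integrable derivative. The remaining ingredients --- the sign $\mu_\ell\ge d$, the asymptotic $t/\jb{t}^2\sim t^{-1}$, and passing from $L^2(\Sphere^d)$ bounds to pointwise bounds on a fixed-degree harmonic --- are routine.
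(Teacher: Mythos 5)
Your proof is correct and follows the same strategy as the paper: the energy identity \eqref{eq:linearenergyestimate}, its positive semi-definiteness for $\ell\ge 1$, and the observation that if $\int\jb{t}^4\breve\varphi_t^2\,\D{\omega}$ had a positive limit then the right-hand side of \eqref{eq:linearenergyestimate} would fail to be integrable. The one place where you go beyond the paper is worth flagging: the paper asserts outright that $\int\jb{t}^4\breve\varphi_t^2\,\D{\omega}$ is monotonically decreasing, which is immediate only for $\ell=1$ (where this quantity coincides with the conserved-up-to-sign energy $\mathcal{E}$); for $\ell>1$ the monotone quantity is $\mathcal{E}=F+(\mu_\ell-d)G$, and some extra argument is needed to upgrade $\liminf F=0$ to $\lim F=0$. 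Your decomposition --- showing $G$ converges because $|G'|\lesssim G^{1/2}\jb{t}^{-2}F^{1/2}$ is integrable, hence $F=\mathcal{E}-(\mu_\ell-d)G$ converges and its limit must equal its $\liminf$ --- supplies exactly the missing step, and correctly identifies that restricting to a single angular-momentum sector is what makes $\int\breve\varphi_\omega^2\,\D{\omega}$ an exact multiple of $G$. The remaining ingredients (the sign $\mu_\ell\ge d$, the reduction of the pointwise claim to an $L^2(\Sphere^d)$ claim via finite-dimensionality of the eigenspace, and the deduction of convergence of $\jb{t}^{-1}\psi_\ell$ and linear growth of $\psi_\ell$ from boundedness of $\jb{t}^2\breve\varphi_t$) are all handled correctly.
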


\begin{rmk}\label{rmk:lineargrowthmodeest}
For the special case $\ell = 1$, for each fixed $\omega$, the
renormalised quantity $\jb{t}^{-1}\psi_1(t,\omega)$ is such that its
time derivative is spanned by $\{0, \jb{t}^{-d-3}\}$. The former
corresponds to $\psi_1(t,\omega) = \psi_1(0,\omega) \jb{t}$, which
is the linear instability associated with the spatial translation
symmetries of $\Real^{1,d+1}$. Of the symmetries in the Poincar\'e
group, only the translations do \emph{not} fix $\dS$; the indefinite
orthogonal group $O(1,d+1)$ generates no additional linear
instabilities. 

This, however, should not be interpreted as the non-existence of
additional linear instabilities. In fact, as we will see immediately
below, \eqref{eq:linearenergyestimate} and the linear upper bound for
$\psi_\ell$ in Proposition \ref{prop:lineargrowthhighermode} are as
good as we can get. 
\end{rmk}

\subsection{Cosmological horizon and a no-go result}
Playing with a domain of dependence argument using the presence of
cosmological horizons on $\dS$ and the instability in the spherically
symmetric cases gives us immediately 
\begin{enumerate}
\item linear mode instability for infinitely many angular momenta
(Theorem \ref{thm:allmodeunstable}), and
\item a no-go theorem for approximating the asymptotic profile of
perturbed solutions using directly $\dS$ and its images under the
Poincar\'e group (Theorem \ref{thm:badasymp}). 
\end{enumerate}
The first result indicates that were one to try to na\"\i{}vely
approach the \cpmc stability problem from the point of view of
modulation theory applied to the set-up where the solution manifold
$M_\phi$ is regarded as a graph in the normal bundle of $\dS$, one
will necessarily have to contend with an infinite dimensional family
of modulations. The second result shows that in the
non-spherically-symmetric case, the asymptotic behaviour of solutions
should be more complicated than suggested by Corollary
\ref{cor:expansiontolightcone} and Theorem
\ref{thm:expansionstability}. 

\begin{thm}\label{thm:allmodeunstable}
Let $Y_{m,\ell}$ denote spherical harmonics for $\Sphere^{d}$. Let
$\{(m_\alpha,\ell_\alpha)\}$ be a fixed, finite set of parameters.
Then there exists a solution to \eqref{eq:linearMCdS} satisfying 
\[ \norm[L^\infty(\Sphere^d)]{\phi(t)} \gtrsim t \]
for all sufficiently large $t$, and that 
\[ \int_{\Sphere^d} \phi(t,\omega) Y_{m_{\alpha},\ell_\alpha}(\omega) \D{\omega} = 0
\]
for parameters belonging to our fixed finite set. 
\end{thm}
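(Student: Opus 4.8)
The statement actually admits a short proof: choose a pair $(m_0,\ell_0)$ that does \emph{not} belong to the given finite set --- possible since there are infinitely many such pairs --- and let $f$ be a solution of the scalar mode-$\ell_0$ ODE coming from \eqref{eq:boxdscoord} whose renormalised quotient $t^{-1}f(t)$ tends to a nonzero limit (this is generic: for $\ell_0=0$ it is \eqref{eq:renormalderive0}, for $\ell_0=1$ it is Remark~\ref{rmk:lineargrowthmodeest}, and for $\ell_0\geq2$ it is an easy addendum to Proposition~\ref{prop:lineargrowthhighermode}, whose limiting value is a nontrivial linear functional of the Cauchy data). Then $\phi(t,\omega)\eqdef f(t)Y_{m_0,\ell_0}(\omega)$ solves \eqref{eq:linearMCdS}, satisfies $\norm[L^\infty(\Sphere^d)]{\phi(t)}=\abs{f(t)}\norm[L^\infty(\Sphere^d)]{Y_{m_0,\ell_0}}\gtrsim t$, and is orthogonal to every $Y_{m_\alpha,\ell_\alpha}$ in the finite set by orthonormality. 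The framing of Section~\ref{sec:cosmohorizon} nevertheless suggests a different argument, using only the (fully established) instability of the \emph{spherically symmetric} mode together with the cosmological horizon; I describe it below, as it is the version that generalises and it avoids the supplement just mentioned.

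For the geometric proof the plan is to localise the linearly growing solution $x^0=t$ of \eqref{eq:linearMCdS} (which solves it, as is immediate from \eqref{eq:boxdscoord}) to a small cap of $\Sphere^d$. Fix a large $\tilde t\geq0$ and a direction $\omega_0\in\Sphere^d$; let $\mathcal K_0$ be the slice $\{x^0=\tilde t\}$ of the static patch $\{x\cdot\omega_0>\abs{x^0}\}\cap\dS$, a geodesic cap of angular radius $\arcsin\jb{\tilde t}^{-1}$ about $\omega_0$; and choose a smooth cut-off $\chi:\Sphere^d\to[0,1]$ equal to $1$ on $\mathcal K_0$ and supported in a slightly larger concentric cap. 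Let $\Phi$ solve \eqref{eq:linearMCdS} with Cauchy data $(\Phi,\partial_t\Phi)=(\tilde t\chi,\chi)$ on $\{x^0=\tilde t\}$ --- well posed and with finite speed of propagation, since $\dS$ is globally hyperbolic with the $\{x^0=\mathrm{const}\}$ slices as Cauchy surfaces. The data of $\Phi$ and of $x^0$ agree on $\mathcal K_0$; as explained in Section~\ref{sec:cosmohorizon} (cf.\ Figure~\ref{fig1}), the future domain of dependence of $\mathcal K_0$ is precisely the future static patch $\{x\cdot\omega_0>\abs{x^0}\}\cap\{x^0\geq\tilde t\}$, which contains the comoving point $(t,\jb{t}\,\omega_0)$ for every $t\geq\tilde t$. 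Hence $\Phi(t,\omega_0)=t$ for all $t\geq\tilde t$, so $\norm[L^\infty(\Sphere^d)]{\Phi(t)}\geq t$.

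Next I would remove the finitely many offending modes. Since the operator in \eqref{eq:linearMCdS} preserves each eigenspace of $\subDD$ (by \eqref{eq:boxdscoord}), the coefficient $f_\alpha(t)\eqdef\int_{\Sphere^d}\Phi(t,\omega)Y_{m_\alpha,\ell_\alpha}(\omega)\,\D{\omega}$ solves the scalar mode-$\ell_\alpha$ ODE; setting $R(t,\omega)\eqdef\sum_\alpha f_\alpha(t)Y_{m_\alpha,\ell_\alpha}(\omega)$, which is again a solution of \eqref{eq:linearMCdS}, the difference $\phi\eqdef\Phi-R$ is a solution all of whose $(m_\alpha,\ell_\alpha)$ Fourier coefficients vanish identically in $t$, giving the stated orthogonality for every $t$. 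The main point --- and the main obstacle --- is that this subtraction must not undo the growth. Here I use that each $f_\alpha$ solves a mode-$\ell_\alpha$ ODE, so by \eqref{eq:renormalderive0}, Remark~\ref{rmk:lineargrowthmodeest}, and Proposition~\ref{prop:lineargrowthhighermode} the quotient $t^{-1}f_\alpha(t)$ converges; hence $t^{-1}R(t,\omega_0)\to r_\infty$ for a finite constant $r_\infty$ and $t^{-1}\phi(t,\omega_0)\to1-r_\infty$. One must then guarantee $r_\infty\neq1$. The number $r_\infty$ is a linear functional of the Cauchy data of $\Phi$, hence of $\chi$: when $d\geq2$ a cap of angular radius $\approx\jb{\tilde t}^{-1}$ makes $\norm[L^1(\Sphere^d)]{\Phi(\tilde t)}\lesssim\tilde t^{1-d}\to0$, so all the data's Fourier coefficients against the $Y_{m_\alpha,\ell_\alpha}$, and therefore $r_\infty$, become small, and $r_\infty<1$ for $\tilde t$ large; when $d=1$ the $L^1$ gain is absent, and one instead runs the construction with two caps carrying a free relative weight $\lambda$ and chooses $\lambda$ so that at least one of the two comoving points witnesses $1-r_\infty\neq0$ (alternatively, on $\Sphere^1$ translations act by characters, so placing $\gtrsim\tilde t$ equally spaced caps with coefficients given by a discrete Fourier relation makes the relevant data coefficients vanish \emph{exactly}, dispensing with $R$). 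Either way $\abs{\phi(t,\omega_0)}\geq\tfrac12\abs{1-r_\infty}\,t$ for large $t$, so $\norm[L^\infty(\Sphere^d)]{\phi(t)}\gtrsim t$, completing the proof.
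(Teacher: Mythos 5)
Your second (geometric) argument is the same mechanism as the paper's proof --- localise the linearly growing spherically symmetric solution $\phi = x^0$ behind a cosmological horizon so that it survives, unmodified, along a comoving ray --- but the two proofs diverge on how the orthogonality constraints are enforced, and the difference is not cosmetic. The paper uses $N > \#\{(m_\alpha,\ell_\alpha)\}$ disjoint bumps with free amplitudes $\epsilon_i$ and kills the offending Fourier coefficients \emph{at the level of the initial data} by solving an underdetermined linear system; since the modes decouple, the coefficients then vanish for all time and no correction is ever subtracted, so the growth $\phi(t,\omega_i)=\epsilon_i t$ is exact and nothing about the asymptotics of the individual mode ODEs is needed. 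You instead use a single bump and subtract the projection $R$ onto the offending modes a posteriori, which obliges you to show that $R$ cannot cancel the growth at $\omega_0$. That forces you to control $\lim_{t\to\infty} t^{-1}f_\alpha(t)$ as a linear functional of the Cauchy data of $f_\alpha$ at time $\tilde t$, \emph{uniformly in} $\tilde t$; your phrase ``therefore $r_\infty$ becomes small'' silently assumes this uniform boundedness. Proposition \ref{prop:lineargrowthhighermode} only gives existence of the limit, not a quantitative bound on the data-to-limit map nor its non-degeneracy, so this is a genuine supplement to the paper. It is true and standard --- the mode equation has Euler-type leading part with indicial roots $1$ and $-(d+1)$, so asymptotic integration gives a basis $u_1\sim t$, $u_2\sim t^{-(d+1)}$, and the Wronskian representation of the coefficient of $u_1$ yields $\abs{r_\infty}\lesssim \tilde t^{-1}\abs{f_\alpha(\tilde t)}+\abs{f_\alpha'(\tilde t)}\lesssim \tilde t^{-d}$, which incidentally handles $d=1$ with a single cap and makes your two-cap/$\lambda$-tuning workaround (the weakest step as written, since ruling out the degenerate affine identity is left unexamined) unnecessary --- but you should prove it rather than assert it.

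The same caveat applies to your ``short proof'': it is a legitimately different and more elementary route (a single pure mode $f(t)Y_{m_0,\ell_0}$ outside the finite set, orthogonality for free), but it hinges entirely on the claim that for every $\ell_0\geq 2$ the mode ODE admits a solution with $t^{-1}f(t)\to c\neq 0$, which is exactly the addendum above and is not in the paper. Granting that addendum, the short proof is the cheapest way to the literal statement; what it loses is the content of the theorem in context, namely that the \emph{cosmological horizon} freezes in localised copies of the spherically symmetric instability, which is the phenomenon Theorem \ref{thm:badasymp} then exploits at the nonlinear level. So: both of your arguments are sound in architecture, but each outsources a nontrivial ODE asymptotics step that the paper's multi-bump construction is specifically designed to avoid.
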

\begin{proof}
Let $N > \#\{(m_\alpha,\ell_\alpha)\}$ and choose $N$ distinct points
on $\Sphere^d$ labelled $\{\omega_1,\dots,\omega_N\}$. Let
$\delta_0 = \min_{i\neq j} \abs{\omega_i - \omega_j}$. Fix
$t_0 \geq 8/\delta_0$. Consider the sets 
\[ B_i(r) = \set{(t_0,\omega)\in\dS}{\abs{\omega - \omega_i} < r},\]
by our discussion of the cosmological
horizon above, we see that there exists $\rho_0> 0$ such that,
\begin{itemize}
\item $B_i(2\rho_0)$ are mutually disjoint;
\item the domain of dependence for $B_i(\rho_0)$ contains the ray
$\set{(t,\omega_i)}{t \geq t_0}$. 
\end{itemize}
Now let $\phi_i^{(0)}: \{t_0\}\times\Sphere^d \to\Real$ be a smooth 
bump function supported on
$B_i(2\rho_0)$ and such that $\phi_i^{(0)} = 1$ on $B_i(\rho_0)$.
Consider the Cauchy problem for \eqref{eq:linearMCdS} with initial
data prescribed on $\{t_0\}\times\Sphere^d$ such that
\[
\phi|_{t_0}  = \sum_{i = 1}^N \epsilon_i t_0 \phi_i^{(0)}, \qquad
\text{and} \qquad 
\partial_t\phi|_{t_0} = \sum_{i = 1}^N \epsilon_i \phi_i^{(0)}.
\]

We have the inside the domain of dependence of $B_i(\rho_0)$, and in
particular along the ray $[t_0,\infty)\times\{\omega_i\}$, 
solution coincides with the spherically symmetric solution
$\phi(t,\omega) = \epsilon_i t$, and hence as long as for some
$\epsilon_i$ we have that $\epsilon_i \neq 0$, we have the lower bound
on the $L^\infty$ norm as claimed. 

It remains to verify the assumption on the spherical harmonics. But
the condition on the spherical harmonics reduce to a family of
$\#\{(m_\alpha,\ell_\alpha)\}$ linear equations which can be solved
for generic choices of $\omega_i$, and with the prescription
$\epsilon_1 = 1$. 
\end{proof}

\begin{thm}\label{thm:badasymp}
Suppose the full \cpmc problem, as a perturbation of $\dS$, has small
data global existence. Then there exist \cpmc manifolds which are not
eventually tangent to a light cone. More precisely, fix a initial time
$t_0 > 0$, then there exist a \cpmc manifold $M$, which can be chosen to
be arbitrarily close to $\dS$ at time $t_0$, such that for every
$(\tau_0,\xi_0) \in \Real^{1,1+d}$ we have
\[ \lim_{t\to \infty} \sup_{(t,x)\in M} \abs{ \abs{x - \xi_0} - t +
\tau_0} > 0.\]
\end{thm}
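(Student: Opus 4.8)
The plan is to use the same domain-of-dependence construction as in the proof of Theorem \ref{thm:allmodeunstable}, but this time feeding in the \emph{nonlinear} spherically symmetric solutions of Section \ref{sec:rotsym} rather than the linear mode $\phi = \epsilon t$. The key geometric input is that an expanding \cpmc manifold's asymptotic light cone is determined, via Corollary \ref{cor:expansiontolightcone} and Remark \ref{rmk:expansionstability}, by a parameter $\tau_0 \in \Real$ (the time-translation parameter), and that distinct spherically symmetric backgrounds — or the same background time-translated by different amounts — asymptote to \emph{distinct} light cones. So if we can arrange a single \cpmc manifold $M$ which, in two widely separated angular regions, looks asymptotically like spherically symmetric expanding solutions with \emph{different} $\tau_0$ values, then no single cone $\{\abs{x-\xi_0} = t - \tau_0\}$ can match both regions, and we are done.

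Concretely, first I would fix $t_0 > 0$ and pick two points $\omega_1, \omega_2 \in \Sphere^d$, set $\delta_0 = \abs{\omega_1 - \omega_2}$, take $t_0$ large enough (relative to $1/\delta_0$), and — exactly as in Theorem \ref{thm:allmodeunstable} — invoke the cosmological horizon discussion to produce $\rho_0 > 0$ with $B_1(2\rho_0), B_2(2\rho_0)$ disjoint and with the future domain of dependence of $B_i(\rho_0)$ containing the ray $\{(t,\omega_i) : t \geq t_0\}$. Next I would choose initial data on $\{t_0\}\times\Sphere^d$ which, \emph{restricted to} $B_i(\rho_0)$, coincides with the initial data (at extrinsic time $t_0$) of a spherically symmetric future-expanding solution $r = f_i(t)$ with asymptotic parameters $\tau_0^{(1)} \neq \tau_0^{(2)}$; one can take the two $f_i$ to be time-translates of a single expanding background by different amounts, and interpolate smoothly in the annular region $B_i(2\rho_0)\setminus B_i(\rho_0)$ in between, keeping the whole datum within any prescribed $C^k$-neighbourhood of that of $\dS$ by taking the translation parameters small and $t_0$ large. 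Assuming small-data global existence (the hypothesis of the theorem), finite speed of propagation forces the resulting \cpmc manifold $M$ to agree, inside the domain of dependence of $B_i(\rho_0)$ and in particular along the ray $[t_0,\infty)\times\{\omega_i\}$, with the spherically symmetric solution $r = f_i(t)$. Then, for any candidate $(\tau_0,\xi_0)$, I would evaluate $\abs{\abs{x - \xi_0} - t + \tau_0}$ along the two rays: along ray $i$ the point of $M$ has $\abs{x} = f_i(t) = t - \tau_0^{(i)} + o(1)$ by Corollary \ref{cor:expansiontolightcone}, and a short computation using the triangle inequality shows that $\abs{\abs{x-\xi_0} - t + \tau_0}$ is asymptotically bounded below by a positive constant depending on $\tau_0 - \tau_0^{(i)}$ and on $\xi_0$ (projected onto the $\omega_i$ direction); since $\tau_0$ can be adjusted to kill the contribution from at most one of the two rays, and $\xi_0$ directionally affects the two rays differently, no choice of $(\tau_0,\xi_0)$ makes both limits vanish, hence the supremum over $M$ stays bounded away from $0$.

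The one genuinely delicate point is the last estimate: one must be careful that a nonzero spatial offset $\xi_0$ does not conspire with a time offset $\tau_0$ to fake a match along both rays simultaneously. This is handled by noting that along the ray in direction $\omega_i$ the quantity $\abs{x - \xi_0} = \abs{f_i(t)\omega_i - \xi_0}$ expands as $f_i(t) - \langle \xi_0, \omega_i\rangle + O(1/f_i(t))$, so asymptotic cancellation along ray $i$ forces $\tau_0 = \tau_0^{(i)} + \langle \xi_0,\omega_i\rangle$; requiring this for both $i=1,2$ gives $\tau_0^{(1)} - \tau_0^{(2)} = \langle \xi_0, \omega_2 - \omega_1\rangle$, which is a single linear constraint on $\xi_0 \in \Real^{d+1}$ — so for $d \geq 1$ there are indeed choices of $\xi_0$ satisfying it. To defeat this, I would instead use \emph{three} angular regions $\omega_1,\omega_2,\omega_3$ in general position with three distinct parameters $\tau_0^{(i)}$, generically chosen; matching along all three forces two linear constraints on $\xi_0$ together with the requirement that $(\tau_0^{(1)},\tau_0^{(2)},\tau_0^{(3)})$ lie in the range of an affine map, which fails for a generic (indeed, an open dense set of) choices of the $\tau_0^{(i)}$. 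This is the step I expect to require the most care; everything else is a routine repackaging of the horizon/domain-of-dependence argument already used for Theorem \ref{thm:allmodeunstable} together with the asymptotics of Section \ref{sec:rotsymasymp}.
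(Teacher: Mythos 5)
Your overall strategy is the same as the paper's: use the cosmological horizon to manufacture causally independent angular regions, plant in each one the data of a translated copy of an expanding spherically symmetric solution, invoke the assumed small-data global existence together with finite speed of propagation, and then argue that no single cone $\{\abs{x-\xi_0}=t-\tau_0\}$ can asymptotically match all the planted pieces. You are also right---and in fact more careful than the published proof, which uses only two antipodal regions carrying the $\pm\epsilon e_{d+1}$ spatial translates of $\dS$---to worry that a spatial offset $\xi_0$ can conspire with a time offset $\tau_0$: each planted cap around $\omega_i$ with asymptotic shift $\tau_0^{(i)}$ imposes exactly one affine condition $\tau_0=\tau_0^{(i)}+\langle\xi_0,\omega_i\rangle$ on the $d+2$ unknowns $(\tau_0,\xi_0)$, so two caps can always be matched simultaneously (for the two-region construction with opposite spatial translates, $\tau_0=-\epsilon$, $\xi_0=0$ matches both caps in the limit).

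The gap is in your proposed fix. Three regions do not suffice when $d\geq 1$: any three distinct points $\omega_1,\omega_2,\omega_3\in\Sphere^d$ are affinely independent (a line meets a sphere in at most two points), so the three vectors $(1,-\omega_i)\in\Real^{d+2}$ are linearly independent and the system $\tau_0-\langle\xi_0,\omega_i\rangle=\tau_0^{(i)}$, $i=1,2,3$, is solvable for \emph{every} choice of the $\tau_0^{(i)}$; the affine map whose range you invoke is surjective onto $\Real^3$, so no choice of targets, generic or otherwise, escapes it. The correct count is $d+3$ regions: $d+3$ vectors $(1,-\omega_i)$ in $\Real^{d+2}$ are necessarily linearly dependent, say $\sum_i\lambda_i(1,-\omega_i)=0$ with $\lambda\neq 0$, and then any choice of shifts with $\sum_i\lambda_i\tau_0^{(i)}\neq 0$ renders the system unsolvable. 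Equivalently, you must plant a shift profile at the $\omega_i$ that cannot be interpolated by any function in the $(d+2)$-dimensional span of the $\ell=0$ and $\ell=1$ spherical harmonics, which is impossible to avoid with fewer than $d+3$ sample points in general position. With that modification---and observing that unsolvability of the linear system gives, for each fixed $(\tau_0,\xi_0)$, at least one ray along which the deviation has a nonzero limit---the remainder of your argument goes through.
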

\begin{proof}
Consider the initial data as a graph over the $t_0$ slice of $\dS$. 
Since $t_0 > 0$, we can choose initial data at $t_0$ such that
restricted to the set $\{x^{d+1} \geq \sqrt{1 + t_0^2}\}$ agree with
the spatial translation of $\dS$ in the positive $x^{d+1}$ direction
by distance $\epsilon$, and such that restricted to the set $\{x^{d+1}
\leq - \sqrt{1 + t_0^2}\}$ agree with the spatial translation of $\dS$
in the negative $x^{d+1}$ direction by distance $\epsilon$, and
smoothly joined in between. Our assumption of small data global
existence implies that for sufficiently small $\epsilon$ the Cauchy
problem given this initial data can be solved globally. In the
relevant domains of dependence, which in particular will contain a
small neighbourhood of the curves with $x^1 = \cdots = x^d = 0$ and $t
> t_0$ the solution will agree with the two spatially translated
solutions. From this we see that for any $t > t_0$ the quantity 
\[ \sup_{(t,x)\in M} \abs{\abs{x - \xi_0} - t + \tau_0} > \epsilon /
2.\qedhere\]
\end{proof}

\begin{rmk}
That $t_0 > 0$ is only for convenience. The data can be chosen
to be arbitrarily close to that of $\dS$ at any one fixed time by
a Cauchy stability argument. By choosing larger $t_0$ we can also
include more ``pieces'' of translated solutions, as is done in the
proof to Theorem \ref{thm:allmodeunstable}. This shows that the
asymptotic profiles for the full problem is also, in some sense,
infinite dimensional, in accordance to the idea that the cosmological
horizon freezes in perturbations after finite time. 
\end{rmk}

\begin{rmk}
For the last inequality in the proof of Theorem \ref{thm:badasymp} 
to hold we used that $\dS$ is ruled by null
geodesics. Thus as $t \to\infty$, while the \emph{angular size} of the
set $\{x^{d+1} \geq \sqrt{1 + t_0^2}\}\cap \dS$ gets smaller and
smaller, the physical ``diameter'' of this set at a fixed time remains
roughly constant. 
\end{rmk}

Observe in the statement of Theorem \ref{thm:badasymp} that
\emph{small data global existence is assumed}. One may ask whether it
is possible to prove this fact independently of asymptotic stability
(which is false in the graphical setting in view of Theorem
\ref{thm:badasymp}). In the graphical setting Theorem
\ref{thm:allmodeunstable} suggests that this would be difficult. In
fact, in view of Theorem \ref{thm:allmodeunstable}, the asymptotics
given by Proposition \ref{prop:lineargrowthhighermode} implies that
$\partial_t\phi(t)$ for a generic solution of \eqref{eq:linearMCdS} 
converges to a non-zero constant (for each fixed $\omega$). This poses
a severe obstacle to studying the \cpmc problem in the formulation of
a quasilinear wave equation for a graph over the normal bundle of
$\dS$. Nevertheless, as we shall see in the remainder of this paper,
the desired small data global existence is in fact true, and we can
also obtain some control over the asymptotic behaviour of the
solutions. 

\tocaddline
\section{Inverse-Gauss-map gauge}\label{sec:IGMgauge}

The linear analysis of Section \ref{sec:linearinstability} leaves
still a ray of hope: while the analysis shows that treating a \cpmc
manifold which is a perturbation of $\dS$ as a graph over its normal
bundle leads to great difficulties in studying the associated
quasilinear wave equations, it also shows that certain
\emph{renormalised} quantities behave better. In particular,
\eqref{eq:linearenergyestimate} gives derivative control over 
the rescaled solution $\jb{t}^{-1} \phi$ to \eqref{eq:linearMCdS}. One
approach to the \cpmc stability problem would be to derive the
equation for this rescaled quantity, and hope that its equation of
motion can be studied perturbatively under this derivative decay. Here
we take an alternative approach. Instead of treating a \cpmc manifold
as a graph over the normal bundle of $\dS$, we introduce in this
section the \emph{inverse-Gauss-map gauge}. In this gauge the
evolution equation for the \cpmc problem is most naturally expressed
in terms of the Codazzi equations for the embedding, which is
automatically at the \emph{derivative level} compared to the graphical
formulation. In other words, by reformulating the question in a
geometric manner, we will be able to avoid some of the difficulties
that manifest in the graphical treatment of the problem.

For a time-like hypersurface $M$ of $\Real^{1,d+1}$, the Gauss map is
a (smooth) map $G: M\to \dS$, where the value of $G$ corresponds to the
out-ward unit-normal of the hypersurface (see Appendix
\ref{app:gaussmap}). We will consider the regime in which $G$ is a
diffeomorphism onto its image; this will be the case, for example, 
if we have a sufficiently small perturbation of $\dS$ (see Remark
\ref{rmk:invertible} below). We let $\Phi$ denote the 
inverse map to $G$ in this case. 

Now, using the \emph{ambient geometry} of $\Real^{1,d+1}$ we can
naturally identify the tangent spaces $T_pM$ and $T_{G(p)} \dS$. Note
that this identification is \emph{different} from the identification
afforded by the derivative map $\D*{G}$ or $\D*\Phi$. Under
this identification we observe that $\D*{G}$ as well as $\D*\Phi$ can
each be interpreted as a $(1,1)$-tensor on $TM$ and $T\dS$
respectively. By the \emph{inverse-Gauss-map gauge} we mean that $M$
is studied as the image of $\dS$ under the mapping $\Phi$, and its
geometry studied through the ``tensor field'' $\D*\Phi$. 

In classical differential geometry (see also Appendices
\ref{app:defmc} and \ref{app:gaussmap}) it is well-known that
the differential of the Gauss map is the shape operator. Using the
identification above, as well as the fact that $\D*{G}\circ \D*{\Phi} =
\Id$, we see that the first and second fundamental forms, which we now
write as $g$ and $k$, of $M$ as a
hypersurface in $\Real^{1,d+1}$ can be expressed as tensor fields
\emph{over} $\dS$, where $\ooo{g}$ is the metric of $\dS$. For
convenience we also write $A_a^b = (\D*\Phi)_a^b$ the $(1,1)$-tensor
interpretation of the deformation. 
\begin{align}
g_{ab} &= \ooo{g}_{cd} A_a^c A_b^d \\
k_{ab} &= \ooo{g}_{ac} A_b^c.
\end{align} 
The second fundamental form $k_{ab}$ is symmetric. The equation of
motion that we will be studying then arises from the \emph{Codazzi}
equations of the embedding $M\to \Real^{1,d+1}$, which reads
\begin{align}
\label{eq:codazzi} \covD_a k_{bc} - \covD_b k_{ac} &= 0,\\
\label{eq:dualcodazzi} g^{ac} \covD_a k_{bc} &= 0.
\end{align}
The equation \eqref{eq:codazzi} is a direction consequence of Codazzi
equation using that $\Real^{1,d+1}$ is flat, and contains within it
the integrability condition that ``$\D*{\D*\Phi} = 0$''; on the other
hand
\eqref{eq:dualcodazzi} is obtained from taking the $g$ trace of
\eqref{eq:codazzi} and using the condition that $\trace_g k = d+1$ is
constant for our solution. 

The system \eqref{eq:codazzi} and \eqref{eq:dualcodazzi} clearly forms
a \emph{quasilinear divergence-curl} system for the unknown $A_a^b$,
and in this form already exhibits the hyperbolic nature of the
equations; it also has some formal similarity with the systems of
nonlinear electrodynamics. As we are interested in the case of
perturbations of $\dS$, we observe that our $A_a^c$ can be written as
the perturbation $A_a^c = \delta_a^c + \phi_a^c$, with $\phi \equiv 0$
being the specific $\dS$ solution.

For convenience we will also write $\phi_{ab} = \ooo{g}_{ac}\phi^c_b$,
we have that by assumption $k_{ab} = \ooo{g}_{ab} + \phi_{ab}$, and
hence $\phi_{ab}$ is a symmetric two-tensor. 

We will write $\Gamma_{ab}^c$ for the Christoffel symbols of $g_{ab}$
relative to the background metric $\ooo{g}_{ab}$, that is to say
\begin{equation}
\Gamma_{ab}^c \eqdef \frac12 g^{cd} \left( \ooo\covD_a g_{bd} +
\ooo\covD_b g_{ad} - \ooo\covD_d g_{ab}\right) 
\end{equation} 
where $\ooo\covD$ is the Levi-Civita connection relative to the metric
$\ooo{g}$. Observe that $\Gamma$ is at the level of one derivative of
$\phi$: this could potentially cause a bit of problem with expansion
of the
equations. Fortunately we have some nice cancellations that comes in
to play. In the remaining part of this section we will re-write the
equations of motion \eqref{eq:codazzi} and \eqref{eq:dualcodazzi} in
terms of $\phi$ and the fixed background $\ooo{g}$. 

First we expand $\Gamma_{ab}^c$, using that $\ooo\covD
\ooo{g} = 0$, and that
\begin{equation}\label{eq:defgfromphi}
g_{ab} = \ooo{g}_{ab} + 2\phi_{ab}  + \phi_{ad}\phi^d_b.
\end{equation}
We have
\[
\Gamma_{ab}^c = \frac12 g^{cd} \big[ \ooo\covD_a (2\phi_{bd} +
\phi_{bf} \phi_d^f)
+ \ooo\covD_b (2 \phi_{ad} + \phi_{af} \phi_d^f)
 - \ooo\covD_d (2 \phi_{ab}  + \phi_{bf}
\phi_a^f)\big].
\]
So \eqref{eq:codazzi} becomes
\begin{align*}
0 &= \covD_a k_{bc} - \covD_b k_{ac} \\
&= \ooo\covD_a \phi_{bc} - \ooo\covD_{b} \phi_{ac} - \Gamma_{ab}^e
k_{ec} - \Gamma_{ac}^e k_{be} + \Gamma_{ba}^e k_{ec} + \Gamma_{bc}^e
k_{ae} \\
& = \ooo\covD_a \phi_{bc} - \ooo\covD_{b} \phi_{ac} - \Gamma_{ac}^e
k_{be} + \Gamma_{bc}^e k_{ae} \\
&= \ooo\covD_a \phi_{bc} - \ooo\covD_{b} \phi_{ac} \\
& \qquad - \frac12 g^{ef} \left[ \ooo\covD_a (2\phi_{cf} +
\phi_{ch} \phi_f^h)
+ \ooo\covD_c (2 \phi_{af} + \phi_{ah} \phi_f^h)
 - \ooo\covD_f (2 \phi_{ac}  + \phi_{ch}\phi_a^h) \right] k_{be} \\
& \qquad + \frac12 g^{ef} \left[ \ooo\covD_b (2\phi_{cf} +
\phi_{ch} \phi_f^h)
+ \ooo\covD_c (2 \phi_{bf} + \phi_{bh} \phi_f^h)
 - \ooo\covD_f (2 \phi_{bc}  + \phi_{ch}\phi_b^h) \right] k_{ae}
\end{align*}
Now we use the following: let $\psi_a^c$ be such that 
\begin{equation}\label{eq:defpsi} 
(\delta_a^c + \psi_a^c)(\delta_c^b + \phi_c^b) = \delta_a^b
\end{equation}
For $\phi$ sufficiently small this exists as the linear mapping $A$ is
invertible. This in particular implies that $\psi\phi = \phi\psi$
(they commute). We have that by definition 
\begin{equation} g^{ef} = (\delta_a^f + \psi_a^f) \ooo{g}^{ab}
(\delta_b^e +
\psi_b^e), \end{equation}
so that
\begin{equation}\label{eq:inversedecomp}
g^{ef}k_{ae} = \delta_a^f + \psi_a^f.
\end{equation}
Note that \eqref{eq:inversedecomp} implies, via our assumption that
$\trace_g k = d+1$, that 
\begin{equation}\label{eq:inversetracefree}
\trace\psi = \psi_a^a = 0,
\end{equation}
this in turn implies that
\begin{equation}\label{eq:traceid}
\phi_a^a + \phi_a^b \psi_b^a = 0.
\end{equation}

\begin{rmk}
The equations \eqref{eq:inversetracefree} and \eqref{eq:traceid}
forces certain constraints on the prescribed initial data. More
precisely, we should think that the free data to be prescribed would
be $\psi$, which satisfies \eqref{eq:inversetracefree} and from which
we can think about $\phi$. One can equivalently write down the
equation for $\psi$; however it seems that the equations for $\phi$ is
somewhat simpler than that of $\psi$. 
\end{rmk}

\begin{rmk}
Observe that in \eqref{eq:traceid}, since $\psi$ is trace-free, we
can decompose $\phi = \hat{\phi} + \tilde{\phi}$, and similarly $\psi
= \hat{\psi} + \tilde{\psi}$, into pure-trace and traceless parts.
\eqref{eq:inversetracefree} says that $\hat{\psi} = 0$. The identity
\eqref{eq:traceid} becomes $ \trace \hat{\phi} +
\tilde{\phi}:\tilde{\psi} = 0$, indicating that the trace part of
$\phi$ should be ``quadratic'' in size compared to the trace-free
part.
\end{rmk}

Using this decomposition we expand \eqref{eq:codazzi} to get
\begin{align*}
0 &= \ooo\covD_a \phi_{bc} - \ooo\covD_{b} \phi_{ac} \\
& \qquad - \frac12 \left[ \ooo\covD_a (2\phi_{cf} +
\phi_{ch} \phi_f^h)
+ \ooo\covD_c (2 \phi_{af} + \phi_{ah} \phi_f^h)
 - \ooo\covD_f (2 \phi_{ac}  + \phi_{ch}\phi_a^h) \right] (\delta_b^f
   + \psi_b^f) \\
& \qquad + \frac12 \left[ \ooo\covD_b (2\phi_{cf} +
\phi_{ch} \phi_f^h)
+ \ooo\covD_c (2 \phi_{bf} + \phi_{bh} \phi_f^h)
 - \ooo\covD_f (2 \phi_{bc}  + \phi_{ch}\phi_b^h) \right] (\delta_a^f
   + \psi_a^f)\\
&= -\ooo\covD_a \phi_{bc} + \ooo\covD_{b} \phi_{ac} -
\ooo\covD_a (\phi_{ch}\phi_b^h)  + \ooo\covD_b (\phi_{ch}\phi_a^h)\\
& \qquad - \frac12 \left[ \ooo\covD_a (2\phi_{cf} +
\phi_{ch} \phi_f^h)
+ \ooo\covD_c (2 \phi_{af} + \phi_{ah} \phi_f^h)
 - \ooo\covD_f (2 \phi_{ac}  + \phi_{ch}\phi_a^h) \right] \psi_b^f \\
& \qquad + \frac12 \left[ \ooo\covD_b (2\phi_{cf} +
\phi_{ch} \phi_f^h)
+ \ooo\covD_c (2 \phi_{bf} + \phi_{bh} \phi_f^h)
 - \ooo\covD_f (2 \phi_{bc}  + \phi_{ch}\phi_b^h) \right]\psi_a^f\\
&= -\ooo\covD_a \phi_{bc} + \ooo\covD_{b} \phi_{ac} -
\ooo\covD_a (\phi_{ch}\phi_b^h)  + \ooo\covD_b (\phi_{ch}\phi_a^h)\\
& \qquad - \left[ \ooo\covD_a \phi_{cf} + \ooo\covD_c \phi_{af} 
 - \ooo\covD_f \phi_{ac}  \right] \psi_b^f + \left[ \ooo\covD_b
   \phi_{cf} + \ooo\covD_c  \phi_{bf} 
 - \ooo\covD_f \phi_{bc}  \right]\psi_a^f\\
& \qquad - \frac12 \left[ \ooo\covD_a (\phi_{ch} \phi_f^h)
+ \ooo\covD_c (\phi_{ah} \phi_f^h)
 - \ooo\covD_f (\phi_{ch}\phi_a^h) \right] \psi_b^f \\
& \qquad + \frac12 \left[ \ooo\covD_b (\phi_{ch} \phi_f^h)
+ \ooo\covD_c (\phi_{bh} \phi_f^h)
 - \ooo\covD_f (\phi_{ch}\phi_b^h) \right]\psi_a^f \\
&= -\ooo\covD_a \phi_{bc} + \ooo\covD_{b} \phi_{ac} -
\ooo\covD_a (\phi_{ch}\phi_b^h)  + \ooo\covD_b (\phi_{ch}\phi_a^h)\\
& \qquad - \left[ \ooo\covD_a \phi_{cf} + \ooo\covD_c \phi_{af} 
 - \ooo\covD_f \phi_{ac}  \right] \psi_b^f + \left[ \ooo\covD_b
   \phi_{cf} + \ooo\covD_c  \phi_{bf} 
 - \ooo\covD_f \phi_{bc}  \right]\psi_a^f\\
& \qquad + \frac12 \left[ \ooo\covD_a \phi_{ch} 
+ \ooo\covD_c \phi_{ah} \right] (\phi_b^h + \psi_b^h) - \frac12 \left[
\ooo\covD_b \phi_{ch} 
+ \ooo\covD_c \phi_{bh}\right](\phi_a^h + \psi_a^h) \\
& \qquad - \frac12 \left[ \phi_{ch}\ooo\covD_a \phi_f^h +
\phi_{ah}\ooo\covD_c \phi_f^h 
 - \ooo\covD_f (\phi_{ch}\phi_a^h) \right] \psi_b^f \\
& \qquad + \frac12 \left[ \phi_{ch} \ooo\covD_b \phi_f^h
+ \phi_{bh}\ooo\covD_c\phi_f^h
 - \ooo\covD_f (\phi_{ch}\phi_b^h) \right]\psi_a^f
\end{align*}
where we used that $\phi\psi = \psi \phi = - \psi - \phi$. This allows
us to further simplify 
\begin{align*}
0 &= -\ooo\covD_a \phi_{bc} + \ooo\covD_{b} \phi_{ac} \\
& \qquad + \frac12\left(
\ooo\covD_c\phi_{ah} - \ooo\covD_a \phi_{ch}\right)\phi_b^h + \frac12
\left( \ooo\covD_b\phi_{ch} - \ooo\covD_c\phi_{bh}\right) \phi_a^h +
\left( \ooo\covD_b \phi_{ah} - \ooo\covD_a\phi_{bh}\right) \phi_c^h\\
& \qquad - \frac12 \left[ \ooo\covD_a \phi_{cf} + \ooo\covD_c
\phi_{af} 
 - 2\ooo\covD_f \phi_{ac}  \right] \psi_b^f + \frac12 \left[
   \ooo\covD_b
   \phi_{cf} + \ooo\covD_c  \phi_{bf} 
 - 2\ooo\covD_f \phi_{bc}  \right]\psi_a^f\\
& \qquad - \frac12 \left[ \phi_{ch}(\ooo\covD_a \phi_f^h- \ooo\covD_f
\phi_a^h) +
\phi_{ah}(\ooo\covD_c \phi_f^h - \ooo\covD_f\phi_c^h) \right] \psi_b^f
\\
& \qquad + \frac12 \left[ \phi_{ch} (\ooo\covD_b \phi_f^h -
\ooo\covD_f\phi_b^h ) + \phi_{bh}(\ooo\covD_c\phi_f^h -
\ooo\covD_f\phi_c^h)\right]\psi_a^f.
\end{align*}
From this we can extract an exterior-derivative structure by writing
the equation as some coefficients times $\ooo\covD_{[i}\phi_{j]k}$.
More precisely, we have that the above expression can be further
simplified to be
\begin{multline}\label{eq:coda2}
0 = \ooo\covD_{[i}\phi_{j]k} \cdot \Big[
(\delta^i_b+ \psi^i_b)\delta^j_a(\delta^k_c+ \phi^k_c) - (\delta^i_a +
\psi^i_a)\delta^j_b(\delta^k_c + \phi^k_c) + \delta^i_c \delta^j_a
\phi^k_b -
\delta^i_c\delta^j_b \phi^k_a  \\
+ \psi^i_b \delta^j_c(\delta^k_a + \phi^k_a)  - \psi^i_a \delta^j_c
  (\delta^k_b+ \phi^k_b)\Big].
\end{multline}
The most important feature of \eqref{eq:coda2} is that the term in the
bracket can be written as 
\[ 2\delta_b^i\delta_a^j\delta_c^k + O(|\phi|,|\psi|) \]
which implies immediately that as a linear mapping on $T^{0,3}\dS$ to itself, it
has no null space when $\phi,\psi$ are small. Furthermore, we can
rewrite \eqref{eq:coda2} as
\begin{multline}\label{eq:coda3}
0 = \ooo\covD_{[i}\phi_{j]k} \cdot \Big[ (A^{-1})^i_b \delta^j_b A^k_c
- (A^{-1})^i_a\delta^j_b A^k_c \\ + (A^{-1})^i_b \delta^j_c A^k_a -
  (A^{-1})^i_a\delta^j_c A^k_b + \delta^i_c\delta^j_b\delta^k_a -
\delta^i_c\delta^j_a\delta^k_b\Big],
\end{multline}
where $A^a_c = \delta^a_c + \phi^a_c$ is as defined in the beginning
of this section. \emph{Provided that the term inside the brackets has no
non-trivial kernel}, we can more conveniently write \eqref{eq:coda3} as 
\begin{equation}\label{eq:codafinal}
0 = \ooo\covD_{[a}\phi_{b]c}.
\end{equation}
As we will see in Proposition \ref{prop:coeffinvert} below, in the 
situations that we will be interested in (small
perturbations of the spherically symmetric, expanding solutions), this
condition is satisfied. For now let us assume \eqref{eq:codafinal} and
continue. 

An immediate consequence of \eqref{eq:codafinal} is a simplified
expression for the Christoffel symbol. Indeed, we can simplify to
\begin{equation}\label{eq:christre}
\Gamma^c_{ab} = g^{cd} ( \ooo\covD_a \phi_{bf})(\delta^f_d + \phi^f_d)
= (\delta^c_e + \psi^c_e) (\ooo\covD_a \phi_b^e)
\end{equation}
which we note is \emph{symmetric} in the indices $a,b$. 

Next we treat the divergence equation \eqref{eq:dualcodazzi}, which
gives us, in view of \eqref{eq:christre},
\begin{align*}
0 &= g^{ab}\covD_ak_{bc} = \covD_a(g^{ab}k_{bc}) = \covD_a \psi_c^a \\
&= \ooo\covD_a \psi_c^a + \Gamma_{ae}^a \psi_c^e - \Gamma_{ac}^e
\psi_e^a \\
&= \ooo\covD_a\psi_c^a + (\ooo\covD_a \phi_e^a) \psi_c^e + \psi^a_e
(\ooo\covD_a\phi_f^e) \psi_c^f - (\psi^a_f +
\psi^a_e\psi^e_f)(\ooo\covD_a
\phi_c^f).
\end{align*}
Taking a derivative of \eqref{eq:defpsi} we obtain
\begin{equation}\label{eq:derpsi}
\ooo\covD_a \psi^b_c = -(\delta_c^s + \psi_c^s) \ooo\covD_a \phi_s^t
(\delta_t^b + \psi_t^b).
\end{equation}
Using \eqref{eq:derpsi} and \eqref{eq:codafinal} we then have
\begin{align*}
0 &= -(\delta_c^s + \psi_c^s) \ooo\covD_a \phi^a_s - (\delta_c^s +
\psi_c^s) (\ooo\covD_a \phi_s^t)\psi_t^a \\
& \qquad + (\ooo\covD_a \phi_e^a)
\psi_c^e + \psi^a_e (\ooo\covD_a\phi_f^e) \psi_c^f - (\psi^a_f +
\psi^a_e\psi^e_f)(\ooo\covD_a \phi_c^f) \\
& = - \ooo\covD_a\phi^a_c - 2 \psi_t^a \ooo\covD_a\phi_c^t - \psi^a_e
\psi^e_f \ooo\covD_a \phi_c^f. 
\end{align*}
We can write more compactly 
\begin{equation}\label{eq:dcodafinal}
0 = (\delta_f^a + 2 \psi_f^a + \psi^a_e \psi^e_f)(\ooo\covD_a
\phi_c^f).
\end{equation}
Now, we note that since $A$ is self-adjoint relative to $\ooo{g}$, 
from Proposition \ref{prop:linalg} below we have that both $\phi$ and
$\psi$ are self-adjoint relative to both the background metric and its
inverse. In particular, lowering and raising an index in
\eqref{eq:dcodafinal} gives us
\[ 0 = (\delta_e^a + \psi^a_e)(\delta^e_f + \psi^e_f) \ooo{g}^{fd}
\ooo\covD_a \phi_{cd} = g^{ad} \ooo\covD_{a}\phi_{cd}, \]
an even more compact notation for the same equation. 

To summarise, in the inverse-Gauss-map gauge, the CMC equations reduce
to the system 
\begin{equation}\label{eq:mainsystem}
\begin{split}
\ooo\covD_a \phi_b^c - \ooo\covD_b \phi_a^c &= 0, \\
(\delta^a_b + 2 \psi^a_b + \psi^a_d \psi^d_b) \ooo\covD_a \phi^b_c &=
0.
\end{split}
\end{equation}
In the sequel we will study the evolution of this system. 

\begin{rmk}
In terms of the Cauchy problem, the system \eqref{eq:mainsystem} is
equivalent to the full \cpmc problem. Observe that we can reconstruct
the map $\Phi$, the inverse of the Gauss map $G$, by integrating
$\D*\Phi$ along constant $\omega$ lines of $\dS$, given initial data
prescribed on a hypersurface transverse to the constant $\omega$
lines. 

More precisely, using the notation of Remark
\ref{rmk:defcauchyproblem}, observe that knowledge of $\D*{\Upsilon}_0$ and
$\Upsilon_1$ is sufficient to give us the value of the Gauss map $G$
along $\{0\}\times\Sigma$, which by assumption embeds as a space-like
$d$-sphere in $\dS$, and hence is transverse to the constant $\omega$
lines. The initial value for the inverse map $\Phi$ is then
given by $\Upsilon_0$. Once we solve for $\phi$, we can reconstruct
$\D*\Phi$ and integrate to get $\Phi$. 

The Cauchy problem satisfied by $\phi$, however, has its initial value
given by the value of $\D*\Phi$ along the initial slice, which depends
on the $2$-jet of the solution embedding $\Upsilon$ along $\{t = 0\}$.
That we can recover the $2$-jet pointwise from the $1$-jet is due to
the local wellposedness of the Cauchy problem as described in Remark
\ref{rmk:defcauchyproblem}, and is equivalent to the real-analytic 
local wellposedness of the Cauchy problem for $\Upsilon$ in the sense of
Cauchy-Kowalevski.

The point of view we prefer to take, however, is that issue of local
well-posedness of the \cpmc problem is already solved (see the
discussion surrounding Remark \ref{rmk:defcauchyproblem}). The system 
\eqref{eq:mainsystem} is an associated system of PDEs that allows us
to more easily derive good \emph{a priori} estimates which, for
a large class of suitable data, leads to global-in-time existence and
good controls on the asymptotics. 
\end{rmk}

\subsection{Spherical symmetry revisited}\label{sec:sphsymrevisit}
Before we launch into the study of the full system
\eqref{eq:mainsystem}, however, let us first re-investigate the 
spherically symmetric case, which
we previously treated in Section \ref{sec:rotsym}, now using the
language introduced above. 

Let $\tau^a$ denote the unit time-like vector field orthogonal to the
constant $t$ slices of $\dS$. Under spherical symmetry, it is easy to
see that the $\ooo{g}$-self-adjoint map $\phi_a^c$ is determined by 
two scalar functions $\zeta$ and $\eta$ as
\begin{equation}
\phi_a^c = \eta \tau_a\tau^c + \zeta \delta_a^c;
\end{equation}
furthermore, $\eta$ and $\zeta$ are constant on the constant $t$
slices of $\dS$. Using \eqref{eq:defpsi} and \eqref{eq:inversedecomp}
we have that the constant mean curvature condition is equivalent to 
\begin{equation}\label{eq:etazeta} \frac{d}{1+\zeta} + \frac{1}{1+\zeta - \eta} = d+1 \iff \eta =
\frac{\zeta (1+\zeta)}{\zeta + \frac{1}{d+1}}.\end{equation}
This implies that 
\begin{equation}\label{eq:expressionA1}
 A^{c}_a = (1 + \zeta)\left[\left(\delta^c_a + \tau_a\tau^c\right) -
\frac{1}{(d+1)\zeta + 1} \tau_a\tau^c\right] \end{equation}
and
\begin{equation}\label{eq:expressionA2} (A^{-1})^c_a = \frac{1}{1+\zeta} \left[ (\delta^c_a + \tau_a\tau^c)
- ((d+1)\zeta + 1)\tau_a\tau^c\right] \end{equation}
so we have that $A^c_a$ is positive definite provided $\zeta > -
\frac{1}{d+1}$. We now verify that in this regime, the bracketed terms
in \eqref{eq:coda3} have no non-trivial kernel. 
\begin{prop}\label{prop:coeffinvert}
In the spherically symmetric case, with $A^c_a$ as in
\eqref{eq:expressionA1} and with $\zeta > - \frac{1}{d+1}$, the
equations \eqref{eq:coda3} and \eqref{eq:codafinal} are equivalent. 
\end{prop}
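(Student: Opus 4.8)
The plan is to prove the two implications separately; only one has content. That \eqref{eq:codafinal} implies \eqref{eq:coda3} is immediate: if $\ooo\covD_{[a}\phi_{b]c}$ vanishes identically, then every term on the right-hand side of \eqref{eq:coda3} vanishes. For the converse I would regard the bracketed coefficient in \eqref{eq:coda3} as a bundle endomorphism. Put $\omega_{ijk}\eqdef\ooo\covD_{[i}\phi_{j]k}$, a section of the bundle $\mathcal{A}$ over $\dS$ of $(0,3)$-tensors antisymmetric in the first two slots; as observed after \eqref{eq:coda2}, the bracket in \eqref{eq:coda3} defines a fibrewise linear map $\mathcal{B}\colon\mathcal{A}\to\mathcal{A}$ (its output is again antisymmetric in the first two indices) which to leading order is a nonzero constant multiple of the identity (coming from the term $2\,\delta^i_b\delta^j_a\delta^k_c$ noted there), and \eqref{eq:coda3} reads simply $\mathcal{B}(\omega)=0$. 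It therefore suffices to prove that $\mathcal{B}$ is fibrewise invertible on all of $\dS$ whenever $\zeta>-\tfrac{1}{d+1}$: then $\omega\equiv0$, which is exactly \eqref{eq:codafinal}.

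To establish this fibrewise invertibility I would feed in the explicit spherically symmetric expressions \eqref{eq:expressionA1}--\eqref{eq:expressionA2}. Both $A$ and $A^{-1}$ have the form $p\,\Pi + q\,\tau\otimes\tau$, where $\Pi^c_a=\delta^c_a+\tau_a\tau^c$ is the $\ooo{g}$-orthogonal projection onto $\tau^\perp$ and $p,q$ are explicit rational functions of $\zeta$ with no zeros and no poles on the ray $\zeta>-\tfrac{1}{d+1}$ (for instance $p=1+\zeta$ for $A$, and its reciprocal for $A^{-1}$). Consequently $\mathcal{B}$ is assembled entirely from $\delta$, $\tau\otimes\tau$ and scalars, so it respects the splitting of the fibre of $\mathcal{A}$ according to how many of the three indices are ``temporal'' (contracted with $\tau$) versus ``spatial'' (lying in $\tau^\perp$); the only off-diagonal contribution comes from the permutation terms $\delta^i_c\delta^j_b\delta^k_a-\delta^i_c\delta^j_a\delta^k_b$ of \eqref{eq:coda3}, which can move a single temporal index between the third slot and the antisymmetrized first pair, so at worst a $2\times2$ coupling between two sub-blocks occurs. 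On each of these finitely many blocks $\mathcal{B}$ acts --- using $O(d)$-equivariance in the spatial directions --- as multiplication by a scalar, or on the mixed pieces by a fixed $2\times2$ matrix, with entries that are explicit rational functions of $\zeta$. The remaining task is to write these down and verify that each eigenvalue, resp.\ each $2\times2$ determinant, is nonvanishing for $\zeta>-\tfrac{1}{d+1}$; continuity in $\zeta$ together with the value at $\zeta=0$ (where $A=\mathrm{Id}$ and $\mathcal{B}$ reduces to its leading part) is a convenient consistency check, and also re-derives the smallness statement $\mathcal{B}=(\text{invertible})+O(|\phi|,|\psi|)$ used for perturbations.

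The main obstacle is exactly this last computation: expanding $\mathcal{B}$ on $\mathcal{A}$ after substituting \eqref{eq:expressionA1}--\eqref{eq:expressionA2}, carefully tracking how the antisymmetry in the first two slots both reduces the fibre and couples the index-type sub-blocks, and then checking non-vanishing of the relevant determinants along the whole ray. Everything else is soft. In particular, once invertibility holds at the exactly spherically symmetric backgrounds, openness of the invertibility condition (continuity of $\det\mathcal{B}$) gives it for all sufficiently small perturbations of these solutions, which is precisely what licenses the use of \eqref{eq:codafinal} in place of \eqref{eq:coda3} in the remainder of the paper, as anticipated in the discussion following \eqref{eq:coda3}.
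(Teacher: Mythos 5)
Your strategy coincides with the paper's own: the implication from \eqref{eq:codafinal} to \eqref{eq:coda3} is trivial, and the substantive direction reduces to the fibrewise invertibility of the bracketed coefficient, which one checks by decomposing the fibre of $(0,3)$-tensors with the symmetry type of $\ooo\covD_{[i}\phi_{j]k}$ according to how many slots are contracted with $\tau$ and examining each block. The gap is that you stop exactly where the proof begins: the block computation you defer as ``the remaining task'' \emph{is} the content of the proposition, and nothing licenses the claimed non-vanishing on the whole ray $\zeta>-\tfrac{1}{d+1}$ until it is done (continuity plus the value at $\zeta=0$ only gives invertibility near $\zeta=0$, i.e.\ the perturbative statement already recorded after \eqref{eq:coda2}, not the statement for all $\zeta>-\tfrac{1}{d+1}$).

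For the record, carrying the computation out with $\nu=(d+1)\zeta+1$ (so $\nu>0$ on the stated range) and with the basis tensors
\[
F^{(\alpha\beta\gamma)}_{ijk} = f^{(\alpha)}_i f^{(\beta)}_j f^{(\gamma)}_k + f^{(\alpha)}_i f^{(\gamma)}_j f^{(\beta)}_k  - f^{(\beta)}_i f^{(\alpha)}_j f^{(\gamma)}_k  - f^{(\gamma)}_i f^{(\beta)}_j f^{(\alpha)}_k,
\]
where $f^{(0)}=\tau$ and the remaining $f^{(i)}$ are spatial, one finds: eigenvalue $2$ on the all-spatial block; eigenvalue $1+\nu^{-1}$ on $F^{(0\beta\gamma)}$; a coupling of $F^{(\beta 0\gamma)}$ and $F^{(\gamma 0\beta)}$ whose $2\times2$ determinant, modulo the eigenvector $F^{(0\beta\gamma)}$, is $(1+\nu)^2-(\nu-1)^2=4\nu$; and eigenvalue $2\nu$ on $F^{(\alpha 0 0)}$. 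All of these are nonzero precisely when $\nu\neq 0$, which holds for $\zeta>-\tfrac{1}{d+1}$. Two further cautions about your structural description: the $F^{(\alpha\beta\gamma)}$ are not independent ($F^{(\alpha\beta\gamma)}=F^{(\alpha\gamma\beta)}$ and $F^{(\alpha 0 0)}=-2F^{(0\alpha 0)}$), so the meaning of ``diagonalising'' needs care; and the one-temporal-index sector is slightly richer than the single $2\times2$ coupling you predict (it is triangular over a genuine eigenvector together with a $2\times2$ block), although this does not affect the conclusion.
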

\begin{proof}
The proof is by direction computation using some elementary
(multi)linear algebra, and we sketch the computations here. We
consider here the action of the bracketed term in \eqref{eq:coda3}, 
which for the purpose below we denote by $-B^{ijk}_{abc}$, on
elements of $T^{0,3}\dS$ with the \emph{same symmetry type as
$\ooo\covD_{[i}\phi_{j]k}$}, while recalling that $\phi$ is symmetric
in its indices. In particular, letting $f^{(0)},\dots,f^{(d)}$ be an
orthonormal basis of $T^*\dS$ with $f^{(0)} = \tau$, we see that 
$\ooo\covD_{[i}\phi_{j]k}$ can be decomposed in terms of tensors of the
form
\[ F^{(\alpha\beta\gamma)}_{ijk} = f^{(\alpha)}_i f^{(\beta)}_j f^{(\gamma)}_k + f^{(\alpha)}_i
f^{(\gamma)}_j f^{(\beta)}_k  - f^{(\beta)}_i f^{(\alpha)}_j
f^{(\gamma)}_k  - f^{(\gamma)}_i f^{(\beta)}_j f^{(\alpha)}_k .\]
Observe that $F^{(\alpha\beta\gamma)}_{ijk} =
F^{(\alpha\gamma\beta)}_{ijk}$. 
For ease of notation, we write $\nu = (d+1) \zeta + 1$ (which is
positive by assumption), then
$B^{ijk}_{abc}$ can be re-written as
\begin{align*}
B^{ijk}_{abc} &= \delta^i_a \delta^j_b\delta^k_c -
\delta^i_b\delta^j_a\delta^k_c \\
& \quad - (\nu^{-1} - 1) \left[ \tau^i\tau_a \delta^j_b \delta^k_c -
\tau^i\tau_b \delta^j_a \delta^k_c + \tau^i\tau_a \delta^j_c\delta^k_b
- \tau^i\tau_b \delta^j_c\delta^k_a \right] \\
& \quad - (\nu - 1) \left[ \delta^i_a\delta^j_b \tau^k\tau_c -
\delta^i_b \delta^j_a \tau^k\tau)c + \delta^i_a \delta^j_c
\tau^k\tau_b - \delta^i_b \delta^j_c \tau^k\tau_a\right] \\
& \quad + (\nu - 1)(\nu^{-1} - 1) \left[ \tau^i\tau_a \delta^j_b
\tau^k\tau_c - \tau^i\tau_b\delta^j_a \tau^k\tau_c + \tau^i\tau_a
\delta^j_c \tau^k\tau_b - \tau^i\tau_b \delta^j_c \tau^k\tau_a \right]
\end{align*}
by way of \eqref{eq:expressionA1} and \eqref{eq:expressionA2}.

We consider three cases, depending on how many of
$\alpha,\beta,\gamma$ is $0$. 
\begin{enumerate}
\item $\alpha,\beta,\gamma \in \{1,\dots,d\}$: then we see that
\[ B^{ijk}_{abc} F^{(\alpha\beta\gamma)}_{ijk} = 2
F^{(\alpha\beta\gamma)}_{abc}.\] 
\item $\alpha = 0$, $\beta,\gamma \in \{1,\dots,d\}$: then we see that
(recalling that $\tau_a\tau^a = -1$) 
\[ B^{ijk}_{abc} F^{(0\beta\gamma)}_{ijk} = (1 + \nu^{-1})
F^{(0\beta\gamma)}_{abc}\]
and
\[ B^{ijk}_{abc} F^{(\beta 0\gamma)}_{ijk} = (1 + \nu) F^{(\beta 0
\gamma)}_{abc} - (\nu^{-1} - 1) F^{(0\beta\gamma)}_{abc} - (\nu - 1)
F^{(\gamma 0 \beta)}_{abc}.\]
\item $\beta = \gamma = 0$, $\alpha \in \{1,\dots,d\}$: note first
that we have $F^{(\alpha 0 0 )}_{ijk} = -2 F^{(0\alpha 0)}_{ijk}$, 
then we see that
\[ B^{ijk}_{abc} F^{(\alpha 0 0 )}_{ijk} = 2 \nu F^{(\alpha 0 0
)}_{ijk}.\]
\end{enumerate}
Thus we see that expressed in terms of the
$F^{(\alpha\beta\gamma)}_{ijk}$ tensors the operator $B^{ijk}_{abc}$
is almost diagonalised, and its invertibility clearly follows when
$\nu \neq 0 $. 
\end{proof}

The Codazzi equation \eqref{eq:codafinal} then gives that
\[ (\ooo\covD_a \eta)\tau_b \tau^c - (\ooo\covD_b\eta)\tau_a\tau^c +
(\ooo\covD_a\zeta)\delta_b^c - (\ooo\covD_b\zeta)\delta_a^c + \eta
[\tau_b\ooo\covD_a\tau^c - \tau_a\ooo\covD_b\tau^c] = 0.\]
To get an evolution equation we need to contract against $\tau^a$. If
we contract also against $\tau^b$ the expression vanishes by
anti-symmetry. So let $\sigma^a$ be a spatial unit vector and we have,
contracting against $\tau^a\sigma^b\sigma_c$ that
\begin{equation}\label{eq:zetatauder}
\tau(\zeta) = - \eta (\ooo{\covD}_b \tau^c)\sigma^b\sigma_c.
\end{equation}
Applying now Proposition \ref{prop:nablatau} for the value of
$\ooo{\covD}_b\tau^c$, we can write the equation of motion for $\xi$
relative to $t = x^0$ as
\begin{equation}\label{eq:igmrotsym}
\frac{\D*}{\D*{t}} \zeta = - \frac{t}{1+t^2} \frac{1 +
\zeta}{\frac{1}{d+1}+\zeta} \zeta.
\end{equation}
The fixed point $\zeta \equiv 0$ for \eqref{eq:igmrotsym} is an attractor
for all initial data $\zeta(t_0) > - \frac{1}{d+1}$, where $t_0 > 0$. 
Using that in this regime 
\[ \frac{1 +\zeta}{\frac{1}{d+1}+\zeta} > 1\]
we have that 
\[ \abs{\zeta(t)} \lesssim \frac{1}{\sqrt{1+t^2}}\]
for $t > t_0 > 0$. (In fact, in spherical symmetry the decay is
stronger due to monotonicity: for negative initial data, this argument
shows that $\abs{\zeta(t)} \lesssim \jb{t}^{-(d+1)}$. While for
positive initial data this argument shows that $\abs{\zeta(t)}
\lesssim_\epsilon \jb{t}^{-(d+1)+\epsilon}$ for every $\epsilon > 0$,
where $\lesssim_\epsilon$ indicates that the implicit constant of
proportionality depends on the choice of $\epsilon$.) The
algebraic relation \eqref{eq:etazeta} then shows that $\eta$ must also
decay at the same rate. So we have in fact shown that

\begin{thm}\label{thm:sphsymstableigm}
In the inverse-Gauss-map gauge, any future in time, spherically
symmetric solution generated by initial data prescribed at $t_0 >
0$ with $\zeta(t_0) > - \frac{1}{d+1}$ is stable under small
perturbations. 
\end{thm}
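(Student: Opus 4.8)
The statement is essentially a repackaging of the phase-line analysis of \eqref{eq:igmrotsym} carried out immediately above in the text, so the plan is short; here ``small perturbations'' should be read in the spherically symmetric class, i.e.\ as perturbations of the scalar initial datum $\zeta(t_0)$. Abbreviate $\mathcal{R} = \{\zeta > -\tfrac{1}{d+1}\}$, and note that by \eqref{eq:etazeta} the quantity $\eta$ is a smooth function of $\zeta$ on $\mathcal{R}$ that vanishes to exactly first order at $\zeta = 0$, so that \eqref{eq:igmrotsym} may be written $\dot{\zeta} = -\tfrac{t}{1+t^2}\,\eta(\zeta)$. I would first record three facts about this scalar equation on $t > 0$. \emph{(i) Forward invariance and attraction:} on $\mathcal{R}$ both $1+\zeta$ and $\tfrac{1}{d+1}+\zeta$ are positive, so the right-hand side of \eqref{eq:igmrotsym} has the sign opposite to that of $\zeta$; hence $|\zeta(t)|$ is non-increasing, $\zeta(t)$ moves monotonically toward $0$, and it remains in the fixed compact interval with endpoints $\zeta(t_0)$ and $0$. \emph{(ii) Global existence:} on that compact interval $\eta(\zeta)$ is bounded and $\tfrac{t}{1+t^2}\le\tfrac12$, so the right-hand side of \eqref{eq:igmrotsym} is bounded and the solution extends to all $t\ge t_0$. \emph{(iii) Decay rate:} since $\tfrac{1+\zeta}{\frac{1}{d+1}+\zeta}>1$ on $\mathcal{R}$ (this is where $d\ge 1$ enters), one gets $\tfrac{d}{dt}|\zeta|\le -\tfrac{t}{1+t^2}|\zeta|$, which integrates to $|\zeta(t)|\le |\zeta(t_0)|\,\jb{t_0}\jb{t}^{-1}$, whence $|\eta(t)|\lesssim |\zeta(t)|\lesssim \jb{t}^{-1}$ by \eqref{eq:etazeta}; in particular the deformation tensor $\phi^c_a = \eta\,\tau_a\tau^c + \zeta\,\delta^c_a$ decays to $0$ at rate $\jb{t}^{-1}$.

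Given these, stability follows in outline. The condition $\zeta(t_0) > -\tfrac{1}{d+1}$ is open, so any sufficiently small perturbation of the initial datum of the reference spherically symmetric expanding solution again lies in $\mathcal{R}$ and, by (i)--(iii), generates a global-in-time solution with $(\zeta,\eta)$ decaying at rate $\jb{t}^{-1}$, the implied constant being proportional to $|\zeta(t_0)| \le |\zeta_{\mathrm{ref}}(t_0)| + (\text{perturbation size})$. To upgrade this to uniform-in-time closeness of the perturbed and reference solutions I would compare them directly: the difference $w = \zeta_{\mathrm{pert}} - \zeta_{\mathrm{ref}}$ solves $\dot{w} = -\tfrac{t}{1+t^2}\bigl(\eta(\zeta_{\mathrm{pert}}) - \eta(\zeta_{\mathrm{ref}})\bigr)$, and since both arguments stay in a common compact subset of $\mathcal{R}$ on which $\eta(\cdot)$ is Lipschitz, Gr\"onwall bounds $|w(t)|$ on any finite interval $[t_0,T]$ by $|w(t_0)|\,(\jb{T}\jb{t_0}^{-1})^{C}$; combining this with the bound $|w(t)|\le |\zeta_{\mathrm{pert}}(t)| + |\zeta_{\mathrm{ref}}(t)|\lesssim \jb{t}^{-1}$ valid for all $t\ge t_0$, one first chooses $T$ large so that the latter is below the desired threshold for $t\ge T$, and then takes the perturbation small enough that the former is below the threshold on $[t_0,T]$.

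The only mildly delicate point is this last step: because $\int_{t_0}^{\infty}\tfrac{s}{1+s^2}\,ds$ diverges, a na\"\i ve application of Gr\"onwall on all of $[t_0,\infty)$ yields only a slowly (polynomially) growing bound on $|w|$, so one must interleave it with the quantitative decay of (iii) as above; everything else is routine ODE theory. It is worth recording \emph{why} this individual spherically symmetric expanding solution is stable here, in contrast to the graphical picture of Theorem \ref{thm:expansionstability} where the same solution is asymptotically unstable: the inverse-Gauss-map gauge has absorbed the time-translation degeneracy, since distinct time translates of such a solution all correspond to $(\zeta,\eta)$-profiles converging to the common limit $0$, so the one-parameter modulation family of Theorem \ref{thm:expansionstability} collapses to a single orbit in this gauge.
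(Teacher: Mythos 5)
Your proof is correct and follows essentially the same route as the paper, whose proof is just the phase-line analysis of \eqref{eq:igmrotsym} immediately preceding the theorem: $\zeta=0$ is an attractor on $\{\zeta>-\tfrac{1}{d+1}\}$ with decay $\abs{\zeta(t)}\lesssim\jb{t}^{-1}$, transferred to $\eta$ via \eqref{eq:etazeta}. Your interleaving of finite-time Gr\"onwall with the uniform decay to compare perturbed and reference solutions is a detail the paper leaves implicit, but it is the natural completion of the same argument rather than a different approach.
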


Theorem \ref{thm:sphsymstableigm} should be compared with Theorem
\ref{thm:expansionstability}: the main difference is that in the
inverse-Gauss-map gauge, it is no longer necessary to perform the
modulation by allowing for convergence to the family of 
time-translations. This is of course related to the fact that through
the inverse-Gauss-map gauge, the equation of motion
\eqref{eq:igmrotsym} contains only terms at the ``derivative level''
of the map $\Phi = G^{-1}$, and not on $\Phi$ itself (which sees the
instability associated to the Poincar\'e symmetries of the ambient
$\Real^{1,d+1}$). In the remainder of the paper, using the
inverse-Gauss-map gauge as a non-linear, localised replacement for
modulations, we will extend Theorem \ref{thm:sphsymstableigm} to the
general case without symmetry assumptions. 

\begin{rmk}\label{rmk:invertible}
The boundary $-\frac{1}{d+1}$ has a natural interpretation. Observe
that when $\zeta = - \frac{1}{d+1}$, the trace-free condition
\eqref{eq:etazeta} requires
that $\eta = - \infty$, which implies that the Gauss map is \emph{not
invertible}. Going back to our classification of spherically symmetric
solutions in Section \ref{sec:classification}, we see that of the
solutions for which $f'$ vanishes somewhere, for both the expanding
solutions and the big bang and big crunch scenarios the Gauss map
give diffeomorphisms to $\dS$. In the cases where $f'$ is signed, when
$f$ converges to the static cylinder in either future or past, the
Gauss map gives a diffeomorphism to ``half'' of $\dS$ (either the
future half of past half). In the remaining cases $f \sqrt{1 -
(f')^2}$ can be seen to equal $\frac{d}{d+1}$ somewhere, at which
point $f'' = 0$. It is easy to see that this is equivalent to there
being a critical point for the Gauss map, and to $\zeta = -
\frac{1}{d+1}$. 

Aside from the case of the static cylinder solution, for spherically
symmetric solutions to the \cpmc problem, there is at most one time
$t_0$ at which the Gauss map is critical. By placing our initial data
strictly to the future of this time, it thus makes sense to ask about
the \emph{future} stability of any \emph{future-expanding} spherically 
symmetric solution.
\end{rmk}

We record here some further computations regarding these spherically
symmetric solutions. We rewrite \eqref{eq:zetatauder} as
\begin{equation}
\ooo\covD_a \zeta = \frac{\eta}{d} (\ooo\covD_f\tau^f) \tau_a
\end{equation}
and note that from Proposition \ref{prop:nablatau} that 
\[ \ooo\covD_b\tau^c = \frac{1}{d} \ooo\covD_f\tau^f (\delta_b^c +
\tau_b\tau^c).\]
Next, from \eqref{eq:etazeta} we get that 
\[ \ooo\covD_a \eta = \eta \left( \frac{1}{\zeta} + \frac{1}{1+\zeta} -
\frac{1}{\zeta + \frac{1}{d+1}}\right) \ooo\covD_a \zeta\]
and we observe that
\begin{multline*}
\eta \left( \frac{1}{\zeta} + \frac{1}{1+\zeta} -
\frac{1}{\zeta + \frac{1}{d+1}}\right) = \frac{1}{(\zeta +
\frac{1}{d+1})^2}\left[ (\zeta + 1)(\zeta + \frac{1}{d+1}) +
\zeta(\zeta + \frac{1}{d+1}) - \zeta(\zeta + 1) \right]\\
= 1 + \frac{1}{(\zeta +
\frac{1}{d+1})^2}\left[ \frac{\zeta + 1}{d+1} +
\zeta(\zeta + \frac{1}{d+1})  - (\zeta +
\frac{1}{d+1})^2\right] 
= 1 + \frac{d}{[(d+1)\zeta + 1]^2}.
\end{multline*}
Putting it altogether we get that for $\phi_a^c = \eta \tau_a\tau^c +
\zeta\delta_a^c$ we have
\begin{align*}
\ooo\covD_a\phi_b^c &= \ooo\covD_a\eta \tau_b\tau^c + \eta \ooo\covD_a
\tau_b \tau^c + \eta \tau_b \ooo\covD_a \tau^c + \ooo\covD_a\zeta
\delta_b^c \\
& = \frac{1}{d}\eta (\ooo\covD_f \tau^f) [ (\ooo{g}_{ab} +
\tau_a\tau_b)\tau^c + \tau_b(\delta_a^c + \tau_a\tau^c) ] \\
& \quad + (\delta_b^c + \tau_b\tau^c)\ooo\covD_a \zeta +
\frac{d}{[(1+d)\zeta + 1]^2}\tau_b\tau^c \ooo\covD_a \zeta\\
& = \frac{1}{d}\eta (\ooo\covD_f \tau^f) [ (\ooo{g}_{ab} +
\tau_a\tau_b)\tau^c + \tau_b(\delta_a^c + \tau_a\tau^c) +
\tau_a(\delta_b^c + \tau_b\tau^c)] \\
& \quad +
\frac{\eta}{[(1+d)\zeta + 1]^2} (\ooo\covD_f\tau^f) \tau_b\tau^c
\tau_a
\end{align*}
which we write conveniently as
\begin{equation}
\massterm_{abc} \eqdef \eta(\ooo\covD_f\tau^f)\left[ \frac{3}{d}
\ooo{g}_{(ab}\tau_{c)} + \frac{3}{d} \tau_a\tau_b\tau_c +
\frac{1}{[(1+d)\zeta + 1]^2} \tau_a\tau_b\tau_c\right].
\end{equation}
Observe from Proposition \ref{prop:nablatau} that the divergence
$\ooo\covD_f\tau^f$ is an order 1 quantity, and so we see that using
the decay of $\eta$ implied by the decay of $\zeta$ established above,
we have that $\massterm_{abc}$ decays as roughly $\jb{x^0}^{-(d+1)}$.
Note also that $\massterm$ is spherically symmetric by definition. 

Now let us re-write \eqref{eq:mainsystem} as a perturbation around one
of these spherical symmetric solutions. More precisely, we will
replace 
\begin{equation}
\begin{split}
\phi_a^b &\to \breve{\phi}_a^b + \hat{\phi}_a^b = \eta \tau_a\tau^b +
\zeta\delta_a^b + \hat\phi_a^b,\\
\delta_a^b + \psi_a^b &\to \delta_a^b + \breve{\psi}_a^b +\hat\psi_a^b =  - \frac{1}{1 + \zeta - \eta} \tau_a\tau^b
+ \frac{1}{1+\zeta}(\delta_a^b + \tau_a\tau^b) + \hat\psi_a^b,
\end{split}
\end{equation}
where $\breve\phi$ and $\breve\psi$ are the values corresponding to a
fixed background spherically symmetric solution. 
We let also 
\begin{equation}
\begin{split}
g^{ab} & = (\delta^a_c + \breve{\psi}^a_c + \hat\psi^a_c) \ooo{g}^{cd}
(\delta^b_d + \breve{\psi}^b_d + \hat\psi^b_d), \\
\breve{g}^{ab} & = (\delta^a_c+ \breve{\psi}^a_c) \ooo{g}^{cd}
(\delta^b_d + \breve{\psi}^b_d).
\end{split}
\end{equation}
(By definition we have that $\breve{g}^{ab}\ooo\covD_a\breve{\phi}_{bc}
= 0$.) Then \eqref{eq:mainsystem} becomes
\begin{align*}
\ooo\covD_{[a}\hat\phi_{b]c} & = 0,\\
g^{ab} \ooo\covD_a \hat\phi_{bc} &= -(g^{ab} - \breve{g}^{ab}) \ooo\covD_a
\breve{\phi}_{bc},
\end{align*}
which we simplify as
\begin{equation}\label{eq:largedatasystem}
\begin{split}
\ooo\covD_{[a}\hat\phi_{b]c} & = 0,\\
g^{ab} \ooo\covD_a \hat\phi_{bc} + 2 (\delta^a_e +
\breve{\psi}^a_e)\hat\psi^{be} \massterm_{abc} &= \massterm_{abc} \hat\psi^b_e
\hat\psi^{ae}.
\end{split}
\end{equation}

\tocaddline
\section{Stress-energy tensor}
\label{sec:strtensor}
A by-now standard method of obtaining \emph{a priori} estimates for
wave-like equations is through $L^2$-based \emph{energy inequalities}.
For \emph{second order} partial differential equations arising from
a Lagrangian formulation, a systematic treatment of energy
inequalities based on the construction of an associated \emph{canonical
stress-energy tensor} has been considered in \cite{Christ2000} (see
also \cite[\S 4.2]{Wong2011}). Our system\footnote{We will, for the
remainder of the paper, consider only \emph{small perturbations} of
expanding spherically symmetric solutions. From the discussion in the
previous section, in particular Proposition \ref{prop:coeffinvert},
we see in this regime the equations of motion are equivalent to
the divergence-curl system \eqref{eq:mainsystem}.} \eqref{eq:mainsystem}
however is \emph{first order} and, in our formulation, is not
obviously the Euler-Lagrange equations of a variational functional.
Yet as we shall see below, we can nevertheless write down a 
stress-energy tensor with suitable properties for deriving energy
estimates. 

For the linear system 
\begin{subequations}
\begin{align}
\ooo\covD_a \phi_b^c - \ooo\covD_b\phi_a^c &= 0,\\
\ooo\covD_a\phi_b^a &= 0,
\end{align}
\end{subequations}
where $\phi$ is $\ooo{g}$-self-adjoint, S. Brendle obtained
\cite{Brendl2002} a Bel-Robinson type energy tensor\footnote{Though
interestingly, Brendle in fact did \emph{not} use his Bel-Robinson
tensor in his stability proof. To the best of the author's knowledge,
the present paper is the first time this construction is applied to
obtain concrete energy estimates.}. That such a
tensor is available is not so unexpected:
the relationship between the tensor Brendle wrote down, and the
stress-energy tensor for the linear scalar field (which we interpret
as a div-curl relation for a 1-form), is identical to the
relationship between the classical Bel-Robinson tensor and the
stress-energy tensor for the linear Maxwell field.\footnote{To expand
at little bit: for a solution $u$ to the linear wave equation
$\Box_{\ooo{g}} u = 0$, the one-form $\D*{u}$ satisfies a linear 
divergence-curl
system, and thus the linear scalar field can be viewed as (up to
topological obstructions) identical to a $\ooo{g}$-harmonic one-form field.
Similarly, the linear Maxwell field is a $\ooo{g}$-harmonic two-form field.
The $\phi$ for the linearised equation is a $\ooo{g}$-self-adjoint
mapping on the space of one-form fields, and satisfies a
divergence-curl relation similar to that of the one-form field (in
terms of the number of indices involved). The Weyl-like tensors are
$\ooo{g}$-self-adjoint mappings on the space of two-form fields, and
satisfy divergence-curl relations similar to that of the two-form
field (again in terms of the number of indices involved). Thus at the
algebraic level the relationship between Maxwell and Weyl fields are
very similar to that between one-form fields and our tensor $\phi$.}
To study the system
\eqref{eq:mainsystem} which is quasilinear in $\phi$, one cannot
simply treat the nonlinearities as a ``source'' term, for that will
introduce a loss of derivatives in the corresponding energy estimates.
Instead, we need to consider the ``variable coefficient''
analogue of Brendle's Bel-Robinson tensor. This we can do somewhat
systematically in view of the well-developed theory for stress-energy
tensors of variable coefficient wave equations, and our analogy
comparing relationship between scalar fields and $\phi$ and the
relationship between Maxwell and Weyl fields. 

To set notations, we consider a linear system of equations
\begin{subequations}
\begin{align}
\label{eq:gensyscurl} \ooo\covD_a \phi_{bc} - \ooo\covD_b \phi_{ac} & =
0,\\
\label{eq:gensysdiv} g^{ab} \ooo\covD_a \phi_{bc} &= F_c.
\end{align}
\end{subequations}
The connection $\ooo\covD$ is the Levi-Civita connection for some
background metric $\ooo{g}$, while the coefficients $g^{ab}$ are
\emph{not} $\ooo\covD$-parallel (hence ``variable''), but for now
should be considered ``frozen'' (and not quasilinear). In view of
\eqref{eq:gensyscurl}, we can assume, without loss of generality, that
$g^{ab}$ is symmetric. The term $F_c$
is some fixed source term. We also assume that $\phi_{bc}$ is
symmetric in its indices. The lowering and raising of indices in
computations below will be with the background metric $\ooo{g}$.

By combining equation (16) of \cite{Wong2011} and the computations in
\cite[\S 3]{Brendl2002}, we will first define the tensor $Z^{mn}|^a_b$
by
\[ Z^{mn}|^a_b = g^{mn} \delta^a_b - g^{an}\delta^m_b -
g^{ma}\delta^n_b. \]
Then we define $\strtensor^{ab}_{cd}$ by
\begin{equation}\label{eq:strtendef1}
\strtensor^{ab}_{cd} = \phi_{mo}\phi_{np} Z^{mn}|^a_c Z^{op}|^b_d
\end{equation}
which we expand fully as
\begin{multline}\label{eq:strtendef2}
\strtensor^{ab}_{cd} = \phi_{mo}\phi_{np}g^{mn}g^{op} \delta^a_c\delta^b_d - 2
\phi_{md}\phi_{np} g^{mn} g^{bp} \delta^a_c \\
 - 2 \phi_{co}\phi_{np} g^{op} g^{an}\delta^b_d + 2
   (\phi_{cd} \phi_{np}+ \phi_{cp}\phi_{nd}) g^{na} g^{pb}.
\end{multline}
Observe that from the definition $\strtensor^{ab}_{cd} = \strtensor^{ba}_{dc}$. We
compute the divergence $\ooo\covD_a \strtensor^{ab}_{cd}$ (the divergence
$\ooo\covD_b \strtensor^{ab}_{cd}$ can be obtained by symmetry)
\begin{align*}
\ooo\covD_a \strtensor^{ab}_{cd} &= \delta^b_d \ooo\covD_c(\phi_{mo}\phi_{np}
g^{mn} g^{op}) - 2 \ooo\covD_c(\phi_{md}\phi_{np} g^{mn} g^{bp}) \\
& \quad - 2 \delta^b_d \ooo\covD_a(\phi_{co}\phi_{np} g^{op}g^{an}) + 
2 \ooo\covD_a[ (\phi_{cd} \phi_{np}+ \phi_{cp}\phi_{nd}) g^{na} g^{pb}
] \\
&= \delta^b_d \phi_{mo}\phi_{np} \ooo\covD_c(g^{mn}g^{op}) 
- 2 \phi_{md}\phi_{np}\ooo\covD_c(g^{mn} g^{bp}) \\
& \quad - 2 \delta^b_d \phi_{co} \ooo\covD_a(\phi_{np} g^{op}g^{an}) + 
2 \phi_{cd} \ooo\covD_a(\phi_{np} g^{na} g^{pb}) + 2 \phi_{cp}
\ooo\covD_a (\phi_{nd} g^{na} g^{pb})
\end{align*}
where in the second equality we used \eqref{eq:gensyscurl}. Applying
next \eqref{eq:gensysdiv} we get
\begin{multline}\label{eq:keydivergenceprop}
\ooo\covD_a \strtensor^{ab}_{cd} = \delta^b_d \phi_{mo}\phi_{np} \ooo\covD_c(g^{mn}g^{op}) 
- 2 \phi_{md}\phi_{np}\ooo\covD_c(g^{mn} g^{bp}) 
- 2 \delta^b_d \phi_{co} \phi_{np} \ooo\covD_a(g^{op}g^{an})\\
- 2 \delta^b_d \phi_{co} g^{op} F_p
+ 2 (\phi_{cd}\phi_{np}+ \phi_{cp}\phi_{nd}) \ooo\covD_a(g^{na} g^{pb}) 
+ 2 \phi_{cd} g^{pb}F_p + 2 \phi_{cp} g^{pb}F_d.
\end{multline}
The key feature of \eqref{eq:keydivergenceprop} to note is that the 
$\ooo\covD_a \strtensor^{ab}_{cd}$ does not depend on
derivatives of the solution $\phi$, and hence we can use this to write
down an energy identity \emph{without derivative loss}. 

\begin{rmk}
Observe that in the case $\ooo{\covD}_c g^{ab} = 0$ (which is
satisfied in the ``constant coefficient'' case $g^{ab} =
\ooo{g}^{ab}$) and $F_c = 0$, we have that $\strtensor^{ab}_{cd}$ is divergence
free: this is the case of Brendle's Bel-Robinson tensor. One can check
that the tensor
$Q(h)$ given in \cite[\S 3]{Brendl2002}, under the above assumptions
of homogeneity and constant coefficient, can be expressed as
\[ Q_{abcd} = \frac{3}{4} \ooo{g}_{e(a}\strtensor^{ef}_{cd}\ooo{g}_{b)f} \]
where the parentheses in the indices denote full symmetrisation in 
$a,b,c,d$. We will not forcibly symmetrise in the indices, as that
property is not necessary for the derivation of energy identities, and
the natural form of $\strtensor$ is as a $(2,2)$-tensor (in analogy
with the canonical stress tensor which has type $(1,1)$).  
\end{rmk}

For $\strtensor$ to be useful in energy estimates, it also needs to
satisfy good coercivity properties. We state the pointwise inequality 
in the following ``perturbative'' form. 

\begin{prop}\label{prop:fundposstress}
Let $(f^{(i)})_{i \in \{0,\dots,d\}}$ be an orthonormal co-frame, and
$(e_{(i)})$ its dual frame, for the background metric $\ooo{g}$, which 
we assume to be Lorentzian. Denote also $\tau^a = e_{(0)}^a$, and
hence $\tau_a = - f^{(0)}_a$. Suppose there exists $A,B>0$ such that 
\[ \abs{g^{ab}f^{(0)}_a f^{(0)}_b + A} < \frac{\min(A,B)}{4(d+1)} \]
and for every $i \in \{1,\dots,d\}$,
\[ \abs{g^{ab}f^{(i)}_a f^{(i)}_b - B} < \frac{\min(A,B)}{4(d+1)}, \]
and for every $\mu,\nu \in \{0,\dots,d\}$ such that $\mu \neq \nu$,
\[ \abs{g^{ab}f^{(\mu)}_a f^{(\nu)}_b} < \frac{\min(A,B)}{4(d+1)}, \]
then we have
\begin{equation}\label{eq:fundposstress}
\strtensor^{ab}_{cd} \tau^c\tau^d \tau_a\tau_b \geq
\frac{\min(A,B)^2}{2}
\sum_{i,j = 0}^d \abs{\phi_{(i)(j)}}^2 
\end{equation}
where
\[ \phi_{(i)(j)} = \phi_{ab} (e_{(i)})^a (e_{(j)})^b. \]
\end{prop}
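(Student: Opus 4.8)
The plan is to reduce \eqref{eq:fundposstress} to an eigenvalue estimate for a single symmetric $(d+1)\times(d+1)$ matrix built from $g^{ab}$. First I would carry out the contraction of the two copies of $Z$ in \eqref{eq:strtendef1} against $\tau_a\tau^c$. Using that $\tau$ is $\ooo{g}$-unit timelike, i.e.\ $\tau^a\tau_a = -1$, and abbreviating $w^m := g^{ma}\tau_a$, a one-line computation from $Z^{mn}|^a_c = g^{mn}\delta^a_c - g^{an}\delta^m_c - g^{ma}\delta^n_c$ gives
\[ P^{mn} := Z^{mn}|^a_c\tau_a\tau^c = -\bigl(g^{mn} + w^m\tau^n + w^n\tau^m\bigr), \]
which is symmetric in $m,n$, so $\strtensor^{ab}_{cd}\tau^c\tau^d\tau_a\tau_b = \phi_{mo}\phi_{np}P^{mn}P^{op}$. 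I then pass to the orthonormal frame: setting $\Phi_{\mu\nu} := \phi_{ab}(e_{(\mu)})^a(e_{(\nu)})^b = \phi_{(\mu)(\nu)}$ and $\mathcal{P}^{\mu\nu} := P^{ab}(f^{(\mu)})_a(f^{(\nu)})_b$, and observing that the two $\phi$'s in $\phi_{mo}\phi_{np}P^{mn}P^{op}$ couple the index pairs of the two $P$'s crosswise (with no metric factors intervening, since frame and coframe are dual), the scalar equals $\mathrm{tr}\bigl((\mathcal{P}\Phi)^2\bigr)$ read as an ordinary matrix product. Since $\Phi$ is symmetric, $\sum_{i,j}|\phi_{(i)(j)}|^2 = \|\Phi\|_{\mathrm{HS}}^2$.

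Next I would compute the frame components of $\mathcal{P}$ in terms of $E_{\mu\nu} := g^{ab}(f^{(\mu)})_a(f^{(\nu)})_b$. Using $\tau^{(\nu)} = \delta^\nu_0$ and $\tau_a = -f^{(0)}_a$, so that $w^{(\mu)} = -E_{\mu 0}$, one finds $\mathcal{P}^{00} = E_{00}$, $\mathcal{P}^{0i} = 0$ for $i\ge 1$, and $\mathcal{P}^{ij} = -E_{ij}$ for $i,j\ge 1$. Hence the matrix $Q := -\mathcal{P}$ has $Q_{00} = -E_{00}$, vanishing time–space entries, spatial block $(E_{ij})_{1\le i,j\le d}$, and $\mathrm{tr}\bigl((\mathcal{P}\Phi)^2\bigr) = \mathrm{tr}\bigl((Q\Phi)^2\bigr)$. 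The three hypotheses of the proposition imply in particular that every entry of $Q - D$ has absolute value $< \tfrac{\min(A,B)}{4(d+1)}$, where $D := \mathrm{diag}(A,B,\dots,B)$ (the diagonal $Q_{00}$ and $Q_{ii}$ are controlled by the first two hypotheses, the off-diagonal spatial $Q_{ij}$ by the third, and the time–space entries vanish identically). Summing the $(d+1)^2$ squared entries, $\|Q - D\|_{\mathrm{op}} \le \|Q - D\|_{\mathrm{HS}} < (d+1)\cdot\tfrac{\min(A,B)}{4(d+1)} = \tfrac{\min(A,B)}{4}$; since $D$ is positive definite with $\lambda_{\min}(D) = \min(A,B)$, Weyl's perturbation inequality shows $Q$ is positive definite with $\lambda_{\min}(Q) > \tfrac34\min(A,B)$.

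Finally I would diagonalize $Q = U^\top\Lambda U$ with $U$ orthogonal and $\Lambda = \mathrm{diag}(\lambda_0,\dots,\lambda_d)$, and set $\tilde\Phi := U\Phi U^\top$, which is again symmetric. Then
\[ \mathrm{tr}\bigl((Q\Phi)^2\bigr) = \mathrm{tr}\bigl((\Lambda\tilde\Phi)^2\bigr) = \sum_{\mu,\nu=0}^d \lambda_\mu\lambda_\nu\,\tilde\Phi_{\mu\nu}^2 \ \ge\ \lambda_{\min}(Q)^2\,\|\tilde\Phi\|_{\mathrm{HS}}^2 = \lambda_{\min}(Q)^2\,\|\Phi\|_{\mathrm{HS}}^2, \]
using orthogonal invariance of $\|\cdot\|_{\mathrm{HS}}$. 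Combining with the bound on $\lambda_{\min}(Q)$ yields $\strtensor^{ab}_{cd}\tau^c\tau^d\tau_a\tau_b \ge \tfrac{9}{16}\min(A,B)^2\sum_{i,j}|\phi_{(i)(j)}|^2$, which is stronger than the claimed \eqref{eq:fundposstress}.

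I do not expect a genuine analytic obstacle here; this is a linear-algebra lemma dressed up in tensor notation. The only steps that demand care are bookkeeping ones: keeping the contraction pattern of the four $\phi$-indices straight when rewriting $\phi_{mo}\phi_{np}P^{mn}P^{op}$ as $\mathrm{tr}\bigl((\mathcal{P}\Phi)^2\bigr)$, correctly identifying the frame components $\mathcal{P}^{\mu\nu}$ (in particular the exact vanishing of the time–space block, which is what makes the crude Hilbert–Schmidt perturbation bound suffice), and tracking that the constant $\tfrac14$ in the hypotheses survives the factor $(d+1)$ from summing entries. If one wanted the sharper or more transparent route, one could instead exploit the block-diagonal structure of $Q$ directly — the time–time, time–space and space–space pieces of $\mathrm{tr}((Q\Phi)^2)$ separate, and the cross term turns out to have a favorable sign — but the diagonalization argument above already gives more than enough.
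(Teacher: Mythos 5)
Your proof is correct and follows essentially the same route as the paper's: both reduce the contraction $Z^{mn}|^a_c\tau_a\tau^c$ in the orthonormal frame to the negative of a matrix that is an entrywise perturbation, of size $\min(A,B)/(4(d+1))$, of $\mathrm{diag}(A,B,\dots,B)$, and then conclude by elementary linear algebra (the paper invokes Cauchy's inequality where you diagonalize and apply Weyl's perturbation bound). Your version even yields the slightly sharper constant $\tfrac{9}{16}\min(A,B)^2$ in place of $\tfrac12\min(A,B)^2$.
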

\begin{proof}
Write $\breve{g}^{mn} = - A e_{(0)}^m e_{(0)}^n + \sum_{i = 1}^d B
e_{(i)}^m e_{(i)}^n$. We consider
\[ Z^{mn}|^a_c \tau^c\tau_a = - \breve{g}^{mn}  - 2
\breve{g}^{an}\tau_a \tau^m +
\tilde{Z}^{mn} \]
where 
\[ \tilde{Z}^{mn} = - (g - \breve{g})^{mn} - (g - \breve{g})^{ma}\tau^n
\tau_a- (g-\breve{g})^{na}\tau^m \tau_a.\]
Note that 
\[ \breve{g}^{mn} + 2 \breve{g}^{an}\tau_a \tau^m = A e_{(0)}^m
e_{(0)}^n + \sum_{i = 1}^d B
e_{(i)}^m e_{(i)}^n.\]
By our assumption 
\[ \abs{\tilde{Z}^{mn} (f^{(i)})_m (f^{(j)})_n } < \frac{\min(A,B)}{4(d+1)}. \]
Then our desired inequality follows from the definition
\eqref{eq:strtendef1} and Cauchy's inequality. 
\end{proof}

As we shall see in the next section, to derive the fundamental energy
estimate through the divergence theorem, we will consider the divergence 
\begin{equation} \ooo\covD_a(\strtensor^{ab}_{cd}\tau^c\tau^d\tau_b) 
= \ooo\covD_a(\strtensor^{ab}_{cd})\tau^c\tau^d\tau_b +
\strtensor^{ab}_{cd}\ooo\nabla_a(\tau^c\tau^d\tau_b)
\end{equation}
where the multiplier $\tau$ is the unit future time-like normal orthogonal to the
constant $x^0$ slices of $\dS$. Therefore in addition to the
divergence of the stress-energy tensor, we also need to consider its
contractions with tensors of the form $\ooo\covD_a\tau^c$, for which
we have an explicit expression in Proposition \ref{prop:nablatau}. For
convenience we record the relevant computations here. 

By Proposition \ref{prop:nablatau}, what we need to compute is (up to
a scalar weight in $x^0$)
\[
\strtensor^{ab}_{cd}\left[ (\delta^c_a + \tau^c\tau_a)\tau^d\tau_b +
(\delta^d_a + \tau^d\tau_a)\tau^c\tau_b + (\ooo{g}_{ab} +
\tau_a\tau_b)\tau^c\tau^d\right],
\]
and we do so term by term; the second and third terms inside brackets
are essentially identical due to the symmetry properties of
$\strtensor$. As we have done in the proof to Proposition
\ref{prop:fundposstress}, we proceed by first computing the
coefficients given by the tensor $Z$ appearing in
\eqref{eq:strtendef1}. 

First we see easily that
\[ Z^{mn}|^a_c \delta^c_a = (d-1) g^{mn}, \]
while
\[ Z^{mn}|^a_c \tau_a\tau^c = - g^{mn} - g^{am}\tau_a\tau^n -
g^{an}\tau_a\tau^m, \]
so we can write
\begin{equation}
Z^{mn}|^a_c \delta^c_a = (1-d) Z^{mn}|^a_c \tau_a\tau^c + (1-d)
(g^{am}\tau_a\tau^n + g^{an}\tau_a\tau^m).
\end{equation}

In the sequel, for the main decay estimate we will be working under
the assumption where $g -\ooo{g}$ is a small error term. So defining
\[
\ooo{Z}^{mn}|^a_b  = \ooo{g}^{mn}\delta^a_b - \ooo{g}^{an}\delta^m_b
- \ooo{g}^{am}\delta^n_b,\]
we have
\begin{equation}\label{eq:Zoncetau}
\begin{split}
\ooo{Z}^{mn}|^a_b \tau^b &= \ooo{g}^{mn}\tau^a - \ooo{g}^{an}\tau^m -
\ooo{g}^{am}\tau^n \\
\ooo{Z}^{mn}|^a_b \tau_a &= \ooo{g}^{mn}\tau_b - \tau^n \delta^m_b -
\tau^m \delta^n_b
\end{split}
\end{equation}
which implies that
\begin{multline*}
\ooo{Z}^{mn}|^a_c \ooo{Z}^{op}|^b_d \delta^d_a \tau_b\tau^c =
\ooo{Z}^{mn}|^a_c \ooo{Z}^{op}|^b_d \ooo{g}_{ab}\tau^c\tau^d = \\
- \ooo{g}^{mn}\ooo{g}^{op} - 2\ooo{g}^{mn}\tau^o\tau^p - 2
  \ooo{g}^{op}\tau^m\tau^n + \ooo{g}^{mo}\tau^n\tau^p +
\ooo{g}^{mp}\tau^n\tau^o + \ooo{g}^{no} \tau^m\tau^p +
\ooo{g}^{np}\tau^m\tau^o
\end{multline*}
which we simplify to 
\begin{multline*}
\ooo{Z}^{mn}|^a_c \ooo{Z}^{op}|^b_d \delta^d_a \tau_b\tau^c =
\ooo{Z}^{mn}|^a_c \ooo{Z}^{op}|^b_d \ooo{g}_{ab}\tau^c\tau^d = 
- (\ooo{g}^{mn}+ 2 \tau^m\tau^n)(\ooo{g}^{op}  + 2\tau^o\tau^p) 
\\ + (\ooo{g}^{mo}+ \tau^m\tau^o)\tau^n\tau^p  +
(\ooo{g}^{mp}+ \tau^m\tau^p)\tau^n\tau^o + (\ooo{g}^{no}+\tau^n\tau^o) \tau^m\tau^p +
(\ooo{g}^{np}+\tau^n\tau^p) \tau^m\tau^o.
\end{multline*}
From these we conclude that, writing only schematically (and hence
dropping factors that are fixed at ``size one'') terms that are at least linear
in the error $g - \ooo{g}$, and re-inserting the scalar weight in
$x^0$,
\begin{multline}\label{eq:strwdeform}
\frac{\jb{x^0}}{x^0} \strtensor^{ab}_{cd} \nabla_a(\tau_b\tau^c\tau^d) + (d-2)
\strtensor^{ab}_{cd} \tau_a\tau_b\tau^c\tau^d \\
\approx
 4\phi_{mo}\phi_{np} \left[ (\ooo{g}^{mp} +
\tau^m\tau^p)\tau^n\tau^o + (\ooo{g}^{mo} + \tau^m\tau^o)
\tau^n\tau^p\right] \\
+ 2(d-1) \phi_{mo}\phi_{np}  \tau^m\tau^n(\ooo{g}^{op} + 2 \tau^o\tau^p)
+O(\phi\cdot\phi \cdot g\cdot [g - \ooo{g}]),
\end{multline}
away from where $x^0 = 0$.
There are two troublesome terms in the expression above: when $d > 2$
the term $(d-2) \strtensor^{ab}_{cd} \tau_a\tau_b\tau^c\tau^d$ gives a
\emph{negative} contribution to the divergence; and 
the term $(\trace_{\ooo{g}} \phi) \phi_{np}\tau^n\tau^p$ appearing
in the third line which can, in principle, have indeterminate sign.
Without those two terms, and up to errors of the form $g- \ooo{g}$, 
the remaining terms contribute positively and 
give us in principle a monotonicity formula for an $L^2$ energy
quantity (see Section \ref{sec:energyest} for more details). 

How do we deal with the terms with the bad sign? For the term with the
coefficient $d-2$, we make the following observation. Returning to our
study of the main system \eqref{eq:mainsystem} in spherical symmetry
at the end of the previous section, we showed that the expected
\emph{uniform} decay rate 
of $\phi$ in spatial $L^\infty$ to be $\jb{x^0}^{-1}$. At this rate of
decay, the square of the spatial $L^2$ norm will in fact \emph{grow} at a rate
$\jb{x^0}^{d-2}$, due to the volume of the constant $x^0$ spheres
being $\jb{x^0}^{d}$. So the $\jb{x^0}^{d-2}$ growth rate implied by
the $(d-2)$ term in \eqref{eq:strwdeform} is expected, and can (and
should) be
renormalised away: instead considering the time evolution of the
energy $\strtensor^{ab}_{cd}\tau^c\tau^d\tau_a\tau_b$, we consider the
evolution of the time-weighted energy $\jb{x_0}^{2-d}
\strtensor^{ab}_{cd}\tau^c\tau^d\tau_a\tau_b$. This energy will be
almost conserved, which then by Sobolev embedding will give us that
the $L^\infty$ size of $\phi$ will decay. 

The treatment of the trace term $\trace_{\ooo{g}} \phi$ uses our
assumption of constant mean curvature. Recall that a consequence for
$\phi$ being a solution \eqref{eq:traceid} (or equivalently
\eqref{eq:inversetracefree}) will hold, provided that it holds
initially. This tells us that $\trace_{\ooo{g}}\phi$ has hidden
cancellations and should be treated as a \emph{nonlinear} quantity,
with smallness controlled from the expected $L^\infty$ decay of
$\phi$. 

\tocaddline
\section{Interlude: notations}
We record here the notational conventions that will be in force for
the remainder of the paper. 

The notation $A\lesssim B$ indicates that there exists a universal
constant $C > 0$ such that $A \leq CB$; when $C$ is not universal but
depends on parameter $\alpha$ we write $A\lesssim_\alpha B$. By
$A \approx B$ we intend $A\lesssim B$ and $B\lesssim A$; similarly we
have versions with subscripts.

The Japanese bracket, we recall, is defined by $\jb{s} = \sqrt{1 +
s^2}$.  

We will always be working over (subsets of) the manifold $\dS$ as
defined in Appendix \ref{app:gaussmap}. We will use $t$ and $x^0$
interchangeably for the same coordinate function along $\dS$, and we
will use $\omega\in\Sphere^d$ to parametrise the spatial directions in
the obvious way. The spatial dimension $d$ is always assumed to be at
least $1$: in the $d = 0$ case the result that a space-time
modulational stability holds follow immediately from the
ODE analysis of Section \ref{sec:rotsym}. The background metric $\ooo{g}$ is the induced
Lorentzian metric on $\dS$ with the coordinate expression
\[ - \frac{1}{\jb{t}^2} \D{t}^2 + \jb{t}^2 \D{\omega}^2_{\Sphere^d}.\]
All index-raising and -lowering will be done with respect to
$\ooo{g}$.  We use $\ooo\covD$ for the Levi-Civita connection of 
$\ooo{g}$. 

The constant $t$ subsets of $\dS$ will be denoted $\spaceslice{t}$; and
the space-time region satisfying $t\in (t_1,t_2)$ will be denoted
$\bulkregion{t_1}{t_2} = \cup_{t\in(t_1,t_2)} \spaceslice{t}$. We will
only be interested in $t_2 > t_1 \geq 0$. The
vector field $\tau$ is the future directed unit normal to
$\spaceslice{t}$, and in the coordinate $(t,\omega)$ is given by
$\jb{t}\partial_t$. The vector fields $\rotvf{ij}$ are the rotation
vector fields defined in \eqref{eq:defrotvf}. We will denote by
$\allrot$ the collection of all $\rotvf{ij}$, $i,j\in
\{1,\dots,d+1\}$. 

We write $\areasph$ for the standard area element of $\Sphere^d$. The
induced area element on $\spaceslice{t}$ is $\D*{\ooo{A}} = \jb{t}^d
\areasph$ and the space-time volume element of $\dS$ is $\D*{\ooo{V}} =
\jb{t}^{d-1} \D{t}\areasph$.  Observe that $\D*{\ooo{A}} =
\intprod_\tau \D*{\ooo{V}}$. For convenience we will introduce the
notation $\tweight$ 
\begin{equation}\label{eq:deftweight}
\tweight = \jb{t}^{2-d}.
\end{equation} 
Observe that we have $\tau(\tweight) = (2-d) \frac{t}{\jb{t}}
\tweight$ due to $\tau(t) = \jb{t}$. This implies that
\begin{equation}
\ooo\covD_a \tweight = (d-2) \frac{t}{\jb{t}} \tweight \tau_a.
\end{equation}
(As one can see we will use $\tweight$ to normalise the $(2-d)$ term
appearing in \eqref{eq:strwdeform}.)
 
The tensor $\strtensor[g,\phi]^{ab}_{cd}$ is as defined in
\eqref{eq:strtendef2}, relative to the coefficients $g^{ab}$ and the
unknown symmetric two-tensor $\phi_{ab}$. Using this we define the
$L^2$-based \emph{weighted energy} as 
\begin{equation}\label{eq:defenergy}
\energy[g,\phi](t) \eqdef \int_{\spaceslice{t}} \tweight
\strtensor[g,\phi]^{ab}_{cd} \tau_a\tau_b\tau^c\tau^d \D{\ooo{A}} =
\jb{t}^{2} \int_{\spaceslice{t}} \strtensor[g,\phi]^{ab}_{cd}
\tau_a\tau_b\tau^c \tau^d \areasph.
\end{equation} 

We will also define some conventions for norms using the vector field
$\tau$. The bilinear form $\hat{g}_{ab} = \ooo{g}_{ab} + 2\tau_a\tau_b$ 
is positive definite, as is its counterpart with raised indices
$\hat{g}^{ab} = \ooo{g}^{ab} + 2\tau^a\tau^b$. For a tensor field 
$V^{a \dots b}_{c\dots d}$, we define its \emph{pointwise norm} by
\begin{equation}
\abs{V}_{\ooo{g},\tau}^2 \eqdef V^{a_1\dots b_1}_{c_1\dots d_1} V^{a_2\dots
b_2}_{c_2\dots d_2} \hat{g}_{a_1a_2}\cdots
\hat{g}_{b_1b_2} \hat{g}^{c_1c_2}\cdots\hat{g}^{d_1d_2}.
\end{equation}
In particular,
we can rewrite the conclusion of Proposition \ref{prop:fundposstress}
as
\[\label{eq:fundposstressp}
\tag{\ref*{eq:fundposstress}'} 
\strtensor[g,\phi]^{ab}_{cd} \tau_a\tau_b \tau^c\tau^d \geq \frac12
\min(A,B)^2 \abs{\phi}_{\ooo{g},\tau}^2.
\]
We can analogously define the $L^p$ norms of the tensor $V$ over 
$\spaceslice{t}$ and $\bulkregion{t_1}{t_2}$ by considering the 
$L^p$ norms of the scalar $\abs{V}_{\ooo{g},\tau}$ with respect to the area
and volume measures $\D*{\ooo{A}}$ and $\D*{\ooo{V}}$. To simplify
notations we will write
\begin{align}
\norm[2]{V(t)}^2 &\eqdef \int_{\spaceslice{t}}
\abs{V}_{\ooo{g},\tau}^2
\D{\ooo{A}}, \\
\norm[\infty]{V(t)} &\eqdef \operatorname{ess\,sup}_{\spaceslice{t}}
\abs{V}_{\ooo{g},\tau}.
\end{align}

We also introduce the following notations for higher
order norms and energies. First for the Levi-Civita connection of $\ooo{g}$ we
write
\begin{equation}
\abs{\jb{\ooo\covD}^k V}_{\ooo{g},\tau} \eqdef \abs{V}_{\ooo{g},\tau} +
\abs{\ooo\covD V}_{\ooo{g},\tau} + \cdots + \abs{\ooo\covD^{k}
V}_{\ooo{g},\tau}.
\end{equation}
For a family $\mathfrak{F}$ of vector fields we write
\begin{equation}
\begin{split}
\abs{\mathfrak{F}^k V}_{\ooo{g},\tau} &\eqdef \sum_{\Omega\in
\mathfrak{F}^k} \abs{\lieD_{\Omega_1}\cdots\lieD_{\Omega_k}
V}_{\ooo{g},\tau} \\
\abs{\jb{\mathfrak{F}}^k V}_{\ooo{g},\tau} &\eqdef
\abs{V}_{\ooo{g},\tau} +
\sum_{j = 1}^{k} \abs{\mathfrak{F}^k V}_{\ooo{g},\tau}
\end{split}
\end{equation}
where $\lieD_X$ is the Lie derivative along the vector field $X$. 
The extension of this notation to the $L^p$ and $L^\infty$ cases are
clear. For the energy, we write
\begin{equation}
\begin{split}
\energy[g,\mathfrak{F}^k\phi](t) &\eqdef \sum_{\Omega\in
\mathfrak{F}^k} \energy[g, \lieD_{\Omega_1} \cdots\lieD_{\Omega_k}
\phi](t), \\
\energy[g,\jb{\mathfrak{F}}^k \phi](t) &\eqdef \energy[g,\phi](t) +
\sum_{j = 1}^k \energy[g,\mathfrak{F}^j\phi](t).
\end{split}
\end{equation}

\tocaddline
\section{Linear theory}
In this section we continue our study of the inhomogeneous, variable
coefficient linear system of equations \eqref{eq:gensyscurl} and
\eqref{eq:gensysdiv} which we reproduce here
\begin{align*}
\ooo\covD_{a}\phi_{bc} - \ooo\covD_{b} \phi_{ac} &= 0,
\tag{\ref*{eq:gensyscurl}}\\
g^{ab} \ooo\covD_{a} \phi_{bc} &= F_c . \tag{\ref*{eq:gensysdiv}}
\end{align*}
The main goal is to obtain estimates on $L^2$-based higher Sobolev
norms for the solution $\phi_{ab}$ which depends on properties of the
coefficients $g^{ab}$ and the source term $F_c$. 

\subsection{The fundamental energy estimate}\label{sec:energyest}
Suppose now that $\phi_{ab}$ is a symmetric two-tensor satisfying the
system of equations \eqref{eq:gensyscurl} and \eqref{eq:gensysdiv}, we
apply the divergence theorem to 
\[ \ooo\covD_a ( \tweight \strtensor[g,\phi]^{ab}_{cd}
\tau_b\tau^c\tau^d ) \]
over\footnote{While we fix our attention on domains of the form
$\bulkregion{t_1}{t_2}$, it will be clear from the argument that it
suffices that the ``top'' boundary of our region is of the form
$\spaceslice{t_2}$ for us to get good energy control. The choice of
``bottom'' boundary being $\spaceslice{t_1}$ is one of notational
convenience. It should be clear that the arguments given here can all
be carried through with the bottom boundary being any space-like slice
to the past of $\spaceslice{t_2}$, with minimum modifications.} 
the domain $\bulkregion{t_1}{t_2}$, with $t_2 > t_1 \geq 0$, and we get
\begin{multline*}
\energy[g,\phi](t_1) - \energy[g,\phi](t_2) =
\int_{\bulkregion{t_1}{t_2}} (d-2) \frac{t}{\jb{t}} \tweight
\strtensor[g,\phi]^{ab}_{cd} \tau_a\tau_b\tau^c\tau^d \\
+
\tweight (\ooo\covD_a\strtensor[g,\phi]^{ab}_{cd})\tau_b\tau^c\tau^d + 
\tweight \strtensor[g,\phi]^{ab}_{cd} \covD_a(\tau_b\tau^c\tau^d)
\D{\ooo{V}}.
\end{multline*}
Going back to \eqref{eq:keydivergenceprop}, we note that $\ooo\covD_a
g^{mn} = \ooo\covD_a(g^{mn} -\ooo{g}^{mn})$, so that we have
schematically 
\begin{equation}\label{eq:keydivschematic}
\ooo\covD_a\strtensor^{ab}_{cd} \approx 2 (\phi_{cd} F^b + \phi_c^b
F_d - \delta_d^b \phi_c^a F_a) + O(\phi\cdot\phi\cdot g \cdot
\ooo\covD[g - \ooo{g}]) + O(\phi\cdot F\cdot [g - \ooo{g}]).
\end{equation}
So using \eqref{eq:strwdeform} and moving the terms with the good sign
to the left hand side we have the schematic expression
\begin{multline*}
\energy[g,\phi](t_2) + \int_{\bulkregion{t_1}{t_2}}
2(d+1)\frac{t}{\jb{t}} \tweight
\abs{\phi\cdot\tau}_{\ooo{g},\tau}^2 \D{\ooo{V}}  - \energy[g,\phi](t_1) \\
\leq
\int_{\bulkregion{t_1}{t_2}} 2\tweight\abs{ (\phi_{cd} F^b + \phi_c^b
F_d - \delta_d^b \phi_c^a F_a)\tau_b\tau^c\tau^d}  + 4\tweight \phi_{ab}\tau^a\tau^b (\trace_{\ooo{g}} \phi)\\
+ O(\tweight\cdot \phi^2 \cdot g\cdot \jb{\ooo\covD}[g - \ooo{g}])
+ O(\tweight\cdot \phi\cdot F\cdot [g -
\ooo{g}])
\D{\ooo{V}}
\end{multline*}
where the implicit constant in the big-Oh notation is independent of
$\phi$, $g$, and $F$, but may depend on the dimension $d$. So we can
simply write
\begin{multline}\label{eq:basicenergy}
\energy[g,\phi](t_2) + 2(d+1) \int_{\bulkregion{t_1}{t_2}}
\frac{t}{\jb{t}}\tweight
\abs{\phi\cdot\tau}_{\ooo{g},\tau}^2 \D{\ooo{V}} - \energy[g,\phi](t_1) \\
\lesssim_{d}
\int_{t_1}^{t_2} \tweight \norm[2]{\phi(t)}^2
\norm[\infty]{g(t)}\norm[\infty]{\jb{\ooo\covD}[g - \ooo{g}](t)} 
\jb{t}^{-1} \D{t}\\
+ \int_{\bulkregion{t_1}{t_2}} \tweight
\abs{\phi F}_{\ooo{g},\tau} \left(1 + \abs{g -
\ooo{g}}_{\ooo{g},\tau}\right) 
+ \tweight \abs{\phi}_{\ooo{g},\tau}
\abs{\trace_{\ooo{g}} \phi} \D{\ooo{V}}
\end{multline}
where in the second line we used the decomposition $\D*{\ooo{V}} =
\jb{t}^{-1} \D{t}\D{\ooo{A}}$ using the notation described in the
previous section. 

\subsection{Commutators and higher order energies}
To obtain higher order derivative control (so we can eventually use
Sobolev's inequality to regain $L^\infty$ control from the $L^2$ based
energy quantities), we commute the equations with the rotational
symmetry vector fields $\rotvf{ij}$ of $\dS$. That $\rotvf{ij}$ is
Killing implies that it commutes with covariant derivatives as well as
$\ooo{g}$; a consequence being that index-raising and -lowering, and
tracing with respect to $\ooo{g}$ also
commute with the Lie derivation relative to $\rotvf{ij}$. (This allows
us to work with ordinary Lie derivatives of our unknown field $\phi$,
instead of \emph{modified} Lie derivatives such as those used in 
\cite{ChrKla1993, Christ2009}.) 
We furthermore observe that the commutator of two
rotational vector fields is given by a linear combination of other
rotational vector fields. 

We have from \eqref{eq:gensyscurl} and \eqref{eq:gensysdiv} that
\begin{gather*}
\ooo\covD_a \lieD_{\rotvf{ij}}\phi_{bc}
-\ooo\covD_b\lieD_{\rotvf{ij}}\phi_{ac} = 0,\\
g^{ab}\ooo\covD_a \lieD_{\rotvf{ij}} \phi_{bc} = \lieD_{\rotvf{ij}}
F_c - \lieD_{\rotvf{ij}} (g^{ab} - \ooo{g}^{ab}) \ooo\covD_a
\phi_{bc},
\end{gather*}
and by induction, we have the schematic equations
\begin{gather}
\ooo\covD_a (\lieD_{\Omega})^k \phi_{bc} -
\ooo\covD_b (\lieD_{\Omega})^k \phi_{ac} = 0, \label{eq:gensyscurlh}\\
g^{ab} \ooo\covD_a (\lieD_{\Omega})^k
\phi_{bc} = (\lieD_{\Omega})^k F_c + \sum_{j =
1}^{k} O( (\lieD_{\rotvf{}})^j [g - \ooo{g}] \cdot \ooo\covD_a
(\lieD_{\rotvf{}})^{k-j} \phi). \label{eq:gensysdivh}
\end{gather}
Applying our basic energy estimate \eqref{eq:basicenergy} we get the
following higher order energy estimate.
\begin{prop}\label{prop:higherbasicenergy}
Let $\phi_{ab}$ be a symmetric two tensor solving
\eqref{eq:gensyscurl} and \eqref{eq:gensysdiv}, then
\begin{align*} \energy[g,\allrot^k\phi](t_2) &+ 2(d+1)
\int_{\bulkregion{t_1}{t_2}} \frac{t}{\jb{t}} \tweight
\abs{\allrot^k\phi\cdot\tau}_{\ooo{g},\tau}^2 \D{\ooo{V}} -
\energy[g,\allrot^k\phi](t_1) \\
& \lesssim_{k,d}
\int_{t_1}^{t_2} \tweight \norm[2]{\allrot^k\phi(t)}^2 \norm[\infty]{g(t)}
\norm[\infty]{\jb{\ooo\covD}[g - \ooo{g}](t)} \jb{t}^{-1} \D{t} \\
&\quad + \int_{\bulkregion{t_1}{t_2}} \tweight
\abs{\allrot^k\phi}_{\ooo{g},\tau} \abs{\allrot^k
\trace_{\ooo{g}}\phi} \D{\ooo{V}}\\
&\quad + \int_{\bulkregion{t_1}{t_2}} \tweight \abs{\allrot^k\phi}_{\ooo{g},\tau}
\abs{\allrot^k F}_{\ooo{g},\tau} (1 + \abs{g - \ooo{g}}_{\ooo{g},\tau})
\D{\ooo{V}} \\ & \quad+ \int_{\bulkregion{t_1}{t_2}} \tweight
\abs{\allrot^k\phi}_{\ooo{g},\tau} \abs{\jb{\allrot}^{k-1}\ooo\covD\phi}_{\ooo{g},\tau} 
\abs{\jb{\allrot}^{\lceil k/2\rceil} [g - \ooo{g}]}_{\ooo{g},\tau}
\D{\ooo{V}}\\
& \quad + \int_{\bulkregion{t_1}{t_2}} \tweight
\abs{\allrot^k\phi}_{\ooo{g},\tau}
\abs{\jb{\allrot}^{\lceil k/2\rceil}\ooo\covD\phi}_{\ooo{g},\tau} 
\abs{\jb{\allrot}^{k} [g - \ooo{g}]}_{\ooo{g},\tau}
\D{\ooo{V}},
\end{align*}
where we recall that $\allrot$ denotes the collection of all
rotational vector fields. 
\end{prop}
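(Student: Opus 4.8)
The plan is to deduce the higher-order estimate from the basic energy inequality \eqref{eq:basicenergy} applied to the fields obtained by commuting the system with the rotational Killing fields $\rotvf{ij}$ of $\dS$. First I would record that, because each $\rotvf{ij}$ is Killing for $\ooo{g}$, Lie differentiation along it commutes with $\ooo\covD$, with raising, lowering and tracing by $\ooo{g}$, and preserves the symmetry of a two-tensor; moreover each bracket $[\rotvf{ij},\rotvf{k\ell}]$ is a linear combination of rotational fields. Consequently, for any $k$-tuple $\Omega=(\Omega_1,\dots,\Omega_k)$ with $\Omega_\ell\in\allrot$, the field $\lieD_{\Omega_1}\cdots\lieD_{\Omega_k}\phi$ is again a symmetric two-tensor and solves the curl equation \eqref{eq:gensyscurl} exactly and the divergence equation \eqref{eq:gensysdiv} with $F_c$ replaced by the modified source appearing on the right of \eqref{eq:gensysdivh},
\[
\tilde F_c \eqdef (\lieD_\Omega)^k F_c + \sum_{j=1}^{k} O\!\big((\lieD_{\allrot})^j[g-\ooo{g}]\cdot\ooo\covD(\lieD_{\allrot})^{k-j}\phi\big),
\]
the implicit constants depending only on $k$ and $d$, through the structure constants of the rotation algebra and the number of $k$-tuples involved.

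Next I would apply \eqref{eq:basicenergy} verbatim with $\lieD_{\Omega_1}\cdots\lieD_{\Omega_k}\phi$ in place of $\phi$ and $\tilde F_c$ in place of $F_c$. This is legitimate: the derivation of \eqref{eq:basicenergy} used only that the unknown is a symmetric solution of \eqref{eq:gensyscurl}--\eqref{eq:gensysdiv}, and the pointwise coercivity (Proposition \ref{prop:fundposstress}) is a purely algebraic statement valid for every symmetric two-tensor, so nothing in it is disturbed by commuting. Summing the resulting inequalities over the finitely many $\Omega\in\allrot^k$, and using the definitions of $\energy[g,\allrot^k\phi]$ and of the pointwise and $L^2$ norms of $\allrot^k\phi$ together with the elementary comparisons $(\sum_\alpha a_\alpha)^2\ge\sum_\alpha a_\alpha^2$ and $\sum_\alpha a_\alpha b_\alpha\le(\sum_\alpha a_\alpha)(\sum_\alpha b_\alpha)$ for nonnegative reals, I obtain the claimed left-hand side, and on the right the three ``immediate'' terms: the time integral of $\tweight\norm[2]{\allrot^k\phi(t)}^2\norm[\infty]{g(t)}\norm[\infty]{\jb{\ooo\covD}[g-\ooo{g}](t)}\jb{t}^{-1}$ --- here $g$ itself is never differentiated by $\allrot$, so the weight $\norm[\infty]{g}\norm[\infty]{\jb{\ooo\covD}[g-\ooo{g}]}$ is structurally unchanged --- the trace term $\tweight\abs{\allrot^k\phi}_{\ooo{g},\tau}\abs{\allrot^k\trace_{\ooo{g}}\phi}_{\ooo{g},\tau}$ (using that tracing commutes with $\lieD_{\rotvf{}}$), and the contribution of $\tilde F_c$ through the term $\tweight\abs{\phi\cdot F}_{\ooo{g},\tau}(1+\abs{g-\ooo{g}}_{\ooo{g},\tau})$ of \eqref{eq:basicenergy}.

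It remains to organize that last contribution, which is the only genuine bookkeeping and, I expect, the only place where any care is needed. The piece $(\lieD_\Omega)^kF_c$ of $\tilde F_c$ produces exactly $\tweight\abs{\allrot^k\phi}_{\ooo{g},\tau}\abs{\allrot^kF}_{\ooo{g},\tau}(1+\abs{g-\ooo{g}}_{\ooo{g},\tau})$. For the commutator pieces, indexed by $1\le j\le k$, I would split the sum at $j=\lceil k/2\rceil$: when $j\le\lceil k/2\rceil$ the factor on $g-\ooo{g}$ carries at most $\lceil k/2\rceil$ rotational derivatives while $\ooo\covD(\lieD_{\allrot})^{k-j}\phi$ carries at most $k-1$ of them (since $j\ge1$), which is absorbed into $\tweight\abs{\allrot^k\phi}_{\ooo{g},\tau}\abs{\jb{\allrot}^{k-1}\ooo\covD\phi}_{\ooo{g},\tau}\abs{\jb{\allrot}^{\lceil k/2\rceil}[g-\ooo{g}]}_{\ooo{g},\tau}$; when $j>\lceil k/2\rceil$ then fewer than $\lceil k/2\rceil$ rotational derivatives land on $\ooo\covD\phi$ while at most $k$ land on $g-\ooo{g}$, giving $\tweight\abs{\allrot^k\phi}_{\ooo{g},\tau}\abs{\jb{\allrot}^{\lceil k/2\rceil}\ooo\covD\phi}_{\ooo{g},\tau}\abs{\jb{\allrot}^{k}[g-\ooo{g}]}_{\ooo{g},\tau}$; any extra undifferentiated $g$ factors coming from the $(1+\abs{g-\ooo{g}})$ weight are harmless and absorbed into the schematic $O(\cdot)$. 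There is no substantive obstacle in any of this; the work is entirely in checking that the derivative counts in \eqref{eq:gensysdivh} are precisely the ones the right-hand side of the proposition can accommodate, which the split above arranges.
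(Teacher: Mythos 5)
Your proposal is correct and follows the paper's own route exactly: commute the system with the rotational Killing fields to obtain \eqref{eq:gensyscurlh}--\eqref{eq:gensysdivh}, apply the basic energy estimate \eqref{eq:basicenergy} to each commuted field with the modified source, sum over tuples, and split the commutator sum at $j=\lceil k/2\rceil$ to land the derivative counts on the stated factors. The only cosmetic point is that passing from $\sum_\Omega\abs{\cdot}^2$ to $\bigl(\sum_\Omega\abs{\cdot}\bigr)^2$ in the bulk term on the left costs a constant depending on $k$ and $d$, which is harmless at the schematic level at which the proposition (and the paper) operates.
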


\subsection{``Elliptic'' estimate}
In Proposition \ref{prop:higherbasicenergy} we see a term of the form
$\allrot^{k-1} \ooo\covD\phi$. We wish to control this term in terms of
$\allrot^k \phi$: we see that for the derivatives tangential to
$\spaceslice{t}$ the control is (more or less) built-in. But we have to worry about
transversal (time) derivatives. For this we will use the
equation.\footnote{This procedure of solving the equations of motion
for the time derivatives comes from the fact that $\spaceslice{t}$ is
non-characteristic, and is one of the basic ingredients for the
Cauchy-Kowalevski theorem.}

We start by establishing some facts concerning the rotation vector
fields $\rotvf{ij}$. An easy consequence of \eqref{eq:defrotvf} is that
\begin{equation}\label{eq:sizeestgradrot}
\abs{\ooo{\covD}\rotvf{ij}}_{\ooo{g},\tau} \approx_{d} 1.
\end{equation}
This implies that, for a $(p,q)$ tensor field $V$, 
\begin{equation}\label{eq:rotliecovexch}
\begin{split}
\abs{\lieD_{\rotvf{ij}}V}_{\ooo{g},\tau} & \lesssim_{p,q,d}
\abs{\ooo\covD_{\rotvf{ij}}V}_{\ooo{g},\tau} + \abs{V}_{\ooo{g},\tau},
\\
\abs{\ooo\covD_{\rotvf{ij}}V}_{\ooo{g},\tau} & \lesssim_{p,q,d}
\abs{\lieD_{\rotvf{ij}}V}_{\ooo{g},\tau} + \abs{V}_{\ooo{g},\tau}.
\end{split}
\end{equation}

Next we recall a well-known fact of the geometry of Euclidean spaces:
\begin{lem}
We have that along $\dS$
\[ \sum_{i,j = 1}^{d} \rotvf{ij}^a\rotvf{ij}^b = 2\jb{t}^2
(\ooo{g}^{ab}+ \tau^a\tau^b).\]
\end{lem}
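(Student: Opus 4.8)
The plan is to compute $\sum_{i,j}\rotvf{ij}^a\rotvf{ij}^b$ (the sum being over all rotation vector fields $\rotvf{ij}$, $i,j\in\{1,\dots,d+1\}$) directly in the ambient Cartesian coordinates $(x^0,\dots,x^{d+1})$ of $\Real^{1,d+1}$, and then to recognise the result as $2\jb{t}^2$ times the $\ooo{g}$-orthogonal projection of $T\dS$ onto $T\spaceslice{t}$. From \eqref{eq:defrotvf} the ambient components of $\rotvf{ij}$ are $\rotvf{ij}^\mu = x^i\delta^\mu_j - x^j\delta^\mu_i$; in particular $\rotvf{ij}$ has no $\partial_{x^0}$-component. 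So for spatial indices $a,b\in\{1,\dots,d+1\}$ I would expand
\[ \sum_{i,j}\rotvf{ij}^a\rotvf{ij}^b = \sum_{i,j}\bigl(x^i\delta^a_j - x^j\delta^a_i\bigr)\bigl(x^i\delta^b_j - x^j\delta^b_i\bigr) \]
and carry out the four resulting sums over $i$ and $j$; using $\sum_k (x^k)^2 = 1 + (x^0)^2 = \jb{x^0}^2$ along $\dS$, this collapses to $2\jb{x^0}^2\delta^{ab} - 2x^ax^b$, while every component carrying a $0$ index vanishes. Restricting to $\spaceslice{t} = \dS\cap\{x^0 = t\}$ then gives $\sum_{i,j}\rotvf{ij}^a\rotvf{ij}^b = 2\jb{t}^2\bigl(\delta^{ab} - \tfrac{1}{\jb{t}^2}x^ax^b\bigr)$ on the spatial block, and $0$ on any row or column indexed by $0$.

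It remains to identify this with $2\jb{t}^2(\ooo{g}^{ab} + \tau^a\tau^b)$. Since $\tau$ is the unit ($\ooo{g}$-length $-1$) future normal to $\spaceslice{t}$ inside $\dS$, the tensor $\ooo{g}^{ab} + \tau^a\tau^b = \ooo{g}^{ab} - \tau^a\tau^b/\ooo{g}(\tau,\tau)$ is precisely the $\ooo{g}$-orthogonal projection onto $T\spaceslice{t}$; as $\spaceslice{t}$ is the Euclidean sphere of radius $\jb{t}$ inside the hyperplane $\{x^0 = t\}$, written as an ambient $(2,0)$-tensor this projection is $\delta^{ab} - \jb{t}^{-2}x^ax^b$ on the spatial block and $0$ otherwise, which matches the previous display. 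Concretely one checks this componentwise: the unit inward normal of $\dS$ is $-x^\mu\partial_{x^\mu}$ with Minkowski square $+1$ on $\dS$, whence $\ooo{g}^{ab} = m^{ab} - x^ax^b$ as an ambient tensor, and feeding in the explicit expression \eqref{eq:taudef} for $\tau$ one sees that the $00$- and $0i$-entries of $m^{ab} - x^ax^b + \tau^a\tau^b$ cancel while the $ij$-entry reduces to $\delta^{ij} - \jb{t}^{-2}x^ix^j$. This completes the argument.

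There is no substantive obstacle here; the only thing requiring care is the bookkeeping in the last identification — keeping the Minkowski signature straight when passing between the intrinsic objects $\ooo{g}^{ab}$, $\tau$ on $\dS$ and their ambient coordinate expressions, and not conflating $\jb{t}^2 = 1+t^2$ (the squared radius of $\spaceslice{t}$) with other appearances of $\jb{t}$. As a cross-check one may instead argue abstractly: $\sum_{i,j}\rotvf{ij}^a\rotvf{ij}^b$ is an $SO(d+1)$-invariant symmetric $2$-tensor along the round sphere $\spaceslice{t}$, hence a scalar multiple of the inverse induced metric $\ooo{g}^{ab} + \tau^a\tau^b$ by Schur's lemma, and taking the $\ooo{g}$-trace of both sides — the left side giving $\sum_{i\neq j}\bigl[(x^i)^2 + (x^j)^2\bigr] = 2d\,\jb{t}^2$ and the right side $2\jb{t}^2\bigl(\delta^a_a + \tau^a\tau_a\bigr) = 2d\,\jb{t}^2$ — pins the constant to $2\jb{t}^2$.
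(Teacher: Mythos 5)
Your proposal is correct and follows essentially the same route as the paper: expand $\sum_{i,j}\rotvf{ij}^a\rotvf{ij}^b$ in ambient Cartesian coordinates to get $2\bigl[r^2\delta^{ab}-x^ax^b\bigr]$ on the spatial block with $r^2=\jb{t}^2$ along $\dS$, and identify this with $2\jb{t}^2$ times the projection $\ooo{g}^{ab}+\tau^a\tau^b$. The paper leaves the final identification as a one-line remark, whereas you verify it componentwise and add a Schur's-lemma trace check; both of these are sound (note the summation range in the statement should be read as $i,j\in\{1,\dots,d+1\}$, as you correctly did).
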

\begin{proof}
Let $e_{1},\dots,e_{d+1}$ denote the standard unit vectors in
$\Real^{d+1}$. We have $\rotvf{ij} = x^i e_j - x^j e_i$, and the
Euclidean (inverse) metric is $\sum_{i} e_i \otimes e_i$. We have,
writing $r^2 = \sum_i (x^i)^2$,
\begin{align*}
\sum_{i,j} \rotvf{ij}\otimes\rotvf{ij} &= \sum_{i,j} (x^i)^2
e_j\otimes e_j + (x^j)^2 e_i\otimes e_i - x^i x^j (e_i\otimes e_j +
e_j\otimes e_i) \\
& = 2 \left[ r^2 \sum_i e_i\otimes e_i - \left(\sum_i x^i e_i\right)
\otimes \left(\sum_i
x^i e_i\right)\right].
\end{align*}
Noting that $\sum_i x^i e_i$ represent $r$ times the unit radial
vector field, we have our claim. 
\end{proof}
\begin{cor}
For a $(p,q)$ tensor field $V$, 
\begin{equation}
\abs{\ooo{\covD}V}_{\ooo{g},\tau} \lesssim_{p,q,d} \frac{1}{\jb{t}}
\abs{\jb{\allrot} V}_{\ooo{g},\tau} +
\abs{\tau^a\ooo\covD_{a} V}_{\ooo{g},\tau}.
\end{equation}
\end{cor}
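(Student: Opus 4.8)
The plan is to split an arbitrary covariant derivative $\ooo\covD_a V$ into the piece along $\tau$ and the piece tangent to $\spaceslice{t}$, to estimate the tangential piece by rewriting the spatial projector in terms of the rotation fields via the lemma just proved, and then to exchange covariant derivatives along $\rotvf{ij}$ for Lie derivatives using \eqref{eq:rotliecovexch}.

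First I would introduce the mixed spatial projector $P_a{}^c \eqdef \delta_a^c + \tau_a\tau^c$, which maps onto $T\spaceslice{t}$, and write
\[ \ooo\covD_a V = P_a{}^c\,\ooo\covD_c V \;-\; \tau_a\,\bigl(\tau^c\ooo\covD_c V\bigr). \]
Since $\hat{g}^{ab}\tau_a\tau_b = 1$, the pointwise norm of the last term equals $\abs{\tau^c\ooo\covD_c V}_{\ooo{g},\tau}$, which is already one of the two terms on the right-hand side of the claim, so nothing more is needed there. For the first term I would raise an index, $\ooo{g}^{ae}P_e{}^c = \ooo{g}^{ac} + \tau^a\tau^c$, and invoke the preceding lemma to write $P_a{}^c = \frac{1}{2\jb{t}^2}\sum_{i,j}(\ooo{g}_{ab}\rotvf{ij}^b)\,\rotvf{ij}^c$; contracting with $\ooo\covD_c V$ gives
\[ P_a{}^c\,\ooo\covD_c V = \frac{1}{2\jb{t}^2}\sum_{i,j}(\ooo{g}_{ab}\rotvf{ij}^b)\,\ooo\covD_{\rotvf{ij}}V. \]

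Next I would record the elementary size bound: along $\dS$ one has $\abs{\rotvf{ij}}_{\ooo{g}}^2 = (x^i)^2 + (x^j)^2 \le \sum_{k=1}^{d+1}(x^k)^2 = \jb{t}^2$, and since $\rotvf{ij}$ is $\ooo{g}$-orthogonal to $\tau$ the $\ooo{g}$-dual covector $\ooo{g}_{ab}\rotvf{ij}^b$ has $\abs{\cdot}_{\ooo{g},\tau}$-norm at most $\jb{t}$ as well. Feeding this into the last display and using the triangle inequality together with the multiplicativity of $\abs{\cdot}_{\ooo{g},\tau}$ under tensor products gives $\abs{P_a{}^c\ooo\covD_c V}_{\ooo{g},\tau}\lesssim_d \jb{t}^{-1}\sum_{i,j}\abs{\ooo\covD_{\rotvf{ij}}V}_{\ooo{g},\tau}$. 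Finally \eqref{eq:rotliecovexch} bounds each $\abs{\ooo\covD_{\rotvf{ij}}V}_{\ooo{g},\tau}$ by a $(p,q,d)$-dependent multiple of $\abs{\lieD_{\rotvf{ij}}V}_{\ooo{g},\tau} + \abs{V}_{\ooo{g},\tau}$; summing over $i,j$ and recognising $\abs{V}_{\ooo{g},\tau} + \sum_{i,j}\abs{\lieD_{\rotvf{ij}}V}_{\ooo{g},\tau} = \abs{\jb{\allrot}V}_{\ooo{g},\tau}$ closes the estimate.

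I do not anticipate a genuine obstacle. The only points needing a little care are the bookkeeping between the background metric $\ooo{g}$ (used for raising and lowering) and the auxiliary positive-definite metric $\hat{g} = \ooo{g} + 2\tau\otimes\tau$ defining the pointwise norm — this is harmless because $\rotvf{ij}$ and $\tau$ are $\ooo{g}$-orthogonal and $\tau$ is unit — and the uniform bound $\abs{\rotvf{ij}}_{\ooo{g}}\le\jb{t}$, which is read straight off the defining equation of $\dS$. The $(p,q)$-dependence of the implicit constant enters only through \eqref{eq:rotliecovexch}.
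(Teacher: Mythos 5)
Your proof is correct and is precisely the argument the paper intends: the corollary is stated without proof as an immediate consequence of the preceding lemma, and your decomposition into the $\tau$-component plus the spatial projector $\delta_a^c+\tau_a\tau^c$, rewritten via $\sum\rotvf{ij}^a\rotvf{ij}^b = 2\jb{t}^2(\ooo{g}^{ab}+\tau^a\tau^b)$ with the bound $\abs{\rotvf{ij}}_{\ooo{g}}\le\jb{t}$ and the exchange \eqref{eq:rotliecovexch}, is exactly that intended derivation. All the bookkeeping (orthogonality of $\rotvf{ij}$ and $\tau$, unit length of $\tau$ in $\hat{g}$, multiplicativity of the pointwise norm) checks out.
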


Now, letting $X^a$ be a vector field tangent to $\spaceslice{t}$, we
see from \eqref{eq:gensyscurl} that
\[ (\tau^a X^b - \tau^b X^a)\ooo\covD_a\phi_{bc} = 0.\]
This implies that, together with the above corollary, that for $\phi$
verifying \eqref{eq:gensyscurl} we have
\[ \abs{\ooo\covD \phi}_{\ooo{g},\tau} \lesssim_{d} \frac{1}{\jb{t}}
\abs{\jb{\allrot} \phi}_{\ooo{g},\tau} + \abs{\tau^a\tau^b\tau^c
\ooo\covD_{a}\phi_{bc}}.\]
For this final remaining component, we need to use
\eqref{eq:gensysdiv}, which implies
\[ \abs{\tau^a\tau^b\tau^c \ooo\covD_{a}\phi_{bc}} \lesssim_{d}
\frac{1}{\abs{g^{ab}\tau_a\tau_b}} \left[ \jb{t}^{-1}
\abs{g}_{\ooo{g},\tau} \abs{\jb{\allrot}\phi}_{\ooo{g},\tau} +
\abs{F}_{\ooo{g},\tau} \right].\]
Combining our computations we have the following estimates.
\begin{prop}\label{prop:ellipticest}
Let $\phi$ be a symmetric 2-tensor solving \eqref{eq:gensysdiv} and
\eqref{eq:gensyscurl}, then we have
\begin{equation}\label{eq:ellipticest1}
\abs{\ooo\covD \phi}_{\ooo{g},\tau} \lesssim_{d}
\frac{1}{1 - \abs{g - \ooo{g}}_{\ooo{g},\tau}} 
\left[ \jb{t}^{-1} \left(1 + \abs{g - \ooo{g}}_{\ooo{g},\tau}\right)
\abs{\jb{\allrot}\phi}_{\ooo{g},\tau}
+ \abs{F}_{\ooo{g},\tau} \right].
\end{equation}
For the higher order norms we have 
\begin{multline}\label{eq:ellipticest2}
\abs{\jb{\allrot}^k \ooo\covD\phi}_{\ooo{g},\tau} \lesssim_{k,d}
\frac{1+ \abs{g - \ooo{g}}_{\ooo{g},\tau}}{\left(1 - \abs{g - \ooo{g}}_{\ooo{g},\tau}\right)^{k+1}} 
\Big[ \abs{\jb{\allrot}^k F} \\
+ \jb{t}^{-1} \left( 1 + \abs{\jb{\allrot}^{\lceil k/2\rceil}[g -
\ooo{g}]}_{\ooo{g},\tau}\right)^k \abs{\jb{\allrot}^{k+1}
\phi}_{\ooo{g},\tau} \\
+ \jb{t}^{-1} \left(1 + \abs{\jb{\allrot}^{\lceil k/2\rceil}[g -
\ooo{g}]}_{\ooo{g},\tau}\right)^{k-1} \abs{\jb{\allrot}^k [g -
\ooo{g}]}_{\ooo{g},\tau} \abs{\jb{\allrot}^{\lceil k/2\rceil}
\phi}_{\ooo{g},\tau} \Big].
\end{multline}
\end{prop}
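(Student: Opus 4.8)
The two estimates are proved in order; the first is a direct combination of the three geometric facts established immediately above the proposition (the Corollary bounding $\abs{\ooo\covD V}_{\ooo{g},\tau}$, the curl identity $(\tau^a X^b-\tau^b X^a)\ooo\covD_a\phi_{bc}=0$ for $X$ tangent to $\spaceslice{t}$, and the divergence equation), while the second is obtained by induction on $k$ using the commuted system \eqref{eq:gensyscurlh}--\eqref{eq:gensysdivh}.

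\textbf{Step 1: the first-order estimate \eqref{eq:ellipticest1}.} Apply the Corollary preceding the proposition to $V=\phi$ to reduce controlling $\abs{\ooo\covD\phi}_{\ooo{g},\tau}$ to controlling $\jb{t}^{-1}\abs{\jb{\allrot}\phi}_{\ooo{g},\tau}$ plus the transversal piece $\abs{\tau^a\ooo\covD_a\phi}_{\ooo{g},\tau}$. Feeding the curl relation $(\tau^a X^b-\tau^b X^a)\ooo\covD_a\phi_{bc}=0$ back in, the mixed components of $\tau^a\ooo\covD_a\phi$ (those contracted against a vector tangent to $\spaceslice{t}$ in one slot) are rewritten as purely spatial derivatives, hence again absorbed into $\jb{t}^{-1}\abs{\jb{\allrot}\phi}_{\ooo{g},\tau}$; what remains is the single scalar $\tau^a\tau^b\tau^c\ooo\covD_a\phi_{bc}$. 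For this one uses \eqref{eq:gensysdiv}: writing $g^{ab}=\ooo{g}^{ab}+(g-\ooo{g})^{ab}$, contracting $g^{ab}\ooo\covD_a\phi_{bc}=F_c$ against $\tau^b\tau^c$ isolates $(g^{ab}\tau_a\tau_b)\,\tau^a\tau^b\tau^c\ooo\covD_a\phi_{bc}$ against a remainder containing only spatially-tangential derivatives of $\phi$ (times a factor $\abs{g-\ooo{g}}_{\ooo{g},\tau}$) together with $F_c$. Since $\ooo{g}^{ab}\tau_a\tau_b=-1$, one has $\abs{g^{ab}\tau_a\tau_b}\ge 1-\abs{g-\ooo{g}}_{\ooo{g},\tau}$, and dividing produces the prefactor $(1-\abs{g-\ooo{g}}_{\ooo{g},\tau})^{-1}$, giving \eqref{eq:ellipticest1}.

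\textbf{Step 2: the higher-order estimate \eqref{eq:ellipticest2} by induction on $k$.} The field $(\lieD_\Omega)^k\phi$ solves \eqref{eq:gensyscurlh} together with \eqref{eq:gensysdivh}, i.e. a divergence equation with source $(\lieD_\Omega)^kF_c + \sum_{j=1}^k O\!\big((\lieD_{\rotvf{}})^j[g-\ooo{g}]\cdot\ooo\covD(\lieD_{\rotvf{}})^{k-j}\phi\big)$. Applying \eqref{eq:ellipticest1} to this field, the new source terms carry at most $k$ derivatives of $\phi$, and because $j\ge 1$ the factor $\ooo\covD(\lieD_{\rotvf{}})^{k-j}\phi$ has order $\le k-1$ after the derivative count, so it is estimated by the inductive hypothesis at order $k-1$. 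Exchanging $\lieD_{\rotvf{ij}}$ with $\ooo\covD_{\rotvf{ij}}$ throughout is done with \eqref{eq:sizeestgradrot}--\eqref{eq:rotliecovexch} at the cost of harmless lower-order terms. Each of the (up to $k$) covariant derivatives falling on the inverse quantity $(g^{ab}\tau_a\tau_b)^{-1}$ through the quotient rule contributes one further factor $(1-\abs{g-\ooo{g}}_{\ooo{g},\tau})^{-1}$, which accumulates to $(1-\abs{g-\ooo{g}}_{\ooo{g},\tau})^{-(k+1)}$. Finally, in every product $\ooo\covD^{j_1}[g-\ooo{g}]\cdot\ooo\covD^{j_2}(\text{field})$ with $j_1+j_2\le k$ at least one factor has $\le\lceil k/2\rceil$ derivatives, so one places the higher count on $\phi$ or on $g-\ooo{g}$ as appropriate to land exactly on the two displayed terms of \eqref{eq:ellipticest2}.

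\textbf{Main obstacle.} Steps 1 and the structural part of Step 2 are routine contractions; the genuinely delicate point is the constant bookkeeping in the induction --- verifying that the powers of $(1-\abs{g-\ooo{g}}_{\ooo{g},\tau})^{-1}$ coming from repeatedly differentiating $(g^{ab}\tau_a\tau_b)^{-1}$, the $(1+\abs{g-\ooo{g}}_{\ooo{g},\tau})$ weights, and the nested Leibniz expansions of the commutator source in \eqref{eq:gensysdivh} combine to give precisely the stated exponents $k+1$ and $\lceil k/2\rceil$, rather than something worse. I would organise the induction so that at stage $k$ one invokes only \eqref{eq:ellipticest1} applied to a once-more-differentiated field plus the order-$(k-1)$ hypothesis, and imposes the high/low split at the very end by discarding whichever of $\abs{\jb{\allrot}^{\lceil k/2\rceil}[g-\ooo{g}]}_{\ooo{g},\tau}$ or $\abs{\jb{\allrot}^{\lceil k/2\rceil}\phi}_{\ooo{g},\tau}$ is not needed in each individual term; this keeps the exponent growth linear and under control.
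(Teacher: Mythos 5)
Your proposal is correct and follows essentially the same route as the paper: the first estimate is assembled from the corollary on angular decomposition, the curl relation, and the contracted divergence equation, and the higher-order bound is obtained by induction on $k$ applied to the commuted system \eqref{eq:gensyscurlh}--\eqref{eq:gensysdivh}, noting that the commutator source carries one fewer angular derivative on $\ooo\covD\phi$. The additional bookkeeping you supply for the exponents is consistent with the stated estimate.
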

\begin{proof}
The estimate \eqref{eq:ellipticest1} follows immediately from the
discussion before the statement of the proposition;
\eqref{eq:ellipticest2} is a consequence of \eqref{eq:ellipticest1}
applied to the system \eqref{eq:gensyscurlh} and
\eqref{eq:gensysdivh}, and using induction on $k$ after noting that 
the right hand side of
\eqref{eq:gensysdivh} contains a term of the form
$\jb{\allrot}^{k-1}\ooo\covD\phi$ with \emph{fewer} angular
derivatives. 
\end{proof}

\subsection{Sobolev estimates}
In order to obtain $L^\infty$ estimate from the $L^2$ based energy
quantities, we need some form of \emph{uniform} Sobolev estimates.
This we obtain simply from the standard Sobolev inequalities on the
standard sphere, using that $\spaceslice{t}$ are isometric to spheres
with radii $\jb{t}$. 

\begin{lem}\label{lem:sobolev}
Let $V$ be a $(p,q)$-tensor field, we have that for $k > d/2$ 
\[ \norm[\infty]{V(t)} \lesssim_{d,p,q} \jb{t}^{-d/2}
\norm[2]{\jb{\allrot}^kV(t)}.\]
\end{lem}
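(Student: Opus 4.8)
The plan is to read this as a rescaled Sobolev embedding on the round sphere. The point is that $\spaceslice{t}$ carries the induced metric $\jb{t}^2\D{\omega}^2_{\Sphere^d}$ and so is isometric to a round sphere of radius $\jb{t}$; its area element is $\D{\ooo{A}} = \jb{t}^d\areasph$; and along $\spaceslice{t}$ the rotation fields obey $\sum_{i,j}\rotvf{ij}^a\rotvf{ij}^b = 2\jb{t}^2(\ooo{g}^{ab}+\tau^a\tau^b)$ (shown above), so that they span $T\spaceslice{t}$ and are ``$O(1)$-sized'' with respect to the \emph{unit} round metric. Thus a $\rotvf{ij}$-derivative is, up to a constant, an intrinsic gradient on the unit sphere, while passing from $L^2(\spaceslice{t})$ to $L^2$ of the unit sphere costs a factor $\jb{t}^{-d/2}$; these are exactly complementary, so the classical embedding $H^k(\Sphere^d)\hookrightarrow L^\infty(\Sphere^d)$ for $k > d/2$, rewritten in terms of the $\rotvf{ij}$ (a finite, uniformly spanning family of Killing fields, so iterated $\rotvf{}$-derivatives control the usual $H^k$-norm, commutators and curvature terms being absorbed) and then rescaled, gives the \emph{scalar} version
\[ \norm[\infty]{u(t)} \lesssim_{k,d} \jb{t}^{-d/2}\,\norm[2]{\jb{\allrot}^k u(t)} \qquad (k > d/2) \]
for a function $u$ on $\spaceslice{t}$. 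I would establish this first.

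To reduce the tensorial statement to the scalar one, cover $\Sphere^d$ by finitely many charts (their number depending only on $d$) on each of which one has a $\ooo{g}$-orthonormal frame $e_{(0)} = \tau,\ e_{(1)},\dots,e_{(d)}$ with $e_{(1)},\dots,e_{(d)}$ tangent to $\spaceslice{t}$; on such a chart $\abs{V}_{\ooo{g},\tau}^2$ equals the sum of squares of the (scalar) frame components $V_{(\vec\mu)(\vec\nu)}$ of $V$, since $\hat{g} = \ooo{g}+2\tau\otimes\tau$ is represented by the identity in this frame. The crucial observation is that differentiating a frame component along $\rotvf{ij}$ produces no power of $\jb{t}$: one has $\rotvf{ij}(V_{(\vec\mu)(\vec\nu)}) = (\lieD_{\rotvf{ij}}V)_{(\vec\mu)(\vec\nu)}$ plus a linear combination of components of $V$ with coefficients read off from $\lieD_{\rotvf{ij}}e_{(\mu)}$ and $\lieD_{\rotvf{ij}}\tau_a$, and these coefficients are bounded by a dimensional constant. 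Indeed $\lieD_{\rotvf{ij}}\tau_a = 0$, so the normal frame vector contributes nothing: Proposition \ref{prop:nablatau} shows $\ooo\covD_a\tau_b$ is symmetric, and $\rotvf{ij}^a\tau_a = 0$ since the rotations are tangent to the slices, whence $\lieD_{\rotvf{ij}}\tau_a = \rotvf{ij}^b\ooo\covD_b\tau_a + \tau_b\ooo\covD_a\rotvf{ij}^b = 0$ using again that $\rotvf{ij}$ is Killing and the symmetry of $\ooo\covD\tau$; and for $k\geq 1$ the bracket $[\rotvf{ij},e_{(k)}]$ is tangent to $\spaceslice{t}$ with $\abs{[\rotvf{ij},e_{(k)}]}_{\ooo{g},\tau}\lesssim_d 1$, the a priori $\jb{t}$-sized normal component cancelling by symmetry of the second fundamental form of $\spaceslice{t}$ in $\dS$ (again given by Proposition \ref{prop:nablatau}), leaving a tangential part controlled via \eqref{eq:sizeestgradrot}. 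Iterating, $\abs{\allrot^j V_{(\vec\mu)(\vec\nu)}}\lesssim_{j,d}\abs{\jb{\allrot}^j V}_{\ooo{g},\tau}$ for all $j$.

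Combining the two steps finishes the proof: apply the scalar inequality to each component $V_{(\vec\mu)(\vec\nu)}$ on each chart, use $\norm[2]{\jb{\allrot}^k V_{(\vec\mu)(\vec\nu)}(t)}\lesssim_{k,d}\norm[2]{\jb{\allrot}^k V(t)}$ from the reduction step, and sum over the (finitely many, dimension-dependent) components and charts; since $\norm[\infty]{V(t)}$ is the essential supremum over $\spaceslice{t}$ of the square root of the sum of squares of the local frame components, this yields $\norm[\infty]{V(t)}\lesssim_{d,p,q}\jb{t}^{-d/2}\norm[2]{\jb{\allrot}^k V(t)}$.

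The step I expect to demand the most care is the commutator bookkeeping in the reduction: one must be sure that commuting a frame through the rotation fields never generates a positive power of $\jb{t}$. A careless estimate would lose a factor $\jb{t}$ per derivative, reflecting that $\abs{\rotvf{ij}}_{\ooo{g},\tau}\approx_d\jb{t}$ rather than $\approx_d 1$, and that would be fatal to the claimed decay rate. The gain comes precisely from $\lieD_{\rotvf{ij}}\tau_a = 0$ (equivalently, invariance of $\hat{g}$ under the rotation flow), from $\abs{\ooo\covD\rotvf{ij}}_{\ooo{g},\tau}\approx_d 1$ in \eqref{eq:sizeestgradrot}, and from the cancellation of the normal parts of $[\rotvf{ij},e_{(k)}]$ — these are exactly what make the powers of $\jb{t}$ on the two sides balance. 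Everything else is standard Sobolev theory on compact manifolds together with \eqref{eq:sizeestgradrot}, \eqref{eq:rotliecovexch} and Proposition \ref{prop:nablatau}.
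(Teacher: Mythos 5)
Your proof is correct and follows essentially the same route as the paper's: the standard tensorial Sobolev embedding on the unit sphere, scalarisation of the normal components via the $\tau$-adapted frame (using that the rotation flow preserves $\tau$ and $\hat g$), and the $\jb{t}^{-d/2}$ factor from scaling the area element $\D*{\ooo{A}}=\jb{t}^d\areasph$. You have simply spelled out in detail the commutator bookkeeping that the paper's one-line remark ``$\tau$ commutes with $\rotvf{ij}$'' compresses, and your verification that $\lieD_{\rotvf{ij}}\tau_a=0$ (so no powers of $\jb{t}$ are lost) is exactly the point being relied upon.
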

\begin{proof}
The Sobolev inequality on a fixed compact Riemannian manifold (such as the
unit sphere) for tensor fields along the manifold is standard (see, e.g.\
\cite{Taylor2011}). To obtain it for our (space-time) tensor field we
partially scalarise the normal components by examining the contraction
of $V$ against $\tau$. But noting that $\tau$ commutes with
$\rotvf{ij}$ we see that we can still write the expression as in the
compact form above. It suffices to obtain the factor $\jb{t}^{-d/2}$.
But this follows from scaling, and noting that $\D*{\ooo{A}} =
\jb{t}^d \areasph$. 
\end{proof}

\tocaddline
\section{The case of ``small data'': perturbations of
\texorpdfstring{$\dS$}{de Sitter}}
We are now ready to attack the quasilinear system
\eqref{eq:mainsystem} for $\phi$ small. Note that here the
source term $F$ of \eqref{eq:gensysdiv} vanishes identically. The
result that we will prove is:
\begin{thm}[Small data case] \label{thm:mainSmallData}
For every positive integer\footnote{The lower bound for $N$ here is
not sharp. One can improve the bound if we keep more of the structure
of \eqref{eq:gensysdivh} instead of the rough pigeonholed estimate
given in Proposition \ref{prop:higherbasicenergy}.} $N > d+3$, 
there exists a real constant $\epsilon_0 > 0$ 
depending on the dimension $d$ and the number $N$, for which the following
holds:
if $\phi$ solves \eqref{eq:mainsystem} on $\bulkregion{t_1}{t_2}$
for any $t_2 > t_1 > 0$, and for some $t_0\in (t_1,t_2)$ we
have 
\[ \energy[g,\jb{\allrot}^N\phi](t_0) < \epsilon_0, \]
and 
\[ \norm[\infty]{g(t_0) -\ooo{g}(t_0)} < \frac{1}{8(d+1)},\]
then $\phi$ can be extended to a (classical) solution of
\eqref{eq:mainsystem} on $\bulkregion{t_1}{\infty}$ such that
\begin{equation}\label{eq:mainenergybound}
 \sup_{t\in (t_0,\infty)} \energy[g,\jb{\allrot}^N\phi](t) < 2
\epsilon_0\end{equation}
and 
\[ \sup_{t\in (t_0,\infty)}
\jb{t}\norm[\infty]{\jb{\allrot}^{N - \lceil d/2\rceil}\phi(t)} <
\infty.\] 
\end{thm}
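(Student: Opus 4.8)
The plan is to run a standard continuity/bootstrap argument, where the key structural inputs are the weighted energy estimate from Proposition~\ref{prop:higherbasicenergy}, the elliptic estimate of Proposition~\ref{prop:ellipticest}, the Sobolev inequality of Lemma~\ref{lem:sobolev}, and the algebraic cancellation built into the constant-mean-curvature condition (i.e.\ the identity~\eqref{eq:traceid} saying $\trace_{\ooo g}\phi$ is a quadratic quantity). First I would set up the bootstrap: assume on a maximal interval $(t_0, T^*)$ that $\energy[g,\jb{\allrot}^N\phi](t) < 2\epsilon_0$ and that $\norm[\infty]{g(t) - \ooo g(t)} < \tfrac{1}{4(d+1)}$ (say), and aim to recover the same bounds with $2\epsilon_0$ replaced by $\tfrac{3}{2}\epsilon_0$ and the coefficient bound strictly improved, so that by local existence (which we may invoke, since the \cpmc{} Cauchy problem is well-posed by the discussion around Remark~\ref{rmk:defcauchyproblem}) the solution extends past $T^*$ and the bounds propagate to $t_2 = \infty$.

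The heart of the argument is the decay mechanism. Under the bootstrap assumption, Lemma~\ref{lem:sobolev} gives $\norm[\infty]{\jb{\allrot}^{N-\lceil d/2\rceil}\phi(t)} \lesssim \jb{t}^{-d/2}\sqrt{\epsilon_0}$; in particular the lower-order $L^\infty$ norms of $\phi$ (and hence, via~\eqref{eq:defgfromphi} and~\eqref{eq:defpsi}, of $g - \ooo g$ and $\psi$) decay like $\jb{t}^{-d/2}$. I would feed this back into Proposition~\ref{prop:higherbasicenergy} with $F = 0$. The source term $\int \tweight\, \norm[2]{\allrot^k\phi}^2\, \norm[\infty]{g}\,\norm[\infty]{\jb{\ooo\covD}[g-\ooo g]}\, \jb{t}^{-1}\,\D t$ is then bounded, after the elliptic estimate~\eqref{eq:ellipticest2} is used to trade $\ooo\covD$-derivatives for $\allrot$-derivatives at the cost of a $\jb{t}^{-1}$, by $\int \jb{t}^{-1-d/2}\sqrt{\epsilon_0}\, \energy[g,\jb{\allrot}^N\phi](t)\,\D t$ (roughly: each $[g-\ooo g]$ factor contributes $\jb{t}^{-d/2}$ pointwise and the $\jb{t}^{-1}$ is the volume weight); since $d \geq 1$ this is integrable in $t$. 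The trace term $\int \tweight\, \abs{\allrot^k\phi}\,\abs{\allrot^k\trace_{\ooo g}\phi}\,\D\ooo V$ is handled using~\eqref{eq:traceid}: $\trace_{\ooo g}\hat\phi = -\tilde\phi:\tilde\psi$ is quadratic in $\phi$, so commuting with $\allrot^k$ and distributing derivatives, $\abs{\allrot^k\trace_{\ooo g}\phi} \lesssim \norm[\infty]{\jb{\allrot}^{\lceil N/2\rceil}\phi}\,\abs{\jb{\allrot}^N\phi} \lesssim \jb{t}^{-d/2}\sqrt{\epsilon_0}\,\abs{\jb{\allrot}^N\phi}$ (here $N > d+3$ guarantees $\lceil N/2\rceil \leq N - \lceil d/2\rceil$ so the $L^\infty$ factor is controlled by Sobolev), again giving an integrable-in-$t$ contribution. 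The remaining two commutator terms in Proposition~\ref{prop:higherbasicenergy} are treated the same way after applying~\eqref{eq:ellipticest2}. Collecting everything, Gr\"onwall gives $\energy[g,\jb{\allrot}^N\phi](t) \leq \energy[g,\jb{\allrot}^N\phi](t_0)\,\exp\!\big(C\sqrt{\epsilon_0}\int_{t_0}^\infty \jb{s}^{-1-d/2}\,\D s\big) < \tfrac{3}{2}\epsilon_0$ for $\epsilon_0$ small, closing the energy bootstrap. The coefficient bootstrap $\norm[\infty]{g - \ooo g}$ is recovered by integrating $\tau(g - \ooo g)$, which is schematically $\ooo\covD\phi \cdot (\text{stuff})$, in time: $\norm[\infty]{(g-\ooo g)(t)} \leq \norm[\infty]{(g-\ooo g)(t_0)} + \int_{t_0}^t \norm[\infty]{\jb{\allrot}^{?}\ooo\covD\phi}\,\D s$, and by Sobolev plus~\eqref{eq:ellipticest1} the integrand is $\lesssim \jb{s}^{-1-d/2}\sqrt{\epsilon_0}$, integrable, so the coefficient bound strictly improves from $\tfrac{1}{8(d+1)}$ at $t_0$. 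Finally the claimed uniform bound $\sup_t \jb{t}\norm[\infty]{\jb{\allrot}^{N-\lceil d/2\rceil}\phi(t)} < \infty$ follows from Lemma~\ref{lem:sobolev} applied to~\eqref{eq:mainenergybound}: $\jb{t}\,\jb{t}^{-d/2}\sqrt{\epsilon_0}$ is bounded only when $d \geq 2$; for $d = 1$ one gets $\jb{t}^{1/2}$, so strictly one should either restrict the final assertion appropriately or — better — squeeze an extra half-power out of the $\ell \geq 1$ modes as in Proposition~\ref{prop:lineargrowthhighermode}, noting that the spherically symmetric mode is already handled by Theorem~\ref{thm:sphsymstableigm} and decays faster.

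\textbf{Main obstacle.} The delicate point is not the Gr\"onwall bookkeeping but ensuring the estimates genuinely \emph{close} without derivative loss: the divergence-curl structure means the natural energy controls $\ooo\covD\phi$ only in tangential directions, and the transversal (time) derivative must be recovered from the equation via Proposition~\ref{prop:ellipticest}, which in turn requires the coefficient matrix $g^{ab}\tau_a\tau_b$ to stay uniformly away from zero — this is exactly why the coefficient bootstrap on $\norm[\infty]{g-\ooo g}$ must be run in tandem with the energy bootstrap, and why the two cannot be decoupled. A secondary subtlety is the interplay of weights: the energy $\energy$ is weighted by $\tweight = \jb{t}^{2-d}$, so the \emph{unweighted} $L^2$ norm of $\phi$ is allowed to grow like $\jb{t}^{(d-2)/2}$; one must consistently track that all nonlinear error terms, after accounting for this growth and the $\jb{t}^{-d/2}$ pointwise decay from Sobolev, still produce an integrable time-weight — this is the place where the precise threshold $N > d+3$ and the quadratic nature of $\trace_{\ooo g}\phi$ are both essential, and getting a single sign or power wrong collapses the argument.
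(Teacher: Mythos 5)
Your overall architecture is the same as the paper's: a bootstrap on the weighted energy run in tandem with a bootstrap on $\norm[\infty]{g-\ooo{g}}$, with Proposition \ref{prop:fundposstress} for coercivity, Lemma \ref{lem:sobolev} for pointwise decay, Proposition \ref{prop:ellipticest} to recover transversal derivatives, \eqref{eq:traceid}/\eqref{eq:tracefromphiwder} for the trace term, and Gr\"onwall to close. The one substantive difference from the paper is a computational slip in the decay rate, and it matters.

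You assert that Sobolev gives $\norm[\infty]{\jb{\allrot}^{N-\lceil d/2\rceil}\phi(t)}\lesssim \jb{t}^{-d/2}\sqrt{\epsilon_0}$. This forgets the $\tweight$ weight in the definition \eqref{eq:defenergy} — a weight you correctly identify in your closing paragraph but then fail to apply in the main computation. The coercivity statement is $\energy[g,\jb{\allrot}^N\phi](t)\gtrsim\tweight\norm[2]{\jb{\allrot}^N\phi(t)}^2$, so the unweighted $L^2$ norm is only bounded by $\tweight^{-1/2}\sqrt{\epsilon_0}=\jb{t}^{(d-2)/2}\sqrt{\epsilon_0}$, and Lemma \ref{lem:sobolev} then yields
\begin{equation*}
\norm[\infty]{\jb{\allrot}^{N-\lceil d/2\rceil}\phi(t)}^2\lesssim \jb{t}^{-d}\,\tweight^{-1}\,\energy[g,\jb{\allrot}^N\phi](t)\lesssim \jb{t}^{-2}\epsilon_0,
\end{equation*}
i.e.\ the decay rate is $\jb{t}^{-1}$ \emph{uniformly in $d$}, not $\jb{t}^{-d/2}$ (this is exactly the paper's Lemma \ref{lem:linftydecay}). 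For $d\geq 3$ you are claiming more decay than is available; fortunately nowhere in your error estimates do you actually need more than $\jb{t}^{-1}$ (the Gr\"onwall integrand becomes $\sqrt{\epsilon_0}\,\energy\,\jb{s}^{-2}$, still integrable), so the energy bootstrap survives the correction. For $d=1$ the error runs the other way: you conclude only $\jb{t}^{-1/2}$ decay and are then forced to flag the final assertion $\sup_t\jb{t}\norm[\infty]{\jb{\allrot}^{N-\lceil d/2\rceil}\phi(t)}<\infty$ as possibly failing, proposing a mode decomposition to rescue it. This is a spurious complication: with the correct rate the final assertion is immediate for all $d\geq 1$ and no further argument is needed. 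Separately, and more minor: your recovery of the coefficient bound by time-integrating $\tau(g-\ooo{g})$ works but is roundabout; since $g-\ooo{g}=O(\psi,\psi^2)$ is \emph{algebraic} in $\phi$ via \eqref{eq:defpsi}, the paper simply reads off the improved bound at each fixed time from Lemma \ref{lem:psifromphi} and the pointwise decay of $\phi$, with no integration.
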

Note that by \eqref{eq:ellipticest2} the $L^\infty$ estimate implies
the following version for the covariant derivative
\[ \sup_{t\in(t_0,\infty)} \jb{t}^{k+1} \norm[\infty]{\ooo\covD^k
\phi} < \infty \]
where $k \leq N - \lceil d/2\rceil$; that is to say, each additional
derivative gains one factor of $t$ decay.  

\subsection{Estimates of \texorpdfstring{$\psi$ and
$\trace_{\ooo{g}}\phi$}{the inverse field and the}}
We note here some immediate consequences of \eqref{eq:defpsi}.
Firstly, we have that $\psi + \phi\psi = -\phi$, which implies that
\[ \abs{\psi}_{\ooo{g},\tau} \left( 1 -
\abs{\phi}_{\ooo{g},\tau}\right) \leq \abs{\phi}_{\ooo{g},\tau}.\]
This gives us 
\begin{lem}\label{lem:psifromphi}
Whenever $\abs{\phi}_{\ooo{g},\tau} < \frac12$, we have
$\abs{\psi}_{\ooo{g},\tau} \lesssim \abs{\phi}_{\ooo{g},\tau}$. 
\end{lem}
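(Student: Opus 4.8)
The plan is to solve the algebraic relation \eqref{eq:defpsi} for $\psi$ in terms of $\phi$ and then estimate pointwise. Expanding $(\delta_a^c + \psi_a^c)(\delta_c^b + \phi_c^b) = \delta_a^b$ gives $\psi_a^b + \phi_a^b + \psi_a^c \phi_c^b = 0$; writing this schematically as a relation between $(1,1)$-tensors, $\psi = -\phi - \phi\psi$, where the order of composition is immaterial since, as noted after \eqref{eq:defpsi}, $\phi$ and $\psi$ commute. Equivalently, $\psi + \phi\psi = -\phi$.

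Next I would apply the pointwise norm $\abs{\cdot}_{\ooo{g},\tau}$ to both sides. The only structural fact needed is that this norm is \emph{submultiplicative} on compositions of $(1,1)$-tensors: since the bilinear forms $\hat{g}_{ab}$ and $\hat{g}^{ab}$ entering its definition are positive definite, in a $\hat{g}$-orthonormal frame the quantity $\abs{\phi}_{\ooo{g},\tau}$ is exactly the Hilbert--Schmidt (Frobenius) norm of the matrix $(\phi^a_b)$, and composition of $(1,1)$-tensors becomes matrix multiplication, so that $\abs{\phi\psi}_{\ooo{g},\tau} \le \abs{\phi}_{\ooo{g},\tau}\abs{\psi}_{\ooo{g},\tau}$ with constant one. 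Combined with the triangle inequality this yields
\[
\abs{\psi}_{\ooo{g},\tau} \le \abs{\phi}_{\ooo{g},\tau} + \abs{\phi}_{\ooo{g},\tau}\,\abs{\psi}_{\ooo{g},\tau},
\]
i.e. $\abs{\psi}_{\ooo{g},\tau}\bigl(1 - \abs{\phi}_{\ooo{g},\tau}\bigr) \le \abs{\phi}_{\ooo{g},\tau}$.

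Finally, under the hypothesis $\abs{\phi}_{\ooo{g},\tau} < \tfrac12$ one has $1 - \abs{\phi}_{\ooo{g},\tau} > \tfrac12$, so dividing gives $\abs{\psi}_{\ooo{g},\tau} \le 2\,\abs{\phi}_{\ooo{g},\tau}$, which is the asserted bound. This is a purely pointwise, linear-algebraic estimate — no integration, no Sobolev embedding, and no use of the equations of motion is involved — and the only step deserving a comment is the submultiplicativity of $\abs{\cdot}_{\ooo{g},\tau}$, which, being an elementary consequence of the positive-definiteness of $\hat{g}$, I do not expect to present any real obstacle.
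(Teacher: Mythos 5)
Your proof is correct and is essentially the paper's own argument: the paper likewise derives $\psi + \phi\psi = -\phi$ from \eqref{eq:defpsi} and reads off $\abs{\psi}_{\ooo{g},\tau}\bigl(1-\abs{\phi}_{\ooo{g},\tau}\bigr)\leq\abs{\phi}_{\ooo{g},\tau}$ before dividing. Your extra remark justifying submultiplicativity of $\abs{\cdot}_{\ooo{g},\tau}$ (it is the Frobenius norm in a $\hat{g}$-orthonormal frame) is a correct filling-in of a step the paper leaves implicit.
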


Next, from \eqref{eq:traceid} we obtain
\begin{lem}\label{lem:tracefromphi}
Whenever $\abs{\phi}_{\ooo{g},\tau} < \frac12$, we have
$\abs{\trace_{\ooo{g}}\phi}_{\ooo{g},\tau} \lesssim
\abs{\phi}_{\ooo{g},\tau}^2$. 
\end{lem}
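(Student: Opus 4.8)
The plan is to exploit the algebraic identity \eqref{eq:traceid}, namely $\phi_a^a + \phi_a^b\psi_b^a = 0$, together with the bound on $\psi$ already obtained in Lemma \ref{lem:psifromphi}. First I would observe that $\trace_{\ooo{g}}\phi = \phi_a^a = -\phi_a^b\psi_b^a$, so that pointwise $\abs{\trace_{\ooo{g}}\phi} = \abs{\phi_a^b\psi_b^a}$; this is a full contraction of the $(1,1)$-tensors $\phi$ and $\psi$. The point is that the right-hand side is \emph{bilinear} in $\phi$ and $\psi$, hence the trace is genuinely a ``quadratic'' quantity once $\psi$ is expressed in terms of $\phi$.

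The key step is then a Cauchy--Schwarz-type estimate for the contraction in terms of the pointwise norm $\abs{\cdot}_{\ooo{g},\tau}$ introduced in the notations section. Since $\hat{g}_{ab} = \ooo{g}_{ab} + 2\tau_a\tau_b$ and its inverse $\hat{g}^{ab}$ are positive definite, a full contraction of two $(1,1)$-tensors is controlled by the product of their $\abs{\cdot}_{\ooo{g},\tau}$-norms, up to a dimensional constant: $\abs{\phi_a^b\psi_b^a} \lesssim_d \abs{\phi}_{\ooo{g},\tau}\abs{\psi}_{\ooo{g},\tau}$. (This is the standard fact that the Frobenius-type inner product induced by a positive definite metric is submultiplicative under index contraction; it can be checked by passing to the $\hat{g}$-orthonormal frame.) Combining this with Lemma \ref{lem:psifromphi}, which gives $\abs{\psi}_{\ooo{g},\tau} \lesssim \abs{\phi}_{\ooo{g},\tau}$ whenever $\abs{\phi}_{\ooo{g},\tau} < \tfrac12$, we conclude
\[ \abs{\trace_{\ooo{g}}\phi} = \abs{\phi_a^b\psi_b^a} \lesssim_d \abs{\phi}_{\ooo{g},\tau}\abs{\psi}_{\ooo{g},\tau} \lesssim \abs{\phi}_{\ooo{g},\tau}^2, \]
which is exactly the claim (reading $\abs{\trace_{\ooo{g}}\phi}_{\ooo{g},\tau}$ as the pointwise norm of the scalar, i.e.\ its absolute value).

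There is no serious obstacle here; the only point requiring a word of care is the contraction inequality $\abs{\phi_a^b\psi_b^a}\lesssim_d\abs{\phi}_{\ooo{g},\tau}\abs{\psi}_{\ooo{g},\tau}$, since the contraction pattern in \eqref{eq:traceid} pairs an upper index of $\phi$ with a lower index of $\psi$ and vice versa, rather than the ``matched'' pattern used to define $\abs{\cdot}_{\ooo{g},\tau}$. One resolves this by noting that both $\phi$ and $\psi$ are $\ooo{g}$-self-adjoint (as recorded in the discussion around Proposition \ref{prop:linalg} and \eqref{eq:dcodafinal}), so their components in a $\hat{g}$-orthonormal frame form symmetric matrices and the contraction $\phi_a^b\psi_b^a$ is the usual trace of a matrix product, which is bounded by the product of the Frobenius norms. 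Everything else is routine, so the proof is short.
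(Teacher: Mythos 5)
Your proposal is correct and is essentially the paper's own argument: the paper derives the lemma directly from the trace identity \eqref{eq:traceid}, $\trace_{\ooo{g}}\phi = -\phi_a^b\psi_b^a$, combined with the bound $\abs{\psi}_{\ooo{g},\tau}\lesssim\abs{\phi}_{\ooo{g},\tau}$ from Lemma \ref{lem:psifromphi}, exactly as you do. Your care with the contraction pattern (working in a $\hat{g}$-orthonormal frame so the contraction is a trace of a matrix product bounded by the product of Frobenius norms) is a correct filling-in of a detail the paper leaves implicit; note only that self-adjointness is not actually needed there, since $\abs{\trace(\Phi\Psi)}\leq\norm{\Phi}_F\norm{\Psi}_F$ holds for arbitrary matrices.
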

Noting that rotational vector fields commute with $\trace_{\ooo{g}}$,
we can take higher derivatives of \eqref{eq:traceid} and obtain, under
the assumption\footnote{This condition can be significantly relaxed,
but the version quoted here suffices for our purposes.} that 
$\abs{\jb{\allrot}^k\phi}_{\ooo{g},\tau} < \frac12$,
\begin{equation}\label{eq:tracefromphiwder}
\abs{\jb{\allrot}^k \trace_{\ooo{g}}\phi}_{\ooo{g},\tau} \lesssim_k
\abs{\jb{\allrot}^k \phi}_{\ooo{g},\tau} 
\abs{\jb{\allrot}^{\lceil k/2\rceil}\phi}_{\ooo{g},\tau}.
\end{equation}

\subsection{Proof of Theorem \ref*{thm:mainSmallData}}
Recall that \emph{local} well-posedness for the \cpmc problem is
relatively straight-forward, as the equations can be cast locally as
quasilinear wave equations.\footnote{In principle one can also prove local
well-posedness of \eqref{eq:mainsystem} directly using the energy method 
based on the estimates discussed in this and the next section, see e.g.\ the 
general techniques discussed in \cite{CouHil1962, Kato1975, HuKaMa1976}. We will
not pursue this line of argument in here.} Thus it suffices for us to
prove the \emph{a priori} energy bound \eqref{eq:mainenergybound}. We
do so using a bootstrap/continuity argument.

Let us now assume that the solution $\phi$ exists on
$\bulkregion{t_1}{T}$ with the bound
\begin{equation}\label{eq:bootstrapenergy}
\sup_{t\in (t_0,T)} \energy[g,\jb{\allrot}^N\phi](t) < 4 \epsilon_0,
\end{equation}
and for convenience
\begin{equation}\label{eq:bootstrapg}
\sup_{t\in (t_0,T)} \norm[\infty]{g(t) - \ooo{g}(t)} <
\frac{1}{6(d+1)}.
\end{equation}
To close the bootstrap it suffices to show that, for $\epsilon_0$ sufficiently 
small, we can
improve the estimates \eqref{eq:bootstrapenergy} and
\eqref{eq:bootstrapg}. 

First, we note that under assumption \eqref{eq:bootstrapg} we have by
Proposition \ref{prop:fundposstress} that 
\[ \energy[g,\jb{\allrot}^N\phi](t) \gtrsim_{d} \tweight
\norm[2]{\jb{\allrot}^N\phi(t)}^2.\]
For $N \geq d/2 + 1$ we have, by the Sobolev Lemma \ref{lem:sobolev} that
\begin{equation}\label{eq:energytoinfty1}
 \norm[\infty]{\jb{\allrot}^{N - \lceil d/2 \rceil}\phi}^2
\lesssim_{N,d} \jb{t}^{-d} \tweight^{-1} \energy[g,\jb{\allrot}^N\phi](t).
\end{equation}
\begin{lem}[$L^\infty$ decay]\label{lem:linftydecay}
Assuming \eqref{eq:bootstrapg} and \eqref{eq:bootstrapenergy}, we have
that
\[ \sup_{t\in(t_0,T)} \jb{t} \norm[\infty]{\jb{\allrot}^{N - \lceil
d/2\rceil} \phi(t)} \lesssim_{N,d} \sqrt{\epsilon_0}. \]
Hence for $\epsilon_0$ sufficiently small, we have the improved
version of \eqref{eq:bootstrapg}:
\[ \sup_{t\in (t_0,T)} \norm[\infty]{g(t) - \ooo{g}(t)} <
\frac{1}{8(d+1)}.\]
\end{lem}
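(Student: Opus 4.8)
The plan is to chain together the energy lower bound from Proposition \ref{prop:fundposstress}, the Sobolev embedding of Lemma \ref{lem:sobolev}, and the bootstrap hypothesis \eqref{eq:bootstrapenergy}. First I would observe that the smallness condition \eqref{eq:bootstrapg} is exactly what is needed to invoke Proposition \ref{prop:fundposstress} (with $A = \jb{t}^{-2}$, $B = \jb{t}^{2}$, so $\min(A,B) = \jb{t}^{-2}$, and with the off-diagonal and diagonal deviations of $g$ from $\ooo{g}$ controlled by $\frac{1}{6(d+1)} < \frac{1}{4(d+1)}$ times the appropriate normalisation). This yields the pointwise bound \eqref{eq:fundposstressp}, hence after integration over $\spaceslice{t}$ and multiplying by $\tweight$,
\[ \energy[g,\jb{\allrot}^N\phi](t) \gtrsim_d \tweight \norm[2]{\jb{\allrot}^N\phi(t)}^2, \]
which is the displayed inequality already asserted just before the lemma. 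Here one must be mildly careful that the pointwise coercivity constant carries a $\jb{t}$-weight: the factor $\min(A,B)^2 = \jb{t}^{-4}$ combines with $\tweight = \jb{t}^{2-d}$ to give a net weight, and the norm $\abs{\phi}_{\ooo{g},\tau}$ is computed with the $\jb{t}$-independent frame $\hat g$, so this needs to be reconciled with how $\norm[2]{\cdot}$ is defined; but this is bookkeeping and I would treat it as routine.

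Next I would apply Lemma \ref{lem:sobolev} with $k = \lceil d/2\rceil$ (which satisfies $k > d/2$ when $d$ is odd, and for $d$ even one takes $k = d/2 + 1$, absorbed into the $N - \lceil d/2\rceil$ count by the hypothesis $N > d+3$), to the tensor field $V = \jb{\allrot}^{N - \lceil d/2\rceil}\phi$. This gives
\[ \norm[\infty]{\jb{\allrot}^{N-\lceil d/2\rceil}\phi(t)} \lesssim_{d,N} \jb{t}^{-d/2} \norm[2]{\jb{\allrot}^N\phi(t)}, \]
and then combining with the energy lower bound and the bootstrap hypothesis \eqref{eq:bootstrapenergy} gives
\[ \jb{t}^2 \norm[\infty]{\jb{\allrot}^{N-\lceil d/2\rceil}\phi(t)}^2 \lesssim_{d,N} \jb{t}^{d} \cdot \jb{t}^{-d} \tweight^{-1} \energy[g,\jb{\allrot}^N\phi](t) \lesssim_{d,N} \epsilon_0, \]
where the middle step is precisely \eqref{eq:energytoinfty1}. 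Taking square roots gives $\jb{t}\norm[\infty]{\jb{\allrot}^{N-\lceil d/2\rceil}\phi(t)} \lesssim_{d,N} \sqrt{\epsilon_0}$, uniformly in $t \in (t_0,T)$, which is the first assertion.

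For the second assertion I would note that $g - \ooo{g}$ is an algebraic (in fact quadratic plus linear) expression in $\phi$ via \eqref{eq:defgfromphi}, namely $g_{ab} - \ooo{g}_{ab} = 2\phi_{ab} + \phi_{ad}\phi^d_b$, so $\norm[\infty]{g(t) - \ooo{g}(t)} \lesssim_d \norm[\infty]{\phi(t)}(1 + \norm[\infty]{\phi(t)}) \lesssim_d \norm[\infty]{\jb{\allrot}^{N-\lceil d/2\rceil}\phi(t)}$ (dropping the higher-derivative decoration only weakens the bound). By the first part this is $\lesssim_d \sqrt{\epsilon_0}\,\jb{t}^{-1} \leq \sqrt{\epsilon_0}$, so choosing $\epsilon_0$ small enough that the implicit constant times $\sqrt{\epsilon_0}$ is below $\frac{1}{8(d+1)}$ closes the improvement of \eqref{eq:bootstrapg}. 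The only genuine subtlety — and the step I would flag as requiring the most care — is making sure the pointwise norm $\norm[\infty]{\cdot}$ used to measure $g - \ooo{g}$ in \eqref{eq:bootstrapg} (which presumably refers to a frame-independent operator-type norm compatible with the hypotheses of Proposition \ref{prop:fundposstress}) is indeed controlled by $\abs{\cdot}_{\ooo{g},\tau}$; once that comparability is pinned down the rest is a straightforward chain of inequalities with no analytic difficulty.
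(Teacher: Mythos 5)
Your proof is correct and follows essentially the same route as the paper: coercivity of the weighted energy via Proposition \ref{prop:fundposstress} under \eqref{eq:bootstrapg}, the Sobolev estimate of Lemma \ref{lem:sobolev}, the bootstrap bound \eqref{eq:bootstrapenergy}, and then the algebraic dependence of $g-\ooo{g}$ on the unknown for the second claim. Two small corrections. First, in Proposition \ref{prop:fundposstress} the co-frame $(f^{(i)})$ is \emph{orthonormal} for $\ooo{g}$, so the right choice is $A=B=1$ (not $A=\jb{t}^{-2}$, $B=\jb{t}^{2}$, which would come from the coordinate frame $\partial_t,\partial_\omega$ and would wreck the weights); with $A=B=1$ the coercivity constant is uniform in $t$ and the "bookkeeping" worry you flag disappears. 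Second, the quantity controlled in \eqref{eq:bootstrapg} is the inverse-metric coefficient $g^{ab}$ entering the divergence equation, for which the correct expansion is $g^{ab}-\ooo{g}^{ab}=O(\psi)+O(\psi^2)$ combined with Lemma \ref{lem:psifromphi} (this is exactly what the paper invokes), rather than the lowered-index identity \eqref{eq:defgfromphi}; the conclusion is unchanged since $\abs{\psi}_{\ooo{g},\tau}\lesssim\abs{\phi}_{\ooo{g},\tau}$ in the small-$\phi$ regime.
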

\begin{proof}
This first estimate follows from \eqref{eq:energytoinfty1} together
with \eqref{eq:deftweight}. The
second estimate uses the fact that $g - \ooo{g} = O(\psi,\psi^2)$ and
Lemma \ref{lem:psifromphi}. 
\end{proof}

It remains to improve the energy bound \eqref{eq:bootstrapenergy}. We
do so by studying the higher order energy estimate Proposition
\ref{prop:higherbasicenergy}. We observe that from the definition
\eqref{eq:defgfromphi} of $g$, 
\[ \abs{\jb{\allrot}^k[g - \ooo{g}]}_{\ooo{g},\tau} \lesssim_{k,d}
\abs{\jb{\allrot}^k \psi}_{\ooo{g},\tau}\left(1 + \abs{\allrot^{\lceil
k/2\rceil}\psi}_{\ooo{g},\tau}\right).\]
Putting that together with our elliptic estimate Proposition
\ref{prop:ellipticest} and with our pointwise estimates
Lemma \ref{lem:psifromphi} and \eqref{eq:tracefromphiwder}, we have,
finally, 
\[
\energy[g,\jb{\allrot}^k\phi](t) -
\energy[g,\jb{\allrot}^k\phi](t_0)  \lesssim_{k,d}
\int_{t_0}^t \tweight \norm[2]{\jb{\allrot}^k\phi(s)}^2
\norm[\infty]{\jb{\allrot}^{\lceil k/2\rceil+1}\phi(s)} \jb{s}^{-1}
\D{s}.
\]
For $N > d + 3$ we have
\[ N - \lceil d / 2\rceil > \lfloor d/2\rfloor + 3 \geq \lceil
N/2\rceil + 1\]
and so we can apply Lemma \ref{lem:linftydecay} and the bound of $L^2$
by energy to get
\begin{equation}\label{eq:lastmainproofest} \energy[g,\jb{\allrot}^N\phi](t) -
\energy[g,\jb{\allrot}^N\phi](t_0)  \lesssim_{N,d}
\sqrt{\epsilon_0} \int_{t_0}^t \energy[g,\jb{\allrot}^N\phi](s) 
 \jb{s}^{-2}
\D{s}.
\end{equation}
So applying Gronwall's inequality and using that $\jb{s}^{-2}$ is integrable, 
we can pick $\epsilon_0$
sufficiently small, depending on $N$ and $d$ \emph{but independently of $T$} such that
\[ \energy[g,\jb{\allrot}^N\phi](t) < 2
\energy[g,\jb{\allrot}^N\phi](t_0) \]
for all $t \in (t_0, T)$. This complete the proof of Theorem \ref{thm:mainSmallData}. 

\subsection{Improved decay for temporal components}
The components $\phi_{ab}\tau^b$ in fact enjoys slightly better decay rates
than indicated in Theorem \ref{thm:mainSmallData}. Reading off the
\emph{a priori} estimates above, we get the following:
\begin{cor}\label{cor:improveddecay}
Under the assumptions of Theorem \ref{thm:mainSmallData}, 
\[ \int_{t_0}^\infty \norm[\infty]{\jb{\allrot}^{N-\lceil d/2\rceil}
\phi \cdot \tau(s)}^2 s \D{s} \lesssim_{d} 
\energy[g,\jb{\allrot}^N\phi](t_0).\]
\end{cor}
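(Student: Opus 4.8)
The plan is to revisit the energy estimate leading to \eqref{eq:lastmainproofest} and track more carefully the spacetime bulk term that was discarded there. Recall from Proposition~\ref{prop:higherbasicenergy} (and the basic estimate \eqref{eq:basicenergy}) that the divergence theorem applied to $\tweight\,\strtensor[g,\allrot^N\phi]^{ab}_{cd}\tau_b\tau^c\tau^d$ produces on the left-hand side, in addition to $\energy[g,\jb{\allrot}^N\phi](t_2)$, the positive bulk term
\[
2(d+1)\int_{\bulkregion{t_0}{t_2}} \frac{t}{\jb{t}}\,\tweight\,\abs{\allrot^N\phi\cdot\tau}_{\ooo{g},\tau}^2\,\D{\ooo{V}}.
\]
In the proof of Theorem~\ref{thm:mainSmallData} this term was simply thrown away (estimated below by $0$), since only the endpoint energy bound was needed. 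The corollary is precisely the observation that this term is in fact \emph{quantitatively bounded} once the bootstrap is closed: moving it back to the left, the same chain of estimates (elliptic estimate Proposition~\ref{prop:ellipticest}, the pointwise bounds Lemma~\ref{lem:psifromphi} and \eqref{eq:tracefromphiwder}, Sobolev Lemma~\ref{lem:sobolev}, and the $L^\infty$ decay Lemma~\ref{lem:linftydecay}) that produced \eqref{eq:lastmainproofest} now gives
\[
\energy[g,\jb{\allrot}^N\phi](t_2) + 2(d+1)\int_{\bulkregion{t_0}{t_2}} \frac{t}{\jb{t}}\,\tweight\,\abs{\allrot^N\phi\cdot\tau}_{\ooo{g},\tau}^2\,\D{\ooo{V}} \lesssim_{N,d} \energy[g,\jb{\allrot}^N\phi](t_0) + \sqrt{\epsilon_0}\int_{t_0}^{t_2}\energy[g,\jb{\allrot}^N\phi](s)\,\jb{s}^{-2}\,\D{s}.
\]
By the already-established bound \eqref{eq:mainenergybound}, $\energy[g,\jb{\allrot}^N\phi](s) < 2\epsilon_0$ uniformly, so the last integral is bounded by $\lesssim \epsilon_0$ uniformly in $t_2$; hence the bulk term is bounded by $\lesssim_{N,d}\energy[g,\jb{\allrot}^N\phi](t_0)$ uniformly in $t_2$, and we may let $t_2\to\infty$ by monotone convergence.

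The second and final step is to convert this weighted $L^2$ bulk bound into the stated $L^\infty$ statement. Note $\tweight\,\frac{t}{\jb{t}}\,\D{\ooo{V}} = \jb{t}^{2-d}\cdot\frac{t}{\jb{t}}\cdot\jb{t}^{d-1}\,\D{t}\,\areasph = t\,\D{t}\,\areasph$, so the bulk term is exactly $\int_{t_0}^\infty\!\int_{\spaceslice{t}} \abs{\allrot^N\phi\cdot\tau}_{\ooo{g},\tau}^2\,\areasph\, t\,\D{t}$, which up to the $\jb{t}^{-d}$ discrepancy between $\areasph$ and $\D{\ooo{A}}$ is $\int_{t_0}^\infty \jb{t}^{-d}\norm[2]{\allrot^N(\phi\cdot\tau)(t)}^2\, t\,\D{t}$. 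One then applies the Sobolev Lemma~\ref{lem:sobolev} to the tensor field $\phi\cdot\tau$ — legitimate since $\tau$ commutes with the rotation fields $\rotvf{ij}$, exactly as used in the proof of that lemma — to get $\norm[\infty]{\jb{\allrot}^{N-\lceil d/2\rceil}(\phi\cdot\tau)(t)}^2 \lesssim_{d} \jb{t}^{-d}\norm[2]{\jb{\allrot}^N(\phi\cdot\tau)(t)}^2$, and the lower-order terms $\jb{\allrot}^j(\phi\cdot\tau)$ for $j<N$ are controlled by interpolation (or simply by the corresponding lower-order versions of the bulk inequality, obtained by running Proposition~\ref{prop:higherbasicenergy} at level $j\le N$). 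Integrating against $t\,\D{t}$ and combining with the bulk bound from Step~1 yields
\[
\int_{t_0}^\infty \norm[\infty]{\jb{\allrot}^{N-\lceil d/2\rceil}\phi\cdot\tau(s)}^2\,s\,\D{s} \lesssim_{d} \energy[g,\jb{\allrot}^N\phi](t_0),
\]
as claimed.

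The main obstacle is a bookkeeping one rather than a conceptual one: one must check that the \emph{same} cubic bulk error terms appearing on the right of \eqref{eq:lastmainproofest} — the terms of schematic form $\tweight\norm[2]{\jb{\allrot}^N\phi}^2\norm[\infty]{\jb{\allrot}^{\lceil N/2\rceil+1}\phi}\jb{s}^{-1}$ — do not destroy the positivity of the $\abs{\allrot^N\phi\cdot\tau}^2$ bulk term when it is moved to the left-hand side. This is fine precisely because, by the $L^\infty$ decay Lemma~\ref{lem:linftydecay}, $\norm[\infty]{\jb{\allrot}^{\lceil N/2\rceil+1}\phi(s)} \lesssim \sqrt{\epsilon_0}\,\jb{s}^{-1}$, so the error term carries an extra $\sqrt{\epsilon_0}$ and, for $\epsilon_0$ small (the same $\epsilon_0$ fixed in Theorem~\ref{thm:mainSmallData}, possibly shrunk by a dimensional factor), it is absorbed into the positive bulk term with room to spare, rather than competing with it. A secondary point to handle with care is that the trace term $\tweight\abs{\allrot^N\phi}_{\ooo{g},\tau}\abs{\allrot^N\trace_{\ooo{g}}\phi}_{\ooo{g},\tau}$ is, by \eqref{eq:tracefromphiwder}, genuinely quadratic-in-$\phi$ times a decaying $L^\infty$ factor, hence likewise absorbable; no new idea beyond what already appears in the proof of Theorem~\ref{thm:mainSmallData} is required.
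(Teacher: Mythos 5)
Your proposal is correct and follows essentially the same route as the paper: re-insert the positive integrated bulk term that was dropped in deriving \eqref{eq:lastmainproofest}, bound it \emph{a posteriori} by the initial energy using the closed bootstrap, and then convert to the $L^\infty$ statement via the Sobolev Lemma \ref{lem:sobolev} after checking that $\tweight\,\frac{t}{\jb{t}}\,\D{\ooo{V}} = t\,\D{t}\,\areasph$. The lower-order commuted terms are already accounted for by summing Proposition \ref{prop:higherbasicenergy} over $k \le N$, so no separate interpolation step is needed.
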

\begin{proof}
In the proof of Theorem \ref{thm:mainSmallData}, the integrated energy
term on the left hand side of Proposition \ref{prop:higherbasicenergy}
is just dropped due to its being positive. \emph{A posteriori} from
the proof we can re-insert the integrated energy into 
\eqref{eq:lastmainproofest} to obtain
\[ \int_{\bulkregion{t_0}{\infty}} \frac{t}{\jb{t}} \tweight \abs{\jb{\allrot}^{N}
\phi\cdot \tau}_{\ooo{g},\tau}^2 \D{\ooo{V}} \lesssim
\energy[g,\jb{\allrot}^N\phi](t_0).\] 
Observe then from the Sobolev Lemma \ref{lem:sobolev} we have
\begin{align*}
\int_{t_0}^t \norm[\infty]{\jb{\allrot}^{N-\lceil
d/2\rceil}\phi\cdot\tau(s)}^2 s \D{s}  & \lesssim_d \int_{t_0}^t
\norm[2]{\jb{\allrot}^{N} \phi\cdot \tau(s)}^2 s \jb{s}^{- d} \D{s}
\\
& \lesssim_{d} \int_{\bulkregion{t_0}{t}} \frac{t}{\jb{t}} \tweight
\abs{\jb{\allrot}^N \phi \cdot \tau}_{\ooo{g},\tau}^2 \D{\ooo{V}}  \\
& \lesssim_{d}
\energy[g,\jb{\allrot}^N\phi](t_0). \qedhere
\end{align*}
\end{proof}
In spherical symmetry this bulk term partially cancels the term with
the bad-sign in \eqref{eq:strwdeform} and removes the need for the
$\tweight$-renormalisation. In fact this leads to the predicted decay
rates stated in the paragraph before Theorem
\ref{thm:sphsymstableigm}. 

\tocaddline
\section{The main theorem}
\begin{thm}[Stability of expanding, spherically symmetric solutions]
\label{thm:bigmaintheorem}
Let $(\eta,\zeta):\bulkregion{t_1}{\infty} \to\Real^2$ represent, with $\zeta > -
\frac{1}{d+1}$, and $t_1 > 0$, a spherically symmetric solution as
described in Section \ref{sec:sphsymrevisit}. Given an integer $N > d
+ 3$, there exists a real constant $\epsilon_0 > 0$ depending on
$\eta,\zeta,N,$ and $d$, such that the following holds:
whenever $\phi$ is a solution to \eqref{eq:mainsystem} on
$\bulkregion{t_1}{t_2}$ such that for some $t_0 \in (t_1,t_2)$ we
have 
\[ \energy[g,\jb{\allrot}^N(\phi_{ab} - \eta \tau_a\tau_b - \zeta
\ooo{g}_{ab})](t_0) < \epsilon_0 \]
and
\[ \norm[\infty]{g^{ab}+ \frac{1}{(1 + \zeta - \eta)^2}
\tau^a\tau^b - \frac{1}{(1+\zeta)^2} (\ooo{g}^{ab} +
\tau^a\tau^b)}(t_0) < \frac{1}{8(d+1)},\]
then we have that $\phi$ can be extended to a solution on
$\bulkregion{t_1}{\infty}$ such that 
\[ \sup_{t\in (t_0,\infty)} \jb{t} \norm[\infty]{\jb{\allrot}^{N -
\lceil d/2\rceil}\phi(t)} < \infty. \]
\end{thm}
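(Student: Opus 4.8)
The plan is to reduce Theorem \ref{thm:bigmaintheorem} to the already-established small-data result Theorem \ref{thm:mainSmallData}, by treating the spherically symmetric expanding solution $(\eta,\zeta)$ as a ``background within the background''. Recall from Section \ref{sec:sphsymrevisit} that the full system \eqref{eq:mainsystem}, written as a perturbation $\hat\phi_{ab} = \phi_{ab} - \eta\tau_a\tau_b - \zeta\ooo{g}_{ab}$ around the spherically symmetric solution, becomes the inhomogeneous variable-coefficient system \eqref{eq:largedatasystem}:
\begin{equation*}
\ooo\covD_{[a}\hat\phi_{b]c} = 0, \qquad
g^{ab}\ooo\covD_a\hat\phi_{bc} + 2(\delta^a_e + \breve\psi^a_e)\hat\psi^{be}\massterm_{abc} = \massterm_{abc}\hat\psi^b_e\hat\psi^{ae}.
\end{equation*}
This is exactly of the form \eqref{eq:gensyscurl}--\eqref{eq:gensysdiv} with a nonzero source $F_c$ that is (schematically) $\massterm \cdot \hat\psi$ at linear order plus $\massterm\cdot\hat\psi^2$ at quadratic order. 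The crucial structural input, already recorded at the end of Section \ref{sec:sphsymrevisit}, is that $\massterm_{abc}$ is spherically symmetric and decays like $\jb{x^0}^{-(d+1)}$; this is the decay rate established for $\zeta$ (hence $\eta$) by the ODE analysis \eqref{eq:igmrotsym}.

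First I would run the bootstrap of Section \ref{thm:mainSmallData} essentially verbatim, with two modifications. The coercivity input (Proposition \ref{prop:fundposstress}) now applies with $A = (1+\zeta-\eta)^{-2}$ and $B = (1+\zeta)^{-2}$ rather than $A=B=1$; since $\zeta$ and $\eta$ are bounded and bounded away from the degenerate value $-\tfrac{1}{d+1}$ on $\bulkregion{t_1}{\infty}$, the constants $\min(A,B)$ and $\max(A,B)$ are uniformly comparable to $1$, and the hypothesis on $g(t_0)$ in the theorem is precisely the statement that $g(t_0)$ is close to $\breve g(t_0)$ in the sense demanded by Proposition \ref{prop:fundposstress}. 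So the weighted energy $\energy[g,\jb{\allrot}^N\hat\phi]$ still controls $\tweight\norm[2]{\jb{\allrot}^N\hat\phi}^2$, and the Sobolev Lemma \ref{lem:sobolev} still converts this to the $\jb{t}^{-1}$ pointwise decay. Second, the higher-order energy estimate Proposition \ref{prop:higherbasicenergy} now carries the $F$-terms $\int \tweight\,\abs{\allrot^k\hat\phi}\,\abs{\allrot^k F}(1+\abs{g-\ooo g})\,\D{\ooo V}$. Commuting \eqref{eq:largedatasystem} with rotations (which commute with the spherically symmetric coefficients $\breve\psi$, $\massterm$ just as they did with $\ooo g$), one gets $\abs{\jb{\allrot}^k F}_{\ooo g,\tau} \lesssim \jb{t}^{-(d+1)}\abs{\jb{\allrot}^k\hat\psi}_{\ooo g,\tau}(1 + \abs{\jb{\allrot}^{\lceil k/2\rceil}\hat\psi}_{\ooo g,\tau})$ — this is where the $\massterm$ decay enters — and then Lemma \ref{lem:psifromphi} bounds $\hat\psi$ by $\hat\phi$.

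Assembling, the new contribution to the energy inequality is controlled by
\begin{equation*}
\int_{t_0}^t \tweight\,\norm[2]{\jb{\allrot}^N\hat\phi(s)}\,\norm[2]{\jb{\allrot}^N\hat\phi(s)}\,\jb{s}^{-(d+1)}\,\D{s}
\;\lesssim\; \int_{t_0}^t \energy[g,\jb{\allrot}^N\hat\phi](s)\,\jb{s}^{-(d+1)}\,\D{s},
\end{equation*}
where one $L^2$ factor came with a $\jb{s}^{-(d+1)}\tweight^{1/2}$ from the pointwise decay of $\massterm$ times $\hat\psi$ and $\tweight^{1/2}$ from the other $L^2$ factor, the balance $\tweight\cdot\jb{s}^{-(d+1)}$ being what makes the kernel integrable even when $d=1,2$ where $\tweight$ grows (indeed $d+1 > 2-d$ always). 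This is strictly better than the $\jb{s}^{-2}\sqrt{\epsilon_0}$ kernel of \eqref{eq:lastmainproofest}: the $F$-term is genuinely \emph{linear} in the energy but its coefficient decays faster than $\jb{s}^{-2}$, so it too is integrable and Gronwall closes. The purely quasilinear terms (the $O(\phi^2\cdot g\cdot\jb{\ooo\covD}[g-\ooo g])$ type pieces and the quadratic-in-$\hat\psi$ part of $F$) are handled exactly as in the small-data proof, using $N > d+3$ so that $N - \lceil d/2\rceil \geq \lceil N/2\rceil + 1$ and the $L^\infty$ decay of Lemma \ref{lem:linftydecay} applies; these carry a $\sqrt{\epsilon_0}$ and are absorbed. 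The $\trace_{\ooo g}\hat\phi$ term requires checking that the constant-mean-curvature identity \eqref{eq:traceid}, suitably expanded around the spherically symmetric solution, still forces $\trace_{\ooo g}\hat\phi$ to be quadratically small in $\hat\phi$ — this follows since the background $\breve\phi$ already satisfies the trace identity, so the difference satisfies a perturbed version with the same quadratic structure.

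The main obstacle, and the place requiring genuine care rather than routine bookkeeping, is verifying that the commutator and elliptic estimates of the linear theory (Propositions \ref{prop:higherbasicenergy} and \ref{prop:ellipticest}) go through cleanly for the system \eqref{eq:largedatasystem}: one must confirm that the source $F_c$, when differentiated by $\allrot^k$, does not generate any term with \emph{more} than $k$ angular derivatives on $\hat\phi$ and does not spoil the ``elliptic'' recovery of the transversal derivative $\tau^a\tau^b\tau^c\ooo\covD_a\hat\phi_{bc}$ from \eqref{eq:gensysdiv} — i.e. that the coefficient $g^{ab}\tau_a\tau_b$ is still bounded away from zero, which is guaranteed by the closeness hypothesis on $g(t_0)$ together with the bootstrap. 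Once these are in place, the argument is a word-for-word transcription of Section \ref{thm:mainSmallData}'s bootstrap with the single improvement that the extra linear source is $\massterm$-weighted and hence integrable, so no new smallness beyond $\epsilon_0$ is needed. The conclusion $\sup_{t}\jb{t}\norm[\infty]{\jb{\allrot}^{N-\lceil d/2\rceil}\hat\phi(t)} < \infty$ then follows from the uniform energy bound exactly as before; note the statement of the theorem is phrased for $\phi$ itself, but since $\phi - \hat\phi = \eta\tau\tau + \zeta\ooo g$ is the fixed, decaying spherically symmetric background, the bound on $\hat\phi$ gives the claimed bound on $\phi$ up to the explicit (and decaying, hence bounded) background contribution.
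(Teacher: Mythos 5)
Your proposal follows essentially the same route as the second of the paper's two proof sketches: rewrite the equations as the inhomogeneous system \eqref{eq:largedatasystem} for $\hat\phi = \phi - \breve\phi$, observe that the source $F_c$ carries the decaying, spherically symmetric coefficient $\massterm_{abc} = O(\jb{t}^{-(d+1)+\epsilon})$, and rerun the bootstrap of Theorem \ref{thm:mainSmallData} with the extra linear-but-integrable Gronwall kernel. (The paper also offers an alternative first proof via Cauchy stability up to a large time $T$ followed by an application of Theorem \ref{thm:mainSmallData} from $t_0 = T$, which you do not use.) Two points in your write-up are inaccurate, though neither is fatal.

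First, your treatment of the trace term is wrong as stated: $\trace_{\ooo g}\hat\phi$ is \emph{not} quadratically small in $\hat\phi$. Subtracting the trace identity \eqref{eq:traceid} for $\phi$ and for $\breve\phi$ gives
\[
\trace_{\ooo g}\hat\phi = -\breve\phi:\hat\psi - \breve\psi:\hat\phi - \hat\phi:\hat\psi,
\]
which contains terms genuinely \emph{linear} in the perturbation, with coefficients $\breve\phi,\breve\psi$ that merely decay (like $\zeta$). The argument survives because these linear terms, like the $F_c$ terms, contribute an integrable kernel to the Gronwall inequality rather than a $\sqrt{\epsilon_0}$-weighted one — but that is the correct mechanism, not quadratic smallness.

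Second, your assertion that ``no new smallness beyond $\epsilon_0$ is needed'' and that ``Gronwall closes'' glosses over exactly the point the paper flags as delicate. An integrable kernel whose coefficient depends on the background (through $\massterm$, $\breve\psi$, and $t_0$) yields $\energy(t) \leq C\,\energy(t_0)$ for a constant $C$ that can be large, not the almost-conservation $(1+\delta)\energy(t_0)$ of the small-data case. To close the bootstrap (whose hypotheses include pointwise smallness needed for coercivity and the elliptic estimate) you must therefore choose $\epsilon_0 < C^{-1}\epsilon_1$, i.e.\ the admissible initial smallness genuinely shrinks with the background — which is consistent with the theorem's statement that $\epsilon_0$ depends on $\eta,\zeta$, but should be said explicitly rather than claimed away.
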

Taking the notation of Section \ref{sec:sphsymrevisit}, we write
$\breve{\phi}_a^b = \eta \tau_a\tau^b + \zeta\delta_a^b$ for the
spherically symmetric expanding solution, and $\breve{g}^{ab}$ its
corresponding induced metric. By noting that for any rotation 
vector field
\[ \lieD_{\rotvf{ij}}\phi = \lieD_{\rotvf{ij}}(\phi - \breve\phi)\]
since the background solution is spherically symmetric, we see that
for higher order derivatives we can proceed analogously as in the
proof of Theorem \ref{thm:mainSmallData}. The main difficulty lies in
the $0$th order terms, which instead solves the system
\eqref{eq:largedatasystem}. What we will make use of is the decay
estimates stated before Theorem \ref{thm:sphsymstableigm}: an analysis
of \eqref{eq:igmrotsym} shows that both $\zeta$ and $\eta$ decay, and
we have
\[ \norm[\infty]{\breve\phi(t)}
\lesssim_{\delta,\norm[\infty]{\breve\phi(t_0)}} \jb{t}^{-d-1 +
\delta}.\]
This shows that its corresponding weighted energy
\[ \energy[\breve{g},\breve\phi](t)
\lesssim_{d,\delta,\norm[\infty]{\breve\phi(t_0)}} \jb{t}^{-2d +
2\delta}.\]
(Compare this to the situation of the almost conservation law in
\eqref{eq:basicenergy}: one should not be surprised because we were
somewhat wasteful in the derivation of \eqref{eq:basicenergy}, where
the terms with good signs on the right-hand-side of
\eqref{eq:strwdeform} are just thrown away, when in fact they provide
some weak form of integrated energy decay.)

We sketch here two arguments giving the proof of Theorem
\ref{thm:bigmaintheorem}. The basic ingredients are still energy
estimates and a bootstrap argument, which are largely similar to the
proof of Theorem \ref{thm:mainSmallData}: therefore we will just
highlight the differences between the proofs and that of Theorem
\ref{thm:mainSmallData}. 

\begin{proof}[Sketch of first proof of Theorem
\ref*{thm:bigmaintheorem}]
Using the faster energy decay of the spherically symmetric
backgrounds, we can approach Theorem \ref{thm:bigmaintheorem} using a
Cauchy stability argument. The basic idea is the following: instead of
bootstrapping on $L^\infty$ decay of the solution, we bootstrap on
$L^\infty$ boundedness, as one would do for a \emph{local
well-posedness} result. This shows that for sufficiently small
initial perturbations, the solution remain a small perturbation up to
some large finite time $T$. Using that the background has decayed, at
time $T$ now we are in a situation where Theorem
\ref{thm:mainSmallData} applies: it is crucial here that the order of
the quantifiers in the statement of Theorem \ref{thm:mainSmallData} is
as it is, such that $\epsilon_0$ is independent of the time $t_0$. 

Here we will study \eqref{eq:largedatasystem} for $\phi - \breve\phi$
and \eqref{eq:gensyscurlh} and \eqref{eq:gensysdivh} for the higher
order derivatives. 

First note that provided $\phi - \breve\phi$ is sufficiently small, we
can appeal to Proposition \ref{prop:fundposstress} to get coercivity
of the energy on a weighted $L^2$ norm. 
Examining Proposition \ref{prop:higherbasicenergy} we see that using 
\[ 
F_c = -2(\delta^a_e + \breve\psi^a_e)(\psi - \breve\psi)^{be}
\massterm_{abc} + \massterm_{abc}(\psi - \breve\psi)^{be}
(\psi-\breve\psi)^a_e, \]
we have, in fact, that provided $\phi - \breve\phi$ is sufficiently
small, the estimate
\[ \energy[g,\jb{\allrot}^k(\phi-\breve\phi)](t) -
\energy[g,\jb{\allrot}^k(\phi-\breve\phi)](t_0) \lesssim \int_{t_0}^t
\energy[g,\jb{\allrot}^k(\phi - \breve\phi)](s) \jb{s}^{-1} \D{s}\]
where the implicit constant depends on $k,d$ as well as the background
solution $\eta,\zeta$ and an assumed $L^\infty$ bootstrap bound on
$\abs{\jb{\allrot}^{\lceil k/2\rceil + 1}(\phi -
\breve\phi)}_{\ooo{g},\tau}$. Here we note that we do not need to do
anything special to control the trace term $\trace_{\ooo{g}} \phi -
\breve{\phi}$, since we do not need decaying coefficients!
From Gronwall's inequality we get 
that the energy of $\phi - \breve\phi$
grows at most linearly in $t$; hence by taking initial perturbations
arbitrarily small, we can make the bootstrap bound be satisfied for
arbitrarily long (finite) times. This proves Cauchy stability in a small
neighbourhood of the spherically symmetric solution $\breve\phi$. 

Now fix $T$ sufficiently large that $\breve\phi$ has sufficiently
decayed. By choosing our initial $\epsilon_0$ small we can
guarantee that 
\[ \energy[g,\jb{\allrot}^N\phi](T) \leq \energy[g,\jb{\allrot}^N(\phi
- \breve\phi)](T) + \energy[g,\phi](T) \]
is sufficiently small so we can apply Theorem \ref{thm:mainSmallData}.
\end{proof}

\begin{proof}[Sketch of second proof of Theorem
\ref*{thm:bigmaintheorem}]
One can also approach the proof of Theorem
\ref{thm:bigmaintheorem} by directly studying the system
\eqref{eq:largedatasystem} \`a la the proof of Theorem
\ref{thm:mainSmallData}. The fact that the coefficients
$\massterm_{abc}$ decays like $\jb{t}^{-(d+1)+\epsilon}$ means that
the contribution of the inhomogeneity $F_c$ to the energy estimate
Proposition \ref{prop:higherbasicenergy} is relatively harmless (with
the $\jb{t}^{-1}$ weight carried by the volume form $\D*{\ooo{V}}$
this becomes integrable in time). The trace term is also treatable:
the trace identity \eqref{eq:traceid} now implies the schematic
decomposition
\[ \trace_{\ooo{g}}(\phi - \breve\phi) = \breve\phi (\psi-\breve\psi)
+ \breve\psi (\phi - \breve\phi) + (\phi - \breve\phi)(\psi
-\breve\psi)\]
which consists of a quadratic term (which is ``higher order'' and we
can control using $L^\infty$ decay) and two linear terms which have
good decay in the coefficients (again, $\breve\phi$ decays like
$\zeta$). Using also that $\lieD_\Omega [g - \ooo{g}] = \lieD_\Omega
[g - \breve{g}]$ for rotation vector fields, we see that the only term
on the right hand side of our energy estimate in Proposition
\ref{prop:higherbasicenergy} that we may have difficulty controlling
is the first term which requires estimating $\ooo\covD g$, or rather,
by triangle inequality, $\ooo\covD \breve{g}$. This term, however,
also decays using the decay of the background $\breve{\phi}$. 

The bootstrap step in this argument is slightly more delicate,
however, using that we have essentially ``linear'' terms appearing in
the energy estimate. Basically the energy estimate outlined above
shows that, under the assumption that the energy
$\energy[g,\jb{\allrot}^N(\phi - \breve\phi)](t)$ remains sufficiently
small, say $< \epsilon_1$, we can prove that
$\energy[g,\jb{\allrot}^N(\phi-\breve\phi)](t) \leq C
\energy[g,\jb{\allrot}^N(\phi-\breve\phi)](t_0)$ for some really large
constant $C$. Hence we need to pick $\epsilon_0 < C^{-1} \epsilon_1$
for our initial data in order to close the bootstrap. (Note that in
the small data case for every $\delta > 0$, we can choose sufficiently
small $\epsilon_0$ such that the almost conservation law 
\[ \energy[g,\jb{\allrot}^N\phi](t) \leq (1+\delta)
\energy[g,\jb{\allrot}^N\phi](t_0) \]
holds for all $t > t_0$. In the ``large data'' regime, the
$(1+\delta)$ bound is not tenable, and the best we can do is some
fixed large constant $C$ \emph{depending on the chosen background
around which we perturb}.) 
\end{proof}

\begin{rmk}\label{rmk:improveddecay}
It then follows that Corollary \ref{cor:improveddecay} also holds
under the assumptions of Theorem \ref{thm:bigmaintheorem}.
\end{rmk}

\subsection{Geometric implications}\label{sec:geometricimplications}
We conclude this section with some discussion of the geometric
implications of the Theorems \ref{thm:mainSmallData} and
\ref{thm:bigmaintheorem}, and in particular justify the
interpretation stated in Theorem \ref{thm:maintheoremgeometric}.

\subsubsection{Extrinsic geometry}
We first set some notations. Let $\tau$ the unit time-like vector
field on $\dS$ as before, and recall from Section \ref{sec:IGMgauge} 
the inverse Gauss map $\Phi$ from $\dS$ to our solution manifold 
$M$ and its differential $A$. First parametrise $\Real^{1,d+1}$ in
radial coordinates $(t,r,\omega)$, and we can define $u \eqdef t-r$. 
Outside of $\{r = 0\}$ the system $(t,u,\omega)$ defines a coordinate
chart on Minkowski space. In this coordinate system we have that
\[ \dS = \{ u(u - 2 t)  = 1 \}, \qquad \bulkregion{0}{\infty} = \{ u =
t - \jb{t}, t > 0\}\]
and
\[ \tau = \jb{t} \partial_t +  (\jb{t} - t) \partial_u. \]
Observe that 
\[ \jb{t} - t = \frac{1}{\jb{t} + t}.\]
Now, by Corollary \ref{cor:expansiontolightcone}, we have that for
each of the spherically symmetric background solutions in the
statements of Theorems \ref{thm:mainSmallData} and
\ref{thm:bigmaintheorem}, there exists some $u_\infty$ such that the
solution is asymptotic to $\{u = u_\infty\}$. In the case of $\dS$, we
have $u_\infty = 0$. 

Now let $(\tilde{u}, \tilde{t},\tilde{\omega})$ be the surface defined
by $\Phi$, written as functions of $(t,\omega)$. We have that 
\begin{equation}
\begin{split}
\partial_t \tilde{u} &= \left( 1 - \frac{t}{\jb{t}} \right)
(1+\phi_{\tau\tau}),\\
\partial_t \tilde{t} &= 1 + \phi_{\tau\tau}.
\end{split}
\end{equation}
Using that $\phi_{\tau\tau} \lesssim \jb{t}^{-1}$, we have that
$\partial_t \tilde{u} \lesssim \jb{t}^{-2}$ and is integrable, while
$\partial_t \tilde{t}$ just misses being integrable. Thus we have that
\begin{lem}\label{lem:boundedshift}
Under the assumptions in Theorem \ref{thm:mainSmallData} or
\ref{thm:bigmaintheorem}, there exists a function 
$\tilde{u}_\infty:\Sphere^d \to\Real$ such that for
each fixed $\omega$ 
\[ \lim_{t\to\infty} \abs{\tilde{u}(t,\omega) -
\tilde{u}_\infty(\omega)} = 0.\]
Furthermore we have $\sup_{\omega\in\Sphere^d} 
\abs{u_\infty - \tilde{u}_\infty(\omega)}^2 \lesssim
\epsilon_0$. 
\end{lem}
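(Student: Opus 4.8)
The plan is to prove the two assertions in turn, both resting on the formula $\partial_t\tilde u = (1 - t/\jb{t})(1+\phi_{\tau\tau})$ recorded just above, together with the uniform decay $\norm[\infty]{\phi(t)\cdot\tau}\lesssim\jb{t}^{-1}\sqrt{\epsilon_0}$, which Theorems \ref{thm:mainSmallData} and \ref{thm:bigmaintheorem} supply through the energy bound \eqref{eq:mainenergybound} and the Sobolev estimate \eqref{eq:energytoinfty1} (in the setting of Theorem \ref{thm:bigmaintheorem} the same holds with $\phi$ replaced by $\phi-\breve\phi$, where $\breve\phi_a^b = \eta\tau_a\tau^b + \zeta\delta_a^b$ as in Section \ref{sec:sphsymrevisit}).

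First I would settle the existence of the limit. Since $1 - t/\jb{t} = \jb{t}^{-1}(\jb{t}-t) = \jb{t}^{-1}(\jb{t}+t)^{-1}\lesssim\jb{t}^{-2}$ and $\abs{\phi_{\tau\tau}}$ is bounded (indeed tends to $0$), one has $\abs{\partial_t\tilde u(t,\omega)}\lesssim\jb{t}^{-2}$, which is integrable on $[t_0,\infty)$. Hence $t\mapsto\tilde u(t,\omega)$ satisfies the Cauchy criterion as $t\to\infty$, which defines $\tilde u_\infty(\omega)\eqdef\lim_{t\to\infty}\tilde u(t,\omega)$; the same bound yields the rate $\abs{\tilde u(t,\omega)-\tilde u_\infty(\omega)}\le\int_t^\infty\abs{\partial_s\tilde u(s,\omega)}\,\D{s}\lesssim\jb{t}^{-1}$.

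For the size estimate I would let $\breve u$ denote the analogous function of the reference solution $\breve\Phi$, so that $\partial_t\breve u = (1-t/\jb{t})(1+\breve\phi_{\tau\tau})$, and then integrate both flows from $t_0$ to $\infty$ and subtract to obtain
\[ \tilde u_\infty(\omega)-u_\infty = \bigl(\tilde u(t_0,\omega)-\breve u(t_0)\bigr) + \int_{t_0}^\infty\Bigl(1-\tfrac{s}{\jb{s}}\Bigr)(\phi-\breve\phi)_{\tau\tau}(s,\omega)\,\D{s}. \]
The integral is the soft term: its integrand is $\lesssim\jb{s}^{-2}\cdot\jb{s}^{-1}\sqrt{\epsilon_0}$, so the integral is $\lesssim\jb{t_0}^{-2}\sqrt{\epsilon_0}$.

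The term $\tilde u(t_0,\omega)-\breve u(t_0)$ is where I expect the only genuine obstacle: the hypotheses of the two theorems constrain the differential $\D*{\Phi}$ (through $\phi$) but \emph{not} $\Phi$ itself, so the absolute position of the initial slice $\spaceslice{t_0}$ — in particular a uniform-in-$\omega$ additive constant in $\tilde u(t_0,\cdot)$ — is pinned down only up to an ambient translation. I would resolve this by noting that the Gauss map is translation-invariant, so translating $\breve\Phi$ along $\partial_{x^0}$ changes neither $\breve\phi$ nor the hypotheses while shifting $\breve u(t_0)$ and $u_\infty$ by a common constant; choosing this translation (equivalently, reading the statement with the reference solution best-fitted within its translation orbit) I may assume $\breve u(t_0)=\tilde u(t_0,\omega_0)$ for one fixed $\omega_0\in\Sphere^d$. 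Then $\tilde u(t_0,\omega)-\breve u(t_0)=\tilde u(t_0,\omega)-\tilde u(t_0,\omega_0)$ is a pure variation over $\spaceslice{t_0}$; since $\breve u$ is constant on $\spaceslice{t_0}$ by spherical symmetry, the tangential gradient of $\tilde u$ there equals the composition of the differential of the ($1$-Lipschitz) function $u=x^0-r$ with the restriction of $\phi-\breve\phi$ to $T\spaceslice{t_0}$, and is therefore pointwise $\lesssim\abs{(\phi-\breve\phi)(t_0)}_{\ooo{g},\tau}$ (for vectors tangent to $\spaceslice{t_0}$ the norm $\abs{\cdot}_{\ooo{g},\tau}$ is the round-sphere norm, so the Lipschitz bound transfers cleanly). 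Integrating along a path in $\spaceslice{t_0}$, whose diameter is $\lesssim\jb{t_0}$, gives $\sup_\omega\abs{\tilde u(t_0,\omega)-\breve u(t_0)}\lesssim\jb{t_0}\,\norm[\infty]{(\phi-\breve\phi)(t_0)}\lesssim\jb{t_0}\cdot\jb{t_0}^{-1}\sqrt{\epsilon_0}=\sqrt{\epsilon_0}$, where I used the Sobolev bound $\norm[\infty]{(\phi-\breve\phi)(t_0)}\lesssim\jb{t_0}^{-1}\sqrt{\energy[g,\jb{\allrot}^N(\phi-\breve\phi)](t_0)}$ and the hypothesis. Combining this with the integral estimate yields $\sup_{\omega\in\Sphere^d}\abs{u_\infty-\tilde u_\infty(\omega)}\lesssim\sqrt{\epsilon_0}$, which is the asserted squared bound; apart from this positional-normalization bookkeeping every step is a one-line integration.
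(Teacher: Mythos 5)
Your treatment of the first assertion is exactly the paper's: the pointwise bound $\abs{\partial_t\tilde{u}}\lesssim\jb{t}^{-2}$, coming from $1-t/\jb{t}=\jb{t}^{-1}(\jb{t}+t)^{-1}$ and the boundedness of $\phi_{\tau\tau}$, gives integrability and hence the limit, with your $\jb{t}^{-1}$ rate as the quantitative content. For the second assertion the paper offers no explicit argument, and you have correctly isolated the one genuine issue: the hypotheses of Theorems \ref{thm:mainSmallData} and \ref{thm:bigmaintheorem} constrain only the differential $\D*\Phi$, so the zeroth-order position of the initial slice is unconstrained, and without some normalisation the bound $\abs{u_\infty-\tilde{u}_\infty}\lesssim\sqrt{\epsilon_0}$ is simply false (a large translate of $\dS$ has $\phi\equiv 0$). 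Your estimate of the time-integral term is correct and is surely what the paper intends.

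Your repair of the normalisation issue, however, has a gap in the step bounding the variation of $\tilde{u}(t_0,\cdot)$ over $\spaceslice{t_0}$. For $X$ tangent to $\spaceslice{t_0}$ the vector $(\phi-\breve\phi)(X)$ is in general \emph{not} tangent to $\spaceslice{t_0}$: it has a $\tau$-component, and for that component the comparison between the ambient Euclidean norm (which is what the Lipschitz bound on $u$ sees) and $\abs{\cdot}_{\ooo{g},\tau}$ degenerates by a factor $\jb{t_0}$, since $\tau$ has unit $\abs{\cdot}_{\ooo{g},\tau}$-norm but Euclidean length $\approx\jb{t_0}$. Run as written, your argument therefore yields only $\sup_\omega\abs{\tilde{u}(t_0,\omega)-\tilde{u}(t_0,\omega_0)}\lesssim\jb{t_0}\sqrt{\epsilon_0}$, which is not uniform in $t_0$. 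The estimate can be rescued because the differential of $u$ is nearly degenerate on $T\dS$ in precisely the $\tau$ direction --- $\tau(u)=\jb{t_0}-t_0\approx\jb{t_0}^{-1}$ --- but that computation is valid at points \emph{of} $\dS$, whereas here the differential of $u$ must be evaluated at $\Phi(t_0,\omega)$, and its pairing with $\tau$ is only $O(1)$ when $\Phi(t_0,\omega)$ is angularly close to $(t_0,\omega)$: another zeroth-order statement that your one-point normalisation $\breve{u}(t_0)=\tilde{u}(t_0,\omega_0)$ does not supply. The reading most consistent with the surrounding text (cf.\ Theorem \ref{thm:maintheoremgeometric}, ``generated by a small perturbation of the initial data'') is that pointwise $O(\sqrt{\epsilon_0})$-closeness of the initial embedding $\Phi(t_0,\cdot)$ to $\breve\Phi(t_0,\cdot)$ is an implicit standing hypothesis; granting that, $\tilde{u}(t_0,\cdot)-\breve{u}(t_0)=O(\sqrt{\epsilon_0})$ is immediate and your integral estimate finishes the proof without any normalisation discussion.
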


Examining the definitions we have that
\[ \abs{\partial_\omega \partial_t\tilde{u}(t,\omega)} \leq
\frac{\jb{t} - t}{\jb{t}} \abs{\allrot \phi_{\tau\tau}(t)}.\]
Integrating $t$ over $(0,\infty)$ and $\omega$ over some curve on
$\Sphere^d$, we have that 
\begin{lem}\label{lem:uinftycont}
The function $\tilde{u}_\infty$ described in Lemma
\ref{lem:boundedshift} is Lipschitz continuous. 
\end{lem}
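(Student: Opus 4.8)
The plan is to realise $\tilde{u}_\infty$ as an initial term plus a uniformly convergent time integral, and then to control its modulus of continuity by differentiating under that integral along curves in $\Sphere^d$. Since Lemma~\ref{lem:boundedshift} already produces the limit, for each fixed $\omega$ one may write
\[ \tilde{u}_\infty(\omega) = \tilde{u}(t_0,\omega) + \int_{t_0}^\infty \partial_s\tilde{u}(s,\omega)\,\D{s}. \]
The first term is the restriction to $\Sigma=\Sphere^d$ of (a component of) the smooth initial embedding $\Upsilon_0$, hence Lipschitz, so it suffices to show that $\omega\mapsto\int_{t_0}^\infty\partial_s\tilde{u}(s,\omega)\,\D{s}$ is Lipschitz. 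Given nearby points $\omega_1,\omega_2\in\Sphere^d$, I would join them by a length-minimising geodesic $\gamma$ with $\operatorname{length}(\gamma)\approx\abs{\omega_1-\omega_2}$ and write the difference of the two integrals as $\int_\gamma\int_{t_0}^\infty\partial_\omega\partial_s\tilde{u}(s,\omega)\,\D{s}\,\D{\ell}$; the interchange of $\partial_\omega$ with $\int\,\D{s}$ and of the two integrations is routine once we produce a bound on $\abs{\partial_\omega\partial_s\tilde{u}}$ that is integrable in $s$ and uniform in $\omega$ — and that estimate is the only point that needs care.

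For the bound, start from $\partial_t\tilde{u}=(1-t/\jb{t})(1+\phi_{\tau\tau})$; since the weight $1-t/\jb{t}$ is $\omega$-independent, $\partial_\omega\partial_t\tilde{u}=\frac{\jb{t}-t}{\jb{t}}\,\partial_\omega\phi_{\tau\tau}$, exactly as recorded just before the statement. Here the coordinate angular derivative of the scalar $\phi_{\tau\tau}$ is controlled by the invariant rotational norm, $\abs{\partial_\omega\phi_{\tau\tau}}\lesssim_d\abs{\allrot\phi}_{\ooo{g},\tau}$: this uses that $\tau$ commutes with the rotation fields $\rotvf{ij}$, and that the identity $\sum_{i,j}\rotvf{ij}^a\rotvf{ij}^b=2\jb{t}^2(\ooo{g}^{ab}+\tau^a\tau^b)$ exhibits the $\rotvf{ij}$ as an overcomplete frame for the angular directions on $\spaceslice{t}$ with $O(1)$ dual coefficients relative to a round-unit coordinate vector. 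Combining with $\frac{\jb{t}-t}{\jb{t}}=\frac{1}{\jb{t}(\jb{t}+t)}\approx\jb{t}^{-2}$ gives
\[ \abs{\partial_\omega\partial_t\tilde{u}(t,\omega)}\lesssim_d\jb{t}^{-2}\,\norm[\infty]{\allrot\phi(t)}. \]
By Theorem~\ref{thm:mainSmallData} (respectively Theorem~\ref{thm:bigmaintheorem}), which applies since $N-\lceil d/2\rceil\ge 1$ when $N>d+3$, one has $\norm[\infty]{\allrot\phi(t)}\lesssim\sqrt{\epsilon_0}\,\jb{t}^{-1}$ — in fact mere boundedness of $\norm[\infty]{\allrot\phi(t)}$ would already do — so $\abs{\partial_\omega\partial_t\tilde{u}(t,\omega)}\lesssim\sqrt{\epsilon_0}\,\jb{t}^{-3}$, which is integrable over $(t_0,\infty)$ uniformly in $\omega$.

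Then $\int_\gamma\int_{t_0}^\infty\abs{\partial_\omega\partial_s\tilde{u}}\,\D{s}\,\D{\ell}\lesssim\sqrt{\epsilon_0}\operatorname{length}(\gamma)\lesssim\sqrt{\epsilon_0}\abs{\omega_1-\omega_2}$, and together with the Lipschitz bound on $\tilde{u}(t_0,\cdot)$ this yields $\abs{\tilde{u}_\infty(\omega_1)-\tilde{u}_\infty(\omega_2)}\lesssim\abs{\omega_1-\omega_2}$ for $\omega_1,\omega_2$ close; compactness and connectedness of $\Sphere^d$ then promote the local estimate to a global Lipschitz bound, finishing the proof. The hard part — really the only substantive step — is the uniform-in-$\omega$ integrability established in the second paragraph: the decisive gain is the $\jb{t}^{-2}$ carried by the weight $\frac{\jb{t}-t}{\jb{t}}$ inherent in the retarded coordinate $u=t-r$, so that the only input needed from the analysis of earlier sections is a uniform $L^\infty$ control on a single rotational derivative of $\phi$, which the weighted energy estimates and the Sobolev embedding of Lemma~\ref{lem:sobolev} already supply.
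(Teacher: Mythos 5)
Your proof is correct and follows essentially the same route as the paper: the key step in both is the pointwise bound $\abs{\partial_\omega\partial_t\tilde{u}}\lesssim\frac{\jb{t}-t}{\jb{t}}\abs{\allrot\phi_{\tau\tau}}$, whose $\jb{t}^{-2}$ weight (together with the $L^\infty$ control on $\allrot\phi$) gives uniform integrability in $t$, followed by integration in $t$ and along a curve on $\Sphere^d$. You have merely written out in more detail the routine interchanges and the Lipschitz regularity of the initial slice term that the paper leaves implicit.
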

The two Lemmata \ref{lem:boundedshift} and \ref{lem:uinftycont} should
be compared with Theorem \ref{thm:expansionstability}. 

\subsubsection{Causal geometry}
To complete our justification of Theorem
\ref{thm:maintheoremgeometric}, we first discuss the causal geometry
of expanding solutions. Let $(t,\omega(t))$ be a curve in
$\Real\times\Sphere^d$, its image
$(\tilde{t}(t),\tilde{u}(t),\tilde{\omega}(t))$ is time-like if and
only if
\[ - \partial_t\tilde{t} \partial_t\tilde{u} + (\partial_t\tilde{u})^2
+ \jb{t}^2(\partial_t\tilde{\omega})^2 < 0 .\]
Using the uniform decay rates on $\phi$ this requires
\[ \abs{\partial_t\tilde{\omega}} \lesssim \jb{t}^{-2}\]
which implies that there exists $\tilde{\omega}_\infty$ such that 
\[ \abs{\tilde{\omega}(t) - \tilde{\omega}_\infty} \lesssim
\jb{t}^{-1}.\]
Noting that $\partial_t \omega \approx \partial_t\tilde{\omega}$ by
the uniform decay of $\phi$, we have that there exists some
$\omega_\infty$ toward which $\omega(t)$ converges at rate
$\jb{t}^{-1}$ under our time-like
assumptions. 

Now let $(t,\omega_0(t))$ and $(t,\omega_1(t))$ be two curves, whose
images $\tilde{\omega}_0$ and $\tilde{\omega}_1$ converges to the same
$\tilde{\omega}_\infty$. By the above $\abs{\omega_0 - \omega_1}
\lesssim \jb{t}^{-1}$. Now let $\gamma:(0,1)\to\Sphere^d$ be a geodesic
curve connecting $\gamma_0$ and $\gamma_1$. Integrating
$\partial_\omega \tilde{u}$ along $\gamma$ we obtain that
$\abs{\tilde{u}_0 - \tilde{u}_1}\lesssim \jb{t}^{-1}$. This shows 
\begin{prop}
There exists some $\tilde{u}_{\bar\infty}: \Sphere^d \to\Real$ such
that whenever 
$(\tilde{t}(t),\tilde{u}(t), \tilde{\omega}(t))$ is a smooth time-like
curve on our solutions manifold, we have
\[ \lim_{t\to\infty} \tilde{u}(t) =
\tilde{u}_{\bar\infty}(\lim_{t\to\infty} \tilde{\omega}(t)).\]
\end{prop}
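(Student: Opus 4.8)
The plan is to assemble the estimates of the preceding two subsections into a well-definedness statement. First I would show that for \emph{every} smooth future-directed time-like curve on $M$ — not just the constant-$\omega$ rays of Lemma \ref{lem:boundedshift} — the coordinate $\tilde u(t)$ converges as $t\to\infty$. Writing such a curve via $\Phi^{-1}$ and the $(t,\omega)$ coordinates on $\dS$ as $t\mapsto(t,\omega(t))$ (legitimate since $M$ is a small perturbation of $\dS$, so $t$ is increasing along it), its Minkowski image is $\bigl(\tilde t(t,\omega(t)),\tilde u(t,\omega(t)),\tilde\omega(t,\omega(t))\bigr)$ and
\[
\frac{\D*}{\D*{t}}\tilde u(t,\omega(t)) = \Bigl(1-\tfrac{t}{\jb{t}}\Bigr)\bigl(1+\phi_{\tau\tau}\bigr) + (\partial_\omega\tilde u)\,\dot\omega .
\]
The first term is $O(\jb{t}^{-2})$ because $1-t/\jb{t}=\bigl(\jb{t}(\jb{t}+t)\bigr)^{-1}\lesssim\jb{t}^{-2}$ and $\norm[\infty]{\phi(t)}\lesssim\jb{t}^{-1}$; the second is $O(\jb{t}^{-2})$ because $|\dot\omega|\approx|\partial_t\tilde\omega|\lesssim\jb{t}^{-2}$ (the time-like constraint used in the causal-geometry discussion) while $\partial_\omega\tilde u$ is bounded uniformly in $t$, the latter obtained by integrating $|\partial_\omega\partial_t\tilde u|\le\tfrac{\jb{t}-t}{\jb{t}}|\allrot\phi_{\tau\tau}|\lesssim\jb{t}^{-2}$ from a finite initial time. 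Hence the integrand is integrable in $t$, $\lim_{t\to\infty}\tilde u(t)$ exists, and $\bar\omega:=\lim_{t\to\infty}\tilde\omega(t)$ exists by the causal-geometry discussion.

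Next I would prove that this limit depends on $\bar\omega$ alone. Given two time-like curves with the same $\bar\omega$: since $|\partial_t\tilde\omega_j|\lesssim\jb{t}^{-2}$ is integrable, $|\tilde\omega_0(t)-\tilde\omega_1(t)|\lesssim\int_t^\infty\bigl(|\partial_s\tilde\omega_0|+|\partial_s\tilde\omega_1|\bigr)\,\D{s}\lesssim\jb{t}^{-1}$, and because $\tilde\omega(t,\cdot)$ is a bi-Lipschitz self-map of $\Sphere^d$ with constants uniform in $t$ (a small perturbation of the angular projection of $\dS$), this forces $|\omega_0(t)-\omega_1(t)|\lesssim\jb{t}^{-1}$. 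Joining $\omega_0(t)$ to $\omega_1(t)$ by a minimizing $\Sphere^d$-geodesic $\gamma^{(t)}$, of length $\lesssim\jb{t}^{-1}$, and using the uniform bound on $\partial_\omega\tilde u$ from the previous step,
\[
\bigl|\tilde u_0(t)-\tilde u_1(t)\bigr| \le \operatorname{length}\bigl(\gamma^{(t)}\bigr)\cdot\sup_{\omega}\bigl|\partial_\omega\tilde u(t,\omega)\bigr| \lesssim \jb{t}^{-1},
\]
which tends to $0$ as $t\to\infty$; combined with the existence of the two limits this gives them equal — this is exactly the mechanism of Lemma \ref{lem:uinftycont}, now applied on each slice $\spaceslice{t}$. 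I would then define $\tilde u_{\bar\infty}(\bar\omega)$ to be this common value, so that the identity in the statement holds by construction. On the complement of the set of angular limits actually attained one may extend $\tilde u_{\bar\infty}$ arbitrarily (or by Lipschitz extension); but in fact that set is all of $\Sphere^d$, since the constant-$\omega$ rays realize the map $\omega\mapsto\lim_t\tilde\omega(t,\omega)$, a continuous self-map of $\Sphere^d$ homotopic (via $\tilde\omega(t,\cdot)$, letting $t\to\infty$ uniformly) to the diffeomorphism $\tilde\omega(t_0,\cdot)$, hence of nonzero degree and therefore onto.

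The step I expect to be the main obstacle is the uniform-in-$t$ bookkeeping in the well-definedness argument: one integrates $\partial_\omega\tilde u$ across the slices $\spaceslice{t}$, whose intrinsic geometry expands like $\jb{t}$, so the cancellation only goes through because the angular separation of two time-like curves with a common limit decays at the precisely compensating rate $\jb{t}^{-1}$ and because $\partial_\omega\tilde u$ stays bounded. Both facts ride on the uniform decay $\jb{t}\norm[\infty]{\jb{\allrot}^{N-\lceil d/2\rceil}\phi(t)}\lesssim 1$ of Theorem \ref{thm:mainSmallData} (equivalently Theorem \ref{thm:bigmaintheorem}) together with the extra factor $\tfrac{\jb{t}-t}{\jb{t}}\approx\jb{t}^{-2}$ sitting in $\partial_t\tilde u$; keeping these two competing powers of $\jb{t}$ straight is the only genuinely delicate point, everything else being routine integration.
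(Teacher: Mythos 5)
Your argument is essentially the paper's own: the time-like constraint forces $\abs{\partial_t\tilde\omega}\lesssim\jb{t}^{-2}$, hence two curves with a common angular limit have $\abs{\omega_0-\omega_1}\lesssim\jb{t}^{-1}$, and integrating the uniformly bounded $\partial_\omega\tilde u$ along a connecting $\Sphere^d$-geodesic gives $\abs{\tilde u_0-\tilde u_1}\lesssim\jb{t}^{-1}\to 0$. The extra care you take (integrability of $\frac{\D*}{\D*{t}}\tilde u$ along a general time-like curve, and the degree argument for surjectivity of the limit-angle map) is correct and fills in details the paper leaves implicit, but does not change the approach.
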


The above proposition makes precise the locally spatially convergence
discussed in Theorem \ref{thm:maintheoremgeometric}. One can rephrase
the convergence in the language of the causal boundary of the expanding
solutions: for an expanding solution to the \cpmc problem its \emph{timelike infinity} $i_+$ can be identified with a quotient of
the set of time-like curves where two curves are viewed as equivalent
if they are asymptotically parallel. The mapping
$\tilde{u}_{\bar\infty}$ can be regarded as being defined on $i_+$,
assigning to each point of future time-like infinity a corresponding
$u$-shift. Inside the causal past of a point $\tilde{\omega}_\infty
\in i_+$, the perturbed solution converges to a shift of $\dS$ by the
parameter $\tilde{u}_{\bar\infty}(\tilde{\omega}_\infty)$.

\subsubsection{Intrinsic geometry}
Immediately from the $L^\infty$ decay estimates of Theorems
\ref{thm:mainSmallData} and \ref{thm:bigmaintheorem}, we see that the
induced metrics $g$ on $M$ and $\ooo{g}$ on $\dS$ converge.
Furthermore, we have that the connection coefficients $\ooo\covD g =
O(\jb{t}^{-2})$, which is integrable. Thus the future time-like
geodesic completeness of $\dS$ implies the same of $M$, and that
the curves $\tilde{\Psi}(\cdot,\omega)$ for any fixed $\omega$ is
asymptotically geodesic in $M$, as $\Psi(\cdot,\omega)$ is geodesic in
$\dS$.

\tocaddline
\section{Discussion and open problems}
The theme of the present manuscript is one that is familiar in general
relativity, especially in the study of cosmological space-times. More
precisely, what we have is that expanding space-times with effectively
a positive cosmological constant (such as the de Sitter metric and
many of the FLRW solutions) have improved stability properties coming
from the exponential\footnote{Again, in proper time, which in our case
is something like $\sinh^{-1}(t)$.} decay induced by the space-time
expansion. One consequence is that the vector field method is
particularly simple to implement: it is only necessary in our case to
consider the multiplier field $\tau$ and the commutator family
$\allrot$. Of course, part of the simplification comes from the choice
of the inverse-Gauss-map gauge: this gave us a \emph{canonical} choice
of the vector fields with \emph{favourable built-in weights}. Compare
this to the case of, e.g.\ \cite{Speck2012}, where the appropriate
geometric renormalisation needs to be inserted in ``by hand'' to
factor in the different scaling properties of spatial and temporal
derivatives. 

Theorem \ref{thm:bigmaintheorem} above settles the question of
future (and also past, using a simple time reflection) asymptotic
stability for \emph{expanding spherically symmetric} solutions of the
\cpmc problem. Of course, this still leaves open two venues of
investigation: the stability properties of the cylindrical and
asymptotically cylindrical solutions, as well as the stability of the
singularity formation in the case of collapse (see Section
\ref{sec:classification}). As we have seen already in Theorems
\ref{thm:unstablecylinder} and \ref{thm:collapsestability}, the
stability properties of the corresponding ODE in the spherically
symmetric case are completely understood. How much of this carries to
the non-spherically symmetric case is unknown. We make several remarks
here:
\begin{itemize}
\item It is clear that the inverse-Gauss-map gauge will play no role
(in the current formulation) in the analysis of the \emph{cylindrical}
solution, due to the Gauss map being non invertible for that solution.
For the asymptotically cylindrical solutions the situation is less
clear, but one will have to contend with $\zeta$ approaching $-
\frac{1}{d+1}$ and hence $\eta$ blowing up asymptotically. (This
blow-up manifest for both $\phi$ and $\psi$ in fact.) This blow-up is
of course expected since we are essentially compactifying in time:
future time infinity corresponds to the slice $\spaceslice{0}$ under
the inverse-Gauss-map gauge. 
\item For the collapse cases, the inverse-Gauss-map gauge is
well-defined, but its role in the analysis is also not clear. Most
importantly is the fact that the collapse limit has a different causal
structure when compared with the case of asymptotic expansion: whereas 
in the expanding case
we have the presence of cosmological horizons, in the collapse case
the causal past of the singularity contains the entire manifold. 
\item Aside from the stability of collapse, it may be interesting to
also classify the different possible geometries near singularity
formation. 
\end{itemize}

\appendix
\tocaddline
\section{Review of pseudo-Riemannian geometry}
We gather here some facts concerning pseudo-Riemannian geometry,
partly to set notations and conventions, and partly to review and 
recall the main concepts, as the similarity and differences between the
pseudo-Riemannian/Lorentzian cases and the Riemannian cases may not be
familiar to all. Most of the material here are discussed in more
detail in \cite{ONeil1983}.

\subsection{Some linear algebra}
Let $V$ be a real vector space, and let $g:V\times V\to \Real$ be a
non-degenerate, symmetric, bilinear form. This form can be
equivalently viewed as an isomorphism from $V\to V^*$, which we call
$\flat$, and induces
another non-degenerate, symmetric, bilinear form which we shall write
as $(g^{-1}): V^*\times V^*\to\Real$. By definition $(g^{-1})$ can
also be treated as a mapping $V^*\to V$ which we write also as
$\sharp$ and we have that $g\circ
g^{-1} = \Id_{V^*}$ and $g^{-1} \circ g = \Id_{V}$ when interpreted as
linear mappings. 

Let $A:V\to V$ be a linear map. We say that $A$ is \emph{self-adjoint
relative to the form $g$} if for every pair $v,w\in V$ we have that 
$g(v,Aw) = g(Av,w)$, or that $g\circ A$ represent a symmetric bilinear
form. In the following we write
\begin{itemize}
\item $A^{-1}$ to be the inverse map to $A$ (we assume that $A$ is
invertible). 
\item $A^*:V^*\to V^*$ to be the dual map of $A$ given by
$(A^*\eta)(v) =
\eta(Av)$.
\end{itemize}
\begin{prop}\label{prop:linalg}
If $A$ is self-adjoint relative to $g$, then $A^{-1}$ is self-adjoint
relative to $g$, and $A^*$ is self-adjoint relative to $g^{-1}$.
\end{prop}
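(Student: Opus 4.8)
The plan is to translate ``self-adjointness'' into an identity between linear maps and then manipulate that identity purely algebraically. First I would record the reformulation: a linear map $A:V\to V$ is self-adjoint relative to $g$ precisely when, as maps $V\to V^*$, one has $\flat\circ A = A^*\circ\flat$; writing $g$ for the flat map $\flat$, this reads $gA = A^*g$. Indeed, evaluating both sides on $v\in V$ and pairing against $w\in V$ gives $(\flat(Av))(w) = g(Av,w)$ on the left and $(A^*(\flat v))(w) = (\flat v)(Aw) = g(v,Aw)$ on the right, so (using non-degeneracy, i.e.\ that a functional vanishes iff it kills every $w$) the map identity is literally the self-adjointness condition.

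For the first claim I would start from $gA = A^*g$, use that $g=\flat$ is invertible with inverse $g^{-1}=\sharp$ to write $A = g^{-1}A^*g$, invert to get $A^{-1} = g^{-1}(A^*)^{-1}g$, and note that dualization commutes with inversion, $(A^*)^{-1} = (A^{-1})^*$. Rearranging yields $gA^{-1} = (A^{-1})^*g$, which by the reformulation above is exactly self-adjointness of $A^{-1}$ relative to $g$.

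For the second claim I would again start from $gA = A^*g$ and left- and right-multiply by $g^{-1}$ to obtain $g^{-1}A^* = Ag^{-1}$ as maps $V^*\to V$. Unwinding the defining pairing $(g^{-1})(\eta,\xi) = \eta(g^{-1}\xi)$ and its symmetry, for $\eta,\xi\in V^*$ one computes $(g^{-1})(A^*\eta,\xi) = (A^*\eta)(g^{-1}\xi) = \eta(Ag^{-1}\xi) = \eta(g^{-1}A^*\xi) = (g^{-1})(\eta,A^*\xi)$, which is precisely self-adjointness of $A^*$ relative to $g^{-1}$.

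The only point requiring care is bookkeeping of which vector space each map acts on, together with the tacit finite-dimensionality (so that $\flat$ is an isomorphism, as already presumed in the excerpt); there is no genuine obstacle. As an alternative presentation I would fix a basis, represent $g$ by a symmetric invertible matrix $M$ and $A^*$ by the transpose, so that self-adjointness becomes $MA = A^TM$ and both statements reduce to one-line matrix identities ($MA^{-1} = (A^{-1})^TM$ from inverting, and $M^{-1}A^T = AM^{-1}$ from conjugating); some readers may find this form more transparent, and it could be included as a short remark.
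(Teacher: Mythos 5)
Your proof is correct and is essentially the paper's argument: the identity $gA=A^*g$ (equivalently $A^*(v^\flat)=(Av)^\flat$) is exactly what the paper derives before its computation for the second claim, and your unwinding of $(g^{-1})(A^*\eta,\xi)$ matches the paper's chain $g^{-1}(A^*\eta,\zeta)=g((A^*\eta)^\sharp,\zeta^\sharp)=\dots$ step for step. The only (cosmetic) difference is in the first claim, where the paper avoids the operator identity entirely by inserting $AA^{-1}$ inside the form, $g(A^{-1}v,w)=g(A^{-1}v,AA^{-1}w)=g(AA^{-1}v,A^{-1}w)=g(v,A^{-1}w)$, whereas you conjugate $A=g^{-1}A^*g$, invert, and invoke $(A^*)^{-1}=(A^{-1})^*$; both are valid one-line arguments.
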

\begin{rmk}
In the case that $g$ is positive definite, the Proposition can also be
obtained as a consequence of the spectral theorem for
self-adjoint operators on finite dimensional vector spaces. The
spectral theorem in the case where $g$ is pseudo-Euclidean is a bit
more complicated, and here we obtain the result by elementary
calculations.
\end{rmk}
\begin{proof}
The first statement is evident:
\[ g(A^{-1}v,w) = g(A^{-1}v,A A^{-1}w) = g(A A^{-1}v,A^{-1}w) =
g(v,A^{-1}w).\]
For the second statement, we have that by nondegeneracy of $g$ we have
that if $\eta = v^\flat$ then $v = \eta^\sharp$, therefore 
\[ (A^*\eta)(w) = \eta(Aw) = g(v,Aw) = g(Av,w) \]
showing that 
\[ A^*(v^\flat) = (Av)^\flat. \]
This immediately implies that
\[ g^{-1}(A^*\eta,\zeta) = g((A^*\eta)^\sharp,\zeta^\sharp) =
g(A\eta^\sharp,\zeta^\sharp) = g(\eta^\sharp,A\zeta^\sharp) =
g^{-1}(\eta,A^*\zeta) \]
as desired. 
\end{proof}
As an immediate corollary we have that this implies that $(A^{-1})^* =
(A^*)^{-1}$ is self-adjoint relative to $g^{-1}$. 

\subsection{Mean curvature for non-degenerate
submanifolds}\label{app:defmc}
Let $(M,g)$ be a pseudo-Riemannian manifold and $N\subset M$ be a
submanifold with positive codimension. We make the non-degeneracy
assumption that $g$ induces on $N$ a pseudo-Riemannian metric $h$
(which is sometimes also called the \emph{first fundamental form} of
the embedding). As the metric $h$ is non-degenerate, at the point
$p\in N$ we can split $T_pM = T_p N \oplus T^\perp_p N$ where the two
subspaces are orthogonal. Let $\proj^\perp$ denote the projection
operator to $T^\perp N$.

The \emph{second fundamental form} of the embedding is a section of
$T^\perp N \otimes (T^*N)^2$, and is defined by
\begin{equation}
\sff(X,Y) = \proj^\perp \covD_X Y,
\end{equation} 
where $X,Y$ are vector fields along $N$. Observe that since the
Levi-Civita connection $\covD$ is torsion-free, we have that 
\[ \proj^\perp ( \covD_X Y - \covD_Y X) = \proj^\perp [X,Y] = 0,\]
the second equality being due to Frobenius' theorem. This implies that
the second fundamental form is symmetric: $\sff(X,Y) = \sff(Y,X)$. 

The \emph{mean curvature vector} is defined to be the\footnote{Some authors
define it with an additional factor of $1/\dim(N)$, based on the
motivation by the hypersurface case where the associated mean
curvature scalar would be the actual \emph{average} of the principal
curvatures (eigenvalues of the second fundamental form). This 
normalisation factor is unimportant in the following analysis: we 
drop it to simplify aesthetically certain computations.}
$h$-trace of $\sff$, 
\begin{equation}
\mcv = \trace_h \sff = \sum_{i =
1}^{\dim N} \sff(e_i,e_i),
\end{equation}
where $\{e_1, \dots, e_{\dim(N)}\}$ is an orthonormal frame for $TN$. Observe
that $\mcv$ is a field of normal vectors along the submanifold $N$. 

Suppose now that $N$ is an orientable nondegenerate hypersurface in $M$; by
orientability we can choose a unit normal vector field to $N$, which
we denote by $\vec{n}$. Then \emph{relative to this orientation} the
\emph{mean curvature scalar} is the quantity 
\begin{equation}
\mcs \eqdef g(\mcv,\vec{n});
\end{equation}
thus
while the \emph{magnitude} of the mean curvature scalar is independent
of the orientation, the \emph{sign} is not. In the title of this
article we implicitly follow the usual convention where the normal
vector $\vec{n}$ is ``directed inward''.

For oriented hypersurfaces, a related concept is that of the
\emph{shape operator}. Let $\vec{n}$ again be the chosen unit normal
vector field. Observe that since
\[ g(\covD_X\vec{n},\vec{n}) = \frac12 \covD_X [g(\vec{n},\vec{n})] = 0
\]
we have that $\covD_X\vec{n}$ is tangent to $N$ for any vector field
$X$ tangent to $N$. The \emph{shape operator} is defined to be the
section of $TN\otimes T^*N$ given by 
\begin{equation}
\shapeop(X) = \covD_X \vec{n}.
\end{equation} 
Note that its definition again depends on the chosen orientation. In
the case of the hypersurface there is a simple relation between the
shape operator and the second fundamental form. Let $X,Y$ be vector
fields tangent to $N$ then we have
\[ g(\shapeop(X),Y) = g(\covD_X \vec{n},Y) = \covD_X [g(\vec{n},Y)] -
g(\vec{n},\covD_X Y) = - g(\vec{n},\sff(X,Y)).\]
The symmetry of the second fundamental form then implies that
$\shapeop$ is
self-adjoint relative to $g$. Since $\shapeop(X)$ is $N$-tangent, we also
then have that $\shapeop$ is self-adjoint relative to $h$. 

Finally, we remark here the scaling properties of the various objects
defined here. Let $(M,g)$ and $(M',g')$ be pseudo-Riemannian manifolds
and $(N,h,\sff)$ and $(N',h',\sff')$ nondegenerate submanifolds of $M,M'$
respectively, with their induced first and second fundamental forms.
Suppose $F:M\to M'$ is an diffeomorphism which restricts to a
diffeomorphism $F|_N: N\to N'$. Suppose additionally that the
pull-back metrics satisfy
\[ F^* g' = \lambda^2 g \]
for some positive constant $\lambda$. Then a direct computation yields
that
\begin{subequations}
\begin{align}
F^* h' &= \lambda^2 h, \\
F^* \sff' &= \sff \label{eq:sffscaling}
\end{align}
(remember that the second fundamental form is a section of $T^\perp
N\otimes (T^*N)^2$). This implies that the mean curvature vector
scales like
\begin{equation}
F^* \mcv' = \frac{1}{\lambda^2} \mcv
\end{equation}
while relative to a chosen orientation, the mean curvature scalar
scales like
\begin{equation}
F^*\mcs' = \frac{1}{\lambda} \mcs.
\end{equation}
\end{subequations}

\subsection{Pseudo-Euclidean spaces, hyperquadrics, and the Gauss map}
\label{app:gaussmap}
Now let $M$ be $\Real^{m,q}$ equipped with
the pseudo-Euclidean quadratic form $g$. A family of distinguished
hypersurfaces are the \emph{hyperquadrics} 
\[ \Sphere^{m,q,\rho} \eqdef \set{ x\in \Real^{m,q}}{g(x,x) = \rho} \]
where $\rho\in \Real\setminus \{0\}$ is a parameter. Observe that
$\Sphere^{m,q,\rho}$ is a non-degenerate hypersurface with dimension
$m+q-1$, and the induced metric has $m$ time-like directions if $\rho
> 0$ and $m-1$ time-like directions if $\rho < 0$. 

Now, the quadratic form $g$ is invariant under the indefinite
orthogonal group $O(m,q)$; these actions give rise to isometries of
$\Sphere^{m,q,\rho}$. As the dimension of $O(m,q)$ is
$(m+q)(m+q-1)/2$, the hyperquadrics are maximally symmetric. One
easily sees that the vector field $\nu = -\sum_{i = 1}^{m+q}
x^i \partial_{x^i}$ is a normal vector field to the hyperquadrics
with $g(\nu,\nu) = \rho$ along $\Sphere^{m,q,\rho}$. So letting
$\vec{n} = \frac{1}{\sqrt{\abs{g(\nu,\nu)}}} \nu$, the associated
shape operator is $\shapeop = -\frac{1}{\sqrt{\abs\rho}}
\Id$ and hence the mean curvature scalar (with the orientation given by
$\vec{n}$) of $\Sphere^{m,q,\rho}$ is the constant
$\mcs = \frac{m+q-1}{\sqrt{\abs\rho}}$. This of course is compatible with the
fact that $\Sphere^{m,q,\rho_1}$ and $\Sphere^{m,q,\rho_2}$ with
$\rho_1\rho_2 > 0$ are related by a scaling symmetry. 

\begin{exa}
When $m = 0$, the only admissible $\rho$ are positive, and
$\Sphere^{0,q,\rho}$ are just the $q-1$ dimensional round spheres with
radius $\sqrt{\rho}$. 
\end{exa}

\begin{exa}
When $m = 1$, for $\rho < 0$, the normal vector $\nu$ is time-like,
and $\Sphere^{1,q,\rho}$ is a \emph{Riemannian} manifold isometric to
a hyperbolic space of dimension $q$. For $\rho > 0$, the normal vector
$\nu$ is space-like and $\Sphere^{1,q,\rho}$ is \emph{Lorentzian} and
is isometric to a \emph{de Sitter} space; it is also known as the
\emph{pseudo-sphere}. We will denote by $\dS$ the
manifold $\Sphere^{1,d+1,1}\subset \Real^{1,d+1}$. 

Incidentally the \emph{anti de Sitter} spaces are isometric to the
hyperquadrics $\Sphere^{2,q,\rho}$ with $\rho < 0$ and are analogously
called the \emph{pseudo-hyperbolic spaces}. 
\end{exa}

Since $M$ has a
vector space structure we can canonically identify $T_p M$ with $M$
for every $p\in M$. Now let $N$ be
an orientable nondegenerate hypersurface. Denote again by $\vec{n}$
a choice of the unit normal vector field along $N$, so that
$g(\vec{n},\vec{n}) =
\pm 1$ (the sign depends on whether $\vec{n}$ is time-like or
space-like). The canonical identification of $T_p M$ with $M$ allows
us to associate to each $\vec{n}$ a point, which by abuse of notation
we will also call $\vec{n}$, of $M = \Real^{m,q}$. Consider the
mapping
\begin{equation}\label{eq:defgaussmap}
G(p) = - \vec{n}(p) \qquad p\in N.
\end{equation}
Since $\vec{n}$ is unit, we have that $G: N \to \Sphere^{m,q,\pm 1}$,
the sign depending on whether $\vec{n}$ is time-like or space-like.
This map sending a hypersurface to a standard hyperquadric via the
unit normal vector field is the \emph{Gauss map}, generalising to the
pseudo-Euclidean case the familiar Gauss map for surfaces in
$\Real^3$. 

\begin{rmk}
In \eqref{eq:defgaussmap} we took \emph{minus} the declared unit
normal vector. This is so that when used with our convention that the
normal vectors are inward pointing, the Gauss map reduces to the
identity map for the hyperquadrics $\Sphere^{m,q,\pm 1}$. 
\end{rmk}

The derivative of the Gauss map $\D*{G}$ maps $T_p N$ to $T_{G(p)}
\Sphere^{m,q,\pm 1}$; both tangent spaces are orthogonal to
$\vec{n}(p)$, after the identification of both $T_p M$ and $T_{G(p)}M$
with $M$ itself. This allows us to naturally identify
$T_{G(p)}\Sphere^{m,q,\pm 1}$ with $T_p N$ and hence identify $\D*{G}$
with $- \shapeop$, where $\shapeop$ is the shape operator relative to $\vec{n}$.
This recovers for us, in the setting of hypersurfaces in
pseudo-Euclidean spaces, the familiar relation between the second fundamental
form and the Gauss map for surfaces in $\Real^3$. 

\subsection{The Codazzi equations}
As already seen above in the case of the shape operator and Gauss map
descriptions of the second fundamental form, the second fundamental
form can be schematically
written as the first derivative of a smooth quantity. Now, from
calculus we expect second derivatives to commute, up to lower-order
curvature terms: this gives certain integrability criteria that the
second fundamental form of a submanifold must satisfy. These are the
Codazzi equations. Let $M$ be a pseudo-Riemannian manifold with metric
$g$ and Levi-Civita connection $D$, and let $N$ be a
nondegenerate submanifold with induced metric $h$ with induced
Levi-Civita connection $\covD$, and second
fundamental form $\sff$, we denote, for $W,X,Y$ vector fields along
$N$, 
\[ (\covD^\perp_W \sff)(X,Y) = \proj^\perp D_W(\sff(X,Y)) - \sff(\covD_W
X,Y) - \sff(\covD_W Y,X).\]
Then the \emph{Codazzi equations} read
\begin{equation}\label{eq:codazziorig}
\proj^\perp \Riem[^{(M)}](X,Y)W + (\covD^\perp_X\sff)(Y,W) -
(\covD^\perp_Y\sff)(X,W) = 0
\end{equation}
where $\Riem[^{(M)}]$ is the Riemann curvature tensor of the ambient
manifold $M$. 

Specialising now to the case of a hypersurface in pseudo-Euclidean
space, the ambient curvature vanishes identically, and
\eqref{eq:codazziorig} simplifies to 
\begin{equation}\label{eq:codahyperorig}
(\covD_X \shapeop)(Y) - (\covD_Y \shapeop)(X) = 0
\end{equation}
for the shape operator $\shapeop$ and any tangent vector fields $X,Y$. Now 
supposing our hypersurface $N$ has constant mean curvature, we can take 
the trace of \eqref{eq:codahyperorig} to obtain (in index notation)
\begin{equation}\label{eq:codahyperdual}
\covD_a \shapeop_b^a = 0.
\end{equation}

\subsection{Linearisation of mean curvature}\label{app:linearisationmc}
In the codimension-1 case the following computation is well-known (e.g.\
\cite{Choque1976}); here we start with the generalisation to the case of 
higher codimensions. Let $(M,g)$ be
a pseudo-Riemannian manifold and $(\ooo{N},\ooo{h})$ be an embedded
pseudo-Riemannian manifold (in particular $\ooo{h}$ is not
degenerate). Assume that $M$ has dimension $m$ and $\ooo{N}$
dimension $n$. Then locally in a small neighbourhood $M$ can be
described by the normal bundle $\ooo{N} \times \Real^{m-n}$. A
concrete local diffeomorphism can be obtained by the normal
exponential map on the normal bundle of $\ooo{N}$. 

This gives us a local coordinate system. Let $\phi : \ooo{N} \to
\Real^{m-n}$, this gives us another submanifold of $M$ that is
homotopic to $\ooo{N}$. What is its second fundamental form? We let
$x^1, \dots,x^n$ be a local coordinate system on $\ooo{N}$, and let
$x^{n+1},\dots,x^{m}$ be coordinates for $\Real^{m-n}$. What we
need to compute is the normal projection of $\covD_{\phi_*\partial_i}
\phi_*\partial_j$. We can write
\[ \phi_* \partial_i = \partial_i + \partial_i \phi^\mu \partial_\mu
\]
where Greek indices run from $n+1,\ldots,m$ for the vertical
directions and Latin indices run from $1, \ldots,n$ for the horizontal
directions. So we have that
\begin{multline} 
\covD_{\phi_*\partial_i} \phi_*\partial_j = \Gamma_{ij}^k\partial_k
+ \Gamma_{ij}^\nu \partial_\nu + \Gamma_{\mu j}^k \phi_{,i}^\mu
\partial_k + \Gamma_{\mu j}^\nu \phi_{,i}^\mu \partial_\nu 
+ \phi^{\nu}_{,ij} \partial_\nu \\ + \phi^{\nu}_{,j\mu} \phi^\mu_{,i}
\partial_\nu + \phi^\mu_{,j} \Gamma_{i \mu}^k\partial_k 
+ \phi^\mu_{,j} \Gamma_{i\mu}^\nu \partial_\nu + \phi^\mu_{,i}
\phi^{\rho}_{,j} \Gamma_{\mu\rho}^\nu \partial_\nu +
\phi^\mu_{,i}\phi^{\rho}_{,j} \Gamma_{\mu\rho}^k \partial_k.
\end{multline}
Here, as usual, $\Gamma^{\pholder}_{\pholder\pholder}$ denote the
Christoffel symbol of the metric $g$ relative to the coordinates $x^1,
\dots, x^m$. Note that the Christoffel symbol is evaluated at the
point $(x^1,\dots,x^n,\phi(x^1,\dots,x^n))$ and so implicitly depends
on $\phi$. 
The mean curvature vector is the normal projection of the trace
of the above expression. Note that when $\phi = 0$ this mean curvature
reduces to $\ooo{h}^{ij} \Gamma_{ij}^\nu \partial_\nu $. Keeping only
terms that are linear in $\phi$ gives us the formal linearisation of
the mean curvature (which we will denote by $\delta \mcv^\nu \partial_\nu$).
A direct computation yields that
\[ \delta \mcv^\nu = \ooo{h}^{ij}\left( 2\Gamma^\nu_{\mu j} \phi^\mu_{,i} +
\phi^\nu_{,ij} - \phi^\nu_{,k} \Gamma^k_{ij} +  \phi^\mu
\partial_\mu \Gamma_{ij}^\nu \right) + \phi^\mu \partial_\mu \ooo{h}^{ij}
\Gamma_{ij}^\nu  \]
where we used that
\[ \partial_k = \phi_*\partial_k - \phi^\nu_{,k} \partial_\nu \]
and
\[ \Gamma_{ij}^\nu(\phi) = \Gamma_{ij}^\nu(0) + \partial_\mu
\Gamma_{ij}^\nu \phi^\mu + O(\phi^2) .\]
Now, treating $\phi^\mu$ as a section of the normal bundle, we have
that
\[ \covD_{\partial_i} \phi^A = \partial_i \phi^A +
\Gamma_{i\nu}^A\phi^\nu \]
and
\[ \covD^2_{\partial_i\partial_j} \phi^A = \phi^A_{,ij} +
\partial_j \Gamma_{i\nu}^A \phi^\nu + \Gamma_{i\nu}^A
\phi^\nu_{,j} + \Gamma^A_{j\nu} \phi^{\nu}_{,j} +
\Gamma^{A}_{jB}\Gamma^{B}_{i\nu} \phi^\nu - \Gamma_{ij}^k
\phi^A_{,k} - \Gamma_{ji}^B \Gamma_{B\nu}^A \phi^\nu \]
where $A,B$ stand for both horizontal and vertical directions, with
naturally $\phi^i = 0$.

This implies that the formal linearisation of the mean curvature is
given by
\[ \delta \mcv^\nu = \ooo{h}^{ij} \covD^2_{ij} \phi^\nu + \ooo{h}^{ij}
\Riem[_{\mu ij}^{\nu}]\phi^\mu + \partial_\mu \ooo{h}^{ij} \Gamma_{ij}^\nu
\phi^\mu \]
here the convention for $\Riem$ is that $g^{AB}\Riem[_{CAB}^D] =
\Ricci_C^D$. The derivative $\partial_\mu \ooo{h}_{ij} = 2 g_{\mu\sigma}
\Gamma^\sigma_{ij}$ by assumption of orthogonality and implies finally
\begin{equation}\label{eq:linearMCper}
\delta \mcv^\nu = \ooo{h}^{ij} \covD^2_{ij} \phi^\nu + \ooo{h}^{ij}
\Riem[_{\mu ij}^\nu] \phi^\mu + 2 g_{\mu\rho} \phi^\mu
\Gamma^\rho_{ij} \Gamma^\nu_{kl} h^{ik}h^{jl}~. \end{equation}

In the case of a codimension-1 orientable hypersurface, we can write
$\phi^\nu\partial_\nu =\phi \vec{n}$ where $\vec{n}$ is a field of
unit normal vectors and contract, this gives us that the linearisation
of the mean curvature \emph{scalar} satisfies
\begin{equation}\label{eq:linearMC1}
\delta \mcs = \triangle_{\ooo{h}} \phi + \Ricci[_\nu^\nu] + \phi
\shapeop:\shapeop 
\end{equation}
where $\shapeop$ is the shape operator and the notation
$\shapeop:\shapeop$ is a
shorthand for $\shapeop^i_j \shapeop^j_i$. We lost a factor of two in
the last part because
\[ 0 = \covD g(\vec{n},\vec{n}) = 2 g(\vec{n}, \covD\vec{n}) \implies g(\covD^2\vec{n},\vec{n}) = - g(\covD
\vec{n},\covD\vec{n}). \]

Now let us specialise to the case where the ambient space $M$ is
$\Real^{1,d+1}$ and $\ooo{N}$ is Lorentzian. Since Minkowski space is
flat we can drop the Ricci term and write (switching $\triangle$ to
$\Box$ since the Laplace-Beltrami operator is now a wave operator)
\[ \delta \mcs = \Box_{\ooo{h}} \phi + \shapeop:\shapeop \phi.\]
In the case that $\ooo{N}$ is the hyperquadric $\dS$, we further have
$\shapeop = -\delta$ which gives 
\begin{equation}\label{eq:linearMCdS}
\delta \mcs = \Box_{\dS} \phi + (d+1) \phi.
\end{equation}

\tocaddline
\bibliographystyle{amsalpha}
\bibliography{../../../bib_files/jabrefmaster.bib}

\end{document}